\definecolor{red}{rgb}{1,0,0}
\definecolor{green}{rgb}{0,1,0}
\definecolor{blue}{rgb}{0,0,1}
\definecolor{refkey}{gray}{.625}
\definecolor{labelkey}{gray}{.625}
\let\oldmarginpar\marginpar
\renewcommand\marginpar[1]{\-\oldmarginpar[\raggedleft\footnotesize #1]%
{\raggedright\footnotesize #1}}
\def\dar[#1]{\ar@<2pt>[#1]\ar@<-2pt>[#1]}
\theoremstyle{plain}
\newtheorem{theorem}{Theorem}[section]
\newtheorem{lemma}[theorem]{Lemma}
\newtheorem{corollary}[theorem]{Corollary}
\newtheorem{definition}[theorem]{Definition}
\newtheorem{proposition}[theorem]{Proposition}
\newtheorem{main}{Theorem}
\newtheorem*{thm*}{Theorem}
\newtheoremstyle{slanted}
  {3pt}
  {3pt}
  {\slshape}
  {}
  {\bfseries}
  {.}
  {.5em}
  {}
\theoremstyle{slanted}
\newtheorem{example}[theorem]{Example}
\newtheorem{remark}[theorem]{Remark}
\theoremstyle{definition}
\newtheorem*{note*}{Note}
\theoremstyle{remark}
\DeclareMathOperator{\id}{id}
\DeclareMathOperator{\Hom}{Hom}
 \DeclareMathOperator{\pr}{pr}
\newcommand{\CC}{\mathbb{C}}
\newcommand{\reals}{\mathbb{R}}
\newcommand{\ZZ}{\mathbb{Z}}
\newcommand{\thalf}{\tfrac{1}{2}}
\newcommand{\smalcirc}{\mbox{\tiny{$\circ$}}}
\newcommand{\cinf}[1]{C^{\infty}(#1)}
\newcommand{\sections}[1]{ {\pmb{ \Gamma}} (#1)}
\newcommand{\toto}{\rightrightarrows}
\newcommand{\cala}{\mathcal{A}}
\newcommand{\ba}[2]{[#1,#2]}
\newcommand{\bas}[2]{[#1,#2]_*}
\newcommand{\ip}[2]{\langle #1 , #2 \rangle}
\newcommand{\abs}[1]{\left\vert#1\right\vert}
\newcommand{\As}{A^*}
\newcommand{\pairing}[2]{\left\langle #1  |  #2 \right\rangle}
\newcommand{\CIM}{C^{\infty}(M)}
\newcommand{\inserts}{\iota}
\newcommand{\contract}{\lrcorner\,}
\newcommand{\set}[1]{\left\{#1\right\}}
\newcommand{\Poissonbracket}[1]{\left \{ #1\right \}}
\newcommand{\thetagroup}{\Theta}
\newcommand{\thetaalgebra}{\mathfrak{\theta}}
\newcommand{\thetaalgebrastar}{\mathfrak{\theta}^*}
\newcommand{\galgebra}{\mathfrak{g}}
\newcommand{\galgebrastar}{\mathfrak{g}^*}
\newcommand{\ggroup}{G}
\newcommand{\inverse}[1]{{#1}^{-1}}
\newcommand{\crossedmoduletwogroup}{ {\ggroup\ltimes\thetagroup} }
\newcommand{\crossedmodulealgebra}{\galgebra\ltimes  \thetaalgebra}
\newcommand{\gpmulti}{\diamond}
\newcommand{\gpoidmulti}{\star}
\newcommand{\gpinverse}[1]{{#1}_{\diamond}^{-1}}
\newcommand{\gpoidinverse}[1]{{#1}^{-1}_{\star}}
\newcommand{\unit}{{\bf{1}}}
\newcommand{\source}{{\bf{s}}}
\newcommand{\target}{{\bf{t}}}
\newcommand{\Adjoint}[1]{\mathrm{Ad}_{#1}}
\newcommand{\adjoint}[1]{\mathrm{ad}_{#1}}
\newcommand{\coAdjoint}[1]{\mathrm{Ad}^*_{#1}}
\newcommand{\phiUprotate }{\phi^{\scriptscriptstyle T}}
\newcommand{\Id}{{\bf{1}}}
\newcommand{\ddelta}{    { \delta}}
\newcommand{\deta}{    { \eta}}
\newcommand{\dlambda}{    { \lambda}}
\newcommand{\skewdelta}{{\delta}}
\newcommand{\domega}{  { \omega}}
\newcommand{\Rmatrix}{\mathbf{r}}
\newcommand{\gltwo}{\mathfrak{gl}(2)}
\newcommand{\gthetaPi}{\mathbf{\Pi}}
\newcommand{\gthetaV}{\mathbf{V}}
\newcommand{\gthetaC}{\mathbf{C}}
\newcommand{\intsigma}{\widehat{\sigma}}
\newcommand{\dsigma}{\sigma}
\newcommand{\inteta}{\widehat{\eta}}
\newcommand{\intlambda}{\widehat{\lambda}}
\newcommand{\crossedmoduletriple}[3]{(#1\stackrel{#2}{\rightarrow}#3)}
\newcommand{\Lietwo}[2]{ {#2 \ltimes #1}}
\newcommand{\groupcrossedmoduletriple}[3]{(#1\stackrel{#2}{  \rightarrow }#3)}
\newcommand{\sltwo}{\mathfrak{sl}(2)}
\newcommand{\Phistar}{\Phi_*}
\newcommand{\groupaction}{ {  \triangleright }}
\newcommand{\moduleaction}{ \triangleright}
\newcommand{\polyvectorfields}[2]{\mathfrak{X}^{#1}(#2)}
\newcommand{\strictmultiplicative}[2]{\mathfrak{X}^{#1}_{\mathrm{mult}}(#2)}
\newcommand{\phipush}{D_{\phi}}
\newcommand{\rhopush}{D_{\rho}}
\newcommand{\minuspower}[1]{(-1)^{#1}}
\newcommand{\gradedcommutator}[2]{\lfloor #1,#2 \rfloor}
\newcommand{\gpleftmove}[1]{\overleftarrow{#1}^{\mathrm{\scriptscriptstyle gp}}}
\newcommand{\gpoidleftmove}[1]{\overleftarrow{#1}}
\newcommand{\gpoidrightmove}[1]{\overrightarrow{#1}}
\newcommand{\LiealgebroidoverG}{A}
\newcommand{\thetaalgebradegone}{\mathrm{\theta}}
\newcommand{\thetaalgebrastardegminustwo}{\mathrm{\theta}^*}
\newcommand{\galgebradegzero}{\mathfrak{g}}
\newcommand{\galgebrastardegminusone}{\mathfrak{g}^*}
\newcommand{\phimap}{\phi}
\newcommand{\bracketmapbracket}[2]{[#1,#2]}
\newcommand{\actionmapaction}[2]{#1 \moduleaction #2}
\newcommand{\homotopymap}{h}
\newcommand{\cohomotopymap}{\tilde{\eta}}
\newcommand{\symmetricproduct}{{\scriptstyle \odot}\,}
\newcommand{\symmetricalgebra}{\mathcal{S}}
\newcommand{\symmetricalgebraa}{{\mathcal{S}}^\bullet}
\newcommand{\Sbullet}{S^\bullet}
\newcommand{\tobefilledin}{\,\stackrel{\centerdot}{}\,}
\newcommand{\Linf}{L_{\infty}}
\newcommand{\degreesubspace}[2]{{#1}^{(#2)}}
\newcommand{\Vs}{V^*}
\newcommand{\shiftby}[2]{{#1}{\scriptstyle{[#2]}}}
\newcommand{\fullaction}[1]{ {#1}} \pagestyle{plain}
\newcommand{\ipG}[2]{(#1,#2)}
\newcommand{\plankconstant}{\hbar}
\newcommand{\groupdifferential}[1]{\partial^{\mathrm{\scriptscriptstyle gp}}_{#1}}
\newcommand{\groupoiddifferential}[1]{\partial^{\mathrm{\scriptscriptstyle gpd}}_{#1}}
\newcommand{\wedgedelta}{\widehat{\delta}}
\newcommand{\wedgeomega}{\widehat{\omega}}
\newcommand{\derivesh}{\check{h}}
\newcommand{\derivesaction}{\check{a}}
\newcommand{\derivesbracket}{\check{b}}
\newcommand{\derivesvarphi}{\check{\phi}}
\newcommand{\derivescoh}{\check{\eta}}
\newcommand{\derivescoaction}{\check{\alpha}}
\newcommand{\derivescobracket}{\check{\epsilon}}
\newcommand{\derivescovarphi}{\check{\phi}}
\begin{document}

\title{Poisson 2-Groups}

\author{Zhuo Chen}
\address{Department of Mathematics, Tsinghua University}
\email{zchen@math.tsinghua.edu.cn}

\author{Mathieu Sti\'enon}
\address{Department of Mathematics, Pennsylvania State University}
\email{stienon@math.psu.edu}

\author{Ping Xu}
\address{Department of Mathematics, Pennsylvania State University}
\email{ping@math.psu.edu}

\thanks{Research partially supported by NSF grants DMS0605725;
DMS0801129; and DMS1101827, NSFC grant 11001146, and  THU grant 2012THZ02154.}

\begin{abstract}
We prove a 2-categorical analogue of a classical result of Drinfeld:
there is a one-to-one correspondence between connected, simply-connected
Poisson Lie 2-groups and Lie 2-bialgebras. In fact, we also prove that
there is a one-to-one correspondence between connected, simply connected
quasi-Poisson 2-groups and quasi-Lie 2-bialgebras.
Our approach relies on a ``universal lifting theorem'' for Lie 2-groups:
an isomorphism between the graded Lie algebras of multiplicative polyvector fields on the Lie 2-group on one hand and of polydifferentials on the corresponding Lie 2-algebra on the other hand.
\end{abstract}

\maketitle
\tableofcontents

\section*{Introduction}

A Poisson group is a Lie group equipped with a compatible Poisson structure.
Poisson groups are the classical limit of quantum groups and have been
extensively studied in the past two decades.
For instance, Drinfeld proved that there is a bijection between connected,
simply connected Poisson groups and Lie bialgebras \cite{MR688240,MR934283}.

Lie 2-groups (also called strict Lie 2-groups in the literature)
are Lie group objects in the category of Lie groupoids,
or equivalently Lie groupoid objects in the category of Lie groups.
More explicitly, a Lie 2-group is a Lie groupoid $\Gamma_1\toto \Gamma_0$,
where both $\Gamma_1$ and $\Gamma_0$ are Lie groups and
all the groupoid  structure maps are group homomorphisms.
Lie 2-groups are special instances of Mackenzie's double groupoids \cite{MR1697617}.

The recent categorification trend motivates the search for an appropriate notion
of quantum 2-groups. Poisson 2-groups are a
natural first step in that direction.
By a Poisson 2-group, we mean a Lie 2-group equipped with a Poisson structure
$\gthetaPi$ on $\Gamma_1$, which is multiplicative with respect to
both the group and the  groupoid structures on $\Gamma_1$. In other words,
$(\Gamma_1,\gthetaPi)$ is simultaneously both a Poisson group
\cite{MR688240,MR934283} and a Poisson groupoid \cite{MR959095}.

Lie 2-algebras are Lie algebra objects in the category of Lie algebroids \cite{MR2068522}.
They can be identified with Lie algebra crossed modules:
pairs of Lie algebras $\thetaalgebra$ and $\galgebra$
together with a linear map $\phi:\thetaalgebra\to\galgebra$
and an action of $\galgebra$ on $\thetaalgebra$ by derivations
satisfying a certain compatibility condition.
Likewise, a Lie 2-bialgebra can be considered as a Lie bialgebra crossed module,
i.e.\ a pair of Lie algebra crossed modules in duality:
$\crossedmoduletriple{\thetaalgebra}{\phi}{\galgebra}$ and
$\crossedmoduletriple{\galgebrastar}{-\phi^*}{\thetaalgebrastar}$
are both Lie algebra crossed modules, and
$(\Lietwo{\thetaalgebra}{\galgebra},\Lietwo{\galgebrastar}{\thetaalgebrastar})$
is a Lie bialgebra.

We prove that, at the infinitesimal level, Poisson 2-groups induce Lie 2-bialgebras.
More precisely, we prove the following analogue of Drinfeld's theorem:

\begin{main}\label{firstmain}
There is a one-to-one correspondence between connected, simply-connected
Poisson 2-groups and Lie 2-bialgebras.
\end{main}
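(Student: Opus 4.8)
The plan is to deduce both directions of the correspondence from a single \emph{universal lifting theorem} relating bi-multiplicative polyvector fields on the Lie $2$-group to polydifferentials on its Lie $2$-algebra. Write $\Gamma=(\Gamma_1\toto\Gamma_0)$ for a connected, simply-connected Lie $2$-group and $\mathcal{G}=\crossedmoduletriple{\thetaalgebra}{\phi}{\galgebra}$ for the associated Lie $2$-algebra; since strict Lie $2$-algebras integrate to connected, simply-connected strict Lie $2$-groups, $\Gamma\leftrightarrow\mathcal{G}$ is already a bijection. Because $\Gamma_1$ carries both a group and a compatible groupoid structure (so $\Gamma$ is a double Lie groupoid), a Poisson $2$-group structure on $\Gamma$ is precisely a bivector field $\gthetaPi\in\strictmultiplicative{2}{\Gamma}$ multiplicative for \emph{both} structures and satisfying $\cb{\gthetaPi}{\gthetaPi}=0$ in the graded Lie algebra $\strictmultiplicative{\bullet}{\Gamma}$. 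On the infinitesimal side one first checks that a Lie $2$-bialgebra structure on $\mathcal{G}$ --- i.e.\ the data making $\crossedmoduletriple{\galgebrastar}{-\phi^*}{\thetaalgebrastar}$ a Lie algebra crossed module in duality with $\mathcal{G}$ and $(\Lietwo{\thetaalgebra}{\galgebra},\Lietwo{\galgebrastar}{\thetaalgebrastar})$ a Lie bialgebra --- is equivalent to a single polydifferential $\delta\in\strictdiffpairs{\bullet}{\mathcal{G}}$, of the same degree as $\gthetaPi$, with $\cb{\delta}{\delta}=0$. Granting a graded Lie algebra isomorphism $\strictmultiplicative{\bullet}{\Gamma}\cong\strictdiffpairs{\bullet}{\mathcal{G}}$ carrying $\gthetaPi$ to $\delta$, the theorem follows: bi-multiplicative Poisson bivectors on $\Gamma$ correspond bijectively to solutions $\delta$ of $\cb{\delta}{\delta}=0$, hence to Lie $2$-bialgebra structures on $\mathcal{G}$.

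The core work is thus to construct this universal lifting isomorphism. First I would make precise the notion of a polydifferential on a Lie $2$-algebra and endow $\strictdiffpairs{\bullet}{\mathcal{G}}$ with a Schouten-type graded Lie bracket, verifying the graded Jacobi identity by a direct (if lengthy) computation. Second, I would construct the map itself: a multiplicative polyvector field on the Lie group $\Gamma_1$ is, by the universal lifting theorem for Lie groups, the same as a polydifferential on $\Lietwo{\thetaalgebra}{\galgebra}$, while multiplicativity with respect to the groupoid structure $\Gamma_1\toto\Gamma_0$ cuts this down exactly to the subcomplex $\strictdiffpairs{\bullet}{\mathcal{G}}$; concretely, the map is differentiation at the units in both the group and the groupoid directions. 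Third, I would prove bijectivity: injectivity because a bi-multiplicative polyvector field on the connected $\Gamma$ is determined by its infinitesimal data along the units; surjectivity by integrating a given polydifferential, which one can do by invoking the universal lifting theorems for Lie groups and for Lie groupoids separately and checking that the two integrations glue to a single bi-multiplicative tensor, connectedness and simple-connectedness of $\Gamma_1$ (and source-simple-connectedness of $\Gamma_1\toto\Gamma_0$) ensuring uniqueness, hence agreement. Finally I would verify that the map is a morphism of graded Lie algebras; since both sides are generated in low degree this reduces to a check on generators.

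Once the isomorphism is available it remains to match $\cb{\gthetaPi}{\gthetaPi}=0$ with the Lie $2$-bialgebra axioms. Differentiating $\gthetaPi$ yields a pair of cobracket-type maps on $\galgebra$ and $\thetaalgebra$, dual to the candidate brackets on $\galgebrastar$ and $\thetaalgebrastar$; vanishing of the ``group part'' of $\cb{\gthetaPi}{\gthetaPi}$ forces these to be $1$-cocycles and their duals to obey the Jacobi identity, while vanishing of the ``groupoid part,'' combined with the crossed-module relations of $\mathcal{G}$, yields precisely the compatibility of $\crossedmoduletriple{\thetaalgebra}{\phi}{\galgebra}$ with $\crossedmoduletriple{\galgebrastar}{-\phi^*}{\thetaalgebrastar}$ and the Lie bialgebra condition on $(\Lietwo{\thetaalgebra}{\galgebra},\Lietwo{\galgebrastar}{\thetaalgebrastar})$. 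Conversely, any Lie $2$-bialgebra produces such a $\delta$, which lifts uniquely through the isomorphism to a bi-multiplicative Poisson bivector $\gthetaPi$ on the integration $\Gamma$, and the two assignments are visibly mutual inverses. Carrying out the same argument with a nonzero ``background'' $3$-tensor in place of $0$ on both sides simultaneously gives the quasi-Poisson $2$-group / quasi-Lie $2$-bialgebra correspondence.

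\textbf{Main obstacle.} I expect the surjectivity (integration) half of the universal lifting theorem to be the crux: reconstructing a genuinely \emph{bi}-multiplicative polyvector field on $\Gamma_1$ forces one to reconcile the group-multiplicativity and the groupoid-multiplicativity coming from two a priori independent integrations, and to keep bracket-preservation honest through the double-groupoid bookkeeping. A second, more combinatorial difficulty is assembling the many structure maps of a Lie bialgebra crossed module into one polydifferential $\delta$ and establishing that the single equation $\cb{\delta}{\delta}=0$ encodes the entire list of Lie $2$-bialgebra axioms.
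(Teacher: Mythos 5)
Your proposal follows essentially the same route as the paper: a universal lifting theorem identifying the graded Lie algebra of bi-multiplicative polyvector fields on the Lie 2-group with the graded Lie algebra of infinitesimal data (the pairs $(\domega,\skewdelta)$ forming $\bigoplus_{k}\cala_k$ in the paper), followed by matching square-zero degree-$2$ elements with (quasi-)Lie 2-bialgebra structures, which the paper carries out by expanding the big-bracket equation $\{t,t\}=0$ term by term. The one tactical divergence is your surjectivity step: rather than integrating separately via the Lie group and Lie groupoid lifting theorems and gluing the two results (your stated main obstacle), the paper integrates only the group-side $k$-differential built from $(\domega,\skewdelta)$ and then verifies groupoid multiplicativity of the resulting field directly, via the coisotropy/left-invariance characterization of multiplicative polyvector fields and explicit cocycle formulas along $G$ and $\Theta$, so no reconciliation of two independent integrations is ever required.
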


We will also prove a more general result:

\begin{main}\label{secondmain}
There is a one-to-one correspondence between connected, simply-connected
quasi-Poisson 2-groups and quasi-Lie 2-bialgebras.
\end{main}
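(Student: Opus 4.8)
The plan is to reduce the statement to the "universal lifting theorem" advertised in the abstract, namely the promised isomorphism of graded Lie algebras between multiplicative polyvector fields $\strictmultiplicative{\bullet}{\Gamma}$ on a (connected, simply-connected) Lie 2-group $\Gamma$ and polydifferentials $\strictdiffpairs{\bullet}{\crossedmodulealgebra}$ on the corresponding Lie 2-algebra crossed module. First I would unwind both sides of the correspondence into purely algebraic/geometric data living in these two graded Lie algebras. On the group side, a quasi-Poisson 2-group structure is a pair $(\gthetaPi,\gthetaC)$ with $\gthetaPi\in\strictmultiplicative{2}{\Gamma}$ a bivector field that is multiplicative for \emph{both} the group and groupoid structures on $\Gamma_1$, and $\gthetaC$ an invariant trivector, subject to $\thalf\cb{\gthetaPi}{\gthetaPi}=\gthetaC$ and $\cb{\gthetaPi}{\gthetaC}=0$ (the quasi-Poisson conditions), together with the appropriate compatibility making $\gthetaC$ multiplicative. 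On the infinitesimal side, a quasi-Lie 2-bialgebra is a pair of crossed modules in duality carrying, equivalently, a degree-$(-1)$ element $\skewdelta$ and a degree-$(-2)$ element $\domega$ (or the $\mathbf{r}$/$\phi$ data of the Introduction) in $\strictdiffpairs{\bullet}{\crossedmodulealgebra}$ satisfying the same pair of Maurer–Cartan-type equations $\thalf\cb{\skewdelta}{\skewdelta}=\domega$, $\cb{\skewdelta}{\domega}=0$. The point of this first step is that, once the two ambient graded Lie algebras are identified, a quasi-Poisson 2-group structure and a quasi-Lie 2-bialgebra structure are \emph{literally the same equations}, just transported along the isomorphism.

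The key steps, in order, are then: (1) state and prove the universal lifting theorem — construct the map $\strictmultiplicative{\bullet}{\Gamma}\to\strictdiffpairs{\bullet}{\crossedmodulealgebra}$ by intertwining right translation/differentiation (as in the classical Lie-group case, a multiplicative $k$-vector field differentiates to a $1$-cocycle $\galgebra\to\wedge^{k}\galgebra$; here one must do this compatibly with \emph{both} the arrow-group and the source/target-fibration structures, which is exactly what makes the target a space of \emph{polydifferentials} rather than plain polyvectors), and show it is injective with the stated image and a morphism of graded Lie algebras (Schouten bracket $\mapsto$ Gerstenhaber-type bracket); (2) invoke connectedness and simple-connectedness of $\Gamma_1$ and $\Gamma_0$ to upgrade injectivity to surjectivity, i.e. integrate a polydifferential back to a multiplicative polyvector field — this is where Lie's second/third theorem for the relevant integration problem is used, and where the hypotheses in the statement are consumed; (3) check that the bracket isomorphism sends the Schouten calculus identities $\thalf\cb{\gthetaPi}{\gthetaPi}=\gthetaC$, $\cb{\gthetaPi}{\gthetaC}=0$ exactly to the defining identities of a quasi-Lie 2-bialgebra, and conversely; (4) record that the correspondence is a bijection on isomorphism classes by naturality of the lifting map under morphisms of Lie 2-groups. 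Theorem A then follows as the special case $\gthetaC=0$, $\domega=0$.

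The main obstacle I expect is step (1)–(2): building the universal lifting isomorphism for Lie 2-groups. Two things make it harder than Drinfeld's classical argument. First, multiplicativity must be imposed simultaneously with respect to the group multiplication $\gpmulti$ \emph{and} the groupoid multiplication $\gpoidmulti$ on $\Gamma_1$; infinitesimally this produces a \emph{pair} of compatible differentials (one along the "group" direction $\galgebra$, one along the crossed-module map $\phimap:\thetaalgebra\to\galgebra$), and one must verify that the notion of "polydifferential on $\crossedmodulealgebra$" is precisely rich enough to encode this pair together with the homotopy/Jacobiator data $h$, $a$, $b$ of the crossed module — not more, not less. Second, on a $2$-group the objects $\Gamma_0$ and arrows $\Gamma_1$ are \emph{both} groups and the differentiation/integration has to be performed at the level of the whole double structure, so Step (2) requires a version of "integrating multiplicative multivectors" (in the spirit of Mackenzie–Xu and the Poisson-groupoid literature) adapted to double Lie groupoids; the simply-connected hypothesis is what lets this integration go through without obstruction. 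Once the lifting theorem is in hand, the remaining steps are essentially a dictionary check and follow the pattern of Drinfeld's proof.
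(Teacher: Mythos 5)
Your overall skeleton---reduce everything to the universal lifting theorem $\bigoplus_k\strictmultiplicative{k}{\crossedmoduletwogroup}\cong\bigoplus_k\cala_k$ and then do a dictionary check---is indeed the paper's route, and your account of how that isomorphism is built (differentiate compatibly with both multiplications, use connectedness and simple connectedness of $\ggroup$ and $\thetagroup$ to integrate back) matches the paper. The genuine gap is in your treatment of the quasi-term. In a quasi-Poisson 2-group the extra datum is \emph{not} an invariant trivector $C$ on $\Gamma_1$: it is a Lie group $1$-cocycle $\inteta:\ggroup\to\wedge^3\thetaalgebra$ (equivalently a section of $\wedge^3 A$), and the axioms are $\thalf\ba{\gthetaPi}{\gthetaPi}=\gpoidleftmove{\inteta}-\gpoidrightmove{\inteta}$ together with $\ba{\gthetaPi}{\gpoidleftmove{\inteta}}=0$. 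Your version, ``$\thalf\ba{\gthetaPi}{\gthetaPi}=C$ and $\ba{\gthetaPi}{C}=0$,'' is vacuous in its second equation (if $C=\thalf\ba{\gthetaPi}{\gthetaPi}$ then $\ba{\gthetaPi}{C}=0$ automatically by the graded Jacobi identity), so it carries no quasi-content and nowhere records the cocycle $\inteta$ --- which is exactly the piece that must match the co-homotopy $\derivescoh$, i.e.\ the map $\deta:\galgebra\to\wedge^3\thetaalgebra$, on the bialgebra side. The same conflation appears in your algebraic reformulation ``$\thalf\{\skewdelta,\skewdelta\}=\domega$, $\{\skewdelta,\domega\}=0$,'' which again is not the definition of a quasi-Lie 2-bialgebra (a degree $-4$ solution $t$ of $\{t,t\}=0$ with $\derivesh=0$).

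Because of this, the claim that the two structures are ``literally the same equations transported along the isomorphism'' fails: neither $\inteta$ nor $\deta$ is an element of the graded Lie algebras being identified; only the coboundary $\gpoidleftmove{\inteta}-\gpoidrightmove{\inteta}$, respectively the pair $(\deta\circ\phi,\phipush\circ\deta)$, lies there. The proof therefore needs three further ingredients beyond the lifting theorem, which your plan does not supply: (a) that for any group $1$-cocycle $\intlambda:\ggroup\to\wedge^l\thetaalgebra$ the field $\gpoidleftmove{\intlambda}-\gpoidrightmove{\intlambda}$ is multiplicative for \emph{both} structures, with infinitesimal $(\dlambda\circ\phi,\phipush\circ\dlambda)$; (b) the computation of the infinitesimal of $\ba{\gthetaPi}{\gpoidleftmove{\inteta}}$, which produces the condition $\domega\circ\deta=\deta\circ\ddelta$ --- a condition that is simply invisible in your formulation; and (c) the big-bracket verification that the resulting list of identities on the triple $(\domega,\skewdelta,\deta)$, together with the cocycle conditions, is precisely equivalent to a quasi-Lie 2-bialgebra structure on the crossed module. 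These are the steps the paper carries out after the lifting theorem, and without them (or a correct substitute) your step (3) does not go through.
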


Quasi-Poisson 2-groups are, in a certain sense, the 2-categorical analogues
of Kosmann-Schwarzbach's quasi-Poisson groups \cite{MR1188453}.
A quasi-Poisson 2-group is a Lie 2-group $\Gamma_1\toto \Gamma_0$
endowed with a multiplicative quasi-Poisson structure on $\Gamma_1$,
i.e.\ a multiplicative bivector field $\gthetaPi$ on $\Gamma_1$ such that
the Schouten bracket $[ \gthetaPi, \gthetaPi]$ is some sort of coboundary.

A natural generalization of Lie 2-algebras (or Lie algebra crossed modules),
weak Lie 2-algebras are two-term $L_\infty$ algebras.
They can be described concisely in terms of the shifted degree ``big bracket,''
which is a Gerstenhaber bracket on ${\Sbullet}(\shiftby{V}{2}\oplus{\shiftby{\Vs}{1}})$.
Here $V=\theta\oplus \galgebradegzero$ is a graded vector space,
where $\theta$ is of degree $1$ and $\galgebradegzero$ is of degree 0.
Identifying ${\Sbullet}(\shiftby{V}{2}\oplus{\shiftby{\Vs}{1}})$
with the space $\Gamma(\wedge^\bullet\shiftby{T}{4}M)$
of polyvector fields on $M=\shiftby{\Vs}{-2}$ with polynomial coefficients,
the big bracket can be simply described as the Schouten bracket of polyvector fields
on $M$.

In \cite{2011arXiv1109.2290C}, we developed a notion of weak Lie 2-bialgebras:
objects that are simultaneously weak Lie 2-algebras as well as weak Lie 2-coalgebras,
both structures being compatible with one another in a certain sense.
In terms of the big bracket, a weak Lie 2-bialgebra on a graded vector space $V$
is a degree-$(-4)$ element $t$ of ${\Sbullet}(\shiftby{V}{2}\oplus{\shiftby{\Vs}{1}})$
satisfying $\{t,t\}=0$.
Quasi-Lie 2-bialgebras are a special instance of weak Lie 2-bialgebras.

Our proofs of Theorems~\ref{firstmain} and~\ref{secondmain} rely on the following
``universal lifting theorem,'' which should be of independent interest:

\begin{main}\label{thirdmain}
Given a Lie 2-group $\Gamma_1\toto \Gamma_0$,
if both $\Gamma_1$ and $\Gamma_0$ are connected and simply connected,
then the graded Lie algebras $\bigoplus_{k\geq 0} \strictmultiplicative{k}{\Gamma_1}$
and $\bigoplus_{k\geq 0} \cala_k$ are isomorphic.
\end{main}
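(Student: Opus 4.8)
The plan is to upgrade the classical universal lifting theorem of \cite{MR2068522} (which identifies multiplicative polyvector fields on a Lie group with the graded Lie algebra of polydifferentials on its Lie algebra, via the Schouten bracket) to the double structure of a Lie 2-group. A Lie 2-group $\Gamma_1\toto\Gamma_0$ carries two commuting multiplicative structures: a group structure and a groupoid structure. Correspondingly, $\strictmultiplicative{k}{\Gamma_1}$ consists of polyvector fields on $\Gamma_1$ that are multiplicative for both. The target graded Lie algebra $\bigoplus_k\cala_k$ is the infinitesimal counterpart, built from the Lie 2-algebra crossed module $\crossedmoduletriple{\thetaalgebra}{\phi}{\galgebra}$ obtained by differentiation; concretely $\cala_k$ should be the space of polydifferentials on the crossed module, which one can repackage, via the big bracket, as a subspace of $\symmetricalgebra\graded(\shiftby{V}{2}\oplus\shiftby{\Vs}{1})$ cut out by suitable degree and compatibility conditions.

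The key steps, in order, are as follows. First I would apply the single-structure universal lifting theorem twice. Viewing $\Gamma_1$ as a connected, simply connected Lie group, \cite{MR2068522} gives an isomorphism of graded Lie algebras between $\bigoplus_k\polyvectorfields{k}{\Gamma_1}_{\mathrm{mult}}^{\mathrm{gp}}$ (multiplicative for the group structure) and the polydifferentials on the Lie algebra $\Lietwo{\thetaalgebra}{\galgebra}$. Second, viewing $\Gamma_1\toto\Gamma_0$ as a Lie groupoid, the corresponding theorem for groupoids (again from \cite{MR2068522}, which also treats multiplicative polyvector fields on groupoids) identifies the groupoid-multiplicative polyvector fields with polydifferentials on the Lie algebroid $A$ of $\Gamma_1\toto\Gamma_0$. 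Third — and this is the crucial compatibility step — I would show that under these two isomorphisms the condition of being simultaneously multiplicative for \emph{both} structures on the group side corresponds exactly to the defining conditions of $\cala_k$ on the algebra side. This rests on the fact that the two structure maps (group multiplication and groupoid source/target/multiplication) are themselves morphisms of the relevant objects, so the ``double multiplicativity'' is detected by the same finite list of linear/derivation identities that define the crossed-module-compatible polydifferentials. Fourth, I would check that the resulting bijection is a morphism of graded Lie algebras; since each of the two single-structure maps already intertwines the respective Schouten brackets with the big bracket, and the big bracket on $\cala_\bullet$ is by construction the restriction of the ambient Schouten bracket, this is essentially automatic once the underlying vector-space identification is established.

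The main obstacle I expect is the third step: making precise the equivalence between ``bi-multiplicativity'' of a polyvector field on $\Gamma_1$ and the algebraic conditions defining $\cala_k$. One has to be careful because the two multiplicative structures interact, and a polyvector field that is infinitesimally compatible with both linearizations need not a priori integrate to one that is multiplicative for both global structures — this is precisely where simple connectivity of \emph{both} $\Gamma_1$ and $\Gamma_0$ is used, to lift the infinitesimal compatibilities to global ones (an integration/monodromy argument in the spirit of Drinfeld's original lifting). A secondary technical point is bookkeeping: one must match the grading conventions so that the ``$k$'' indexing $\strictmultiplicative{k}{\Gamma_1}$ lines up with the internal degree on $\cala_k$ coming from the $\thetaalgebra$-part being placed in degree $1$. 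Once these two points are handled, the theorem follows by assembling the two single-structure lifting isomorphisms and restricting to the common domain.
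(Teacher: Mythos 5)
There is a genuine gap, and it sits exactly at your ``third step.'' You propose to apply two single-structure lifting theorems (group side and groupoid side) and then identify the simultaneous-multiplicativity locus with $\cala_k$, asserting that the double multiplicativity ``is detected by the same finite list of linear/derivation identities.'' But the two infinitesimal objects do not live in a common ambient space in which one could intersect: the group side produces $k$-differentials of the Lie algebra $\crossedmodulealgebra$ (finite-dimensional cocycle data), while the groupoid side produces $k$-differentials of the Lie algebroid $A\cong\ggroup\times\thetaalgebra$, which are operators on $\sections{\wedge^\bullet A}\cong C^\infty(\ggroup,\wedge^\bullet\thetaalgebra)$, i.e.\ data indexed by $\ggroup$. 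The actual content of the theorem is the mechanism by which playing the two structures against each other collapses all of this to the finite-dimensional pair $(\domega,\skewdelta)$ with $\domega:\thetaalgebra\to\wedge^k\thetaalgebra$, $\skewdelta:\galgebra\to\galgebra\wedge(\wedge^{k-1}\thetaalgebra)$ subject to \textbf{ID1}--\textbf{ID3}: the structure theorem for multiplicative fields along the base of a groupoid forces $\gthetaV|_g=L_{g*}\bigl(\tfrac{\Id-e^{-\phipush}}{\phipush}(\wedgedelta|_g)\bigr)$, group multiplicativity makes $\wedgedelta$ a $\ggroup$-cocycle valued in $W_k$ (hence determined by its derivative $\skewdelta$ at the unit), and tangency to $\thetagroup$ yields $\domega$; the compatibilities \textbf{ID1} and \textbf{ID3} then come from Leibniz identities for the two differentials. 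None of this appears in your outline, and without it the phrase ``the defining conditions of $\cala_k$'' has no content. Relatedly, your fourth step is not ``essentially automatic'': the bracket on $\bigoplus_k\cala_k$ is $\domega_3=\lfloor\domega_1,\domega_2\rfloor$ together with $\skewdelta_3=(\skewdelta_1+\domega_1)\circ\skewdelta_2\mp(\skewdelta_2+\domega_2)\circ\skewdelta_1$, which mixes $\domega$ and $\skewdelta$; deriving it requires the explicit expression of the group $k$-differential in terms of $(\domega,\skewdelta)$ via the power series above.

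The integration direction has the same problem. You correctly flag that a field integrating the group-side data need not be multiplicative for the groupoid structure, but you resolve it only by appeal to ``an integration/monodromy argument in the spirit of Drinfeld,'' which is not an argument. The paper must (i) define $\partial$ on $\crossedmodulealgebra$ by $\partial(u)=\domega(u)$, $\partial(x)=\tfrac{\Id-e^{-\phipush}}{\phipush}\skewdelta(x)$ and prove it is a $k$-differential (a genuinely computational step), (ii) integrate it using simple connectivity of $\crossedmoduletwogroup$, and then (iii) verify groupoid multiplicativity directly against the three-part criterion (base coisotropic, $\inserts_{\target^*\xi}\Sigma$ left-invariant, affineness), using the cocycle properties of $\wedgedelta$, $\wedgeomega$ and the globalized form of \textbf{ID1}. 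Simple connectivity of $\ggroup$ and $\thetagroup$ enters through cocycle integration and uniqueness, not through any monodromy obstruction to lifting compatibilities. A minor point: the single-structure lifting theorem you invoke is not in Baez--Crans (which concerns Lie 2-algebras); the groupoid version used here is the one from the quasi-Poisson groupoids literature.
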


Here $\bigoplus_{k\geq 0} \strictmultiplicative{k}{\Gamma_1}$ denotes
the space of multiplicative polyvector fields on $\Gamma_1$ which,
being closed with respect to the Schouten bracket, is naturally a graded Lie algebra.
On the other hand, $\bigoplus_{k\geq 0}\cala_k$ denotes the graded Lie algebra
formed by the polydifferentials on the associated Lie 2-algebra --- the infinitesimal counterparts of the multiplicative polyvector fields on the Lie 2-group.

Theorems~\ref{firstmain} and~\ref{secondmain} are proved simply by expressing the algebraic data
defining the weak Lie 2-bialgebra structure in terms of the graded Lie algebra $\bigoplus_{k\geq 0}\cala_k$.

We refer to  the recent papers \cite{2011arXiv1101.3996L, MR2861783, 2011arXiv1103.5920S} on
integration of  Courant algebroids to symplectic 2-groupoids, which
may have a close connection  to our work.

\subsection*{Acknowledgments} We would like to thank several institutions
for their hospitality while work on this project was being done:
Penn State University (Chen),
Universit\'e du Luxembourg (Chen,  Sti\'enon, and Xu),
Institut des Hautes \'Etudes Scientifiques
and Beijing International Center for Mathematical Research (Xu).
We would also like to thank Anton Alekseev, Benjamin Enriques,
Yvette Kosmann-Schwarzbach, Kirill Mackenzie, Jim Stasheff,
Henrik Strohmayer, and Alan Weinstein
for useful discussions and comments.

\section{Quasi-Lie  2-bialgebras}

In this section, we recall some basic notions regarding quasi Lie 2-bialgebras developed in~\cite{2011arXiv1109.2290C}.

\subsection{The big  bracket}
\label{Schoutenbracket}

We will introduce a graded version  of the big bracket
\cite{MR2103012,MR1046522} involving graded vector spaces.

Let $V=\bigoplus_{k\in\ZZ}\degreesubspace{V}{k}$ be a graded vector
space. Consider the $\ZZ$-graded manifold $M=\shiftby{\Vs}{-2}$ and
the shifted tangent space
\[ \shiftby{T}{4}M\cong \shiftby{(M\times \shiftby{\Vs}{-2})}{4}\cong M\times\shiftby{\Vs}{2} .\]

Consider the space of polyvector fields on $M$ with polynomial coefficients:
\begin{multline*}
\Gamma_{ }(\wedge^\bullet\shiftby{T}{4}M)
\cong\Sbullet(M^*)\otimes\Sbullet(\shiftby{(\shiftby{\Vs}{2})}{-1}) \\
\cong\Sbullet(\shiftby{V}{2})\otimes\Sbullet(\shiftby{\Vs}{1})
\cong{\Sbullet}(\shiftby{V}{2}\oplus{\shiftby{\Vs}{1}}).
\end{multline*}
We write $\symmetricalgebraa$ for ${\Sbullet}(\shiftby{V}{2}\oplus
{\shiftby{\Vs}{1}})$ and $\symmetricproduct$ for the symmetric tensor product in
$\symmetricalgebraa$.

There is a standard way to endow $\symmetricalgebraa=\Gamma
(\wedge^\bullet\shiftby{T}{4}M)$ with a graded Lie bracket, i.e.
 the Schouten bracket,
which is denoted by $\Poissonbracket{\tobefilledin,\tobefilledin}$. It is  a
 bilinear map
 $\Poissonbracket{\tobefilledin,\tobefilledin}:$  $\symmetricalgebraa\otimes
 \symmetricalgebraa\to\symmetricalgebraa$ satisfying the following properties:
\begin{enumerate}
\item $\Poissonbracket{v,v'}=\Poissonbracket{\epsilon,\epsilon'}=0$,
for all $v,v'\in\shiftby{V}{2}$ and $\epsilon,\epsilon'\in \shiftby{\Vs}{1}$;
\item $\Poissonbracket{v,\epsilon}= \minuspower{\abs{v}}\pairing{v}{\epsilon}$, for all
$v\in\shiftby{V}{2}$ and $\epsilon\in \shiftby{\Vs}{1}$;
\item $\Poissonbracket{e_1,e_2}=-\minuspower{ (\abs{e_1}+3)
(\abs{e_2}+3)  }\Poissonbracket{e_2,e_1} $,  for all $e_1,e_2\in \symmetricalgebraa$;
\item $\Poissonbracket{e_1,e_2\symmetricproduct e_3}=
\Poissonbracket{e_1,e_2}\symmetricproduct e_3 + \minuspower{(
\abs{e_1}+3)\abs{e_2}}e_2\symmetricproduct
\Poissonbracket{e_1,e_3}$, for all $e_1,e_2,e_3\in\symmetricalgebraa$.
\end{enumerate}

It is clear that $\Poissonbracket{\tobefilledin,\tobefilledin}$
is of degree 3, 
i.e.\
\[ \abs{\Poissonbracket{e_1,e_2}}=\abs{e_1} +\abs{e_2}+3 ,\]
for all homogeneous
$e_i\in \symmetricalgebraa$, 
and the following graded Jacobi identity holds:
\[ \Poissonbracket{e_1,\Poissonbracket{e_2,e_3}}
=\Poissonbracket{ \Poissonbracket{e_1,e_2},e_3}
+ \minuspower{(\abs{e_1}+3)(
\abs{e_2}+3)}\Poissonbracket{e_2,\Poissonbracket{e_1,e_3}} .\]

Hence $(\symmetricalgebraa,\symmetricproduct,\Poissonbracket{\tobefilledin,\tobefilledin})$
is a Schouten algebra, also known as an odd Poisson algebra, or a Gerstenhaber algebra \cite{MR1958834}.

Due to our degree convention, when $V$ is an ordinary
 vector space considered as a graded vector space concentrated at degree $0$,
the bracket above  is different from the usual
big bracket in the literature \cite{MR2103012}.

\subsection{Quasi-Lie  2-bialgebras}

Following Baez-Crans \cite{MR2068522},
a weak Lie 2-algebra is an  $\Linf$-algebra on the 2-term
graded vector space $V=\theta\oplus \galgebradegzero$, where
 $\theta$ is of degree
$1$ and $\galgebradegzero$ is of degree 0.
Unfolding the $\Linf$-structure, we can  define
a weak Lie 2-algebra as a
 pair of vector spaces $\thetaalgebradegone$ and
$\galgebradegzero$ endowed with the following structures:

\begin{enumerate}
\item a linear map $\phimap$: $\thetaalgebradegone\to\galgebradegzero$;
\item a bilinear skewsymmetric
 map  $\bracketmapbracket{\tobefilledin}{\tobefilledin}$:
 $\galgebradegzero\otimes\galgebradegzero\to
\galgebradegzero$;
\item a bilinear map  $\actionmapaction{\tobefilledin}{\tobefilledin}$:
$\galgebradegzero\otimes\thetaalgebradegone\to\thetaalgebradegone$;
\item a trilinear skewsymmetric map $\homotopymap$: $\galgebradegzero\otimes\galgebradegzero\otimes\galgebradegzero\to\thetaalgebradegone$,
called the homotopy map.
\end{enumerate}

These maps are  required to satisfy  the following compatibility conditions:
for all $w,x,y,z \in \galgebradegzero$ and $u,v\in
\thetaalgebradegone$,
\begin{gather*}
\bracketmapbracket{
\bracketmapbracket{x}{y}}{z}+ \bracketmapbracket{
\bracketmapbracket{y}{z}}{x}+\bracketmapbracket{
\bracketmapbracket{z}{x}}{y}+(\phimap\circ\homotopymap)(x,y,z)=0;
\\
\actionmapaction{y}{(\actionmapaction{x}{u})}
-\actionmapaction{x}{(\actionmapaction{y}{u})} +\actionmapaction{
\bracketmapbracket{x}{y}}{u}+\homotopymap(\phimap(u),x,y)=0;
\\
\actionmapaction{\phimap(u)}{v}+\actionmapaction{\phimap(v)}{u}=0;
\\
\phimap(\actionmapaction{x}{u})=\bracketmapbracket{x}{\phimap(u)};
\end{gather*}
and
\begin{multline*}
-\actionmapaction{w}{\homotopymap(x,y,z)}
-\actionmapaction{y}{\homotopymap(x,z,w)}
+\actionmapaction{z}{\homotopymap(x,y,w)}
+\actionmapaction{x}{\homotopymap(y,z,w)}
\\
=\homotopymap(\bracketmapbracket{x}{y},z,w)-\homotopymap(\bracketmapbracket{x}{z},y,w)
+\homotopymap(\bracketmapbracket{x}{w},y,z)
\\
+\homotopymap(\bracketmapbracket{y}{z},x,w)-\homotopymap(\bracketmapbracket{y}{w},x,z)
+\homotopymap(\bracketmapbracket{z}{w},x,y).
\end{multline*}

If $\homotopymap$ vanishes, we call it a strict Lie 2-algebra, or simply a Lie 2-algebra.

Now consider the degree shifted vector spaces $\shiftby{V}{2}$ and
$\shiftby{\Vs}{1}$.
Under such a degree convention, the degrees of $\galgebradegzero$,
$\thetaalgebradegone$, $\galgebrastardegminusone$ and
$\thetaalgebrastardegminustwo$ are specified as follows:
\begin{center}
\begin{tabular}{   l  | l |  l | l | l }
space & $\galgebradegzero$ & $\thetaalgebradegone$ & $\galgebrastardegminusone$
& $\thetaalgebrastardegminustwo$ \\\hline
degree & $-2$&$-1$&$-1$& $-2$
\end{tabular}
\end{center}

We will maintain this convention throughout this section.
We remind the reader that the abbreviation $\Sbullet$
stands for $\Sbullet({\shiftby{\Vs}{1}\oplus \shiftby{V}{2}})$.

\begin{proposition}[\cite{2011arXiv1109.2290C}]
\label{Prop:Lie2algebraelements}
Under the above degree convention, a weak Lie 2-algebra
structure is equivalent to a solution to the equation
\begin{equation}
\label{eq:ss}
\Poissonbracket{s,s}=0,
\end{equation}
where $s=\derivesvarphi+\derivesbracket+
\derivesaction+\derivesh$ is an element
 in $\degreesubspace{\symmetricalgebra}{-4}$ such that
\begin{equation}\label{eqt:derivesdata}
\left\{
\begin{aligned}
\derivesvarphi & \in \thetaalgebrastardegminustwo \symmetricproduct\galgebradegzero,\\
\derivesbracket & \in (\symmetricproduct^2\galgebrastardegminusone)\symmetricproduct \galgebradegzero,\\
\derivesaction & \in\galgebrastardegminusone\symmetricproduct\thetaalgebrastardegminustwo
\symmetricproduct\thetaalgebradegone,\\
\derivesh & \in(\symmetricproduct^3\galgebrastardegminusone)\symmetricproduct\thetaalgebradegone.
\end{aligned}
\right.
\end{equation}
Here the bracket in Eq.~\eqref{eq:ss}
stands for the big bracket as in Section~\ref{Schoutenbracket}.
\end{proposition}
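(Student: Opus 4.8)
The plan is to match the four tensorial components of $s$ in \eqref{eqt:derivesdata} against the four pieces of structure $(\phimap,\bracketmapbracket{\tobefilledin}{\tobefilledin},\actionmapaction{\tobefilledin}{\tobefilledin},\homotopymap)$ of a weak Lie 2-algebra, and then to show that the single equation $\Poissonbracket{s,s}=0$ unpacks, degree-component by degree-component, into exactly the five listed compatibility conditions. First I would pin down the dictionary: an element $\derivesvarphi\in\thetaalgebrastardegminustwo\symmetricproduct\galgebradegzero$ is, via the pairing of property~(2), the same datum as a linear map $\thetaalgebradegone\to\galgebradegzero$, i.e.\ $\phimap$; likewise $\derivesbracket\in(\symmetricproduct^2\galgebrastardegminusone)\symmetricproduct\galgebradegzero$ corresponds to a skewsymmetric bilinear map $\galgebradegzero\otimes\galgebradegzero\to\galgebradegzero$ (the sign/skewness built into $\symmetricproduct$ on degree $-1$ generators matches the skewsymmetry of $\bracketmapbracket{\tobefilledin}{\tobefilledin}$), $\derivesaction\in\galgebrastardegminusone\symmetricproduct\thetaalgebrastardegminustwo\symmetricproduct\thetaalgebradegone$ corresponds to $\actionmapaction{\tobefilledin}{\tobefilledin}:\galgebradegzero\otimes\thetaalgebradegone\to\thetaalgebradegone$, and $\derivesh\in(\symmetricproduct^3\galgebrastardegminusone)\symmetricproduct\thetaalgebradegone$ corresponds to the homotopy $\homotopymap$. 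One checks in passing that each of these four summands does lie in $\degreesubspace{\symmetricalgebra}{-4}$ using the degree table (e.g.\ $|\derivesbracket|=(-1)+(-1)+(-1)+(-1)\cdot 0+\dots$, computed from the assigned degrees of the three factors), so that $s\in\degreesubspace{\symmetricalgebra}{-4}$ and $\Poissonbracket{s,s}\in\degreesubspace{\symmetricalgebra}{-5}$.

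Next I would expand $\Poissonbracket{s,s}=\sum_{i,j}\Poissonbracket{s_i,s_j}$ over the four pieces $s_1=\derivesvarphi$, $s_2=\derivesbracket$, $s_3=\derivesaction$, $s_4=\derivesh$, and sort the resulting terms by their tensor type, i.e.\ by which product of the spaces $\galgebradegzero,\thetaalgebradegone,\galgebrastardegminusone,\thetaalgebrastardegminustwo$ they live in. Each bracket $\Poissonbracket{s_i,s_j}$ is computed mechanically from properties~(1)–(4) of the big bracket: the only nonzero contractions come from pairing a $V$-factor in one argument with a $\Vs$-factor in the other (property~(2)), and the Leibniz rule~(4) with its signs governs how the remaining factors are distributed. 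Grouping: the component in $\thetaalgebrastardegminustwo\symmetricproduct\galgebradegzero$ (type of $\phimap$ composed once) vanishing gives $\phimap(\actionmapaction{x}{u})=\bracketmapbracket{x}{\phimap(u)}$; the component valued in $(\symmetricproduct^3\galgebrastardegminusone)\symmetricproduct\galgebradegzero$ gives the $\phimap\circ\homotopymap$-corrected Jacobi identity for $\bracketmapbracket{\tobefilledin}{\tobefilledin}$; the component of mixed type $(\symmetricproduct^2\galgebrastardegminusone)\symmetricproduct\thetaalgebrastardegminustwo\symmetricproduct\thetaalgebradegone$ gives the ``curvature of the action equals $\homotopymap\circ\phimap$'' relation; the component in $\thetaalgebrastardegminustwo\symmetricproduct\thetaalgebrastardegminustwo\symmetricproduct\thetaalgebradegone$ gives the symmetry relation $\actionmapaction{\phimap(u)}{v}+\actionmapaction{\phimap(v)}{u}=0$; and the top component in $(\symmetricproduct^4\galgebrastardegminusone)\symmetricproduct\thetaalgebradegone$ gives the long pentagon-type identity for $\homotopymap$. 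Conversely, if all five compatibility conditions hold, every graded component of $\Poissonbracket{s,s}$ vanishes, so $\Poissonbracket{s,s}=0$.

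The main obstacle I anticipate is purely bookkeeping: getting every Koszul sign right in the shifted-degree conventions. The degree shifts $\shiftby{V}{2}$, $\shiftby{\Vs}{1}$ push the generators into degrees $-2$ and $-1$, the bracket carries degree $3$, and the Leibniz and graded-skewsymmetry signs in properties~(3)–(4) are computed from $|e|+3$ rather than $|e|$; so when one expands, say, $\Poissonbracket{\derivesh,\derivesh}$ against three $\galgebradegzero$-inputs and one $\galgebrastardegminusone$-input-turned-output, the combinatorial factors of $2$ from symmetric products and the signs from permuting $\galgebrastardegminusone$-factors must be tracked precisely to reproduce the exact coefficients $\pm1$ in the last multi-line identity. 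A clean way to organize this, and the route I would take, is to evaluate each homogeneous component of $\Poissonbracket{s,s}$ as a multilinear map by feeding in generic elements $w,x,y,z\in\galgebradegzero$ and $u,v\in\thetaalgebradegone$ and reading off the coefficient of each monomial — this reduces the sign bookkeeping to a finite, checkable computation and makes the correspondence with the stated relations transparent. Since this proposition is quoted verbatim from \cite{2011arXiv1109.2290C}, it is legitimate to present the verification at the level of this matching argument and refer to \emph{loc.\ cit.}\ for the full sign chase.
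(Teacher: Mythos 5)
Your proposal is correct and takes essentially the route of the paper's source: the paper gives no proof here, quoting the proposition from \cite{2011arXiv1109.2290C}, where the argument is precisely this Maurer--Cartan-type unpacking --- identify $\derivesvarphi,\derivesbracket,\derivesaction,\derivesh$ with $\phimap,\bracketmapbracket{\tobefilledin}{\tobefilledin},\actionmapaction{\tobefilledin}{\tobefilledin},\homotopymap$ via the pairing, then sort $\Poissonbracket{s,s}\in\degreesubspace{\symmetricalgebra}{-5}$ by tensor type into the five compatibility identities. Two cosmetic slips only: the degree count for $\derivesbracket$ is $(-1)+(-1)+(-2)=-4$, and the component encoding $\phimap(\actionmapaction{x}{u})=\bracketmapbracket{x}{\phimap(u)}$ lies in $\galgebrastardegminusone\symmetricproduct\thetaalgebrastardegminustwo\symmetricproduct\galgebradegzero$ (degree $-5$, coming from $\Poissonbracket{\derivesvarphi,\derivesbracket}$ and $\Poissonbracket{\derivesvarphi,\derivesaction}$), not in $\thetaalgebrastardegminustwo\symmetricproduct\galgebradegzero$.
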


In the sequel, we denote a weak Lie 2-algebra by
$(\thetaalgebradegone{{\to}}\galgebradegzero,s)$ in order to emphasize
the map from $\thetaalgebradegone$ to $\galgebradegzero$.
Sometimes,
 we will omit $s$ and denote a weak Lie 2-algebra simply by
$(\thetaalgebradegone{{\to}}\galgebradegzero)$.
If $\crossedmoduletriple{\galgebrastardegminusone}{ }{\thetaalgebrastardegminustwo}$
is a weak Lie 2-algebra, then $\crossedmoduletriple{\thetaalgebra}{ }{\galgebra}$
is called a weak Lie 2-coalgebra.
Equivalently, a weak Lie 2-coalgebra is a 2-term $\Linf$-structure on $\galgebrastardegminusone\oplus\thetaalgebrastardegminustwo$,
where $\galgebrastardegminusone$ has degree $1$ and
$\thetaalgebrastardegminustwo$ has degree $0$.

Similarly, we have the following

\begin{proposition}[\cite{2011arXiv1109.2290C}]
\label{Prop:Lie2coalgebraelements}
A weak Lie 2-coalgebra  is equivalent to a solution
to the equation
\begin{equation}\nonumber
\Poissonbracket{c,c}=0,
\end{equation}
where $c=\derivescovarphi+\derivescobracket+ \derivescoaction+\derivescoh\in
\degreesubspace{\symmetricalgebra}{-4}$ such that
\begin{equation}\label{eqt:derivescodata}
 \left\{
\begin{aligned}
\derivescovarphi & \in\thetaalgebrastardegminustwo \symmetricproduct\galgebradegzero, \\
\derivescobracket & \in\thetaalgebrastardegminustwo \symmetricproduct (\symmetricproduct^2\thetaalgebradegone), \\
\derivescoaction & \in\galgebrastardegminusone\symmetricproduct\galgebradegzero\symmetricproduct\thetaalgebradegone, \\
\derivescoh & \in\galgebrastardegminusone\symmetricproduct(\symmetricproduct^3\thetaalgebradegone).
\end{aligned}
\right.
\end{equation}
\end{proposition}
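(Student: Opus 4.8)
The statement is the exact dual of Proposition~\ref{Prop:Lie2algebraelements}, and the plan is to deduce it from that result by substituting for $V$ the $2$-term graded vector space $W:=\galgebrastardegminusone\oplus\thetaalgebrastardegminustwo$, with $\galgebrastardegminusone$ placed in degree $1$ and $\thetaalgebrastardegminustwo$ in degree $0$. By definition, a weak Lie $2$-coalgebra on $(\thetaalgebradegone\to\galgebradegzero)$ is nothing but a weak Lie $2$-algebra on $W$; so the whole task is to check that Proposition~\ref{Prop:Lie2algebraelements}, read for $W$ in place of $V$, produces exactly an element $c$ of the shape~\eqref{eqt:derivescodata} subject to $\Poissonbracket{c,c}=0$ for the big bracket of Section~\ref{Schoutenbracket}.

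The first point to settle is that the ambient Gerstenhaber algebra does not change. Spelling out the degree shifts for $W$: the shifted space $\shiftby{W}{2}$ carries $\galgebrastardegminusone$ in degree $-1$ and $\thetaalgebrastardegminustwo$ in degree $-2$, while $\shiftby{(W^*)}{1}=\shiftby{(\galgebradegzero\oplus\thetaalgebradegone)}{1}$ carries $\galgebradegzero$ in degree $-2$ and $\thetaalgebradegone$ in degree $-1$. These are precisely the degrees listed in the table preceding this proposition, so $\Sbullet(\shiftby{(W^*)}{1}\oplus\shiftby{W}{2})$ agrees, as a graded commutative algebra, with $\Sbullet=\Sbullet(\shiftby{\Vs}{1}\oplus\shiftby{V}{2})$. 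It then remains to see that the Schouten bracket is also unchanged; since both brackets are characterized by properties (1)--(4) of Section~\ref{Schoutenbracket}, and the only non-tautological one is the normalization~(2) encoding the canonical pairing between the shifted space and its shifted dual, this reduces to a short Koszul-sign computation (using property~(3) to transpose arguments) showing that the pairing between $\shiftby{W}{2}$ and $\shiftby{(W^*)}{1}$ matches the one between $\shiftby{V}{2}$ and $\shiftby{\Vs}{1}$.

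With the two copies of $\Sbullet$ thus identified, the translation is mechanical. For $W$ the degree-$1$ summand is $\galgebrastardegminusone$ and the degree-$0$ summand is $\thetaalgebrastardegminustwo$; accordingly, in the dictionary of Proposition~\ref{Prop:Lie2algebraelements} read for $W$, the space $\thetaalgebradegone$ is played by $\galgebrastardegminusone$ and $\galgebradegzero$ by $\thetaalgebrastardegminustwo$, while dually $\thetaalgebrastardegminustwo$ is played by $\galgebradegzero$ and $\galgebrastardegminusone$ by $\thetaalgebradegone$. Substituting these into~\eqref{eqt:derivesdata} turns the four slots housing $\derivesvarphi,\derivesbracket,\derivesaction,\derivesh$ into exactly the four slots of~\eqref{eqt:derivescodata} housing $\derivescovarphi,\derivescobracket,\derivescoaction,\derivescoh$. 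Hence Proposition~\ref{Prop:Lie2algebraelements} for $W$ says that an element $c$ of the form~\eqref{eqt:derivescodata} satisfying $\Poissonbracket{c,c}=0$ is the same thing as a weak Lie $2$-algebra structure on $W$, i.e.\ a weak Lie $2$-coalgebra on $(\thetaalgebradegone\to\galgebradegzero)$, which is the assertion.

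The only genuine obstacle is the sign bookkeeping in the middle step: one must be certain that the a priori distinct big brackets on $\Sbullet$ --- the one built from $V$ and the one built from $W$ --- really coincide, which is a matter of carefully tracking the Koszul signs $(-1)^{\abs{e_i}+3}$ under the self-duality $V\leftrightarrow\Vs$. Everything else is a routine transcription.
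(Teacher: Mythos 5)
Your proposal is correct and is essentially the argument the paper intends: Proposition~\ref{Prop:Lie2coalgebraelements} is quoted from the same reference as Proposition~\ref{Prop:Lie2algebraelements} with ``similarly,'' and since the paper \emph{defines} a weak Lie 2-coalgebra as a weak Lie 2-algebra on $\galgebrastar\oplus\thetaalgebrastar$ (degrees $1$ and $0$), your reduction to Proposition~\ref{Prop:Lie2algebraelements} for $W=\galgebrastar\oplus\thetaalgebrastar$ is exactly that dualization made explicit. The Koszul-sign check you defer does go through: with the canonical identification $W^*\cong V$, both big brackets on $\symmetricalgebraa$ agree on generators (e.g.\ $\Poissonbracket{x,\xi}=\xi(x)$ and $\Poissonbracket{u,\kappa}=-\kappa(u)$, all other pairings zero), hence everywhere by the Leibniz rule.
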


We denote such a weak Lie 2-coalgebra by
$(\thetaalgebradegone{{\to}}\galgebradegzero,c)$.

\begin{definition}\label{Defn:Lie2bialgebra}
A weak Lie 2-bialgebra consists of a pair of vector spaces
$\theta$ and $\galgebradegzero$ together with a solution
$t=\derivesbracket+\derivesaction+\derivesh+\derivescovarphi+\derivescobracket
+\derivescoaction+\derivescoh\in\degreesubspace{\symmetricalgebra}{-4}$
to the equation
$\{t, t\}=0$.
Here $\derivesbracket,\derivesaction,\derivesh,\derivescovarphi,\derivescobracket,\derivescoaction,\derivescoh$
are as in  Eqs.~\eqref{eqt:derivesdata} and~\eqref{eqt:derivescodata}.
\newline
If, moreover, $\derivesh=0$, it is called a quasi-Lie 2-bialgebra.
If both $\derivesh$ and $\derivescoh$ vanish, we say that the Lie 2-bialgebra is strict, or simply a Lie 2-bialgebra.
\end{definition}

\begin{proposition}\label{Prop:tt=0impliesll=0andcc=0}
Let $(\thetaalgebradegone, \galgebradegzero, t)$ be a weak
Lie 2-bialgebra as in Definition \ref{Defn:Lie2bialgebra}.
Then
$(\thetaalgebradegone{{\to}}\galgebradegzero,l)$,
where $l=\derivescovarphi+\derivesbracket+ \derivesaction+\derivesh$,
is a weak Lie 2-algebra, while $(\thetaalgebradegone{{\to}}\galgebradegzero,c)$,
where $c=\derivescovarphi+\derivescobracket+ \derivescoaction+\derivescoh$,
is a weak Lie 2-coalgebra.
\end{proposition}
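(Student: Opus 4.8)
\emph{Proof plan.}
The idea is to introduce on $\symmetricalgebraa$ the $\NN\times\NN$-grading whose $(m,n)$-homogeneous piece $\degreesubspace{\symmetricalgebra}{m,n}$ is spanned by the monomials having exactly $m$ tensor factors in $\shiftby{V}{2}=\galgebradegzero\oplus\thetaalgebradegone$ and exactly $n$ tensor factors in $\shiftby{\Vs}{1}=\galgebrastardegminusone\oplus\thetaalgebrastardegminustwo$; call these the \emph{$V$-degree} and the \emph{$V^*$-degree} of a monomial. The crucial observation is that the big bracket respects this grading with a uniform shift, namely
\[
\Poissonbracket{\degreesubspace{\symmetricalgebra}{m_1,n_1},\;\degreesubspace{\symmetricalgebra}{m_2,n_2}}\subseteq\degreesubspace{\symmetricalgebra}{m_1+m_2-1,\;n_1+n_2-1} .
\]
This follows because, by axioms~(1)--(2) of Section~\ref{Schoutenbracket}, the bracket of two generators vanishes unless one lies in $\shiftby{V}{2}$ and the other in $\shiftby{\Vs}{1}$, in which case it is a scalar; feeding this into the biderivation rule~(4) and inducting on the lengths of monomials shows that $\Poissonbracket{w_1,w_2}$ is a sum of monomials, each obtained from $w_1\symmetricproduct w_2$ by contracting exactly one $V$-factor against one $V^*$-factor.

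Granting this, I would read off the bidegrees of the seven building blocks from Eqs.~\eqref{eqt:derivesdata} and~\eqref{eqt:derivescodata}: $\derivescovarphi$ has bidegree $(1,1)$; $\derivesbracket$ and $\derivesaction$ have bidegree $(1,2)$; $\derivesh$ has bidegree $(1,3)$; $\derivescobracket$ and $\derivescoaction$ have bidegree $(2,1)$; and $\derivescoh$ has bidegree $(3,1)$. In particular $l=\derivescovarphi+\derivesbracket+\derivesaction+\derivesh$ is of pure $V$-degree $1$, so writing $t=l+\gamma$ with $\gamma=\derivescobracket+\derivescoaction+\derivescoh$ (which has $V$-degree $\geq 2$), the displayed inclusion makes $\Poissonbracket{l,l}$ of pure $V$-degree $1$ while both $\Poissonbracket{l,\gamma}$ and $\Poissonbracket{\gamma,\gamma}$ have $V$-degree $\geq 2$. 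Hence the $V$-degree-$1$ component of $\Poissonbracket{t,t}$ equals $\Poissonbracket{l,l}$, and $\Poissonbracket{t,t}=0$ forces $\Poissonbracket{l,l}=0$. By the mirror-image argument --- $c=\derivescovarphi+\derivescobracket+\derivescoaction+\derivescoh$ is of pure $V^*$-degree $1$, and $t=c+\lambda$ with $\lambda=\derivesbracket+\derivesaction+\derivesh$ of $V^*$-degree $\geq 2$ --- extracting the $V^*$-degree-$1$ component of $\Poissonbracket{t,t}=0$ yields $\Poissonbracket{c,c}=0$.

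It then remains to identify $l$ and $c$. Since $l\in\degreesubspace{\symmetricalgebra}{-4}$ has exactly the shape required in Proposition~\ref{Prop:Lie2algebraelements}, with $\derivescovarphi$ in the role of $\derivesvarphi$ --- legitimate because $\thetaalgebrastardegminustwo\symmetricproduct\galgebradegzero$ is precisely the space constraining $\derivesvarphi$ there --- the relation $\Poissonbracket{l,l}=0$ says exactly that $(\thetaalgebradegone{\to}\galgebradegzero,l)$ is a weak Lie 2-algebra; likewise $c$ has the shape required in Proposition~\ref{Prop:Lie2coalgebraelements}, so $\Poissonbracket{c,c}=0$ exhibits $(\thetaalgebradegone{\to}\galgebradegzero,c)$ as a weak Lie 2-coalgebra, which finishes the proof. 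The step I expect to carry the most weight is pinning down the right grading: a single auxiliary grading --- say the $V^*$-degree alone, or the difference of $V$-degree and $V^*$-degree --- does not separate $\Poissonbracket{l,l}$ from the mixed term $\Poissonbracket{\lambda,\gamma}$, so one really must use the $V$-degree to isolate $l$ and, independently, the $V^*$-degree to isolate $c$. Everything past that is routine bookkeeping, including verifying that $\derivesbracket,\derivesaction,\derivesh,\derivescovarphi,\derivescobracket,\derivescoaction,\derivescoh$ have the bidegrees claimed above.
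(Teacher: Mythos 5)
Your argument is correct, and the bidegree bookkeeping for the seven terms checks out. The paper itself states this proposition without proof (it is taken from the companion work on weak Lie 2-bialgebras), and the bigrading argument you give --- the big bracket drops the $V$-weight and the $V^*$-weight each by one, so the $V$-degree-one and $V^*$-degree-one components of $\Poissonbracket{t,t}=0$ are exactly $\Poissonbracket{l,l}=0$ and $\Poissonbracket{c,c}=0$, which by Propositions~\ref{Prop:Lie2algebraelements} and~\ref{Prop:Lie2coalgebraelements} give the weak Lie 2-algebra and 2-coalgebra structures --- is precisely the standard route to it.
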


\begin{example}
Assume that $\galgebra$ is a semisimple Lie algebra. Let
$\ipG{\tobefilledin}{\tobefilledin}$ be its Killing form.
Then $\homotopymap(x,y,z)=\plankconstant\ipG{x}{\ba{y}{z}}$, for all $x,y,z\in\galgebra$,
is a Lie algebra $3$-cocycle, where $\plankconstant$ is a constant.
Let $\thetaalgebra=\reals$. Then the trivial map $\reals\to\galgebra$
together with $\homotopymap$ becomes a weak Lie 2-algebra, called
the string Lie 2-algebra \cite{MR2068522}. More precisely, the
string Lie 2-algebra is as follows:
\begin{enumerate}
\item $\thetaalgebra$ is the abelian Lie algebra $\reals$;
\item $\galgebra$ is a semisimple Lie algebra;
\item $\phimap:\thetaalgebra\to\galgebra$ is the trivial map;
\item the action map $\moduleaction:\galgebra\otimes\thetaalgebra\to\thetaalgebra$ is the trivial map;
\item $\homotopymap:\wedge^3\galgebra\to\thetaalgebra$ is given by the map
$\plankconstant\ipG{\cdot}{\ba{\cdot}{\cdot}}$, where $\plankconstant$ is a fixed constant.
\end{enumerate}
Now fix an element $x\in \galgebra$.
We endow $\reals\to \galgebra$ with a weak Lie 2-coalgebra structure as follows:
\begin{enumerate}
\item $\galgebrastar$ is an  abelian Lie algebra;
\item $\thetaalgebrastar\cong\reals$ is an  abelian Lie algebra;
\item $\phi^*:\galgebrastar\to\thetaalgebrastar$ is the trivial map;
\item the $\thetaalgebrastar$-action on $\galgebrastar$ is given by
$\unit\moduleaction\xi=\adjoint{x}^*\xi$, for all $\xi\in\galgebrastar$;
\item $\cohomotopymap:\wedge^3\thetaalgebrastar\to\galgebrastar$ is the trivial map.
\end{enumerate}
One can verify directly that these relations indeed define a weak Lie 2-bialgebra.
\end{example}

\subsection{Lie bialgebra crossed modules}

\begin{definition}
A Lie algebra crossed module consists of a pair of Lie algebras
$\thetaalgebra$ and $\galgebra$, a linear map
$\phi:\thetaalgebra\to\galgebra$, and an action of $\galgebra$ on
$\thetaalgebra$ by derivations satisfying, for all $x,y\in\galgebra$,
$u,v\in\thetaalgebra$,
\begin{enumerate}
\item \label{aun} ${\phi (u)} \moduleaction v=\ba{u}{v}$;
\item \label{adeux} $\phi (  x \moduleaction u)=\ba{ x }{\phi (u)}$,
\end{enumerate}
\end{definition}
where $\moduleaction$ denotes the $\galgebra$-action on $\thetaalgebra$.

Note that \ref{aun} and \ref{adeux}  imply that $\phi$ must be  a Lie algebra homomorphism.
We write $\crossedmoduletriple{\thetaalgebra}{\phi}{\galgebra}$
to denote a Lie algebra crossed module.
The associated semidirect product Lie algebra
is denoted by $\galgebra\ltimes\thetaalgebra$.

The following proposition indicates that crossed modules of Lie algebras
are in one-to-one correspondence with Lie 2-algebras.
We refer the reader to~\cite{MR2068522} for details.

\begin{proposition}
Lie algebra crossed modules are equivalent to (strict) Lie 2-algebras.
\end{proposition}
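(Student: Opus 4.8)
The plan is to exhibit explicit functors in both directions between the category of Lie algebra crossed modules and the category of strict Lie 2-algebras (equivalently, two-term $\Linf$-algebras with vanishing homotopy map $\homotopymap$), and to check that these are mutually inverse. A strict Lie 2-algebra, in the unfolded description given above, consists of data $(\phimap,\bracketmapbracket{\tobefilledin}{\tobefilledin},\actionmapaction{\tobefilledin}{\tobefilledin})$ on a two-term complex $\thetaalgebradegone\xto{\phimap}\galgebradegzero$, subject to the five compatibility conditions with $\homotopymap=0$. When $\homotopymap=0$, the first condition becomes the Jacobi identity for $\bracketmapbracket{\tobefilledin}{\tobefilledin}$ on $\galgebradegzero$; the second becomes the condition that $\actionmapaction{\tobefilledin}{\tobefilledin}$ is a genuine (strict) Lie algebra action of $\galgebradegzero$ on $\thetaalgebradegone$; the fourth is precisely $\phimap(\actionmapaction{x}{u})=\bracketmapbracket{x}{\phimap(u)}$, i.e.\ equivariance of $\phimap$, which is axiom \ref{adeux} of a crossed module; and the last condition is vacuous. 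So the only thing still to be extracted is a Lie bracket on $\thetaalgebradegone$.

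The key step is to define the bracket on $\thetaalgebradegone$ by $\ba{u}{v}:=\actionmapaction{\phimap(u)}{v}$ for $u,v\in\thetaalgebradegone$, which is forced by axiom \ref{aun}. First I would verify that this bracket is skewsymmetric: using the third compatibility condition $\actionmapaction{\phimap(u)}{v}+\actionmapaction{\phimap(v)}{u}=0$, skewsymmetry is immediate. Next I would check the Jacobi identity for this bracket on $\thetaalgebradegone$ and the fact that $\galgebradegzero$ acts by derivations of it; both follow by combining the Leibniz-type identities for the action with equivariance of $\phimap$ and the Jacobi identity on $\galgebradegzero$. Conversely, given a crossed module $\crossedmoduletriple{\thetaalgebra}{\phi}{\galgebra}$, one sets $\homotopymap=0$ and takes the remaining structure maps to be $\phimap=\phi$, the bracket on $\galgebra$, and the given action; the five $\Linf$-conditions then reduce, via axioms \ref{aun} and \ref{adeux}, to the Jacobi identities and equivariance one already has. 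It remains to observe that starting from a strict Lie 2-algebra, passing to the crossed module, and back, one recovers the original bracket on $\thetaalgebradegone$ precisely because $\ba{u}{v}=\actionmapaction{\phimap(u)}{v}$ was already a consequence of the $\Linf$-axioms (this is the content of the remark ``\ref{aun} and \ref{adeux} imply that $\phi$ must be a Lie algebra homomorphism'' run in reverse), and in the other direction the round trip is the identity on the nose. Finally, one checks that morphisms correspond, so the equivalence is an equivalence of categories.

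I expect the main obstacle to be purely bookkeeping: carefully matching the two bracket conventions (the skewsymmetric bracket $\bracketmapbracket{\tobefilledin}{\tobefilledin}$ on $\galgebradegzero$ versus the induced bracket $\actionmapaction{\phimap(u)}{v}$ on $\thetaalgebradegone$) and verifying that the Jacobi identity on $\thetaalgebradegone$ really does drop out of the second compatibility condition $\actionmapaction{y}{(\actionmapaction{x}{u})}-\actionmapaction{x}{(\actionmapaction{y}{u})}+\actionmapaction{\bracketmapbracket{x}{y}}{u}=0$ (with $\homotopymap=0$) together with equivariance. Alternatively, and more cleanly, one can bypass the unfolded description entirely and work with Proposition~\ref{Prop:Lie2algebraelements}: a strict Lie 2-algebra is an element $s=\derivesvarphi+\derivesbracket+\derivesaction\in\degreesubspace{\symmetricalgebra}{-4}$ with $\derivesh=0$ and $\Poissonbracket{s,s}=0$; expanding $\Poissonbracket{s,s}$ into its homogeneous components and reading off the identities obtained by pairing against basis elements of $\galgebradegzero$ and $\thetaalgebradegone$ yields exactly the crossed-module axioms, so the correspondence is transparent in that language. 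Either way, no deep input is needed beyond the definitions; the proof is a direct unwinding, and I would present it in the big-bracket form to keep the computation short.
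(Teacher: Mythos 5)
Your proposal is correct and is essentially the standard argument: the paper itself gives no proof of this proposition, deferring to Baez--Crans, and your unwinding (define the bracket on $\thetaalgebradegone$ by $\ba{u}{v}=\actionmapaction{\phimap(u)}{v}$, get skewsymmetry from the third condition, Jacobi on $\thetaalgebradegone$ and the derivation property from the second and fourth conditions, and recover axiom \ref{aun} on the round trip) is exactly the argument in that reference. The only step deserving explicit care is the Jacobi identity on $\thetaalgebradegone$, which indeed drops out of conditions two and four as you indicate, so no gap remains.
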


\begin{definition}\label{Defn:bi-crossedmoduleofLiealgebra}
A Lie bialgebra crossed module is a pair of Lie algebra crossed modules in duality:
$\crossedmoduletriple{\thetaalgebra}{\phi}{\galgebra}$ and
$\crossedmoduletriple{\galgebrastar}{\phiUprotate}{\thetaalgebrastar}$, where $\phiUprotate=-\phi^*$,
are both Lie algebra crossed modules such that
$(\Lietwo{\thetaalgebra}{\galgebra},\Lietwo{\galgebrastar}{\thetaalgebrastar})$ is a Lie bialgebra.
\end{definition}

Lie bialgebra crossed modules are symmetric as we see in the next

\begin{proposition}
If $(\crossedmoduletriple{\thetaalgebra}{\phi}{\galgebra},
\crossedmoduletriple{\galgebrastar}{\phiUprotate
}{\thetaalgebrastar})$ is a Lie bialgebra crossed module,
so is $(\crossedmoduletriple{\galgebrastar}{\phiUprotate
}{\thetaalgebrastar}, \crossedmoduletriple{\thetaalgebra}{\phi}{\galgebra})$.
\end{proposition}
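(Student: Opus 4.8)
The plan is to exploit the symmetry already present in the big-bracket formulation of Lie bialgebra crossed modules. Recall from Definition~\ref{Defn:bi-crossedmoduleofLiealgebra} that a Lie bialgebra crossed module $(\crossedmoduletriple{\thetaalgebra}{\phi}{\galgebra}, \crossedmoduletriple{\galgebrastar}{\phiUprotate}{\thetaalgebrastar})$ is, in particular, a strict Lie 2-bialgebra in the sense of Definition~\ref{Defn:Lie2bialgebra}, hence encoded by a single element $t=\derivesbracket+\derivesaction+\derivescovarphi+\derivescobracket+\derivescoaction\in\degreesubspace{\symmetricalgebra}{-4}$ with $\derivesh=\derivescoh=0$ and $\{t,t\}=0$. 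The claim to be proved amounts to saying that this same element $t$ also encodes a Lie bialgebra crossed module when the roles of the crossed module and its dual are interchanged, i.e.\ when we regard $\crossedmoduletriple{\galgebrastar}{\phiUprotate}{\thetaalgebrastar}$ as the ``Lie 2-algebra part'' and $\crossedmoduletriple{\thetaalgebra}{\phi}{\galgebra}$ as the ``Lie 2-coalgebra part.''

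First I would set up the duality involution on $\symmetricalgebraa$. There is a natural graded-algebra isomorphism $\sigma\colon \symmetricalgebraa\to\symmetricalgebraa$ obtained from the canonical identification $V\cong V^{**}$ together with a degree regrading that swaps the roles of $V$ and $\Vs$; concretely, $\sigma$ exchanges $\galgebradegzero\leftrightarrow\thetaalgebrastardegminustwo$ and $\thetaalgebradegone\leftrightarrow\galgebrastardegminusone$. The key point is that $\sigma$ is compatible with the big bracket, i.e.\ $\Poissonbracket{\sigma(e_1)}{\sigma(e_2)}=\sigma\Poissonbracket{e_1}{e_2}$ up to the standard sign bookkeeping; this is because both sides are built from the same canonical pairing between $V$ and $\Vs$, which is symmetric under the involution. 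Checking this compatibility carefully on the generators — verifying that properties (1)--(4) of the Schouten bracket in Section~\ref{Schoutenbracket} are preserved by $\sigma$, with the sign $\minuspower{(\abs{e_1}+3)(\abs{e_2}+3)}$ matching up after the degree swap — is where the only real work lies.

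Granting that, the proof is short: inspecting Eqs.~\eqref{eqt:derivesdata} and~\eqref{eqt:derivescodata}, one sees that $\sigma$ maps $\derivesbracket\mapsto\derivescobracket$, $\derivescobracket\mapsto\derivesbracket$, $\derivesaction\mapsto\derivescoaction$, $\derivescoaction\mapsto\derivesaction$, and $\derivescovarphi\mapsto\derivescovarphi$ (possibly up to a sign, corresponding to $\phi\mapsto-\phi^*$, which is precisely the convention $\phiUprotate=-\phi^*$ in Definition~\ref{Defn:bi-crossedmoduleofLiealgebra}). Hence $\sigma(t)=t$ up to sign, so $\Poissonbracket{\sigma(t)}{\sigma(t)}=\sigma\Poissonbracket{t}{t}=0$, and since $\derivesh=\derivescoh=0$ these are preserved as well, $\sigma(t)$ is again a strict Lie 2-bialgebra element. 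Translating back through the correspondence between strict Lie 2-bialgebras and Lie bialgebra crossed modules (the combination of Proposition on crossed modules $\leftrightarrow$ Lie 2-algebras with Propositions~\ref{Prop:Lie2algebraelements}, \ref{Prop:Lie2coalgebraelements}, and the Lie bialgebra compatibility), this says exactly that $(\crossedmoduletriple{\galgebrastar}{\phiUprotate}{\thetaalgebrastar}, \crossedmoduletriple{\thetaalgebra}{\phi}{\galgebra})$ is a Lie bialgebra crossed module.

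Alternatively, if one prefers to avoid the big-bracket machinery, the statement can be proved by unwinding Definition~\ref{Defn:bi-crossedmoduleofLiealgebra} directly: $\crossedmoduletriple{\galgebrastar}{\phiUprotate}{\thetaalgebrastar}$ and $\crossedmoduletriple{\thetaalgebra}{\phi}{\galgebra}$ are already assumed to be crossed modules, so only the Lie bialgebra condition on the semidirect products must be checked, and that condition — cocycle compatibility of the bracket and cobracket on $\Lietwo{\thetaalgebra}{\galgebra}$ versus $\Lietwo{\galgebrastar}{\thetaalgebrastar}$ — is manifestly symmetric in the pair $(\Lietwo{\thetaalgebra}{\galgebra},\Lietwo{\galgebrastar}{\thetaalgebrastar})$ by the classical fact that the dual of a Lie bialgebra is a Lie bialgebra. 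The main obstacle in either route is the same: being scrupulous about the degree shifts and Koszul signs so that the involution genuinely intertwines the two structures rather than only doing so ``up to signs that turn out not to cancel.''
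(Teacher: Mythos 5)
Your proposal is correct, but note that the paper offers no written proof of this proposition at all: it is treated as an immediate consequence of Definition~\ref{Defn:bi-crossedmoduleofLiealgebra}, and that immediate argument is exactly the one you relegate to your final ``alternative'' paragraph. Indeed, since $\phiUprotate=-\phi^*$ we have $-(\phiUprotate)^*=\phi$, so the swapped pair again consists of the two given crossed modules with the structure maps related as the definition requires, and the only remaining condition --- that $(\Lietwo{\galgebrastar}{\thetaalgebrastar},\Lietwo{\thetaalgebra}{\galgebra})$ be a Lie bialgebra --- follows from the classical fact that the dual of a Lie bialgebra is again a Lie bialgebra; this short check is complete and is all that is needed. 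Your primary route through the big bracket is consistent with the correspondence of Theorem~\ref{Thm:Liebialgebracm1-1strictLie2bialgebra}: the regrading involution $\sigma$ exchanging $\galgebradegzero\leftrightarrow\thetaalgebrastardegminustwo$ and $\thetaalgebradegone\leftrightarrow\galgebrastardegminusone$ does carry $\derivesbracket,\derivesaction$ to (co)bracket and (co)action components and fixes the $\derivescovarphi$-type component up to the sign accounting for $\phi\mapsto-\phi^*$, so the element $\sigma(t)$ encodes the swapped structure and $\Poissonbracket{\sigma(t),\sigma(t)}=0$ would follow once $\sigma$ is shown to intertwine the big bracket. But that sign verification is precisely the step you defer (``where the only real work lies''), and as written it is asserted rather than proved; also, what you need is not $\sigma(t)=t$ but rather that $\sigma(t)$ is the element of the swapped structure and satisfies $\Poissonbracket{\sigma(t),\sigma(t)}=0$. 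Since your elementary argument already closes the proof, the big-bracket route costs more than it buys here; its main advantage would be that, once the compatibility of $\sigma$ with the bracket is established, the same symmetry statement extends verbatim to weak and quasi-Lie 2-bialgebras, where no crossed-module reformulation is available.
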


The following result  justifies our terminology.

\begin{proposition}
\label{Prop:liebicrossedmoduleimpliesliebi}
If $(\crossedmoduletriple{\thetaalgebra}{\phi}{\galgebra}$,
$\crossedmoduletriple{\galgebrastar}{\phiUprotate
}{\thetaalgebrastar} )$ is a Lie bialgebra crossed module,
then both pairs $(\thetaalgebra , \thetaalgebrastar)$ and
$(\galgebra, \galgebrastar)$ are Lie bialgebras.
\end{proposition}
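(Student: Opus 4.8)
The plan is to deduce this statement as a direct consequence of the structure theory already developed. Recall that a Lie bialgebra crossed module is, by Definition~\ref{Defn:bi-crossedmoduleofLiealgebra}, the data of a dual pair of Lie algebra crossed modules such that the pair of semidirect products $(\Lietwo{\thetaalgebra}{\galgebra},\Lietwo{\galgebrastar}{\thetaalgebrastar})$ forms a Lie bialgebra. So the key point to establish is that the Lie bialgebra structure on the semidirect products restricts appropriately to the sub- and quotient spaces $\thetaalgebra$, $\thetaalgebrastar$, $\galgebra$, $\galgebrastar$. The guiding observation is that, in the semidirect product $\galgebra\ltimes\thetaalgebra$, the subspace $\thetaalgebra$ is an ideal while $\galgebra$ is a subalgebra, and dually, in $\galgebrastar\ltimes\thetaalgebrastar$, the subspace $\galgebrastar$ is an ideal while $\thetaalgebrastar$ is a subalgebra. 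Under the canonical duality between $\Lietwo{\thetaalgebra}{\galgebra}$ and $\Lietwo{\galgebrastar}{\thetaalgebrastar}$, the ideal $\thetaalgebra\subset\galgebra\ltimes\thetaalgebra$ is precisely the annihilator of the subalgebra $\thetaalgebrastar\subset\galgebrastar\ltimes\thetaalgebrastar$, and the quotient $(\galgebra\ltimes\thetaalgebra)/\thetaalgebra\cong\galgebra$ is canonically dual to that same subalgebra $\thetaalgebrastar$; symmetrically with the roles interchanged.

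First I would recall the elementary fact from Lie bialgebra theory: if $(\mathfrak{p},\mathfrak{p}^*)$ is a Lie bialgebra, $\mathfrak{i}\subset\mathfrak{p}$ an ideal, and $\mathfrak{i}^0\subset\mathfrak{p}^*$ its annihilator, then $\mathfrak{i}^0$ is a Lie subalgebra of $\mathfrak{p}^*$ precisely when the cobracket of $\mathfrak{p}$ maps $\mathfrak{i}$ into $\mathfrak{i}\wedge\mathfrak{p}+\mathfrak{p}\wedge\mathfrak{i}$, i.e.\ when $\mathfrak{i}$ is a ``coideal''-compatible ideal; in that case $\mathfrak{i}$ inherits a Lie bialgebra structure and so does the quotient $\mathfrak{p}/\mathfrak{i}\cong(\mathfrak{i}^0)^*$, with these two being dual Lie bialgebras. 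I would then verify that $\thetaalgebra$ is such a compatible ideal in $\Lietwo{\thetaalgebra}{\galgebra}$: it is an ideal because $\crossedmoduletriple{\thetaalgebra}{\phi}{\galgebra}$ is a crossed module, and the cobracket compatibility is exactly what the statement that $\crossedmoduletriple{\galgebrastar}{-\phi^*}{\thetaalgebrastar}$ is a crossed module encodes on the dual side (the subalgebra condition for $\thetaalgebrastar\subset\galgebrastar\ltimes\thetaalgebrastar$ dualizes to the coideal-type condition for $\thetaalgebra$). Applying the elementary fact then yields that $(\thetaalgebra,\thetaalgebrastar)$ is a Lie bialgebra (identifying $\thetaalgebrastar\cong\thetaalgebra^*$ via the annihilator picture). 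Running the symmetric argument with the ideal $\galgebrastar\subset\Lietwo{\galgebrastar}{\thetaalgebrastar}$, whose annihilator is $\galgebra\subset\Lietwo{\thetaalgebra}{\galgebra}$ — using the symmetry of Lie bialgebra crossed modules proved in the preceding proposition — gives that $(\galgebra,\galgebrastar)$ is a Lie bialgebra as well.

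The main obstacle I anticipate is purely bookkeeping: one must carefully track the several dualities (between $\thetaalgebra$ and $\thetaalgebrastar$, between $\galgebra$ and $\galgebrastar$, and between the full semidirect products) and check that the bracket and cobracket induced on $\thetaalgebra$ by "restriction as an ideal" on the bracket side and "corestriction from the annihilator being a subalgebra" on the cobracket side are genuinely dual to the analogous structures on $\thetaalgebrastar$. In particular one should confirm that the induced cobracket on $\thetaalgebra$ coincides with the transpose of the bracket of the subalgebra $\thetaalgebrastar$, and that the Lie bialgebra compatibility (the cobracket being a $1$-cocycle) for $(\thetaalgebra,\thetaalgebrastar)$ follows from that of the big Lie bialgebra by restriction. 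Alternatively, and perhaps more cleanly, one can phrase the entire argument in terms of the big bracket: the element $t$ encoding the weak Lie 2-bialgebra restricts, upon setting $\homotopymap=\cohomotopymap=0$ and projecting onto the appropriate homogeneous components of $\symmetricalgebraa$, to the Maurer--Cartan elements defining the Lie bialgebra structures on $(\thetaalgebra,\thetaalgebrastar)$ and $(\galgebra,\galgebrastar)$ respectively, and $\{t,t\}=0$ restricts to the corresponding Maurer--Cartan equations. I would likely present the annihilator/ideal argument as the main line and remark on the big-bracket reformulation, since the former is the most transparent and reuses only standard Lie bialgebra facts together with the crossed-module axioms and the symmetry proposition just established.
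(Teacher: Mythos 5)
Your overall strategy---extracting both bialgebras from the Lie bialgebra $(\Lietwo{\thetaalgebra}{\galgebra},\Lietwo{\galgebrastar}{\thetaalgebrastar})$ by an ideal/annihilator argument---is viable, but the duality bookkeeping that you yourself flagged as the main risk is wrong as written, and it is not a harmless slip. Under the canonical pairing between $\Lietwo{\thetaalgebra}{\galgebra}=\galgebra\oplus\thetaalgebra$ and $\Lietwo{\galgebrastar}{\thetaalgebrastar}=\thetaalgebrastar\oplus\galgebrastar$, the subspace $\thetaalgebra$ pairs with $\thetaalgebrastar$ and annihilates $\galgebrastar$; hence the annihilator of the ideal $\thetaalgebra$ is $\galgebrastar$, not $\thetaalgebrastar$, and the annihilator of the ideal $\galgebrastar$ is $\thetaalgebra$, not $\galgebra$. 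Consequently $(\Lietwo{\thetaalgebra}{\galgebra})/\thetaalgebra\cong\galgebra$ is dual to $\galgebrastar$ rather than to $\thetaalgebrastar$: the ideal $\thetaalgebra$ is the one that produces the pair $(\galgebra,\galgebrastar)$, while $(\thetaalgebra,\thetaalgebrastar)$ must come from the ideal $\galgebrastar\subset\Lietwo{\galgebrastar}{\thetaalgebrastar}$. Your matching of ideals with conclusions is interchanged, and the ``subalgebra condition for $\thetaalgebrastar$'' is not what dualizes to the coideal condition for $\thetaalgebra$---the relevant subalgebra is $\galgebrastar$.

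There is a second, independent problem: the ``elementary fact'' you invoke overclaims. If $\mathfrak{i}\subset\mathfrak{p}$ is an ideal and a coideal (equivalently, $\mathfrak{i}^{0}$ is a subalgebra of $\mathfrak{p}^{*}$), then the quotient $\mathfrak{p}/\mathfrak{i}$ inherits a Lie bialgebra structure dual to $\mathfrak{i}^{0}$, but $\mathfrak{i}$ itself does not in general: the cobracket restricts to $\mathfrak{i}$ only when $\delta(\mathfrak{i})\subset\wedge^{2}\mathfrak{i}$, i.e.\ only when $\mathfrak{i}^{0}$ is an \emph{ideal} of $\mathfrak{p}^{*}$, not merely a subalgebra (take $\mathfrak{p}$ abelian with $\mathfrak{p}^{*}=\sltwo$ and $\mathfrak{i}$ the annihilator of a Borel subalgebra: it is an ideal and a coideal, yet carries no induced cobracket). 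So the step ``$\thetaalgebra$ is a compatible ideal, hence $(\thetaalgebra,\thetaalgebrastar)$ is a Lie bialgebra'' fails as stated. The argument is repairable because the stronger hypotheses hold automatically here: $\thetaalgebra$ is an ideal of $\Lietwo{\thetaalgebra}{\galgebra}$ and $\galgebrastar$ is an ideal of $\Lietwo{\galgebrastar}{\thetaalgebrastar}$, each being the acted-upon factor of a semidirect product. Quotienting $\Lietwo{\thetaalgebra}{\galgebra}$ by $\thetaalgebra$, whose annihilator $\galgebrastar$ is a subalgebra, yields the Lie bialgebra $(\galgebra,\galgebrastar)$; quotienting $\Lietwo{\galgebrastar}{\thetaalgebrastar}$ by $\galgebrastar$, whose annihilator $\thetaalgebra$ is a subalgebra, yields $(\thetaalgebrastar,\thetaalgebra)$, i.e.\ $(\thetaalgebra,\thetaalgebrastar)$; one then checks that the induced brackets and cobrackets coincide with the given ones and that the cocycle condition descends along the projections. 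Identifying the correct annihilators, pairing the ideals with the right conclusions, and distinguishing the subalgebra condition (enough for the quotient) from the ideal condition on the annihilator (needed for the sub-object) is precisely the content missing from your write-up.
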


\begin{example}
One can  construct a Lie bialgebra crossed module from
an ordinary   Lie bialgebra as follows.
 Given a Lie bialgebra $(\thetaalgebra,\thetaalgebrastar)$,
consider  the trivial Lie algebra crossed module
 $\crossedmoduletriple{\thetaalgebra}{\Id}{\thetaalgebra}$,
 where  the second $\thetaalgebra$ acts on the first
$\thetaalgebra$ by the adjoint action.
In the mean time,  consider the dual Lie algebra crossed module
$\crossedmoduletriple{\thetaalgebrastar}{-\Id }{ \thetaalgebrastar}$,
where the second
$\thetaalgebrastar$ is equipped with the opposite Lie bracket:
$-\bas{\tobefilledin}{\tobefilledin}$, and the action of the second
$\thetaalgebrastar$ on the first $\thetaalgebrastar$ is given by
$\kappa_2\moduleaction{\kappa_1}= -\bas{\kappa_2}{\kappa_1}$,
for all $\kappa_1,\kappa_2\in\thetaalgebrastar$.
It is simple to see that
$(\crossedmoduletriple{\thetaalgebra}{\Id}{\thetaalgebra},
\crossedmoduletriple{\thetaalgebrastar}{-\Id }{ \thetaalgebrastar})$
is indeed  a Lie bialgebra crossed module.
\end{example}

The following theorem was proved in~\cite{2011arXiv1109.2290C}.

\begin{theorem}
\label{Thm:Liebialgebracm1-1strictLie2bialgebra}
There is a bijection between Lie bialgebra crossed modules
and (strict) Lie 2-bialgebras.
\end{theorem}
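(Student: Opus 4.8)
The plan is to combine the already-established equivalence between Lie algebra crossed modules and strict Lie 2-algebras (the proposition preceding Definition~\ref{Defn:bi-crossedmoduleofLiealgebra}) with the ``big bracket'' description of weak Lie 2-bialgebras (Definition~\ref{Defn:Lie2bialgebra}), specialized to the strict case $\derivesh=\derivescoh=0$. Concretely, given a strict Lie 2-bialgebra, we have an element $t=\derivesbracket+\derivesaction+\derivescovarphi+\derivescobracket+\derivescoaction$ of $\degreesubspace{\symmetricalgebra}{-4}$ with $\{t,t\}=0$. By Proposition~\ref{Prop:tt=0impliesll=0andcc=0}, $l=\derivescovarphi+\derivesbracket+\derivesaction$ defines a (now strict, since $\derivesh=0$) Lie 2-algebra and $c=\derivescovarphi+\derivescobracket+\derivescoaction$ a strict Lie 2-coalgebra; via the cited proposition these correspond to a Lie algebra crossed module $\crossedmoduletriple{\thetaalgebra}{\phi}{\galgebra}$ and, dually, to a Lie algebra crossed module $\crossedmoduletriple{\galgebrastar}{-\phi^*}{\thetaalgebrastar}$ (the coalgebra structure on $\theta\oplus\galgebradegzero$ is by definition an algebra structure on $\galgebrastardegminusone\oplus\thetaalgebrastardegminustwo$). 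Conversely, a Lie bialgebra crossed module gives, by the same dictionary, both an $l$ and a $c$ of the required form, hence a candidate $t=l+c-\derivescovarphi$ (the term $\derivescovarphi$, encoding $\phi$, is shared). The content of the theorem is then that the single equation $\{t,t\}=0$ is equivalent to the conjunction of ``$\{l,l\}=0$,'' ``$\{c,c\}=0$,'' and the compatibility condition $\{l,c\}=0$ (equivalently $\{\derivesbracket+\derivesaction,\derivescobracket+\derivescoaction\}=0$ modulo lower terms), and that this last compatibility is exactly the statement that $(\Lietwo{\thetaalgebra}{\galgebra},\Lietwo{\galgebrastar}{\thetaalgebrastar})$ is a Lie bialgebra.

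The key steps, in order, are: (1) expand $\{t,t\}$ by bilinearity into $\{l,l\}+2\{l,c\}-(\text{correction from the shared }\derivescovarphi)+\{c,c\}$ and observe that, because the big bracket is graded by the natural bidegree (number of $V$-factors vs.\ $\Vs$-factors), the equation $\{t,t\}=0$ splits into homogeneous components, so it is equivalent to the three equations $\{l,l\}=0$, $\{c,c\}=0$, $\{l,c\}=0$ holding separately; (2) invoke the proposition before Definition~\ref{Defn:bi-crossedmoduleofLiealgebra} to translate $\{l,l\}=0$ into ``$\crossedmoduletriple{\thetaalgebra}{\phi}{\galgebra}$ is a Lie algebra crossed module'' and $\{c,c\}=0$ into ``$\crossedmoduletriple{\galgebrastar}{-\phi^*}{\thetaalgebrastar}$ is a Lie algebra crossed module''; (3) unwind the cross-term $\{l,c\}=0$ and match it, component by component, with the cocycle/compatibility axioms defining a Lie bialgebra structure on the semidirect products $\Lietwo{\thetaalgebra}{\galgebra}$ and $\Lietwo{\galgebrastar}{\thetaalgebrastar}$ — i.e.\ that the cobracket dual to the bracket on $\crossedmoduletriple{\galgebrastar}{-\phi^*}{\thetaalgebrastar}$ is a 1-cocycle on $\galgebra\ltimes\thetaalgebra$; (4) check that the correspondence $t\leftrightarrow(\crossedmoduletriple{\thetaalgebra}{\phi}{\galgebra},\crossedmoduletriple{\galgebrastar}{-\phi^*}{\thetaalgebrastar})$ is a bijection, which is immediate once one knows it is well-defined both ways, since the underlying data $(\theta,\galgebradegzero,\phi,\bracketmap,\actionmap,\cobracketmap,\coactionmap)$ are literally the same on the two sides.

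The main obstacle is step~(3): identifying the bidegree components of $\{l,c\}=0$ with the classical 1-cocycle condition for a Lie bialgebra. This is a bookkeeping computation in the Gerstenhaber algebra $(\symmetricalgebraa,\symmetricproduct,\Poissonbracket{\tobefilledin,\tobefilledin})$, complicated by the degree shifts and the sign conventions of Section~\ref{Schoutenbracket}; one must be careful that the big bracket of $\derivesbracket\in(\symmetricproduct^2\galgebrastardegminusone)\symmetricproduct\galgebradegzero$ with $\derivescobracket\in\thetaalgebrastardegminustwo\symmetricproduct(\symmetricproduct^2\thetaalgebradegone)$ and with $\derivescoaction$, together with the analogous brackets involving $\derivesaction$ and $\derivescovarphi$, assemble precisely into the mixed Jacobi/cocycle identities of Definition~\ref{Defn:bi-crossedmoduleofLiealgebra}. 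A clean way to organize this, which I would adopt, is to note that under our degree conventions $\Lietwo{\thetaalgebra}{\galgebra}$ sits inside $\symmetricalgebraa$ with its Lie bracket realized by $\Poissonbracket{\derivesbracket+\derivesaction+\derivescovarphi,\tobefilledin}$ and its cobracket by $\Poissonbracket{\derivescobracket+\derivescoaction+\derivescovarphi,\tobefilledin}$, so that the Lie bialgebra compatibility is literally the vanishing of the big bracket of these two ``structure elements'' — reducing the verification to a single application of the graded Jacobi identity rather than a term-by-term check.
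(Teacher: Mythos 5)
First, note that this paper does not actually prove Theorem~\ref{Thm:Liebialgebracm1-1strictLie2bialgebra}: it is quoted from \cite{2011arXiv1109.2290C}, where the argument is indeed organized the way you propose, namely specializing Definition~\ref{Defn:Lie2bialgebra} to $\derivesh=\derivescoh=0$ and splitting $\Poissonbracket{t,t}=0$ into bidegree-homogeneous components. Your steps (1), (2) and (4) are sound: writing $b=\derivesbracket+\derivesaction$ and $e=\derivescobracket+\derivescoaction$, the six components $\Poissonbracket{\derivescovarphi,\derivescovarphi}$, $\Poissonbracket{\derivescovarphi,b}$, $\Poissonbracket{\derivescovarphi,e}$, $\Poissonbracket{b,b}$, $\Poissonbracket{e,e}$, $\Poissonbracket{b,e}$ lie in distinct bidegrees, so $\Poissonbracket{t,t}=0$ is equivalent to $\Poissonbracket{l,l}=0$, $\Poissonbracket{c,c}=0$ and $\Poissonbracket{b,e}=0$, and the first two translate, via Propositions~\ref{Prop:Lie2algebraelements} and~\ref{Prop:Lie2coalgebraelements} and the equivalence of crossed modules with strict Lie 2-algebras, into the two crossed-module structures $\crossedmoduletriple{\thetaalgebra}{\phi}{\galgebra}$ and $\crossedmoduletriple{\galgebrastar}{-\phi^*}{\thetaalgebrastar}$.

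The genuine gap is in step (3), and in particular the ``clean way'' you propose to close it. It is not true that the Lie bracket of $\Lietwo{\thetaalgebra}{\galgebra}$ is realized as the derived bracket $\Poissonbracket{\Poissonbracket{l,\cdot},\cdot}$ with $l=\derivescovarphi+\derivesbracket+\derivesaction$: that derived bracket recovers the bracket on $\galgebra$ and the action of $\galgebra$ on $\thetaalgebra$, but it vanishes identically on $\thetaalgebra\otimes\thetaalgebra$, since no summand of $l$ contains two factors from $\thetaalgebrastardegminustwo$ --- indeed an element encoding a map $\wedge^2\thetaalgebra\to\thetaalgebra$ would live in $(\symmetricproduct^2\thetaalgebrastardegminustwo)\symmetricproduct\thetaalgebradegone$, which has degree $-5$ and therefore cannot occur in $t\in\degreesubspace{\symmetricalgebra}{-4}$. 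The semidirect product $\Lietwo{\thetaalgebra}{\galgebra}$ of Definition~\ref{Defn:bi-crossedmoduleofLiealgebra}, however, carries the full bracket including $\ba{u}{v}=\phi(u)\moduleaction v$, and dually the bracket on $\Lietwo{\galgebrastar}{\thetaalgebrastar}$ involves $\phi^*$; neither of these pieces is visible in $\Poissonbracket{b,e}$ alone. Consequently the Lie bialgebra compatibility for $(\Lietwo{\thetaalgebra}{\galgebra},\Lietwo{\galgebrastar}{\thetaalgebrastar})$ is not ``literally'' $\Poissonbracket{l,c}=0$, and it cannot be dispatched by a single application of the graded Jacobi identity. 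What must actually be shown is that the $1$-cocycle condition for the cobracket of $\Lietwo{\thetaalgebra}{\galgebra}$ --- whose verification on pairs involving $\thetaalgebra$-elements uses the crossed-module identities $\ba{u}{v}=\phi(u)\moduleaction v$ and their duals, i.e.\ input from $\Poissonbracket{l,l}=0$ and $\Poissonbracket{c,c}=0$, not only from the cross term --- is equivalent to $\Poissonbracket{b,e}=0$ in the presence of those identities. That term-by-term matching is precisely the content of the theorem as proved in \cite{2011arXiv1109.2290C}, and your proposal leaves it open.
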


\begin{example}
Consider the Lie subalgebra $\mathrm{u}(n)\subset\mathfrak{gl}_n(\CC)$
of $n\times n$ skew-Hermitian matrices.
Let ${\thetaalgebra}\subset \mathfrak{gl}_n(\CC) $
be the Lie subalgebra consisting  of
upper triangular matrices whose diagonal elements are real numbers.
It is standard that $(\thetaalgebra,\mathrm{u}(n))$ is a Lie bialgebra.
Indeed $\thetaalgebra\oplus\mathrm{u}(n)\cong \mathfrak{gl}_n(\CC)$,
and both $\thetaalgebra$ and $\mathrm{u}(n)$ are Lagrangian
subalgebras of $\mathfrak{gl}_n(\CC)$ under the nondegenerate
pairing $\pairing{X}{Y}=\mathrm{Im}(\mathrm{Tr}(XY))$, for $X,Y\in \mathfrak{gl}_n(\CC)$.
Hence $({\thetaalgebra},{\mathrm{u}(n)}, \mathfrak{gl}_n(\CC) )$
is a Manin triple,  and thus $(\thetaalgebra,\mathrm{u}(n))$  forms a Lie bialgebra.
\newline
Let $\galgebra$ denote the Lie algebra of traceless upper triangular matrices
with real diagonal coefficients.
It turns out that $\crossedmoduletriple{\thetaalgebra}{\phi}{\galgebra}$,
where $\phi$ is the map $A\mapsto A-\text{tr}A$, is a Lie bialgebra crossed module.
\end{example}

\section{Universal lifting theorem}

\subsection{Lie 2-groups}

A Lie 2-group (also called strict Lie 2-groups in the literature) is a Lie groupoid $\Gamma_1\toto\Gamma_0$,
where both $\Gamma_1$ and $\Gamma_0$ are Lie groups and
all the groupoid  structure maps are group homomorphisms.
A Lie 2-group  is a special case of double groupoid in the sense
of Mackenzie~\cite{MR1697617}.

\begin{definition}[\cite{MR0017537,MR0030760}]
A Lie group crossed module consists of a Lie group homomorphism $\Phi:\thetagroup\to\ggroup$
and an action of $\ggroup$ on $\thetagroup$ by automorphisms satisfying the following compatibility conditions:
\begin{enumerate}
\item \label{bun} ${\Phi(\alpha)}\groupaction\beta=\alpha\beta\inverse{\alpha}$;
\item \label{bdeux} $\Phi( g\groupaction\beta )=g\Phi(\beta)\inverse{g}$,
\end{enumerate}
for all $g\in\ggroup$ and $\alpha,\beta\in\thetagroup$. Here
$g\groupaction\beta $ denotes the action of
$g\in\ggroup$ on $\beta \in\thetagroup$.
\end{definition}

We write $\groupcrossedmoduletriple{\thetagroup}{\Phi}{\ggroup}$ to denote a Lie group crossed module.

\begin{proposition}
\label{thm:2groupstructuredetails}
There is a bijection between Lie 2-groups and crossed modules
of Lie groups.
\end{proposition}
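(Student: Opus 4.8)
The plan is to establish the bijection between Lie 2-groups and Lie group crossed modules by constructing explicit functors in both directions and checking they are mutually inverse. Recall that a Lie 2-group is a Lie groupoid $\Gamma_1\toto\Gamma_0$ in which $\Gamma_1$ and $\Gamma_0$ are Lie groups and source $\source$, target $\target$, unit $\unit$, and multiplication $\gpoidmulti$ are all group homomorphisms (with respect to the group multiplication $\gpmulti$).

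First I would produce a crossed module from a given Lie 2-group. Set $\ggroup=\Gamma_0$ and $\thetagroup=\ker\source\subset\Gamma_1$, a closed normal Lie subgroup since $\source$ is a Lie group homomorphism. Define $\Phi:\thetagroup\to\ggroup$ to be the restriction of the target map $\target$. The action of $g\in\ggroup$ on $\beta\in\thetagroup$ is defined by conjugation inside $\Gamma_1$ by the unit element: $g\groupaction\beta=\unit(g)\gpmulti\beta\gpmulti\unit(g)^{-1}$; this lands in $\thetagroup$ because $\source(\unit(g)\gpmulti\beta\gpmulti\unit(g)^{-1})=g\cdot\source(\beta)\cdot g^{-1}=g\cdot 1\cdot g^{-1}=1$, using that $\source\circ\unit=\id$. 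The compatibility conditions \ref{bun} and \ref{bdeux} then follow from the interchange law relating $\gpmulti$ and $\gpoidmulti$ together with the fact that $\target$ is a homomorphism; the key algebraic input is that for composable arrows the groupoid product can be rewritten using the group product, which forces the Eckmann--Hilton-type identities one needs.

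Conversely, given a crossed module $\groupcrossedmoduletriple{\thetagroup}{\Phi}{\ggroup}$, I would build the Lie 2-group with $\Gamma_0=\ggroup$ and $\Gamma_1=\crossedmoduletwogroup$, the semidirect product with group law $(\alpha,g)\gpmulti(\alpha',g')=(\alpha\,(g\groupaction\alpha'),gg')$. The groupoid structure maps are $\source(\alpha,g)=g$, $\target(\alpha,g)=\Phi(\alpha)g$, $\unit(g)=(1,g)$, and composition $(\alpha,\Phi(\beta)g)\gpoidmulti(\beta,g)=(\alpha\beta,g)$; one checks these are well defined, satisfy the groupoid axioms, and are group homomorphisms for $\gpmulti$. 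Verifying that $\gpoidmulti$ is a homomorphism is exactly where conditions \ref{bun} and \ref{bdeux} are used.

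Finally I would check the two constructions are inverse up to isomorphism. Starting from a crossed module and returning, one recovers $\ker\source=\thetagroup\times\{1\}\cong\thetagroup$ with $\target|_{\thetagroup}=\Phi$ and the conjugation action equal to $\groupaction$. Starting from a Lie 2-group $\Gamma_1\toto\Gamma_0$, the map $\Gamma_1\to(\ker\source)\ltimes\Gamma_0$, $\gamma\mapsto(\gamma\gpmulti\unit(\source\gamma)^{-1},\source\gamma)$, is an isomorphism of Lie 2-groups, with inverse $(\beta,g)\mapsto\beta\gpmulti\unit(g)$. The main obstacle, and the part deserving care rather than cleverness, is the bookkeeping of the interchange law: one must repeatedly exploit that two a priori different multiplications ($\gpmulti$ and $\gpoidmulti$) on the same set of arrows satisfy a compatibility (the middle-four interchange), and deduce from it both the crossed module identities in one direction and the groupoid axioms in the other; smoothness of all the maps is automatic since semidirect products, kernels of submersive homomorphisms, and the structure maps are manifestly smooth.
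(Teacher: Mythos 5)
Your proposal is correct and follows the same standard route the paper relies on (which it cites as standard and only sketches in the crossed-module-to-2-group direction): kernel of the source with the restricted target and conjugation by units in one direction, the semidirect product $\ggroup\ltimes\thetagroup$ with the stated source, target and composition in the other, with the crossed-module identities \ref{bun} and \ref{bdeux} entering exactly where you say, via the interchange law. The only difference from the paper is a harmless change of convention in the semidirect-product multiplication (the paper twists by ${\inverse{h}}\groupaction\alpha$ on the other side), which yields an isomorphic Lie 2-group.
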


\begin{proof}
This is standard. For instance, see~\cite{MR2068521,MR0440553,MR0419643,MR1001474,MR2280287}.
Here we will  sketch the construction of the Lie 2-group out of a crossed module, which will be needed later on.

The Lie 2-group corresponding to a Lie group crossed module
$\groupcrossedmoduletriple{\thetagroup}{\Phi}{\ggroup}$ will be denoted by $\crossedmoduletwogroup\toto G$,
or simply $\crossedmoduletwogroup$, by abuse of notations.
Here the group structure on $\crossedmoduletwogroup $ is as follows:
\begin{itemize}
\item group multiplication:
$(g,\alpha)\gpmulti(h,\beta)=(gh,({\inverse{h}}\groupaction\alpha)\beta)$;
\item group unit:
$\unit_{\gpmulti}=(\unit_{\ggroup},\unit_{\thetagroup})$, here
$\unit_{\ggroup}$ and $\unit_{\thetagroup}$ denote, respectively, the
group unit elements of $\ggroup$ and $\thetagroup$;
\item group inversion:
$\gpinverse{(g,\alpha)}=(\inverse{g},\inverse{(g\groupaction\alpha)})$.
\end{itemize}
The groupoid structure on $\crossedmoduletwogroup\rightrightarrows \ggroup$ is as follows:
\begin{itemize}
\item source and target maps:
$\source(g,\alpha)=g$, $\target(g,\alpha)=g\Phi(\alpha)$;
\item groupoid multiplication:
$(g,\alpha)\gpoidmulti(h,\beta)=(g,\alpha\beta)$, if
$h=g\Phi(\alpha)$;
\item groupoid units: $(g,\unit_{\thetagroup})$;
\item groupoid inversion:
$\gpoidinverse{(g,\alpha)}=(g\Phi(\alpha),\inverse{ \alpha})$.
\end{itemize}
\end{proof}

In the sequel, we will use Lie 2-groups and crossed modules of
Lie groups interchangeably.

\subsection{Multiplicative polyvector fields on Lie groupoids}

We recall some standard results regarding multiplicative
polyvector fields on a Lie groupoid.
Let $\Gamma\toto M$ be a Lie groupoid with   source and target maps
$s$ and  $t$, respectively.
Consider the graph of the groupoid multiplication
$\Lambda=\set{(p,q,pq)|t(p)=s(q)}$,
which is a submanifold  in $\Gamma\times\Gamma\times\Gamma$.

Recall that a $k$-vector field $\Sigma\in
\polyvectorfields{k}{\Gamma}$ is said to be multiplicative if
$\Lambda$ is coisotropic with respect to
$\Sigma\times\Sigma\times\minuspower{k+1}\Sigma$ \cite{MR2911881}. In other words,
\[ (\Sigma\times\Sigma\times\minuspower{k+1}\Sigma)(\xi_1,\cdots,\xi_k)=0 ,\quad\forall \xi_1,\cdots,\xi_k\in \Lambda^{\perp} \]
for all $\xi_1,\cdots,\xi_k\in \Lambda^{\perp}$, where
\[ \Lambda^{\perp}=\set{\xi\in T^*_{\lambda}(\Gamma\times\Gamma\times\Gamma)\text{ s.t. } \lambda\in
\Lambda,\pairing{\xi}{v}=0, \forall v\in T_\lambda \Lambda} .\]

A $k$-vector field $\Sigma$ on $\Gamma$ is said to be affine
if $\ba{\Sigma}{\overleftarrow{X}}$
is left invariant for all $X\in\sections{A}$. Here
$A$ denotes the  Lie algebroid of $\Gamma$, and $\overleftarrow{X}$
denotes the left invariant vector field on $\Gamma$ corresponding to $X$.

The following lemma gives a  useful characterization of  multiplicative polyvector fields.

\begin{lemma}[Theorem~2.19 in~\cite{MR2911881}]
\label{Lem:Sigmagroupoidmultiplicativeiff}
A $k$-vector field $\Sigma$ is multiplicative if and only if the following three conditions hold:
\begin{enumerate}
\item \label{cun} $\Sigma$ is affine;
\item \label{cdeux} $M$ is a coisotropic submanifold of $\Gamma$;
\item \label{ctrois} for any $\xi\in\Omega^1(M)$, $\inserts_{\target^*(\xi)}\Sigma$ is left invariant.
\end{enumerate}
\end{lemma}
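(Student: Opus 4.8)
The plan is to prove the three conditions are equivalent to multiplicativity by unwinding the definition in terms of the conormal space $\Lambda^\perp$ and exploiting the two natural ``directions'' of the groupoid --- the left translations by bisections (equivalently, right-invariant vector fields) and the fibres of source and target. First I would recall that the graph $\Lambda\subset\Gamma\times\Gamma\times\Gamma$ is itself a subgroupoid, and that its conormal bundle $\Lambda^\perp$ is spanned, over a point $(p,q,pq)$, by covectors of the following three types: (i) those coming from the base $M$ via the three structure maps (reflecting $t(p)=s(q)$), (ii) covectors of the form $(\xi,0,-(R_q)^*\xi)$ and $(0,\eta,-(L_p)^*\eta)$ with $\xi\in T_p^*\Gamma$ killing $\ker ds_p$ and $\eta\in T_q^*\Gamma$ killing $\ker dt_q$ respectively, and (iii) pullbacks along $\target$ in the third factor. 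Feeding covectors of each type into $(\Sigma\times\Sigma\times(-1)^{k+1}\Sigma)$ and asking that the result vanish will split the single coisotropy condition into exactly the three stated conditions.

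The key steps, in order, would be: (1) translate ``$\Sigma$ affine'' into the statement that $(\Sigma\times\Sigma\times(-1)^{k+1}\Sigma)$ annihilates the conormal covectors of type (ii) --- this is the classical computation relating multiplicativity of a bivector/polyvector to the bracket $[\Sigma,\overleftarrow X]$ being left-invariant, and I expect it to be essentially the argument already present in the literature for $k=1,2$, extended to general $k$ by the Leibniz rule for the Schouten bracket; (2) show that the covectors of type (i), living in the joint conormal of $M\hookrightarrow\Gamma$ through all three legs, get annihilated precisely when $M$ is coisotropic in $\Gamma$ with respect to $\Sigma$ --- here one uses that near a unit $1_x\in M$ the graph $\Lambda$ restricts to copies of $M$ and that $ds,dt$ agree on $TM$; (3) handle the remaining ``mixed'' covectors of type (iii) and obtain condition~\eqref{ctrois}, that $\inserts_{\target^*\xi}\Sigma$ is left-invariant, by combining (1) and (2): once $\Sigma$ is affine and $M$ is coisotropic, the only surviving independent directions in $\Lambda^\perp$ are the $\target^*$-pullbacks, and coisotropy along those is exactly the stated left-invariance. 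For the converse, I would assume \eqref{cun}--\eqref{ctrois} and check directly that $(\Sigma\times\Sigma\times(-1)^{k+1}\Sigma)$ kills all of $\Lambda^\perp$ by decomposing an arbitrary conormal covector into the three types above.

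The main obstacle I anticipate is the bookkeeping in step (1) and (3): making precise the spanning set of $\Lambda^\perp$ at a general point (not just near the units), keeping track of the left-versus-right invariant conventions, and correctly propagating the sign $(-1)^{k+1}$ and the Koszul signs through the contraction $(\Sigma\times\Sigma\times(-1)^{k+1}\Sigma)(\xi_1,\dots,\xi_k)$ when the $\xi_i$ are of mixed type. Since this lemma is quoted as Theorem~2.19 of \cite{MR2911881}, the cleanest route is simply to invoke that reference; the sketch above indicates how one would reconstruct the proof if needed. No delicate global or analytic input is required --- everything is a pointwise linear-algebra statement about $\Lambda^\perp$ together with the standard dictionary between affine polyvector fields and left-invariant brackets.
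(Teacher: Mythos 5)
The paper offers no proof of this lemma: it is imported verbatim (in slightly streamlined form) from Theorem~2.19 of \cite{MR2911881}, the only added content being the remark that some of the conditions stated in that reference are redundant, so your fallback of simply invoking that reference is exactly what the paper does. Your sketch of how one would reconstruct the argument --- evaluating $\Sigma\times\Sigma\times(-1)^{k+1}\Sigma$ on a spanning set of $\Lambda^{\perp}$ decomposed into base-type, invariance-type, and $\target^{*}$-pullback covectors --- is consistent with the strategy used there, with the caveat (which you acknowledge) that the precise description of $\Lambda^{\perp}$ at a general composable pair would need to be pinned down carefully.
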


\begin{remark}
The  statement of Theorem~2.19 in~\cite{MR2911881}
contains more conditions but some of them are redundant.
\end{remark}

Along the base manifold $M$, the tangent bundle $T\Gamma$ admits
a natural decomposition \[ T \Gamma|_M= TM\oplus A ,\]
where $A$ is identified with $T^s \Gamma|_M$,
the tangent bundle to the $s$-fibers along $M$.
Denote by $\rho:A\to TM$ the anchor map. Then
$\rho$ is equal to $t_*: T^s \Gamma|_M\to TM$.

Let $Z_k$ be the set of all elements $w$ of $TM\wedge(\wedge^{k-1}A)$ satisfying
\[ \inserts_{\zeta_1}\inserts_{\rho^*\zeta_2}w
=-\inserts_{\zeta_2}\inserts_{\rho^*\zeta_1}w,
\quad\forall\zeta_1,\zeta_2\in T^*M .\]

Let $\rhopush$ be a degree-$0$ derivation of $\Gamma (\wedge^\bullet (TM\oplus A))$
such that $\rhopush(a+b)=\rho(a)$, for all $a\in A$ and $b\in TM$.

\begin{lemma}\label{Lem:skewdeltaderive1}
For any $w\in Z_k$ and $j\geq 1$, we have
\begin{equation}\label{Eqn:skewdeltaiteratedformula2}
\inserts_{\rho^*\zeta}(\rhopush^{j-1}w)
=\rhopush^j(\inserts_{\zeta}w)
=\frac{1}{j+1}\inserts_{\zeta}(\rhopush^j w), \quad\forall\zeta\in T^*M.
\end{equation}
\end{lemma}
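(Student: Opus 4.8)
The plan is to prove the two displayed identities by a single induction on $j$, exploiting the defining skew-symmetry of elements of $Z_k$ and the elementary bookkeeping rules for contracting the degree-$0$ derivation $\rhopush$. The key structural observation is that $\rhopush$ acts as a derivation of $\Gamma(\wedge^\bullet(TM\oplus A))$ with $\rhopush(A)\subseteq TM$ and $\rhopush(TM)=0$; in particular $\rhopush^2$ annihilates any element of $A$, and for $w\in TM\wedge(\wedge^{k-1}A)$ the iterated powers $\rhopush^{j-1}w$ land in $(\wedge^{j}TM)\wedge(\wedge^{k-j}A)$, so that $\rhopush^j w$ makes sense and $\rhopush^{k}w=0$. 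A second elementary fact is the commutation rule between contraction by a covector and the derivation: for $\zeta\in T^*M$ one has $\inserts_\zeta\circ\rhopush-\rhopush\circ\inserts_\zeta=\inserts_{\rhopush^*\zeta}$, where $\rhopush^*\zeta$ is the pullback; since $\rhopush$ sends $A$ to $TM$ via $\rho$ and kills $TM$, this pullback term is exactly $\inserts_{\rho^*\zeta}$ acting only on the $A$-slots. This is the mechanism that converts a $\rho^*\zeta$-contraction into a $\zeta$-contraction at the cost of one power of $\rhopush$.

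First I would establish the base case $j=1$: the equality $\inserts_{\rho^*\zeta}w=\rhopush(\inserts_\zeta w)=\tfrac12\inserts_\zeta(\rhopush w)$. The first equality holds because $w\in Z_k$ means precisely that contracting two of its slots — one by $\zeta_1$ directly and one by $\rho^*\zeta_2$ — is antisymmetric in $\zeta_1,\zeta_2$; unwinding the derivation identity $\inserts_\zeta(\rhopush w)=\rhopush(\inserts_\zeta w)+\inserts_{\rho^*\zeta}w$ together with the fact that $\inserts_\zeta w$ lies in $\wedge^{k-1}A$ (so $\rhopush(\inserts_\zeta w)$ picks up one factor from the $TM$-part... ) and comparing with the $Z_k$ relation yields $\inserts_{\rho^*\zeta}w=\rhopush(\inserts_\zeta w)$ and then $\inserts_\zeta(\rhopush w)=2\,\rhopush(\inserts_\zeta w)$, i.e. the factor $\tfrac12$. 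Concretely, $\rhopush w$ has two $TM$-type slots coming from $\zeta$-contractible directions, and contracting $\rhopush w$ by $\zeta$ hits either of them, producing twice $\rhopush(\inserts_\zeta w)$.

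For the inductive step I would assume the formula for $j-1$ applied to all $w\in Z_k$ and all $\zeta$, and also — crucially — that $\rhopush w \in Z_{k+1}$ whenever $w\in Z_k$ (this compatibility of $\rhopush$ with the skew-symmetry condition should itself be checked, and is the place where the derivation property and the identity $\rho=t_*$ are used). Then I apply the $(j-1)$-case to $\rhopush w\in Z_{k+1}$, getting $\inserts_{\rho^*\zeta}(\rhopush^{j-1}w)=\rhopush^{j-1}(\inserts_\zeta(\rhopush w))=\tfrac1j\inserts_\zeta(\rhopush^{j-1}(\rhopush w))$, and combine with the $j=1$ identity $\inserts_\zeta(\rhopush w)=2\rhopush(\inserts_\zeta w)$ plus the commutator rule to repackage the middle term as $\rhopush^j(\inserts_\zeta w)$ and to extract the combinatorial constant $\tfrac{1}{j+1}$ from the telescoping of the binomial factors $\tfrac{1}{j}\cdot\tfrac{j}{j+1}$-type cancellations. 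The main obstacle is precisely the bookkeeping of these rational constants and making the induction self-consistent — one has to be careful that the "two $TM$-slots" phenomenon responsible for the $\tfrac12$ in the base case generalizes correctly to "$j+1$ slots" after $j$ applications of $\rhopush$, which is what produces $\tfrac{1}{j+1}$; and one must verify the closure property $\rhopush(Z_k)\subseteq Z_{k+1}$, without which the induction does not even get off the ground.
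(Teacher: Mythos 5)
Your base case is essentially right, and your starting point (induction on $j$ together with the commutator identity $\inserts_{\zeta}\circ\rhopush-\rhopush\circ\inserts_{\zeta}=\inserts_{\rho^*\zeta}$ and the defining antisymmetry of $Z_k$) matches the paper's. But the inductive step has a genuine gap. You propose to apply the $(j-1)$-case to $\rhopush w$, which requires the closure property $\rhopush(Z_k)\subseteq Z_{k+1}$. This is not even well-typed: by definition $Z_{k+1}$ sits inside $TM\wedge(\wedge^{k}A)$ (one $TM$-factor and $k$ factors from $A$), whereas for $w\in Z_k\subset TM\wedge(\wedge^{k-1}A)$ the element $\rhopush w$ lies in $(\wedge^2 TM)\wedge(\wedge^{k-2}A)$, a $k$-vector with two $TM$-factors, so it can never lie in $Z_{k+1}$. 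Moreover, even in a suitably enlarged class the statement of the lemma does not transfer verbatim: the coefficient $\tfrac{1}{j+1}$ encodes that $\rhopush^{j}w$ has exactly $j+1$ slots of $TM$-type when $w$ has one, and this count is different if you start from $\rhopush w$. Concretely, your displayed chain is internally inconsistent: by your own base case $\inserts_{\zeta}(\rhopush w)=2\,\rhopush(\inserts_{\zeta}w)$, hence $\rhopush^{j-1}\bigl(\inserts_{\zeta}(\rhopush w)\bigr)=2\,\rhopush^{j}(\inserts_{\zeta}w)$, while $\inserts_{\rho^*\zeta}(\rhopush^{j-1}w)$ must equal $\rhopush^{j}(\inserts_{\zeta}w)$ (this is the identity being proved); so the first equality in your chain is off by a factor of $2$ and would force $\rhopush^{j}(\inserts_{\zeta}w)=0$. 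The ``bookkeeping of rational constants'' that you flag as the main obstacle is exactly the content of the lemma, and your plan does not resolve it.

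The fix, and the route the paper takes, is to keep $w\in Z_k$ fixed throughout the induction and to use, besides the commutator identity above, the second elementary identity $\inserts_{\rho^*\zeta}\circ\rhopush=\rhopush\circ\inserts_{\rho^*\zeta}$ (both sides are derivations agreeing on the generators $TM\oplus A$, since $\rhopush$ kills $TM$ and sends $A$ into $TM$, which $\rho^*\zeta\in A^*$ does not see). Then $\inserts_{\rho^*\zeta}(\rhopush^{j}w)=\rhopush\,\inserts_{\rho^*\zeta}(\rhopush^{j-1}w)=\rhopush^{j+1}(\inserts_{\zeta}w)$ by the induction hypothesis, and applying the first commutator once more to $\rhopush^{j+1}(\inserts_{\zeta}w)=\tfrac{1}{j+1}\rhopush\,\inserts_{\zeta}(\rhopush^{j}w)$ yields the linear relation $\rhopush^{j+1}(\inserts_{\zeta}w)=\tfrac{1}{j+1}\bigl(\inserts_{\zeta}(\rhopush^{j+1}w)-\rhopush^{j+1}(\inserts_{\zeta}w)\bigr)$, which you solve to obtain the factor $\tfrac{1}{j+2}$. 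No closure property of $Z_k$ under $\rhopush$ is needed anywhere in this argument.
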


\begin{proof}
First, note that we have the following identities:
\begin{gather}
\label{eq:45a}
\inserts_{\zeta}\circ\rhopush-\rhopush\circ\inserts_{\zeta}=\inserts_{\rho^*\zeta},
\\ \label{Eqn:insertsphiUPstarphipusha}
\inserts_{\rho^*\zeta}\circ\rhopush=\rhopush\circ\inserts_{\rho^*\zeta},
\end{gather}
where $\zeta\in T^*M$, and both sides of Eqs.~\eqref{eq:45a} and~\eqref{Eqn:insertsphiUPstarphipusha}
are considered as linear maps $\wedge^\bullet(TM\oplus A)\to\wedge^{\bullet-1}(TM\oplus A)$.

Now we prove Eq.~\eqref{Eqn:skewdeltaiteratedformula2} by induction.
If $j=1$, the equation
\[ \inserts_{\rho^*\zeta} w = \rhopush (\inserts_{\zeta}w) \]
follows from the definition of $Z_k$.
By Eq.~\eqref{eq:45a}, we have
\[ (\inserts_{\zeta}\circ\rhopush-\rhopush\circ\inserts_{\zeta})w
=\inserts_{\rho^*\zeta}w=\rhopush (\inserts_{\zeta}w) .\]
It thus follows that
\[ \rhopush (\inserts_{\zeta}w)=\frac{1}{2}\inserts_{\zeta}(\rhopush w) .\]

Assume that Eq.~\eqref{Eqn:skewdeltaiteratedformula2} is valid for $j\geq 1$.
Then, using Eq.~\eqref{Eqn:insertsphiUPstarphipusha}, we have
\[ \inserts_{\rho^*\zeta}(\rhopush^{j}w)= (\rhopush\circ
\inserts_{\rho^*\zeta})(\rhopush^{j-1}w) =(\rhopush\circ\rhopush
^j )(\inserts_{\zeta}w)=\rhopush^{j+1}(\inserts_{\zeta}w) .\]

Moreover, using Eq.~\eqref{eq:45a}, we have
\begin{multline*}
\rhopush^{j+1}(\inserts_{\zeta}w)=\rhopush (\rhopush
^j(\inserts_{\zeta}w))=
\frac{1}{j+1}(\rhopush\circ\inserts_{\zeta}\circ\rhopush^j) w \\
=\frac{1}{j+1}(\inserts_{\zeta}\circ
\rhopush-\inserts_{\rho^*\zeta})\circ\rhopush^j
w=\frac{1}{j+1}\inserts_{\zeta} ( \rhopush^{j+1}w)-
\rhopush^{j+1}(\inserts_{\zeta}w)\bigr),
\end{multline*}
which implies that
\[ \rhopush^{j+1}(\inserts_{\zeta}w)=
\frac{1}{j+2}\inserts_{\zeta}(\rhopush^{j+1} w) .\]
\end{proof}

\begin{proposition}\label{Thm:SigmaalongMdecomposition}
Given a multiplicative $k$-vector field $\Sigma$ on $\Gamma$,
there exists a section $\sigma\in\sections{TM\wedge(\wedge^{k-1}A)}$ such that
\begin{equation}\label{Eqn:Sigmaatbasepowerseries}
\Sigma|_M = \frac{\Id-e^{-\rhopush}}{\rhopush} (\sigma)
= \sigma-\tfrac{1}{2!}\rhopush\sigma+\tfrac{1}{3!}\rhopush^2
\sigma+\cdots-\tfrac{\minuspower{k}}{k!}\rhopush^{k-1}\sigma
.\end{equation}
Moreover, $\sigma$ satisfies the following properties:
\begin{gather}\label{differentialonfunctions}
\partial_{\Sigma}(f)=\minuspower{k-1}\inserts_{\mathrm{d}f}\sigma,
\quad\forall f\in\CIM, \\ \label{symmetricityofvarpi}
\inserts_{\zeta}\inserts_{\rho^*\xi}\sigma
=-\inserts_{\xi}\inserts_{\rho^*\zeta}\sigma,
\quad\forall\xi,\zeta\in\Omega^1(M). \nonumber
\end{gather}
\end{proposition}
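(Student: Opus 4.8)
The plan is to extract $\sigma$ from the behaviour of $\Sigma$ along $M$ by exploiting the three characterizations of multiplicativity in Lemma~\ref{Lem:Sigmagroupoidmultiplicativeiff}, and then to verify the power-series identity \eqref{Eqn:Sigmaatbasepowerseries} degree by degree in the $A$-direction. First I would decompose $\Sigma|_M$ according to the splitting $T\Gamma|_M = TM \oplus A$. Condition~\ref{cdeux} (that $M$ is coisotropic in $\Gamma$) forces the purely tangential part $\wedge^k TM$ of $\Sigma|_M$ to vanish, and condition~\ref{ctrois} (that $\inserts_{\target^*\xi}\Sigma$ is left invariant, hence determined by its restriction to $M$, where $\target^*\xi = \rho^*\xi$ on $A$ and $\xi$ on $TM$) will be the key tool controlling how many $A$-legs can be contracted before the result becomes tangential. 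The idea is that iterating condition~\ref{ctrois} shows $\Sigma|_M$ has no component in $\wedge^j A$ for $j \geq 2$ either: contracting with enough $\rho^*\xi$'s and using that left-invariant multivector fields restricted to $M$ still obey the constraint, one peels off $A$-legs and lands in a tangential piece, which must vanish by coisotropy. Hence $\Sigma|_M \in \sections{TM \wedge (\wedge^{k-1}A)} + (\text{lower } TM\text{-degree terms})$, and I would organize the argument so that $\sigma \in \sections{TM\wedge(\wedge^{k-1}A)}$ is precisely the $(1,k-1)$-component of $\Sigma|_M$ in the bigrading by $(TM\text{-degree}, A\text{-degree})$.

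The next step is to pin down the lower-order terms. Affineness (condition~\ref{cun}) says $[\Sigma,\overleftarrow{X}]$ is left invariant for all $X \in \sections{A}$; restricting this bracket to $M$ and using that the left-invariant vector field $\overleftarrow{X}$ is tangent to $s$-fibres gives a recursion relating the $(0,k)$, $(1,k-1)$, $(2,k-2),\dots$ components of $\Sigma|_M$. Concretely, the Lie derivative of $\Sigma|_M$ along the flow generated by elements of $A$ interacts with the anchor $\rho$, and the derivation $\rhopush$ (which is exactly ``apply $\rho$ to one $A$-leg, turning it into a $TM$-leg'') is what encodes this. Working this out, I expect each successive $TM$-degree-raising term of $\Sigma|_M$ to be $-\tfrac{1}{(j+1)!}\rhopush^{j-1}$ applied to the previous one — equivalently, $\Sigma|_M = \sum_{j\geq 1} \tfrac{(-1)^{j+1}}{j!}\rhopush^{j-1}\sigma$, which is the claimed expansion of $\frac{\Id - e^{-\rhopush}}{\rhopush}(\sigma)$; the series terminates after $k$ terms because $\rhopush^{k-1}\sigma$ already has all legs in $TM$ and $\rhopush^k\sigma = 0$ for degree reasons. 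Lemma~\ref{Lem:skewdeltaderive1} is the combinatorial engine here: it guarantees the consistency of these iterated relations (the factors $\tfrac{1}{j+1}$ in \eqref{Eqn:skewdeltaiteratedformula2} are exactly the coefficients that make the recursion collapse to the exponential), and it also tells me that $\sigma$ automatically lies in the subspace $Z_k$, which is the content of the symmetry identity \eqref{symmetricityofvarpi}.

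Finally I would establish \eqref{differentialonfunctions}. By definition $\partial_\Sigma(f) = [\Sigma, f] = \inserts_{df}\Sigma$ up to sign, and for $f \in \CIM$ the restriction to $M$ of $\inserts_{df}\Sigma$ only sees the $TM$-leg of $\Sigma|_M$ paired against $df$; since $\Sigma|_M = \sigma + (\text{terms with }\geq 2\ TM\text{-legs})$ and $\rhopush$ raises $TM$-degree, contracting once with $df$ and then restricting kills everything except the $\sigma$-term, giving $\partial_\Sigma(f)|_M = (-1)^{k-1}\inserts_{df}\sigma$; but $\partial_\Sigma(f)$ is a section of $\wedge^{k-1}A$ (it is left invariant by multiplicativity applied to the $0$-vector field side, or directly from affineness), so it is determined by its restriction to $M$, and the identity holds globally. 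I expect the main obstacle to be the bookkeeping in the second step: showing rigorously that the recursion produced by affineness, when combined with the vanishing forced by coisotropy and condition~\ref{ctrois}, has exactly the binomial/exponential coefficients claimed, rather than some other solution — this is where Lemma~\ref{Lem:skewdeltaderive1} must be invoked carefully, and where a naive degree count is not by itself enough.
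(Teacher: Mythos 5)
Your structural picture of $\Sigma|_M$ is off in a way that matters. Under the splitting $T\Gamma|_M=TM\oplus A$, the conormal bundle of $M$ is $A^*$, so coisotropy of $M$ kills the component of $\Sigma|_M$ lying in $\wedge^k A$ (the purely $A$-direction piece), \emph{not} the purely tangential $\wedge^k TM$ piece; indeed the $\wedge^k TM$ component is generically nonzero --- it is exactly the last term $-\tfrac{\minuspower{k}}{k!}\rhopush^{k-1}\sigma$ of the series \eqref{Eqn:Sigmaatbasepowerseries} you are trying to prove. Likewise, your claim that iterating condition~\ref{ctrois} forces the components with two or more $A$-legs to vanish is false: all the components $\sigma^{i,k-i}\in\sections{(\wedge^i TM)\wedge(\wedge^{k-i}A)}$, $1\leq i\leq k$, survive. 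The correct use of condition~\ref{ctrois} is different: since $\target^*(\xi)|_M=\xi+\rho^*\xi$ and $\inserts_{\target^*(\xi)}\Sigma$ is left invariant (hence lies in $\Gamma(\wedge^{k-1}A)$ along $M$), projecting onto the components with at least one $TM$-leg gives the recursion $\inserts_{\rho^*\xi}\sigma^{i,k-i}=-\inserts_{\xi}\sigma^{i+1,k-i-1}$. That recursion, which you instead hope to extract from affineness and never actually derive (``I expect each successive term to be\dots''), is the entire computational content of the proposition; attributing it to condition~\ref{cun} and leaving the coefficients to a hoped-for bookkeeping is the main gap.

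A second genuine gap is the symmetry identity \eqref{symmetricityofvarpi}. You assert that Lemma~\ref{Lem:skewdeltaderive1} ``also tells me that $\sigma$ automatically lies in $Z_k$,'' but that lemma takes $w\in Z_k$ as a \emph{hypothesis}; it cannot produce membership in $Z_k$, and without it the inductive determination of the coefficients $\tfrac{\minuspower{i-1}}{i!}$ cannot even start. The missing argument is elementary but necessary: from \eqref{differentialonfunctions} (which, as in your last paragraph, comes from $\partial_{\Sigma}(f)=\ba{\Sigma}{\target^*f}|_M=\minuspower{k+1}\inserts_{\target^*(\mathrm{d}f)}\Sigma|_M$, using $\sigma^{0,k}=0$) and the fact that $\partial_\Sigma$ is a $k$-differential, apply it to the vanishing bracket of two functions to get $0=\ba{\partial_\Sigma f_1}{f_2}+\minuspower{k-1}\ba{f_1}{\partial_\Sigma f_2}$, which is exactly the statement $\inserts_{\rho^*\mathrm{d}f_2}\inserts_{\mathrm{d}f_1}\sigma+\inserts_{\rho^*\mathrm{d}f_1}\inserts_{\mathrm{d}f_2}\sigma=0$, i.e.\ $\sigma\in Z_k$. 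With that in hand, Lemma~\ref{Lem:skewdeltaderive1} together with the recursion above yields $\sigma^{i,k-i}=\tfrac{\minuspower{i-1}}{i!}\rhopush^{i-1}\sigma$ by induction, and \eqref{Eqn:Sigmaatbasepowerseries} follows. So: fix the coisotropy statement, derive the recursion from condition~\ref{ctrois} rather than condition~\ref{cun}, and prove the $Z_k$ property before invoking Lemma~\ref{Lem:skewdeltaderive1} instead of citing it as a consequence.
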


\begin{proof}
Since $M$ is  coisotropic in $\Gamma$ with respect to $\Sigma$, we may write
\begin{equation}\label{SigmaatMdecompose}
\Sigma|_M=\sigma^{1,k-1}+\sigma^{2,k-2}+\cdots+\sigma^{k,0},
\end{equation}
where $\sigma^{i,k-i}\in\sections{(\wedge^i TM)\wedge(\wedge^{k-i}A)}$.

Also observe that, for any $1$-form $\xi\in\Omega^1(M)$, we have
\begin{equation}\label{Eqn:targetstarxi}
\target^*(\xi)|_M=\xi+\rho^*\xi\in\sections{T^*M \oplus\As},
\end{equation}
where $T^*\Gamma|_M$ is naturally identified with $T^*M \oplus A^*$.
On the other hand, Condition~\ref{ctrois} of Lemma~\ref{Lem:Sigmagroupoidmultiplicativeiff}
implies that $(\inserts_{\target^*(\xi)}\Sigma)|_M$ is tangent
to the $\source$-fibers, and therefore contains only $\Gamma (\wedge^kA)$-components.
Using Eqs.~\eqref{SigmaatMdecompose} and~\eqref{Eqn:targetstarxi}, we obtain
\begin{equation}\label{temp2}
\left\{
\begin{aligned}
\inserts_{\rho^*\xi}&\sigma^{1,k-1} = -\inserts_{\xi}\sigma^{2,k-2}\\
\inserts_{\rho^*\xi}&\sigma^{2,k-2} = -\inserts_{\xi}\sigma^{3,k-3}\\
&\vdots  \\
\inserts_{\rho^*\xi}&\sigma^{k-1,1} = -\inserts_{\xi}\sigma^{k,0}.
\end{aligned}
\right.
\end{equation}

Note that, for all $f\in\CIM$,
\begin{equation}
\label{eq:partialf}
\partial_{\Sigma}(f)=\ba{\Sigma}{\target^*f}|_M=\minuspower{k+1}\inserts_{\target^*(\mathrm{d}f)}\Sigma|_M
=\minuspower{k+1}\inserts_{\mathrm{d}f}\sigma^{1,k-1}.
\end{equation}

Since
\[ 0=\partial_{\Sigma}\ba{f_1}{f_2}=\ba{\partial_{\Sigma}(f_1)}{f_2}+\minuspower{k-1}\ba{f_1}{\partial_{\Sigma}(f_2)} ,\]
it follows that
\begin{equation}
\label{eq:sigma12}
\inserts_{\rho^*\mathrm{d}f_2}\inserts_{\mathrm{d}f_1}\sigma^{1,k-1}+
\inserts_{\rho^*\mathrm{d}f_1}\inserts_{\mathrm{d}f_2}\sigma^{1,k-1}=0
.\end{equation}

Let $\sigma=\sigma^{1,k-1}$. We will prove the following identity by induction on $i$:
\begin{equation}\label{Eqt:sigmainduction}
\sigma^{i,k-i}=\frac{\minuspower{i-1}}{i!}\rhopush^{i-1} \sigma.
\end{equation}

The case $i=1$ is obvious. Assume that Eq.~\eqref{Eqt:sigmainduction} is valid for $i$.
Then, by Eq.~\eqref{temp2},
\begin{align*}
\inserts_{\xi}\sigma^{i+1,k-i-1}&=-\inserts_{\rho^*\xi}\sigma^{i,k-i} &&\text{(by the induction assumption)} \\
&=\frac{\minuspower{i }}{i!}\inserts_{\rho^*\xi}\rhopush^{i-1} \sigma &&\text{(by Eq.~\eqref{Eqn:skewdeltaiteratedformula2})} \\
&=\frac{\minuspower{i }}{i!}\frac{1}{i+1}\inserts_{ \xi}\rhopush^{i } \sigma && \\
&=\inserts_{ \xi}\bigl(\frac{\minuspower{i }}{(i+1)!}\rhopush^{i } \sigma \bigr) . &&
\end{align*}
Thus Eq.~\eqref{Eqt:sigmainduction} is proved.
Therefore, Eq.~\eqref{SigmaatMdecompose} implies Eq.~\eqref{Eqn:Sigmaatbasepowerseries},
Eq.~\eqref{eq:partialf} implies Eq.~\eqref{differentialonfunctions},
and Eq.~\eqref{eq:sigma12} implies  Eq.~\eqref{symmetricityofvarpi}.
\end{proof}

\subsection{$k$-differentials of a  Lie algebroid}

It is known that the Schouten bracket of two multiplicative polyvector fields
on a Lie groupoid
is still multiplicative. Therefore,
 the space of multiplicative polyvector fields is a graded Lie algebra
 \cite{MR2911881}.
On the level of Lie algebroids, multiplicative $k$-vector fields
correspond to $k$-differentials of  the  Lie algebroid, whose definition
we recall below.

Given a Lie algebroid $A$, a $k$-differential is a linear map
\[ \partial:\sections{\wedge^\bullet A}\to\sections{\wedge^{\bullet+k-1}A} \]
satisfying
\begin{align*}
& \partial(P\wedge Q)=(\partial P)\wedge Q
+\minuspower{\abs{P}(k-1)}P\wedge (\partial Q),\\
& \partial\ba{P}{Q}=\ba{\partial
P}{Q}+\minuspower{(\abs{P}-1)(k-1)}\ba{P}{\partial Q},
\end{align*}
for all $P,Q\in\sections{\wedge^\bullet A}$.

The commutator of a $k_1$differential $\partial_1$ and a $k_2$-differential $\partial_2$ is the
$(k_1+k_2-1)$-differential
\[ \gradedcommutator{\partial_1}{\partial_2}=\partial_1\circ\partial_2-\minuspower{(k_1-1)(k_2-1)}\partial_2\circ\partial_1 .\]

\begin{theorem}[\cite{MR2911881}]\label{Thm:David-multi-diff}
Let $\Gamma$ be a Lie groupoid, and $A$ its Lie algebroid. Then
every multiplicative $k$-vector field $\Sigma$ induces a
$k$-differential $\partial_{\Sigma}$ by
$\overleftarrow{\partial_{\Sigma}(P)}=\ba{\Sigma}{\overleftarrow{P}}$,
for all $P\in\sections{\wedge^\bullet A}$. Here
$\overleftarrow{V}$ denotes the left invariant polyvector field on
$\Gamma$ determined by $V$.
\newline
Moreover, the map $\Sigma\mapsto\partial_{\Sigma}$ is a homomorphism of graded Lie algebras,
which is an isomorphism of graded Lie algebras provided $\Gamma$ is $s$-connected and $s$-simply connected.
\end{theorem}

In  case of Lie groups, $k$-differentials of multiplicative $k$-vector fields
can be  described more explicitly.

\begin{lemma}\label{Lem:partialSigmacalculate}
Let $\Sigma$ be a multiplicative $k$-vector field on a Lie group $G$.
\begin{enumerate}
\item The map $\intsigma:G\to\wedge^k\galgebra$ defined by
$\intsigma(g)=L_{\inverse{g}*}(\Sigma|_{g})$ is a Lie group $1$-cocycle.
\item The Lie algebra $1$-cocycle induced by $\intsigma$ is the
$k$-differential $\partial_{\Sigma}$:
\begin{equation*}
\left.\frac{d}{dt}\right|_{t=0}(\intsigma|_{\exp tx})
=\left.\frac{d}{dt}\right|_{t=0} L_{{\inverse{\exp}tx}*}(\Sigma|_{\exp tx})
=-\partial_{\Sigma}(x) ,\quad\forall x\in\galgebra
.\end{equation*}
\end{enumerate}
\end{lemma}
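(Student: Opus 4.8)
The plan is to treat the two assertions in turn, deriving the cocycle property from multiplicativity and then identifying the infinitesimal cocycle with the $k$-differential of Theorem~\ref{Thm:David-multi-diff}. For part~(1), I would start from the definition $\intsigma(g)=L_{\inverse{g}*}(\Sigma|_g)\in\wedge^k\galgebra$, trivializing $\wedge^k T_g G$ by left translation. The group multiplication on $G$ has graph $\Lambda=\set{(p,q,pq)}$, and $\Sigma$ multiplicative means $\Lambda$ is coisotropic for $\Sigma\times\Sigma\times\minuspower{k+1}\Sigma$; equivalently, in the classical bivector-field language (and its $k$-vector generalization), $\Sigma|_{gh}=L_{g*}\Sigma|_h+R_{h*}\Sigma|_g$. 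Rewriting the second term as $L_{gh*}\bigl(\Adjoint{\inverse{h}}(L_{\inverse{g}*}\Sigma|_g)\bigr)$ and the first as $L_{gh*}(L_{\inverse{h}*}\Sigma|_h)$, one gets $\intsigma(gh)=\intsigma(h)+\Adjoint{\inverse{h}}\bigl(\intsigma(g)\bigr)$, which is precisely the Lie group $1$-cocycle identity for $\intsigma:G\to\wedge^k\galgebra$ with $G$ acting on $\wedge^k\galgebra$ via $\wedge^k\Adjoint{\inverse{(\cdot)}}$ (equivalently the action induced from the left-trivialized adjoint action). I would cite Lemma~\ref{Lem:Sigmagroupoidmultiplicativeiff} or the standard Poisson-group computation to justify the additive decomposition of $\Sigma|_{gh}$; for a Lie group (where $M$ is a point) conditions~\ref{cun}--\ref{ctrois} collapse to exactly the affineness condition $\ba{\Sigma}{\overleftarrow{X}}$ left invariant, which is the infinitesimal form of multiplicativity and yields the decomposition directly.

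For part~(2), the infinitesimal cocycle associated to a Lie group $1$-cocycle $\intsigma$ is by definition $x\mapsto \left.\tfrac{d}{dt}\right|_{t=0}\intsigma(\exp tx)$, a Lie algebra $1$-cocycle valued in $\wedge^k\galgebra$ with the $\ad$-action. So I must show this derivative equals $-\partial_\Sigma(x)$. Here I would use the characterization $\overleftarrow{\partial_\Sigma(P)}=\ba{\Sigma}{\overleftarrow{P}}$ from Theorem~\ref{Thm:David-multi-diff}. Evaluating both sides of $\ba{\Sigma}{\overleftarrow{x}}=\overleftarrow{\partial_\Sigma(x)}$ at the identity $e$ and left-trivializing, the right-hand side gives $\partial_\Sigma(x)\in\galgebra\subset\wedge^{k}\galgebra$ (note $\abs{x}=1$ so $\partial_\Sigma(x)\in\sections{\wedge^k A}$, i.e.\ a constant element of $\wedge^k\galgebra$). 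The left-hand side $\ba{\Sigma}{\overleftarrow{x}}$ at $e$ is the Lie derivative of $\Sigma$ along the left-invariant vector field $\overleftarrow{x}$, whose flow is right translation $R_{\exp tx}$; hence $\bigl(\ld{\overleftarrow{x}}\Sigma\bigr)|_e=-\left.\tfrac{d}{dt}\right|_{t=0}\bigl(R_{\exp(-tx)*}\Sigma|_{\exp tx}\bigr)$. Converting the right translation to a left translation via $R_{\exp(-tx)*}=L_{\exp tx*}\rond\Adjoint{\exp(-tx)}$ and using $\Sigma|_e=\intsigma(e)=0$ (cocycles vanish at the unit), the $\Adjoint{}$-correction term drops out to first order, leaving $-\left.\tfrac{d}{dt}\right|_{t=0}L_{\exp(-tx)*}(\Sigma|_{\exp tx})=-\left.\tfrac{d}{dt}\right|_{t=0}\intsigma(\exp tx)$. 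Comparing signs gives $\left.\tfrac{d}{dt}\right|_{t=0}\intsigma(\exp tx)=-\partial_\Sigma(x)$, as claimed.

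The main obstacle is bookkeeping with signs and with the left-versus-right trivializations: the $k$-vector field case introduces the Koszul sign $\minuspower{k+1}$ in the definition of multiplicativity, and one must check that the additive decomposition $\Sigma|_{gh}=L_{g*}\Sigma|_h+R_{h*}\Sigma|_g$ holds verbatim (without extra signs) in the $k$-vector setting — this follows because, for a group, the coisotropy condition is linear in $\Sigma$ and the sign $\minuspower{k+1}$ on the third factor is exactly compensated by the orientation of $\Lambda$ in the third slot. A clean way to sidestep ad hoc sign chasing is to invoke Theorem~\ref{Thm:David-multi-diff} for the derivation identity and Lemma~\ref{Lem:Sigmagroupoidmultiplicativeiff} (degenerating to the group case) for the decomposition, so that the only genuine computation is the one-parameter-flow manipulation in the last paragraph, where the vanishing $\Sigma|_e=0$ makes the adjoint correction harmless. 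I would also remark that $\intsigma(e)=0$ because $\Sigma|_e\in\wedge^k T_e G$ must be fixed by the cocycle relation at $g=h=e$, which forces $\Sigma|_e=0$ for $k\geq 1$ (equivalently, $M=\{e\}$ is coisotropic, hence $\Sigma|_e$ has no component, which is automatic but here says the value is zero).
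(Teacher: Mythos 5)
Your argument is correct, and there is in fact nothing in the paper to compare it against: the lemma is stated without proof, as a standard fact (the $k=2$ case is the classical Poisson--group computation), and the additive identity $\Sigma|_{gh}=L_{g*}\Sigma|_h+R_{h*}\Sigma|_g$ you start from is exactly what the paper itself invokes without justification in the proof of Lemma~\ref{prop:wedgedelta1cocycle}; your cocycle identity $\intsigma(gh)=\intsigma(h)+\Adjoint{\inverse{h}}\bigl(\intsigma(g)\bigr)$ is the convention the paper uses in Eq.~\eqref{Eqt:c}, and your part~(2) computation, evaluating $\ba{\Sigma}{\overleftarrow{x}}=\overleftarrow{\partial_\Sigma(x)}$ at the unit via the flow of $\overleftarrow{x}$ and using $\intsigma(e)=0$ to kill the adjoint correction, is the standard route and yields the stated formula.

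Two points should be tightened, though neither is a genuine gap. First, when you specialize Lemma~\ref{Lem:Sigmagroupoidmultiplicativeiff} to a group over a point, the conditions do not collapse to affineness alone: condition~\ref{ctrois} is vacuous, but condition~\ref{cdeux} becomes $\Sigma|_e=0$ and is genuinely needed (affine plus vanishing at the unit is the correct characterization of multiplicativity on a group); you do observe $\Sigma|_e=0$ later, so the proof stands, but the parenthetical claim should be corrected. Second, your signs in part~(2) rest on two nonstandard choices that compensate each other: with the usual conventions one has $\ba{\Sigma}{\overleftarrow{x}}=-\ld{\overleftarrow{x}}\Sigma$ (graded antisymmetry, since $\ba{\overleftarrow{x}}{\Sigma}=\ld{\overleftarrow{x}}\Sigma$) and $(\ld{\overleftarrow{x}}\Sigma)|_e=+\left.\tfrac{d}{dt}\right|_{t=0}R_{\exp(-tx)*}(\Sigma|_{\exp tx})$, whereas you take $+\ld{\overleftarrow{x}}\Sigma$ in the first identity and a minus sign in the second; the product is the same and, together with $R_{\exp(-tx)*}\Sigma|_{\exp tx}=\Adjoint{\exp tx}\bigl(\intsigma(\exp tx)\bigr)$ and $\intsigma(e)=0$, you do land on the correct conclusion $\left.\tfrac{d}{dt}\right|_{t=0}\intsigma(\exp tx)=-\partial_\Sigma(x)$, but you should fix one convention explicitly. (Also, $R_{\exp(-tx)*}=L_{\exp tx*}\circ\Adjoint{\exp(-tx)}$ is not composable as written; the identity you want on $T_{\exp tx}G$ is $R_{\exp(-tx)*}=\Adjoint{\exp tx}\circ L_{\exp(-tx)*}$.)
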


\subsection{Infinitesimal data of multiplicative vector fields on  Lie 2-groups}

This section is devoted to the description of infinitesimal
data of multiplicative vector fields on Lie 2-groups.

\subsubsection{Multiplicative polyvector fields on Lie 2-groups}

Let $\groupcrossedmoduletriple{\thetagroup}{\Phi}{ \ggroup}$ be a
Lie group crossed module and $\crossedmoduletwogroup$ the
corresponding  Lie 2-group. Consider a $k$-vector field
$\gthetaV\in \polyvectorfields{k}{\crossedmoduletwogroup}$.
\begin{definition}
A multiplicative $0$-vector field on $\crossedmoduletwogroup$ is a  smooth function
$f\in \cinf{\crossedmoduletwogroup}$ subject to the following
conditions:
\begin{gather*}
f(p\gpmulti q)=f(p)+f(q), \quad \forall p,q \in\crossedmoduletwogroup; \\
f(p\gpoidmulti q)=f(p)+f(q), \quad \forall p,q \in \crossedmoduletwogroup \text{ s.t. } \target(p)=\source(q) .
\end{gather*}
For $k\geq 1$, a $k$-vector field $\gthetaV$ is called  multiplicative if
it is multiplicative with respect to both the group and  the
groupoid structure on $\crossedmoduletwogroup$. In other words, the graph of
the group multiplication
\[ \Lambda^{gp}=\set{(r_1, r_2, r_1 \gpmulti r_2)| r_1, r_2\in\crossedmoduletwogroup} \]
and the graph of  the groupoid multiplication
\[ \Lambda^{gpd} =\set{(r_1, r_2, r_1 \gpoidmulti r_2)| r_1, r_2\in\crossedmoduletwogroup,\target(r_1)=\source(r_2)} \]
are both coisotropic with respect to the $k$-vector field
$(\gthetaV,\gthetaV,(-1)^{k+1}\gthetaV)$ on
$(\crossedmoduletwogroup)\times(\crossedmoduletwogroup)\times(\crossedmoduletwogroup)$.
\end{definition}

Denote the space of  multiplicative $k$-vector fields by
$\strictmultiplicative{k}{\crossedmoduletwogroup}$.
The following lemma follows immediately.

\begin{lemma}
When endowed with the Schouten bracket, the space of multiplicative polyvector fields
\[ \strictmultiplicative{\bullet}{\crossedmoduletwogroup}:=\oplus_{k\geq 0}\strictmultiplicative{k}{\crossedmoduletwogroup} \]
is a graded Lie algebra.
\end{lemma}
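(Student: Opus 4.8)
The plan is to reduce the statement to the classical fact (recalled in Theorem~\ref{Thm:David-multi-diff}) that the Schouten bracket of two multiplicative polyvector fields on a Lie groupoid is again multiplicative, together with the observation that being multiplicative with respect to \emph{two} compatible structures (the group multiplication $\gpmulti$ and the groupoid multiplication $\gpoidmulti$ on $\crossedmoduletwogroup$) is simply the intersection of two such conditions. The key structural input is that both $\Lambda^{gp}$ and $\Lambda^{gpd}$ are themselves ``multiplicative-type'' submanifolds, so that coisotropy is preserved under the bracket in each case separately.

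First I would recall that, by Theorem~\ref{Thm:David-multi-diff} (or rather the cited Theorem~2.19/2.20 of~\cite{MR2911881}) applied to the group $\crossedmoduletwogroup$ viewed as a groupoid over a point, the subspace $\bigoplus_{k\geq 0}\polyvectorfields{k}{\crossedmoduletwogroup}_{\gpmulti\text{-mult}}$ of polyvector fields that are multiplicative for the group structure is closed under the Schouten bracket; likewise, applying the same theorem to the Lie groupoid $\crossedmoduletwogroup\toto\ggroup$, the subspace of $\gpoidmulti$-multiplicative polyvector fields is closed under the Schouten bracket. Since $\strictmultiplicative{\bullet}{\crossedmoduletwogroup}$ is by definition the intersection of these two graded subspaces inside $\polyvectorfields{\bullet}{\crossedmoduletwogroup}$, and the intersection of two subalgebras of a graded Lie algebra is again a graded subalgebra, the bracket of two elements of $\strictmultiplicative{\bullet}{\crossedmoduletwogroup}$ lands in $\strictmultiplicative{\bullet}{\crossedmoduletwogroup}$.

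Next I would check the degree-$0$ corner separately, since multiplicative $0$-vector fields were defined by hand as functions additive with respect to both $\gpmulti$ and $\gpoidmulti$: if $f$ is such a function and $\gthetaV$ is a multiplicative $k$-vector field, then $\ba{\gthetaV}{f}=\partial_{\gthetaV}(f)$ is a multiplicative $(k-1)$-vector field, which follows from the fact that taking the Schouten bracket with a multiplicative function preserves multiplicativity --- again a consequence of the coisotropy of $\Lambda^{gp}$ and $\Lambda^{gpd}$ with respect to $(\gthetaV,\gthetaV,(-1)^{k+1}\gthetaV)$ contracted against $\mathrm{d}f$ on each factor, using that $f$ pulls back correctly along the two multiplications. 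The graded Jacobi identity and graded skew-symmetry are then inherited verbatim from the ambient graded Lie algebra $(\polyvectorfields{\bullet}{\crossedmoduletwogroup},\ba{\tobefilledin}{\tobefilledin})$.

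The only point requiring genuine care --- and hence the ``main obstacle'' --- is verifying that the two closure statements I invoke from~\cite{MR2911881} really do apply in the present situation, namely that $\Lambda^{gp}\subset(\crossedmoduletwogroup)^3$ and $\Lambda^{gpd}\subset(\crossedmoduletwogroup)^3$ are each of the form needed for ``coisotropic with respect to $(\gthetaV,\gthetaV,(-1)^{k+1}\gthetaV)$'' to be preserved under the Schouten bracket. For $\Lambda^{gp}$ this is the graph of an associative multiplication, which is exactly the hypothesis in the Lie group case; for $\Lambda^{gpd}$ it is the graph of the groupoid multiplication, exactly the hypothesis in the Lie groupoid case. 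Both are covered by the cited result, so no new geometric input is needed --- the proof is essentially a bookkeeping argument assembling two instances of a known theorem, and I would present it as the short lemma it is.
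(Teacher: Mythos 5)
Your proposal is correct and follows essentially the same route as the paper, which simply observes that the closure of multiplicative polyvector fields under the Schouten bracket is known for Lie groupoids (hence for the group structure, viewed as a groupoid over a point, and for the groupoid structure $\crossedmoduletwogroup\toto\ggroup$ separately), so the intersection defining $\strictmultiplicative{\bullet}{\crossedmoduletwogroup}$ is a graded Lie subalgebra of all polyvector fields. Your separate check of the degree-$0$ piece is a reasonable extra precaution but introduces no new idea beyond what the paper treats as immediate.
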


\begin{remark}
It is easy to see that $f\in \strictmultiplicative{0}{\crossedmoduletwogroup}$
if and only if $f(g,\alpha)=\nu(\alpha)$, $\forall \alpha\in\thetagroup$
 and $g\in\ggroup$, where $\nu\in C^\infty (\thetagroup )$
satisfies $\nu|_{\alpha\beta}=\nu|_{\alpha}+\nu|_{\beta}$ and
$\nu|_{g\groupaction \alpha}=\nu|_{\alpha}$, $\forall \alpha,\beta\in\thetagroup$.
\end{remark}

\subsubsection{The infinitesimal   data}

It is natural to ask what is the infinitesimal   data of a
multiplicative $k$-vector field (with $k\geq 1$) on a Lie 2-group.
To answer this question, we need, as a first step, to describe the Lie algebroid $\LiealgebroidoverG$
of the groupoid $\crossedmoduletwogroup\rightrightarrows \ggroup$.

It is simple to see that
$ \LiealgebroidoverG$ is  the transformation Lie algebroid
$\ggroup\rtimes \thetaalgebra \to G$,
where the $\thetaalgebra$-action on $G$ is
$ u\mapsto \overleftarrow{ \phi(u)}, \forall~~ u\in\thetaalgebra$.
Here the superscript $\overleftarrow{ \tobefilledin}$ stands for the left invariant
vector field on $\ggroup$ associated to a Lie algebra   element in
$\galgebra$.
It follows from Theorem \ref{Thm:David-multi-diff}
 that a multiplicative
$k$-vector field $\gthetaV \in
\strictmultiplicative{k}{\crossedmoduletwogroup}$ induces a
$k$-differential
\begin{equation}\label{Eqt:defofgroupoiddifferential}
\groupoiddifferential{}:\sections{\wedge^\bullet\LiealgebroidoverG}\to\sections{\wedge^{\bullet+k-1}\LiealgebroidoverG},
\end{equation}
of the Lie algebroid $\LiealgebroidoverG$.
In particular, we have a map
\[ \groupoiddifferential{}:C^\infty(\ggroup)\to
\sections{\wedge^{k-1}\LiealgebroidoverG}\cong
C^\infty(\ggroup,\wedge^{k-1}\thetaalgebra) .\]

Since $\groupoiddifferential{}$ is a derivation, i.e.
$\groupoiddifferential{}(f_1f_2)=f_2\groupoiddifferential{}(f_1)
+f_1\groupoiddifferential{}(f_2)$, for all $ f_1,f_2\in
\cinf{\ggroup}$, $\groupoiddifferential{}$
induces a  $\wedge^{k-1} \thetaalgebra$-valued
vector field on $G$, which in turn can be identified with
a $\galgebra\otimes (\wedge^{k-1}\thetaalgebra)$-valued function on $G$.
Here we identify the tangent bundle $TG$ with $G\times \galgebra$ by
left translations. By skew symmetrization,  we thus
obtain a   $\galgebra\wedge(\wedge^{k-1}\thetaalgebra)$-valued function on $G$,
denoted by $ \wedgedelta$. More explicitly, we have
\begin{equation}\label{Eqt:groupoiddifferentiononf}
\groupoiddifferential{}(f)|_g=\minuspower{k-1}\inserts_{(L^*_{ {g}}\mathrm{d}f)}\wedgedelta|_g,
\qquad\forall~~ f\in \cinf{\ggroup},~g\in\ggroup.
\end{equation}

For $k\geq 1$, let
\begin{equation*}
W_k =\set{w\in\galgebra\wedge(\wedge^{k-1}\thetaalgebra)
\text{ s.t. } \inserts_{\zeta_1}\inserts_{\phi^*\zeta_2}w
=- \inserts_{\zeta_2}\inserts_{\phi^*\zeta_1}w,  \ \forall~ \zeta_1, \zeta_2\in
\galgebrastar}.
\end{equation*}

We have
\begin{lemma}\label{prop:wedgedelta1cocycle}
The function $\wedgedelta:  G\to \galgebra\wedge(\wedge^{k-1}\thetaalgebra)$
is a group $1$-cocycle valued in $W_k$.
Here $\ggroup$ acts on $\galgebra$ by the  adjoint action, and
on $\thetaalgebra$ by the induced  action from the crossed module structure.
\end{lemma}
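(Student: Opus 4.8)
The plan is to realize $\wedgedelta$, up to a left trivialization, as the lowest-order component $\sigma^{1,k-1}$ of $\gthetaV$ along the base $\ggroup$, and then to extract both assertions from results already available: the $W_k$-valuedness from the symmetry relation of Proposition~\ref{Thm:SigmaalongMdecomposition}, and the $1$-cocycle identity from the fact that $\gthetaV$ is also multiplicative for the \emph{group} structure on $\crossedmoduletwogroup$, via Lemma~\ref{Lem:partialSigmacalculate}. First I would apply Proposition~\ref{Thm:SigmaalongMdecomposition} to the Lie groupoid $\crossedmoduletwogroup\toto\ggroup$ and the multiplicative $k$-vector field $\gthetaV$, obtaining a section $\sigma=\sigma^{1,k-1}\in\sections{T\ggroup\wedge(\wedge^{k-1}\LiealgebroidoverG)}$ with $\groupoiddifferential{}(f)=\minuspower{k-1}\inserts_{\mathrm{d}f}\sigma$ for all $f\in\cinf{\ggroup}$. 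Comparing with the defining formula~\eqref{Eqt:groupoiddifferentiononf} for $\wedgedelta$, and noting that as $f$ varies the covector $(L^*_g\mathrm{d}f)|_e$ exhausts $\galgebrastar\cong T^*_e\ggroup$, we see that $\wedgedelta|_g$ is exactly $\sigma|_g$ with the $T\ggroup$-slot left-trivialized via $T\ggroup\cong\ggroup\times\galgebra$ and the $\LiealgebroidoverG$-slots identified with $\thetaalgebra$ via $\LiealgebroidoverG=\ggroup\rtimes\thetaalgebra$.

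Granting this identification, the $W_k$-valuedness is immediate. Under $T\ggroup\cong\ggroup\times\galgebra$ the anchor $\rho\colon u\mapsto\overleftarrow{\phimap(u)}$ of $\LiealgebroidoverG$ becomes the constant map $\phimap\colon\thetaalgebra\to\galgebra$, so $\rho^*$ becomes $\phimap^*\colon\galgebrastar\to\thetaalgebrastar$; the symmetry relation $\inserts_\zeta\inserts_{\rho^*\xi}\sigma=-\inserts_\xi\inserts_{\rho^*\zeta}\sigma$ of Proposition~\ref{Thm:SigmaalongMdecomposition}, evaluated at $g$ with $\xi,\zeta$ chosen to be left-invariant one-forms dual to $\zeta_1,\zeta_2\in\galgebrastar$, then reads precisely as the defining relation of $W_k$ for $\wedgedelta|_g$.

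For the cocycle property I would forget the groupoid structure and treat $\crossedmoduletwogroup$ as a Lie group with Lie algebra $\galgebra\ltimes\thetaalgebra$. By Lemma~\ref{Lem:partialSigmacalculate}(1) the map $\intsigma\colon r\mapsto L_{\inverse{r}*}(\gthetaV|_r)$ is a group $1$-cocycle valued in $\wedge^k(\galgebra\ltimes\thetaalgebra)$ for the adjoint action. The groupoid units form a subgroup of $\crossedmoduletwogroup$ isomorphic to $\ggroup$, since $(g,\unit)\gpmulti(h,\unit)=(gh,\unit)$, so $g\mapsto\intsigma(g,\unit)$ is again a group $1$-cocycle on $\ggroup$. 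Because every structure map of the Lie 2-group is a group homomorphism, left translation by $(\inverse{g},\unit)$ preserves both the unit submanifold and the $\source$-fibration, hence along the units it respects the splitting $T\crossedmoduletwogroup|_\ggroup=T\ggroup\oplus\LiealgebroidoverG$, restricting to $L_{\inverse{g}*}$ on $T\ggroup$ and to the canonical projection onto $\thetaalgebra$ on $\LiealgebroidoverG=\ggroup\rtimes\thetaalgebra$. Consequently $\intsigma(g,\unit)$ splits by bidegree in $\wedge^\bullet\galgebra\otimes\wedge^\bullet\thetaalgebra$, and by the first paragraph its $(\wedge^1\galgebra)\wedge(\wedge^{k-1}\thetaalgebra)$-component is precisely $\wedgedelta|_g$. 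A short computation from $(g,\alpha)\gpmulti(h,\beta)=(gh,(\inverse{h}\groupaction\alpha)\beta)$ shows that $\Adjoint{(g,\unit)}$ acts on $\galgebra\ltimes\thetaalgebra$ as $\Adjoint{g}$ on $\galgebra$ and as the crossed-module-induced $\ggroup$-action on $\thetaalgebra$; in particular it preserves the splitting, hence the bidegree on $\wedge^k(\galgebra\ltimes\thetaalgebra)$. Extracting the $(1,k-1)$-component of $\intsigma(gh,\unit)=\intsigma(g,\unit)+\Adjoint{(g,\unit)}\bigl(\intsigma(h,\unit)\bigr)$ yields $\wedgedelta|_{gh}=\wedgedelta|_g+\Adjoint{g}\bigl(\wedgedelta|_h\bigr)$ with the stated action, which is the asserted $1$-cocycle condition.

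The step I expect to demand the most care is the bookkeeping in the last paragraph: verifying that left translation in the group $\crossedmoduletwogroup$, once restricted to the submanifold of groupoid units, is compatible with the transformation-Lie-algebroid trivialization $\LiealgebroidoverG=\ggroup\rtimes\thetaalgebra$ --- so that no spurious twist enters when one passes between the group cocycle $\intsigma$ and the groupoid-theoretic function $\wedgedelta$ --- together with the explicit computation of $\Adjoint{(g,\unit)}$. The rest is a matter of tracing through the identifications already set up.
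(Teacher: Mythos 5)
Your proposal is correct and is essentially the paper's own argument: Proposition~\ref{Thm:SigmaalongMdecomposition}, applied to $\crossedmoduletwogroup\toto\ggroup$, identifies $\wedgedelta|_g$ with the left-trivialized lowest-order component of $\gthetaV|_g$ and gives the $W_k$-valuedness from the symmetry relation, while the cocycle property comes from group-multiplicativity of $\gthetaV$ along the units --- you package this via Lemma~\ref{Lem:partialSigmacalculate}(1) restricted to the subgroup $\ggroup\times\{\unit_\thetagroup\}$ together with the bidegree-preservation of $\Adjoint{(g,\unit)}$, whereas the paper substitutes the power series~\eqref{Eqn:Vatbasepowerseries} into $\gthetaV|_{gh}=L_{g*}\gthetaV|_h+R_{h*}\gthetaV|_g$, which is the same computation. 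The only slip is your final displayed identity: since $\intsigma$ and $\wedgedelta$ are \emph{left}-trivialized, the cocycle condition comes out as $\wedgedelta|_{gh}=\wedgedelta|_h+\Adjoint{\inverse{h}}\bigl(\wedgedelta|_g\bigr)$ (the convention of Eq.~\eqref{Eqt:c}), not $\wedgedelta|_{gh}=\wedgedelta|_g+\Adjoint{g}\bigl(\wedgedelta|_h\bigr)$; this is a harmless convention issue that affects neither the argument nor the conclusion.
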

\begin{proof}
Proposition  \ref{Thm:SigmaalongMdecomposition} describes how a multiplicative
vector field on a Lie groupoid looks like along the base manifold.
 Now we apply this theorem to the groupoid
$\crossedmoduletwogroup\toto G$. Identify $TG$ with $G\times \galgebra$
via left translations.
We write $\gthetaV|_g$ for the value of $\gthetaV$ at $(g, 1_{\thetagroup})$.
For all $g\in G$, we have
\begin{multline}\label{Eqn:Vatbasepowerseries}
\gthetaV|_g = L_{g*}\left(\frac{\Id-e^{-\phipush}}{\phipush}\big(\wedgedelta|_g\big)\right) \\
= L_{g*}\left(\wedgedelta|_g - \thalf \phipush \wedgedelta|_g +\frac{1}{3!}\phipush^2 \wedgedelta|_g+\cdots
+\tfrac{\minuspower{k-1}}{k!}\phipush^{k-1}\wedgedelta|_g\right)
.\end{multline}

Here $\phipush:  \ \wedge^\bullet(\crossedmodulealgebra)\to
 \wedge^\bullet(\crossedmodulealgebra)$
is a degree-$0$   derivation of the exterior algebra
$\wedge^\bullet(\crossedmodulealgebra)$ such that
  $\phipush( x +u)=\phi(u)$,  $\forall~  x \in\galgebra$, $u\in\thetaalgebra$,
  and,   by abuse of notation,  $L_{g*}$ denotes the tangent map of
 the left  translation by  $(g, 1_{\thetagroup})$ on the group
 $\crossedmoduletwogroup$.

Since $\gthetaV$ is multiplicative with respect to the group structure
on $\crossedmoduletwogroup$,   it follows that
\[ \gthetaV|_{gh}=L_{g*}\gthetaV|_{h}+R_{h*}\gthetaV|_{g} ,\]
where $R_{h*}$ denotes the tangent map of the right translation by $(h,1_{\thetagroup})$
in the group $\crossedmoduletwogroup$.
Substituting Eq.~\eqref{Eqn:Vatbasepowerseries} into the
equation above, we see that $\wedgedelta$ is indeed a Lie group 1-cocycle.

Moreover, Proposition~\ref{Thm:SigmaalongMdecomposition} implies that
\begin{equation}\label{symmetricityofwedgedelta}
\inserts_{\zeta_1}\inserts_{\phi^*\zeta_2}(\wedgedelta|_g)=-\inserts_{\zeta_2}\inserts_{\phi^*\zeta_1}(\wedgedelta|_g),
\quad\forall\zeta_1,\zeta_2\in \galgebrastar.
\end{equation}
As a consequence, $\wedgedelta$ takes values in $W_k$. This concludes the proof.
\end{proof}

Taking the derivative of $\wedgedelta$ at the unit:
\begin{equation}\label{eq:delta}
\skewdelta(x)= -\left.\frac{d}{dt}\right|_{t=0} \wedgedelta|_{\exp tx}, \quad\forall x\in\galgebra,
\end{equation}
we obtain the following
\begin{corollary}
Any multiplicative $k$-vector field on a Lie 2-group $\crossedmoduletwogroup$ induces a Lie
 algebra $1$-cocycle $\skewdelta:\galgebra\to\galgebra\wedge(\wedge^{k-1}\thetaalgebra)$.
\end{corollary}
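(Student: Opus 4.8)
The plan is to recognize $\skewdelta$ as the infinitesimal counterpart of the smooth group $1$-cocycle $\wedgedelta$ supplied by Lemma~\ref{prop:wedgedelta1cocycle}, and then to invoke the classical principle that the derivative at the unit of a smooth Lie group $1$-cocycle is a Lie algebra $1$-cocycle with respect to the differentiated representation.

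In detail, I would proceed as follows. First, fix the representation $\rho$ of $\ggroup$ on $E:=\galgebra\wedge(\wedge^{k-1}\thetaalgebra)$ in which $\ggroup$ acts by $\Ad$ on the $\galgebra$-leg and by the crossed-module action on each $\thetaalgebra$-leg, and observe that this genuinely preserves $E$ inside $\wedge^{k}(\crossedmodulealgebra)$ because $\Ad_g$ preserves $\galgebra$ and the crossed-module action preserves $\thetaalgebra$. Second, extract from the proof of Lemma~\ref{prop:wedgedelta1cocycle} the explicit group cocycle identity obeyed by $\wedgedelta$, and put it in the standard form $c(gh)=c(g)+\rho(g)\bigl(c(h)\bigr)$ --- after composing with group inversion if necessary, $c(g):=\wedgedelta|_{g^{-1}}$ --- so that $\skewdelta(x)=\tfrac{d}{dt}\big|_{0}\,c(\exp tx)$; the minus sign placed in Eq.~\eqref{eq:delta} is precisely what makes this work without an extra sign. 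Third, differentiate: the cleanest route is to note that $g\mapsto\bigl(g,c(g)\bigr)$ is a Lie group homomorphism from $\ggroup$ into the semidirect product $\ggroup\ltimes_{\rho}E$, so that its differential $x\mapsto\bigl(x,\skewdelta(x)\bigr)$ is a Lie algebra homomorphism into $\galgebra\ltimes_{\rho_*}E$; reading off the $E$-component of this map evaluated on $\ba{x}{y}$ then gives the Lie algebra $1$-cocycle identity $\skewdelta(\ba{x}{y})=\rho_*(x)\bigl(\skewdelta(y)\bigr)-\rho_*(y)\bigl(\skewdelta(x)\bigr)$, where $\rho_*$ is $\ad$ on $\galgebra$ and the infinitesimal action $\moduleaction$ on $\thetaalgebra$. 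Finally, $\skewdelta$ automatically takes values in $W_k$, since $W_k$ is a linear subspace of $E$ containing the curve $t\mapsto\wedgedelta|_{\exp tx}$ by Lemma~\ref{prop:wedgedelta1cocycle}, hence its velocity at $t=0$ as well.

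I do not expect a genuine obstacle here: once Lemma~\ref{prop:wedgedelta1cocycle} is available, this is a routine differentiation, and the only points that repay some care are the bookkeeping of the module structure (using $\Ad$ together with the crossed-module action, and checking that this preserves $E$) and the sign convention in Eq.~\eqref{eq:delta}, which --- as noted --- is chosen exactly so the cocycle identity emerges sign-free. If one would rather avoid the semidirect-product packaging, there is an equally short alternative: compute the mixed second derivative $\partial_s\partial_t\big|_{0}\,c\bigl(\exp(sx)\exp(ty)\bigr)$ directly from $c(gh)=c(g)+\rho(g)c(h)$, do it once for $(x,y)$ and once for $(y,x)$, and subtract; the antisymmetrization produces $\skewdelta(\ba{x}{y})$ on one side and $\rho_*(x)\skewdelta(y)-\rho_*(y)\skewdelta(x)$ on the other in a couple of lines.
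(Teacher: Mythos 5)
Your proposal is correct and takes essentially the same route as the paper, where the corollary is an immediate consequence of Lemma~\ref{prop:wedgedelta1cocycle}: one differentiates the Lie group $1$-cocycle $\wedgedelta$ at the unit as in Eq.~\eqref{eq:delta} and invokes the standard fact that this yields a Lie algebra $1$-cocycle for the differentiated module structure ($\Ad$ on the $\galgebra$-leg, the crossed-module action on the $\thetaalgebra$-legs). Your care with the cocycle convention and the observation that the values remain in $W_k$ are consistent with the paper's conventions.
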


\begin{lemma}\label{Thm:tangenttoTheata}
Identify  $\thetagroup$ with the   Lie subgroup $\{\unit_{\ggroup}\}\times \thetagroup$
 of $\crossedmoduletwogroup$, where $\unit_{\ggroup}$ is the unit element of $\ggroup$.
Then any multiplicative $k$-vector field $\gthetaV$ ($k\geq 1$) is
tangent to $\thetagroup$, and therefore  defines a multiplicative
$k$-vector field $\gthetaV|_{\thetagroup}$ on $\thetagroup$.
\end{lemma}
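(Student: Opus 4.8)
The plan is to identify $\thetagroup\cong\set{\unit_\ggroup}\times\thetagroup$ with the source-fibre $\source^{-1}(\unit_\ggroup)$ of the groupoid $\crossedmoduletwogroup\toto\ggroup$, to show that $\gthetaV$ is tangent to it by propagating the affineness of $\gthetaV$ out of the group unit, and finally to observe that the restriction of $\gthetaV$ to the subgroup $\thetagroup$ is automatically multiplicative. Two preliminary observations: first, under the description of the Lie 2-group in Proposition~\ref{thm:2groupstructuredetails} one has $\source(g,\alpha)=g$, so $\source^{-1}(\unit_\ggroup)=\set{(\unit_\ggroup,\alpha)\mid\alpha\in\thetagroup}$ is exactly the Lie subgroup $\thetagroup$; it contains the group unit $\unit_\gpmulti=(\unit_\ggroup,\unit_\thetagroup)$, and its tangent spaces are the source-fibre directions $T^\source\crossedmoduletwogroup$, which are spanned by the left-invariant vector fields $\overleftarrow{X}$, $X\in\sections{\LiealgebroidoverG}$. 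Second, a multiplicative $k$-vector field on any Lie group vanishes at the unit: evaluating the coisotropy of the graph $\Lambda^{gp}$ with respect to $(\gthetaV,\gthetaV,\minuspower{k+1}\gthetaV)$ at $(\unit_\gpmulti,\unit_\gpmulti,\unit_\gpmulti)$ is a one-line computation that forces $\gthetaV|_{\unit_\gpmulti}=0$; applied to $\crossedmoduletwogroup$ this gives $\gthetaV|_{\unit_\gpmulti}=0\in\wedge^kT_{\unit_\gpmulti}\thetagroup$.

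The heart of the argument is the tangency of $\gthetaV$ along $\thetagroup$. By condition~\ref{cun} of Lemma~\ref{Lem:Sigmagroupoidmultiplicativeiff}, $\gthetaV$ is affine, so $\ba{\gthetaV}{\overleftarrow{X}}$ is left-invariant --- in particular tangent to every source-fibre --- for all $X\in\sections{\LiealgebroidoverG}$. Fix such an $X$, let $\psi_t$ be its flow, and note that $\psi_t$ preserves every source-fibre (because $\overleftarrow{X}$ is tangent to them), in particular it maps $\thetagroup$ to itself. For $q\in\thetagroup$, consider the curve $t\mapsto (\psi_{-t})_*(\gthetaV|_{\psi_t(q)})$ in the fixed vector space $\wedge^kT_q\crossedmoduletwogroup$. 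Its derivative is $-(\psi_{-t})_*(\ba{\gthetaV}{\overleftarrow X}|_{\psi_t(q)})$, which belongs to $\wedge^kT_q\thetagroup$ since $\ba{\gthetaV}{\overleftarrow X}$ is source-fibre-tangent and $\psi_{-t}$ preserves $\thetagroup$. Hence, if $\gthetaV|_q\in\wedge^kT_q\thetagroup$, then $\gthetaV|_{\psi_t(q)}\in\wedge^kT_{\psi_t(q)}\thetagroup$ for all $t$. Starting from $q=\unit_\gpmulti$, where $\gthetaV$ vanishes by the second observation, and composing finitely many such flows --- which reach the whole identity component of $\thetagroup$, as the $\overleftarrow{X}$ span $T^\source\crossedmoduletwogroup$ --- we conclude that $\gthetaV$ is tangent to $\thetagroup$. (If $\thetagroup$ is disconnected one works with its identity component, or invokes the running hypothesis that $\crossedmoduletwogroup$, hence $\thetagroup$, is connected, since as a manifold $\crossedmoduletwogroup=\ggroup\times\thetagroup$.)

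Granted tangency, $\gthetaV$ restricts to a $k$-vector field $\gthetaV|_{\thetagroup}$ on $\thetagroup$, and its multiplicativity with respect to the group structure of $\thetagroup$ is immediate: the graph of multiplication in $\thetagroup$ is $\Lambda^{gp}\cap(\thetagroup\times\thetagroup\times\thetagroup)$, $\gthetaV$ is tangent to $\thetagroup\times\thetagroup\times\thetagroup$, and $\Lambda^{gp}$ is coisotropic for $(\gthetaV,\gthetaV,\minuspower{k+1}\gthetaV)$, so $\Lambda^{gp}\cap(\thetagroup)^{3}$ is coisotropic for $(\gthetaV|_{\thetagroup},\gthetaV|_{\thetagroup},\minuspower{k+1}\gthetaV|_{\thetagroup})$ inside $(\thetagroup)^{3}$.

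I expect the propagation step to be the only genuine obstacle. Affineness by itself does not make $\gthetaV$ tangent to the source-fibres --- indeed, by Proposition~\ref{Thm:SigmaalongMdecomposition}, over a generic point of $\ggroup$ the value of $\gthetaV$ has a nonzero component transverse to the source-fibre --- so the group structure is genuinely needed, but only to pin down the single value $\gthetaV|_{\unit_\gpmulti}=0$; affineness is then used purely to spread tangency from that point along the source-fibre $\thetagroup$. The remaining pieces (identifying $\thetagroup$ with $\source^{-1}(\unit_\ggroup)$ and checking multiplicativity of the restriction) are routine.
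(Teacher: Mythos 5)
Your proof is correct for connected $\thetagroup$, but it takes a genuinely different route from the paper's. You use the group structure only to pin down the single value $\gthetaV|_{(\unit_{\ggroup},\unit_{\thetagroup})}=0$, and the groupoid structure through affineness (condition~(1) of Lemma~\ref{Lem:Sigmagroupoidmultiplicativeiff}), which makes $\ba{\gthetaV}{\overleftarrow{X}}$ left-invariant and hence tangent to the $\source$-fibres; tangency is then transported from the unit along $\thetagroup=\source^{-1}(\unit_{\ggroup})$ by the flows of the $\overleftarrow{X}$ via the Lie-derivative identity for the curve $(\psi_{-t})_*(\gthetaV|_{\psi_t(q)})$. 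The paper instead argues pointwise: writing $i$ for the groupoid inversion, multiplicativity gives $i_*\gthetaV=\minuspower{k+1}\gthetaV$ and $\ba{\gthetaV}{\target^*f}=\gpoidleftmove{\groupoiddifferential{}(f)}$, so for each $\alpha\in\thetagroup$ one computes $i_*\bigl(\ba{\gthetaV}{\source^*f}|_{\alpha}\bigr)=\minuspower{k+1}L_{i(\alpha)*}\bigl(\groupoiddifferential{}(f)|_{\unit_{\ggroup}}\bigr)=0$, the vanishing at $\unit_{\ggroup}$ coming from Eq.~\eqref{Eqt:groupoiddifferentiononf} and the cocycle property of $\wedgedelta$ (Lemma~\ref{prop:wedgedelta1cocycle}) --- essentially the same ``vanishing at the unit'' input you use, but injected at every point of $\thetagroup$ at once through the inversion. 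Your route is more elementary (no $\wedgedelta$, no inversion formula, just an ODE/flow argument), which is a real merit; what it costs is connectedness.

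That cost is the one genuine gap to flag: your propagation only reaches the identity component of $\thetagroup$, and your parenthetical fix is not available --- there is no running connectedness hypothesis at this point of the paper (connectedness only enters from Theorem~\ref{Thm:pairtoSigma} onward), and Lemma~\ref{Thm:tangenttoTheata} is used in Theorem~\ref{Thm:Sigmatopair} for arbitrary Lie 2-groups. For disconnected $\thetagroup$ you would need an extra argument to seed tangency in the other components, e.g.\ precisely the paper's inversion computation, which is pointwise and needs no connectedness. The remaining steps of your write-up --- the identification of $\thetagroup$ with $\source^{-1}(\unit_{\ggroup})$, the vanishing of a group-multiplicative $k$-vector field at the unit, and the restriction of coisotropy to $\Lambda^{gp}\cap(\thetagroup\times\thetagroup\times\thetagroup)$ (a clean intersection because $\thetagroup$ is a subgroup) --- are sound and match the level of detail the paper itself leaves implicit.
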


\begin{proof}
Let $i$ denote the inverse map of the groupoid $\crossedmoduletwogroup\toto G$,
as described in the proof of Proposition~\ref{thm:2groupstructuredetails}.
I.e.\ $i{(g,\alpha)}=(g\Phi(\alpha),\inverse{\alpha})$.
It is clear that $i_*\gthetaV=\minuspower{k+1}\gthetaV$ since $\gthetaV$ is multiplicative.
To prove the lemma, it suffices to prove that, for any function $f\in\cinf{\ggroup}$,
$\ba{\gthetaV}{\source^* f}|_{\thetagroup}=0$.
For all $\alpha\in\thetagroup\subset\crossedmoduletwogroup$, we have
\begin{multline*}
i_*\bigl(\ba{\gthetaV}{\source^* f}|_{\alpha}\bigr)=\minuspower{k+1}\ba{\gthetaV}{\target^* f}|_{i(\alpha)}
=\minuspower{k+1}\bigl( \gpoidleftmove{\groupoiddifferential{}(f)}\bigr)|_{i(\alpha)}\\
=\minuspower{k+1}L_{i(\alpha)*}\bigl({\groupoiddifferential{}(f)}|_{\target\circ i(\alpha)}\bigr)
=\minuspower{k+1}L_{i(\alpha)*}\bigl({\groupoiddifferential{}(f)}|_{  \unit_{\ggroup}}\bigr)
=0.\end{multline*}
 Here $\unit_{\ggroup}$ is the unit element of $\ggroup$, and
$L$ stands for the left translations with respect to the
groupoid structure.
The fact that ${\groupoiddifferential{}(f)}|_{\unit_{\ggroup}}=0$ is due to Eq.~\eqref{Eqt:groupoiddifferentiononf}
and Lemma~\ref{prop:wedgedelta1cocycle}.
\end{proof}

As an immediate consequence, the infinitesimal of $\gthetaV|_{\thetagroup}$ gives rise to a Lie algebra 1-cocycle
\begin{equation}\label{eq:omega}
\domega:\thetaalgebra\to\wedge^k \thetaalgebra
.\end{equation}
The pair $(\delta,\omega)$ as defined in Eqs.~\eqref{eq:delta} and~\eqref{eq:omega}
constitutes the \emph{infinitesimal data of $\gthetaV$}.

\subsubsection{Compatibility conditions}

This section is devoted to exploring the compatibility condition between
the infinitesimal data $\omega$ and $\delta$.
The main theorem is the following:

\begin{theorem}\label{Thm:Sigmatopair}
Let $\groupcrossedmoduletriple{\thetagroup}{\Phi}{\ggroup}$ be a crossed module of Lie groups.
A multiplicative $k$-vector field $\gthetaV$ on the Lie 2-group $\crossedmoduletwogroup$ associated to this
crossed module determines a pair of linear maps
\begin{gather*}
\domega:\thetaalgebra\to\wedge^k\thetaalgebra, \\
\skewdelta:\galgebra\to\galgebra\wedge(\wedge^{k-1}\thetaalgebra)
\end{gather*}
which satisfy the following three properties:
\begin{description}
\item[ID1]
$\phipush\smalcirc\omega=\skewdelta \smalcirc \phi$, i.e.\ the diagram
\[ \xymatrix{
\thetaalgebra \ar[r]^{\phi} \ar[d]^{\domega}
& \galgebra \ar[d]^{\skewdelta} \\
\wedge^k(\crossedmodulealgebra) \ar[r]_{\phipush}
& \wedge^k(\crossedmodulealgebra)
} \]
commutes;
\item[ID2]
$\skewdelta$ is a Lie algebra $1$-cocycle
 valued in $W_k$;
\item[ID3]
for all $x\in\galgebra$ and $u\in\thetaalgebra$,
\[ x \moduleaction\big(\omega(u)\big)-\omega( x \moduleaction
u)=\pr_{\wedge^k\thetaalgebra}\big(\ba{u}{\skewdelta(x)}\big) ,\]
where the bracket is taken in $\crossedmodulealgebra$.
\end{description}
\end{theorem}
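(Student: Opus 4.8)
The strategy is to extract the three compatibility conditions from the two structural results already established about $\gthetaV$: Lemma~\ref{prop:wedgedelta1cocycle} (which produces the cocycle $\wedgedelta:G\to W_k$, hence by differentiation $\skewdelta:\galgebra\to\galgebra\wedge(\wedge^{k-1}\thetaalgebra)$ valued in $W_k$), and Lemma~\ref{Thm:tangenttoTheata} (which says $\gthetaV$ restricts to a multiplicative $k$-vector field on $\thetagroup$, hence yields $\domega:\thetaalgebra\to\wedge^k\thetaalgebra$). Condition \textbf{ID2} is then essentially immediate: it is just the content of Lemma~\ref{prop:wedgedelta1cocycle} passed to the infinitesimal level via Eq.~\eqref{eq:delta}, since differentiating a group $1$-cocycle valued in the linear subspace $W_k$ gives a Lie algebra $1$-cocycle valued in $W_k$. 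So the real work is \textbf{ID1} and \textbf{ID3}.

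For \textbf{ID1}, the plan is to use that $\thetagroup\hookrightarrow\crossedmoduletwogroup$ is compatible with the source/target structure: $\source(\unit_\ggroup,\alpha)=\unit_\ggroup$ while $\target(\unit_\ggroup,\alpha)=\Phi(\alpha)$. Thus the image of $\thetagroup$ under $\target$ sweeps out (a neighborhood of the identity in) the subgroup $\Phi(\thetagroup)\subset\ggroup$, and differentiating $\target$ along $\thetagroup$ recovers $\phi:\thetaalgebra\to\galgebra$. The idea is to compare two descriptions of $\gthetaV$ restricted along $\thetagroup$: on one hand $\gthetaV|_{\thetagroup}$ is the multiplicative $k$-vector field on $\thetagroup$ whose infinitesimal cocycle is $\domega$ (Eq.~\eqref{eq:omega}); on the other hand, applying Proposition~\ref{Thm:SigmaalongMdecomposition} to the groupoid $\crossedmoduletwogroup\toto\ggroup$, the value $\gthetaV|_g$ at a point $g=\Phi(\alpha)$ in the base is governed by $\wedgedelta$ via Eq.~\eqref{Eqn:Vatbasepowerseries}. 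Pushing the cocycle identity for $\wedgedelta$ through the homomorphism $\Phi$ and differentiating at the unit, the leading-order term of the power series $\frac{\Id-e^{-\phipush}}{\phipush}$ forces $\skewdelta\circ\phi$ (the value of the base-cocycle pulled back along $\thetagroup$) to equal $\phipush\circ\domega$ (the ``vertical shift'' of the $\thetagroup$-cocycle needed to land it in $\wedge^k(\crossedmodulealgebra)$ correctly). I expect this matching of leading terms, carefully tracking how $\phipush$ and the exponential series interact, to be the technically most delicate point.

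For \textbf{ID3}, the plan is to exploit the $\ggroup$-action on $\thetagroup$ built into the crossed module. The multiplicativity of $\gthetaV$ with respect to the group structure on $\crossedmoduletwogroup$, together with the semidirect-product formula $(g,\alpha)\gpmulti(h,\beta)=(gh,(\inverse h\groupaction\alpha)\beta)$ from Proposition~\ref{thm:2groupstructuredetails}, relates the value of $\gthetaV$ at a product to translates of its values at the factors; conjugating $\thetagroup$ by elements $(g,\unit_\thetagroup)$ and differentiating in both the $g$-direction and the $\alpha$-direction, one obtains a relation between $x\moduleaction\omega(u)$, $\omega(x\moduleaction u)$, and a term coming from how $\skewdelta(x)$ (the ``$G$-direction'' part of the infinitesimal data) interacts with $\thetaalgebra$ under the bracket of $\crossedmodulealgebra$ --- precisely the asserted $\pr_{\wedge^k\thetaalgebra}\bigl(\ba{u}{\skewdelta(x)}\bigr)$. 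In practice I would take the cocycle identity $\wedgedelta(gh)=\wedgedelta(g)+\Adjoint{g}\wedgedelta(h)$ (with $\Adjoint{g}$ the combined adjoint/crossed-module action on $\galgebra\wedge(\wedge^{k-1}\thetaalgebra)$), restrict appropriately to $\thetagroup$ versus $\ggroup$, and differentiate; keeping careful bookkeeping of which wedge-components survive the projection $\pr_{\wedge^k\thetaalgebra}$ will be the routine-but-error-prone part. The genuine obstacle throughout is that $\gthetaV$ carries two compatible multiplicative structures simultaneously, and one must repeatedly use Lemma~\ref{Lem:Sigmagroupoidmultiplicativeiff} (affineness, coisotropy of the base, and left-invariance of $\inserts_{\target^*\xi}\gthetaV$) to see that the two sets of infinitesimal data are forced to be compatible rather than independent; organizing these constraints so that ID1--ID3 drop out cleanly, rather than as a tangle of identities, is the crux.
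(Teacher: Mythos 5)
Your handling of \textbf{ID2} is the paper's own (differentiate the $W_k$-valued group cocycle $\wedgedelta$ from Lemma~\ref{prop:wedgedelta1cocycle}), and the first half of your \textbf{ID3} plan is essentially the paper's argument in integrated form: the paper writes the cocycle property of $\gthetaV$ for the group structure as the Leibniz rule $\groupdifferential{}\ba{x}{u}=\ba{\groupdifferential{}(x)}{u}+\ba{x}{\groupdifferential{}(u)}$ for the group $k$-differential, substitutes $\groupdifferential{}(u)=\domega(u)$ and $\groupdifferential{}(x)=\frac{\Id-e^{-\phipush}}{\phipush}\skewdelta(x)$ from Proposition~\ref{prop:groupdifferentialexplicit}, and projects onto $\wedge^k\thetaalgebra$, where a degree count shows that only the leading term $\skewdelta(x)$ of the series survives, yielding $\pr_{\wedge^k\thetaalgebra}\ba{u}{\skewdelta(x)}$. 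One caveat: the cocycle you must differentiate there is the full group $1$-cocycle $(g,\alpha)\mapsto L_{\gpinverse{(g,\alpha)}*}\bigl(\gthetaV|_{(g,\alpha)}\bigr)$ on $\crossedmoduletwogroup$ (equivalently, $\groupdifferential{}$ itself), not ``the cocycle identity for $\wedgedelta$'': $\wedgedelta$ is defined only on $\ggroup$ and cannot be ``restricted to $\thetagroup$ versus $\ggroup$.''

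The genuine gap is in \textbf{ID1}. What you need is the integrated identity $\wedgedelta|_{\Phi(\alpha)}=\phipush(\wedgeomega|_{\alpha})$ (Eq.~\eqref{Eqn:wedgedeltaomegacomptible1}), but the mechanism you describe --- pushing the cocycle identity of $\wedgedelta$ through $\Phi$ and matching leading-order terms of the power series --- cannot produce it: the cocycle identity of $\wedgedelta$ involves only $\wedgedelta$ and the group structure of $\ggroup$, so it carries no information about $\domega$; moreover $\gthetaV|_{\thetagroup}$ and $\gthetaV$ along the base $\ggroup\times\{\unit_{\thetagroup}\}$ are restrictions to two different submanifolds meeting only at the unit, where both cocycles vanish, so there is no pointwise comparison to make. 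Some step coupling the group data to the groupoid data is indispensable: for a cocycle $\groupdifferential{}$ of $\crossedmodulealgebra$ alone, nothing relates its value at $u$ to its value at $\phi(u)$, and \textbf{ID1} is exactly the extra constraint imposed by the groupoid multiplicativity. In the paper this coupling is Proposition~\ref{prop:domegadefined}: since the group- and groupoid-left-invariant extensions of $u\in\thetaalgebra$ coincide (Eq.~\eqref{eq:gp-gpd}), one has $\domega(u)=\groupdifferential{}(u)=\groupoiddifferential{}(u)$, and applying the Leibniz rule of $\groupoiddifferential{}$ to $\ba{u}{f}$ with $f\in\cinf{\ggroup}$ and evaluating at $\unit_{\ggroup}$ (where $\wedgedelta$, hence $\groupoiddifferential{}(f)$, vanishes) gives $\inserts_{\mathrm{d}f}\bigl(\phipush\domega(u)\bigr)=\inserts_{\mathrm{d}f}\bigl(\skewdelta(\phi(u))\bigr)$ for all $f$, i.e.\ \textbf{ID1}. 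Alternatively, with your own ingredients one can use Condition~\ref{ctrois} of Lemma~\ref{Lem:Sigmagroupoidmultiplicativeiff}: comparing $\inserts_{\target^*\xi}\gthetaV$ at $(\unit_{\ggroup},\alpha)$, which equals $L_{\alpha*}\inserts_{\zeta}\bigl(\phipush\wedgeomega|_{\alpha}\bigr)$, with the groupoid-left-translate of its value at the target point $\Phi(\alpha)$, which by Eq.~\eqref{Eqn:Vatbasepowerseries} is governed by $\inserts_{\zeta}\wedgedelta|_{\Phi(\alpha)}$, forces $\wedgedelta|_{\Phi(\alpha)}=\phipush(\wedgeomega|_{\alpha})$, and differentiating along $\alpha=\exp tu$ gives \textbf{ID1}. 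As written, your plan names these tools but never supplies this coupling step, and without it \textbf{ID1} does not follow.
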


First of all, we prove that  the $k$-differentials of $\gthetaV$
with respect to both the groupoid and the group structures can be expressed
in terms of the infinitesimal data $(\omega,\delta)$.
Since $\LiealgebroidoverG\cong G\times\theta$,
$\groupoiddifferential{}$ is completely determined
by two $\reals$-linear operators:
$\groupoiddifferential{}:C^\infty(G)\to C^\infty(G,\wedge^{k-1}\theta)$
and $\groupoiddifferential{}:C^\infty(G,\theta)\to C^\infty(G,\wedge^k\theta)$.
The latter is determined by its value on constant functions due to the Leibniz rule.

Let $\groupdifferential{}:\crossedmodulealgebra\to\wedge^{k}(\crossedmodulealgebra)$
be the $k$-differential with respect to the group structure on $\crossedmoduletwogroup$.

\begin{proposition}\label{prop:domegadefined}
The map $\domega:\theta\to\wedge^k \theta$ satisfies
\begin{equation}\label{Eqt:domegauanddifferentials}
\domega(u)=\groupdifferential{}(u)=\groupoiddifferential{}(u) ,\quad\forall u\in\thetaalgebra
.\end{equation}
\end{proposition}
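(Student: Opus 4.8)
The plan is to establish the two equalities in~\eqref{Eqt:domegauanddifferentials} by carefully unwinding the definitions of the groupoid $k$-differential $\groupoiddifferential{}$, the group $k$-differential $\groupdifferential{}$, and the cocycle $\omega$ obtained by restricting $\gthetaV$ to $\thetagroup$. The key observation is that all three objects are computed from the \emph{same} multiplicative $k$-vector field $\gthetaV$, just organized differently: $\groupoiddifferential{}$ via Theorem~\ref{Thm:David-multi-diff} applied to the transformation Lie algebroid $\LiealgebroidoverG=\ggroup\rtimes\thetaalgebra$, $\groupdifferential{}$ via the same theorem applied to the (tangent Lie algebroid of the) group $\crossedmoduletwogroup$, and $\omega$ as the infinitesimal of $\gthetaV|_{\thetagroup}$ via Lemma~\ref{Thm:tangenttoTheata}. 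So the strategy is to show that when all three are evaluated on an element $u\in\thetaalgebra$, the computations all reduce to the same left-invariant $k$-vector field along the identity component, restricted appropriately to $\thetagroup$.

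First I would handle $\domega(u)=\groupdifferential{}(u)$. Since $\thetagroup\cong\{\unit_\ggroup\}\times\thetagroup$ is a Lie \emph{sub}group of $\crossedmoduletwogroup$ (with group multiplication $(\unit,\alpha)\gpmulti(\unit,\beta)=(\unit,\alpha\beta)$, as read off from the formula in Proposition~\ref{thm:2groupstructuredetails}), and since by Lemma~\ref{Thm:tangenttoTheata} $\gthetaV$ is tangent to $\thetagroup$ and restricts to a multiplicative $k$-vector field $\gthetaV|_{\thetagroup}$, the group $k$-differential of $\gthetaV$ restricts to the group $k$-differential of $\gthetaV|_{\thetagroup}$ on $\sections{\wedge^\bullet\thetaalgebra}$; the latter, by definition~\eqref{eq:omega} (and Lemma~\ref{Lem:partialSigmacalculate} describing $k$-differentials of multiplicative $k$-vector fields on Lie groups as minus the derivative of the group cocycle at the unit), is exactly $\omega$. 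More concretely, for $u\in\thetaalgebra$, using $\overleftarrow{\groupdifferential{}(u)}=\ba{\gthetaV}{\overleftarrow{u}}$ and evaluating at the unit, the left-invariant vector field $\overleftarrow{u}$ on $\crossedmoduletwogroup$ is tangent to $\thetagroup$ along $\thetagroup$, so the bracket $\ba{\gthetaV}{\overleftarrow{u}}$ at the unit only sees $\gthetaV|_{\thetagroup}$; this gives $\groupdifferential{}(u)=\omega(u)$, noting that $\wedge^k\thetaalgebra\subset\wedge^k(\crossedmodulealgebra)$ so there is no projection to worry about.

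Next I would handle $\groupoiddifferential{}(u)=\domega(u)$. By definition $\overleftarrow{\groupoiddifferential{}(u)}^{\,\mathrm{gpd}}=\ba{\gthetaV}{\overleftarrow{u}^{\,\mathrm{gpd}}}$, where now the left-invariance and the left-invariant vector field $\overleftarrow{u}^{\,\mathrm{gpd}}$ are taken with respect to the \emph{groupoid} structure on $\crossedmoduletwogroup\toto\ggroup$. One computes this differential at points of the base $\ggroup$ (identified with groupoid units $(g,\unit_\thetagroup)$); but since $u\in\thetaalgebra$ is a genuine section of $\LiealgebroidoverG=\ggroup\rtimes\thetaalgebra$ that is \emph{constant} in $g$, I would argue that evaluating $\groupoiddifferential{}(u)$ at an arbitrary $g$ reduces, via the group-cocycle property of $\wedgedelta$ (Lemma~\ref{prop:wedgedelta1cocycle}) together with the fact that along $\thetagroup$ (the source-fiber over $\unit_\ggroup$) the groupoid and group left-translations agree, to its value along $\thetagroup$, which is governed by $\gthetaV|_{\thetagroup}$ and hence equals $\omega(u)$. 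In fact the cleanest route is: both $\groupdifferential{}$ and $\groupoiddifferential{}$ send $u\in\thetaalgebra$ into $\wedge^k\thetaalgebra$, and both are computed from $\ba{\gthetaV}{\,\cdot\,}$ with a left-invariant extension of $u$ that, at the relevant evaluation point, points along $\thetagroup$; since $\gthetaV$ is tangent to $\thetagroup$ there, the two brackets coincide with the bracket computed entirely inside $\thetagroup$, namely $\omega(u)$.

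\textbf{Main obstacle.} The delicate point is the bookkeeping of the \emph{two different} left-invariance conventions (group vs.\ groupoid) on $\crossedmoduletwogroup$ and verifying that they genuinely agree along the subgroup $\thetagroup=\{\unit_\ggroup\}\times\thetagroup$ (equivalently, the source-fiber over $\unit_\ggroup$): one must check that $L_{(\unit,\alpha)}$ for the group law and $L_{(\unit,\alpha)}$ for the groupoid law induce the same tangent map on $T\thetagroup$, which follows from comparing $(\unit,\alpha)\gpmulti(\unit,\beta)=(\unit,\alpha\beta)$ with $(\unit,\alpha)\gpoidmulti(\unit,\beta)=(\unit,\alpha\beta)$. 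Once this identification is in place, the rest is a matter of evaluating Lie brackets of polyvector fields at a single point and invoking that $\gthetaV|_{\thetagroup}$ is, by construction~\eqref{eq:omega}, the multiplicative $k$-vector field whose $k$-differential is $\omega$.
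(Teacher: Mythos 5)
Your first equality, $\domega(u)=\groupdifferential{}(u)$, is fine and is essentially the paper's own (terse) justification: by Lemma~\ref{Thm:tangenttoTheata} both $\gthetaV$ and $\gpleftmove{u}$ are tangent to $\thetagroup$, so $\ba{\gthetaV}{\gpleftmove{u}}$ at the unit is computed inside $\thetagroup$ and equals $\domega(u)$.

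The gap is in the second equality. Recall that $\groupoiddifferential{}(u)$ is a \emph{section} of $\wedge^k\LiealgebroidoverG\cong C^\infty(\ggroup,\wedge^k\thetaalgebra)$, so the claim $\groupoiddifferential{}(u)=\domega(u)$ asserts that this section is \emph{constant} over $\ggroup$. Its value at $g$ is read off from $\ba{\gthetaV}{\gpoidleftmove{u}}$ at the groupoid unit $(g,\unit_{\thetagroup})$, a point where $\gthetaV$ is not controlled by $\gthetaV|_{\thetagroup}$; your argument only evaluates along the source fibre over $\unit_{\ggroup}$ and therefore at best yields $\groupoiddifferential{}(u)|_{\unit_{\ggroup}}=\domega(u)$. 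The appeal to the cocycle property of $\wedgedelta$ cannot supply the missing constancy: by Eq.~\eqref{Eqt:groupoiddifferentiononf}, $\wedgedelta$ encodes $\groupoiddifferential{}$ on $C^\infty(\ggroup)$, which is, via the Leibniz rule, independent of the component $\groupoiddifferential{}|_{\thetaalgebra}$ you are trying to determine. The constancy genuinely matters later (e.g.\ in the proof of Proposition~\ref{Lem:multiplicativeoutofcocycle}, where $\groupoiddifferential{}(u_i)=\domega(u_i)$ is used at arbitrary $g$). The paper's mechanism is the \emph{global} identity $\gpleftmove{u}=\gpoidleftmove{u}$ at every point of $\crossedmoduletwogroup$, seen by comparing $(g,\alpha)\gpmulti(\unit_{\ggroup},\exp tu)=(g,\alpha\exp tu)$ with $(g,\alpha)\gpoidmulti\bigl(g\Phi(\alpha),\exp tu\bigr)=(g,\alpha\exp tu)$; this gives $\gpleftmove{\groupdifferential{}(u)}=\ba{\gthetaV}{\gpleftmove{u}}=\ba{\gthetaV}{\gpoidleftmove{u}}=\gpoidleftmove{\groupoiddifferential{}(u)}$, and evaluating at \emph{all} groupoid units $(g,\unit_{\thetagroup})$ produces both the constancy and the equality with $\domega(u)$. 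Your ``main obstacle'' check, comparing $(\unit_{\ggroup},\alpha)\gpmulti(\unit_{\ggroup},\beta)$ with $(\unit_{\ggroup},\alpha)\gpoidmulti(\unit_{\ggroup},\beta)$, is not the right statement: the groupoid product of two elements of $\{\unit_{\ggroup}\}\times\thetagroup$ is only defined when $\Phi(\alpha)=\unit_{\ggroup}$, and in any case agreement of the two translations along $\thetagroup$ says nothing about $\groupoiddifferential{}(u)|_g$ for $g\neq\unit_{\ggroup}$.
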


\begin{proof}
Every $u\in\thetaalgebra\subset\crossedmodulealgebra$
(considered as a constant section of the Lie algebroid $\LiealgebroidoverG \cong G\times \theta$)
determines two vector fields on $\crossedmoduletwogroup$:
a vector field $\gpleftmove{u}$ invariant under left translations relatively to the group structure
and a vector field $\gpoidleftmove{u}$ invariant under left translations relatively to the groupoid structure $\crossedmoduletwogroup \toto G$.
It is simple to see that
\begin{equation}
\label{eq:gp-gpd}
\gpleftmove{u}=\gpoidleftmove{u}.
\end{equation}
Therefore,
\begin{equation*}
\gpoidleftmove{\groupdifferential{}{(u)}}=
\gpleftmove{\groupdifferential{}{(u)}}=\ba{\gthetaV}{\gpleftmove{u}}=
\ba{\gthetaV}{\gpoidleftmove{u}}=\gpoidleftmove{\groupoiddifferential{}{(u)}}.
\end{equation*}
By definition, we have
$\domega(u)=\groupdifferential{}(u)$. The conclusion follows.
\end{proof}

\begin{proposition}
\label{prop:groupdifferentialexplicit}
The $k$-differential
$\groupdifferential{}:\crossedmodulealgebra\to\wedge^k(\crossedmodulealgebra)$ satisfies
\begin{gather*}
\groupdifferential{}(u)=\domega(u), \quad \forall u\in\thetaalgebra, \\
\groupdifferential{}(x)=\frac{\Id-e^{-\phipush}}{\phipush}\big(\skewdelta(x)\big), \quad\forall x\in\galgebra
.\end{gather*}
\end{proposition}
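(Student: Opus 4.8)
The plan is to compute the $k$-differential $\groupdifferential{}$ for the group structure on $\crossedmoduletwogroup$ directly from the cocycle description of $\gthetaV$ along the base. The first claim, $\groupdifferential{}(u)=\domega(u)$ for $u\in\thetaalgebra$, is nothing but Proposition~\ref{prop:domegadefined}, so only the formula for $\groupdifferential{}(x)$ with $x\in\galgebra$ needs to be established. Since $\crossedmoduletwogroup$ is a Lie group and $\gthetaV$ is multiplicative with respect to its group structure, Lemma~\ref{Lem:partialSigmacalculate} tells us that $\groupdifferential{}$ is the Lie algebra $1$-cocycle obtained by differentiating the group $1$-cocycle $g\mapsto L_{g^{-1}*}(\gthetaV|_g)$ at the unit, valued in $\wedge^k(\crossedmodulealgebra)$.

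The key step is therefore to identify $L_{g^{-1}*}(\gthetaV|_g)$ along the subgroup $\ggroup=\ggroup\times\{\unit_{\thetagroup}\}\subset\crossedmoduletwogroup$. This is exactly what Eq.~\eqref{Eqn:Vatbasepowerseries} in the proof of Lemma~\ref{prop:wedgedelta1cocycle} provides: we have
\[
L_{g^{-1}*}\big(\gthetaV|_g\big)=\frac{\Id-e^{-\phipush}}{\phipush}\big(\wedgedelta|_g\big),
\]
where $\wedgedelta$ is the $W_k$-valued group $1$-cocycle of Lemma~\ref{prop:wedgedelta1cocycle}. Now differentiate both sides at $g=\exp tx$, $t=0$. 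On the left we get, by Lemma~\ref{Lem:partialSigmacalculate}(2), the value $-\groupdifferential{}(x)$. On the right, the operator $\frac{\Id-e^{-\phipush}}{\phipush}$ is a fixed linear endomorphism of $\wedge^\bullet(\crossedmodulealgebra)$ independent of $g$, so it commutes with $\tfrac{d}{dt}|_{t=0}$; thus the derivative of the right-hand side is $\frac{\Id-e^{-\phipush}}{\phipush}$ applied to $\tfrac{d}{dt}|_{t=0}\wedgedelta|_{\exp tx}$, which by the definition~\eqref{eq:delta} of $\skewdelta$ equals $-\frac{\Id-e^{-\phipush}}{\phipush}\big(\skewdelta(x)\big)$. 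Cancelling the minus signs yields the desired identity.

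There is essentially no obstacle here beyond bookkeeping: the real content was already extracted in the proof of Lemma~\ref{prop:wedgedelta1cocycle} (the power-series expansion of $\gthetaV$ along the base, which itself came from Proposition~\ref{Thm:SigmaalongMdecomposition}) and in Lemma~\ref{Lem:partialSigmacalculate} (the differentiation-at-the-unit recipe). The one point deserving a line of care is that the formula \eqref{Eqn:Vatbasepowerseries} was derived using multiplicativity with respect to the \emph{groupoid} structure on $\crossedmoduletwogroup\toto\ggroup$, whereas here we are differentiating the \emph{group} cocycle; but since $\gthetaV$ is multiplicative for both structures, \eqref{Eqn:Vatbasepowerseries} does describe $\gthetaV|_g$ for $g\in\ggroup$, and that is all we use. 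A brief remark to that effect, followed by the two-line differentiation above, completes the proof.
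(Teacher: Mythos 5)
Your proposal is correct and follows essentially the same route as the paper, whose proof consists precisely of citing Lemma~\ref{Lem:partialSigmacalculate}~(2) together with Eq.~\eqref{Eqn:Vatbasepowerseries} and calling the rest a direct verification; you have simply written out that verification, including the (correct) observation that Eq.~\eqref{Eqn:Vatbasepowerseries} describes $\gthetaV$ along $\ggroup$ and may be differentiated as a group $1$-cocycle via Eq.~\eqref{eq:delta}.
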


\begin{proof}
It remains to prove the second equation, which follows from
a direct verification by applying Lemma~\ref{Lem:partialSigmacalculate} (2)
and Eq.~\eqref{Eqn:Vatbasepowerseries}.
\end{proof}

\begin{proof}[Proof of Theorem~\ref{Thm:Sigmatopair}]
According to Lemma~\ref{prop:wedgedelta1cocycle}, \textbf{ID2} holds.
It suffices to prove \textbf{ID1} and \textbf{ID3}.

Consider the $k$-differential $\groupoiddifferential{}:
\Gamma ( \wedge^{\bullet} A)\to \Gamma (\wedge^{\bullet+k-1} A)$
induced by $\gthetaV$.
For any  $u\in \thetaalgebra$,
$f\in C^\infty({\ggroup})$, we have
\begin{equation}\label{temp1}
\groupoiddifferential{}\ba{u}{f}=
\ba{\groupoiddifferential{}(u)}{f} +
\ba{u}{\groupoiddifferential{}(f)}
.\end{equation}

Next we evaluate both sides of Eq.~\eqref{temp1} at $e=\unit_{\ggroup}$.
Since $\wedgedelta$ is a Lie group $1$-cocycle according to Lemma~\ref{prop:wedgedelta1cocycle},
we have $\wedgedelta|_{e}=0$.
It thus follows from  Eq.~\eqref{Eqt:groupoiddifferentiononf} that $\groupoiddifferential{}\ba{u}{f}|_e=0$.
On the other hand, we have
\begin{multline*}
\ba{\groupoiddifferential{}(u)}{f}|_e=\ba{\domega(u)}{f}|_e
=\minuspower{k-1}\inserts_{\phi^*(\mathrm{d}{f})}\domega(u) \\
=\minuspower{k-1}\inserts_{ \mathrm{d}{f}}\big((\phipush\smalcirc\domega)(u)\big)
.\end{multline*}

Here we have used Proposition~\ref{prop:domegadefined} and the
equality $\inserts_{\phi^*\xi}=\inserts_{\xi}\circ \phipush$
in $\Hom(\wedge^k \theta,\wedge^{k-1}\theta)$ valid
for all $\xi\in\galgebrastar$.
Moreover, Eq.~\eqref{Eqt:groupoiddifferentiononf} implies that
$\groupoiddifferential{}(f)|_{e }=0$. Since the Lie algebroid $\LiealgebroidoverG$ is the
 transformation Lie algebroid $G\rtimes \theta$, we have
\begin{align*}
\ba{u}{\groupoiddifferential{}(f)}|_e
&=\left.\frac{d}{dt}\right|_{t=0}\bigl(\groupoiddifferential{}(f)|_{\exp t\phi(u)}\bigr) \\
&= \minuspower{k-1}\left.\frac{d}{dt}\right|_{t=0}\Bigl (\bigl(L^*_{ {\exp} t\phi(u)}{(\mathrm{d}f)}\bigr)\contract
{\wedgedelta|_{\exp t\phi(u)}}\Bigr) \\
&=-\minuspower{k-1}\inserts_{ \mathrm{d}{f}}\skewdelta(\phi(u)))
.\end{align*}

Here $\groupoiddifferential{}(f)$ is considered as a $(\wedge^{k-1}\theta)$-valued function on $G$, and the first equality follows from the Leibniz rule of the Lie algebroid axiom and the identity $\groupoiddifferential{}(f)|_{e }=0$.
Hence \textbf{ID1} follows immediately from Eq.~\eqref{temp1}.

On the other hand, the $k$-differential $\groupdifferential{}$ satisfies
\[ \groupdifferential{}\ba{x}{u}=\ba{\groupdifferential{}(x)}{u}
+\ba{x}{\groupdifferential{}(u)}, \quad \forall x\in\galgebra,u\in \thetaalgebra, \]
where the brackets stand for the Lie algebra bracket on $\crossedmodulealgebra$.
Applying Proposition~\ref{prop:groupdifferentialexplicit}, and comparing the
$\wedge^k\thetaalgebra$-terms of both sides of the equation above,
\textbf{ID3} follows immediately.
\end{proof}

\begin{proposition}\label{Prop:domega1cocycle}
The map $\domega:\theta\to\wedge^k\theta$ is a Lie algebra $1$-cocycle, i.e.\
\begin{equation*}
\domega\ba{u}{v}=\ba{\domega(u)}{v}+\ba{u}{\domega(v)}, \quad\forall u,v\in\thetaalgebra
.\end{equation*}
\end{proposition}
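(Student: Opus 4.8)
The plan is to recognize $\domega$ as the $k$-differential $\groupdifferential{}$ (equivalently $\groupoiddifferential{}$) restricted to $\thetaalgebra$, and then to exploit the fact that a $k$-differential is, by definition, a derivation of the Lie bracket on sections of the exterior algebra of the relevant Lie algebroid. Concretely, by Proposition~\ref{prop:domegadefined} we have $\domega(u)=\groupdifferential{}(u)$ for all $u\in\thetaalgebra$, where $\groupdifferential{}$ is the $k$-differential of the group $\crossedmoduletwogroup$ acting on $\wedge^\bullet(\crossedmodulealgebra)$. Since $\groupdifferential{}$ satisfies the bracket-derivation axiom
\[ \groupdifferential{}\ba{P}{Q}=\ba{\groupdifferential{}P}{Q}+\minuspower{(\abs{P}-1)(k-1)}\ba{P}{\groupdifferential{}Q} \]
for all $P,Q\in\wedge^\bullet(\crossedmodulealgebra)$, specializing to $P=u$, $Q=v$ with $u,v\in\thetaalgebra$ (so $\abs{P}=1$ and the sign is $+1$) immediately gives
\[ \groupdifferential{}\ba{u}{v}=\ba{\groupdifferential{}(u)}{v}+\ba{u}{\groupdifferential{}(v)} . \]

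The one subtlety — and the step I expect to require the most care — is that $\ba{u}{v}$ taken in $\crossedmodulealgebra$ lies in $\thetaalgebra$ (since $\thetaalgebra$ is an ideal of the semidirect product), so the left-hand side is again $\domega\ba{u}{v}$, but on the right-hand side one must check that only the $\thetaalgebra$-component of the brackets $\ba{\groupdifferential{}(u)}{v}$ and $\ba{u}{\groupdifferential{}(v)}$ survives, i.e.\ that these already land in $\wedge^k\thetaalgebra$. Here $\groupdifferential{}(u)=\domega(u)\in\wedge^k\thetaalgebra$ by Proposition~\ref{prop:groupdifferentialexplicit}, and $v\in\thetaalgebra$, and since $\thetaalgebra$ is an ideal, the Lie bracket of a section of $\wedge^k\thetaalgebra$ with an element of $\thetaalgebra$ stays in $\wedge^k\thetaalgebra$; the same holds for the other term by symmetry. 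Thus both sides are genuine elements of $\wedge^k\thetaalgebra$ and the derivation identity gives exactly the asserted $1$-cocycle condition.

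Alternatively, if one prefers to argue on the groupoid side, one can use $\domega(u)=\groupoiddifferential{}(u)$ and the identity $\gpoidleftmove{u}=\gpleftmove{u}$ from Eq.~\eqref{eq:gp-gpd}, together with the bracket-derivation property of $\groupoiddifferential{}$ on $\Gamma(\wedge^\bullet\LiealgebroidoverG)$; applied to the two constant sections $u,v$ of $\LiealgebroidoverG\cong G\times\theta$ this again yields the claim, using that the bracket of constant $\thetaalgebra$-sections is the constant section given by the $\thetaalgebra$-bracket. Either route is a direct application of Lemma~\ref{Thm:David-multi-diff} and the definition of $k$-differential; no new computation beyond tracking the semidirect-product brackets is needed.
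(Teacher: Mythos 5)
Your proof is correct, but it follows a genuinely different route from the paper's. You deduce the cocycle identity from the fact that $\domega$ is the restriction to $\thetaalgebra$ of the group $k$-differential $\groupdifferential{}$ of $\crossedmodulealgebra$ (Proposition~\ref{prop:domegadefined}), then invoke the bracket-derivation axiom of a $k$-differential (with trivial sign since $\abs{u}=1$) and the observation that $\thetaalgebra$ is an ideal of $\crossedmodulealgebra$, so that $\ba{u}{v}$, $\ba{\domega(u)}{v}$ and $\ba{u}{\domega(v)}$ all lie in $\wedge^\bullet\thetaalgebra$ and coincide with the corresponding brackets computed in $\wedge^\bullet\thetaalgebra$; that reasoning is sound, and your groupoid-side variant via constant sections of $\LiealgebroidoverG\cong G\times\theta$ works as well. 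The paper instead argues purely algebraically from the compatibility conditions of Theorem~\ref{Thm:Sigmatopair}: it writes $\ba{u}{v}=\phi(u)\moduleaction v$, applies \textbf{ID3} with $x=\phi(u)$, substitutes $\skewdelta(\phi(u))=\phipush(\domega(u))$ using \textbf{ID1}, and finishes with the identity $\pr_{\wedge^k\thetaalgebra}\ba{v}{\phipush(\zeta)}=\ba{v}{\zeta}$ for $\zeta\in\wedge^k\thetaalgebra$. The difference is not cosmetic: the paper later invokes this proposition in the proof of Proposition~\ref{Prop:partial1cocycle}, where $(\domega,\skewdelta)$ is an arbitrary element of $\cala_k$ and no multiplicative $k$-vector field $\gthetaV$ (hence no $k$-differential $\groupdifferential{}$) is yet available --- indeed its existence is what is being proved there, so your argument would be circular in that converse direction. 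Thus your proof establishes the proposition as stated, more directly and with less computation, while the paper's derivation from \textbf{ID1} and \textbf{ID3} alone buys the purely algebraic statement needed for the integration step.
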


\begin{proof}
Using \textbf{ID1} and \textbf{ID3} from Theorem~\ref{Thm:Sigmatopair}, we have
\begin{align*}
\domega\ba{u}{v} &= \domega(\phi(u)\moduleaction v) \\
&= \phi(u)\moduleaction\domega(v)-\pr_{\wedge^k\thetaalgebra}(\ba{v}{\skewdelta(\phi(u))}) \\
&= \ba{u}{\domega(v)}-\pr_{\wedge^k\thetaalgebra}(\ba{v}{\phipush(\omega(u))}) \\
&= \ba{u}{\domega(v)}-\ba{v}{\domega(u)}
.\end{align*}
Here, in the last equality, we have used the identity
\[ \pr_{\wedge^k\thetaalgebra}(\ba{v}{\phipush(\zeta )})=
\ba{v}{\zeta }, \quad\forall\zeta\in\wedge^k\theta, \]
which can be verified by a straightforward computation.
\end{proof}

Now we extend the two maps $\domega$ and $\skewdelta$
to degree-$(k-1)$ derivations (which we denote by the same symbols by abuse of notation)
on the exterior algebra $\wedge^\bullet(\crossedmodulealgebra)$
by setting $\domega(\galgebra)=0$ and $\skewdelta(\thetaalgebra)=0$.

\begin{proposition}\label{Prop:bracketofmultiplicative}
Assume that $\gthetaV_1$ and  $\gthetaV_2$  are
multiplicative $k_1$- and $k_2$-vector fields on the  Lie
2-group $\crossedmoduletwogroup$.  Let $(\domega_1,\skewdelta_1)$
and $(\domega_2,\skewdelta_2)$ be their corresponding infinitesimals.
Then, the infinitesimal  $(\domega_3,\skewdelta_3)$  of
$\gthetaV_3=\ba{\gthetaV_1}{\gthetaV_2}$ is given by
 the following formulae:
\begin{align}\label{align:temp1}
&\domega_3={\domega_1}\circ{\domega_2}
-\minuspower{(k_1-1)(k_2-1)}{\domega_2}\circ{\domega_1},
\\\label{align:temp2}
&\skewdelta_3={(\skewdelta_1+\domega_1)}\circ{\skewdelta_2}
-\minuspower{(k_1-1)(k_2-1)}{(\skewdelta_2+\domega_2)}\circ{\skewdelta_1}.
\end{align}
\end{proposition}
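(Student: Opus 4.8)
The plan is to leverage the functoriality established in Theorem~\ref{Thm:David-multi-diff}: the assignment $\gthetaV\mapsto\partial_{\gthetaV}$ (for either the group or groupoid structure on $\crossedmoduletwogroup$) is a homomorphism of graded Lie algebras. Hence the $k_3$-differential associated to $\gthetaV_3=\ba{\gthetaV_1}{\gthetaV_2}$ is the graded commutator of the corresponding $k_i$-differentials. I would carry this out separately for the group differential $\groupdifferential{}$ and the groupoid differential $\groupoiddifferential{}$, and then read off $\domega_3$ and $\skewdelta_3$ from the formulae of Propositions~\ref{prop:domegadefined} and~\ref{prop:groupdifferentialexplicit}.

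First, for Eq.~\eqref{align:temp1}: by Proposition~\ref{prop:domegadefined}, $\domega_i(u)=\groupoiddifferential[i]{}(u)$ for $u\in\thetaalgebra$, and the groupoid differentials compose as $\groupoiddifferential[3]{}=\gradedcommutator{\groupoiddifferential[1]{}}{\groupoiddifferential[2]{}}$. Since we have extended $\domega_i$ to a degree-$(k_i-1)$ derivation of $\wedge^\bullet(\crossedmodulealgebra)$ killing $\galgebra$, and since on $\thetaalgebra$ the operator $\groupoiddifferential[i]{}$ agrees with $\domega_i$ while on $C^\infty(G)$ it is governed by $\wedgedelta_i$ (which does not feed back into the $\wedge^\bullet\thetaalgebra$-component once we restrict attention to the $\thetaalgebra$-input), evaluating $\groupoiddifferential[3]{}$ on $u\in\thetaalgebra$ gives exactly $\domega_1\circ\domega_2 - \minuspower{(k_1-1)(k_2-1)}\domega_2\circ\domega_1$ applied to $u$. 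The subtle point to check here is that the ``cross terms'' coming from the $C^\infty(G)$-part of the transformation Lie algebroid $G\rtimes\theta$ do not contaminate the value on constant sections $u\in\thetaalgebra$; this follows because $\wedgedelta_i|_e=0$ (Lemma~\ref{prop:wedgedelta1cocycle}) and the relevant computation is local at the unit, exactly as in the proof of Proposition~\ref{prop:domegadefined}.

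Next, for Eq.~\eqref{align:temp2}: apply the group homomorphism property, $\groupdifferential[3]{}=\gradedcommutator{\groupdifferential[1]{}}{\groupdifferential[2]{}}$, and evaluate on $x\in\galgebra$. By Proposition~\ref{prop:groupdifferentialexplicit}, $\groupdifferential[i]{}=\domega_i$ on $\thetaalgebra$ and $\groupdifferential[i]{}(x)=\frac{\Id-e^{-\phipush}}{\phipush}\bigl(\skewdelta_i(x)\bigr)$ on $\galgebra$. Composing: $\groupdifferential[i]{}(\groupdifferential[j]{}(x))=\groupdifferential[i]{}\bigl(\tfrac{\Id-e^{-\phipush}}{\phipush}\skewdelta_j(x)\bigr)$, and since $\skewdelta_j(x)\in\galgebra\wedge(\wedge^{k_j-1}\thetaalgebra)\subset\wedge^{\bullet}(\crossedmodulealgebra)$ one must apply $\groupdifferential[i]{}$ as a derivation to this mixed element, using that $\groupdifferential[i]{}$ kills nothing on $\galgebra$-slots (it sends $x\mapsto\skewdelta_i(x)+\cdots$) but sends $\thetaalgebra$-slots via $\domega_i$. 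The main obstacle, and the place where a careful (but ultimately mechanical) computation is needed, is to show that after extracting the leading $\galgebra\wedge(\wedge^{k_3-1}\thetaalgebra)$-component of the resulting expression — peeling off the $\frac{\Id-e^{-\phipush}}{\phipush}$ dressing — all the higher $\phipush$-corrections assemble so that the bare infinitesimal $\skewdelta_3(x)$ equals $(\skewdelta_1+\domega_1)\circ\skewdelta_2(x) - \minuspower{(k_1-1)(k_2-1)}(\skewdelta_2+\domega_2)\circ\skewdelta_1(x)$. Here the combination $\skewdelta_i+\domega_i$ arises precisely because, on an element of $\galgebra\wedge(\wedge^{\bullet}\thetaalgebra)$, the derivation $\groupdifferential[i]{}$ acts on the $\galgebra$-factor through $\skewdelta_i$ and on each $\thetaalgebra$-factor through $\domega_i$. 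I would verify the cancellation of the $\phipush$-tails either by a generating-function manipulation of $\frac{\Id-e^{-\phipush}}{\phipush}$ together with the commutation relations between $\phipush$, $\domega_i$, $\skewdelta_i$ (note $\domega_i$ and $\skewdelta_i$ were built to interact with $\phipush$ via \textbf{ID1}), or, more cleanly, by invoking \textbf{ID1} ($\phipush\smalcirc\omega_i=\skewdelta_i\smalcirc\phi$) to see that the power-series dressing is compatible with composition. This bookkeeping with the exponential factor is the only genuinely delicate part; everything else is a direct transcription of the graded-Lie-algebra homomorphism property from Theorem~\ref{Thm:David-multi-diff}.
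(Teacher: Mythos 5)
Your proposal is correct and takes essentially the same route as the paper, whose entire proof is the observation that $\groupdifferential{\gthetaV_3}=\gradedcommutator{\groupdifferential{\gthetaV_1}}{\groupdifferential{\gthetaV_2}}$ together with Proposition~\ref{prop:groupdifferentialexplicit}. The one step you flag as delicate is in fact automatic: since $\gthetaV_3$ is multiplicative, Proposition~\ref{prop:groupdifferentialexplicit} applies to it as well, and every higher term $\phipush^{j}\skewdelta_i(x)$ with $j\geq 1$ carries at least two $\galgebra$-factors, so the projection onto $\galgebra\wedge(\wedge^{k_1+k_2-2}\thetaalgebra)$ that defines $\skewdelta_3$ discards all the $\phipush$-tails and yields Eq.~\eqref{align:temp2} directly, with no cancellation or use of \textbf{ID1} required.
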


\begin{proof}
Note that
\begin{multline*}
\groupdifferential{\gthetaV_3}=\groupdifferential{\ba{\gthetaV_1}{\gthetaV_2}}=\gradedcommutator{\groupdifferential{\gthetaV_1}}{\groupdifferential{\gthetaV_2}}
\\
={\groupdifferential{\gthetaV_1}}\circ{\groupdifferential{\gthetaV_2}}
-\minuspower{(k_1-1)(k_2-1)}{\groupdifferential{\gthetaV_2}}\circ{\groupdifferential{\gthetaV_1}}
.\end{multline*}
Hence Eqs.~\eqref{align:temp1} and~\eqref{align:temp2} follow immediately
from Proposition~\ref{prop:groupdifferentialexplicit}.
\end{proof}

By $\cala_k$ ($k\geq 1$) we denote the space of pairs $(\omega,\delta)$
of linear maps $\omega:\theta\to\wedge^k\theta$ and
$\delta:\galgebra\to\galgebra\wedge(\wedge^{k-1}\theta)$ satisfying
the three properties \textbf{ID1}, \textbf{ID2}, and \textbf{ID3} listed in Theorem~\ref{Thm:Sigmatopair}.
By $\cala_0$, we denote the space of all pairs $(\omega,\delta)$, where $\delta$ is the trivial map $\galgebra\to 0$
and $\omega:\theta\to\reals$ satisfies $\omega(x\moduleaction u)=0$,  for all $x\in\galgebra$ and $u\in\thetaalgebra$.

\begin{corollary}
When endowed with the bracket defined by Eqs.~\eqref{align:temp1} and~\eqref{align:temp2},
the direct sum $\bigoplus_{k\geq 0}\cala_k$ is a graded Lie algebra.
\end{corollary}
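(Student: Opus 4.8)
The plan is to deduce the corollary as a formal consequence of the material already assembled, without entering any fresh computation. The heart of the matter is the map $\gthetaV\mapsto(\domega,\skewdelta)$ sending a multiplicative $k$-vector field on $\crossedmoduletwogroup$ to its infinitesimal data. By Theorem~\ref{Thm:Sigmatopair}, this map lands inside $\cala_k$; by Proposition~\ref{Prop:bracketofmultiplicative}, it intertwines the Schouten bracket on $\bigoplus_{k\geq0}\strictmultiplicative{k}{\crossedmoduletwogroup}$ with the bracket defined by Eqs.~\eqref{align:temp1} and~\eqref{align:temp2} on $\bigoplus_{k\geq0}\cala_k$. So the first thing I would record is that this infinitesimal map is a graded Lie algebra morphism \emph{onto its image}, and that $\strictmultiplicative{\bullet}{\crossedmoduletwogroup}$ is already known (by the lemma preceding the Remark) to be a graded Lie algebra. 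The only thing that is not automatic is that the image of this map is all of $\bigoplus_{k\geq0}\cala_k$ --- i.e.\ that every pair $(\omega,\delta)$ satisfying \textbf{ID1}--\textbf{ID3} actually arises from a multiplicative $k$-vector field.

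Concretely, I would proceed in the following steps. First, observe that the formulae \eqref{align:temp1}--\eqref{align:temp2} are manifestly bilinear in $(\domega_i,\skewdelta_i)$ and of the correct degree $k_1+k_2-1$, so they define a degree-preserving bilinear bracket on $\bigoplus_{k\geq0}\cala_k$; it remains to check that this bracket is (graded) skew-symmetric, satisfies the graded Jacobi identity, and preserves the subspaces $\cala_k$ (closure). For graded skew-symmetry and Jacobi, the cheap route is to invoke surjectivity of the infinitesimal map: pull back any instance of these identities along preimages $\gthetaV_i\in\strictmultiplicative{k_i}{\crossedmoduletwogroup}$, use that the Schouten bracket is graded skew and satisfies graded Jacobi, and push forward via Proposition~\ref{Prop:bracketofmultiplicative}. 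Closure of $\cala_k$ under the bracket is the statement that $(\domega_3,\skewdelta_3)\in\cala_{k_1+k_2-1}$, which again follows because $(\domega_3,\skewdelta_3)$ is the infinitesimal of the multiplicative vector field $\gthetaV_3=[\gthetaV_1,\gthetaV_2]$ and Theorem~\ref{Thm:Sigmatopair} guarantees every such infinitesimal lies in the appropriate $\cala$.

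The main obstacle is therefore the surjectivity of $\gthetaV\mapsto(\omega,\delta)$: one must integrate the infinitesimal data back to a multiplicative $k$-vector field on $\crossedmoduletwogroup$. This is exactly the content of the universal lifting theorem (Theorem~\ref{thirdmain}), whose proof presumably occupies the rest of this section; the present corollary is really a corollary of \emph{that}. If one wished to keep the argument self-contained at this point and not invoke the lifting theorem, the alternative is to verify skew-symmetry, the graded Jacobi identity, and closure of $\cala_k$ \emph{directly} from \textbf{ID1}--\textbf{ID3} and the formulae \eqref{align:temp1}--\eqref{align:temp2}. Skew-symmetry is immediate from the shape of the formulae up to the usual Koszul sign; closure under \textbf{ID2} follows since a graded commutator of $1$-cocycles valued in $W_k$ is again such a cocycle; the genuinely tedious part is checking that \textbf{ID1} and \textbf{ID3} are stable under the bracket and that graded Jacobi holds, which amounts to a direct but lengthy sign-bookkeeping computation using that $\domega$ and $\skewdelta$, extended as derivations of $\wedge^\bullet(\crossedmodulealgebra)$, graded-commute with $\phipush$ in the appropriate sense (this is essentially \textbf{ID1}). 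I would present the short argument (via the lifting theorem, once it is available) as the proof and relegate the direct verification to a remark.
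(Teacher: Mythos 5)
Your argument is correct and is essentially the one the paper leaves implicit: the corollary is stated without proof, and its intended justification is precisely that the infinitesimal map $\gthetaV\mapsto(\domega,\skewdelta)$ lands in $\bigoplus_{k\geq 0}\cala_k$ by Theorem~\ref{Thm:Sigmatopair}, intertwines the Schouten bracket with the bracket of Eqs.~\eqref{align:temp1} and~\eqref{align:temp2} by Proposition~\ref{Prop:bracketofmultiplicative}, and is surjective by Theorem~\ref{Thm:pairtoSigma}, so that skew-symmetry, the graded Jacobi identity, and closure of each $\cala_k$ are transported from $\strictmultiplicative{\bullet}{\crossedmoduletwogroup}$; surjectivity alone suffices for this, as your pull-back argument uses. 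The one caveat worth recording is that Theorem~\ref{Thm:pairtoSigma} assumes $\ggroup$ and $\thetagroup$ connected and simply connected, whereas $\cala_k$ and the bracket depend only on the Lie algebra crossed module $\crossedmoduletriple{\thetaalgebra}{\phi}{\galgebra}$; so for an arbitrary Lie 2-group one should first replace it by the simply connected integration of that crossed module (which always exists) before invoking the lifting theorem, or else fall back on the direct verification from \textbf{ID1}--\textbf{ID3} that you sketch. With that one-line adjustment your proof stands, and your remark that the corollary is logically downstream of the universal lifting theorem, despite being stated before it, is accurate.
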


\subsection{The universal lifting theorem}

\subsubsection{Statement of the main theorems}

The converse of Theorem \ref{Thm:Sigmatopair} holds as well.
\begin{theorem}\label{Thm:pairtoSigma}
Let $\crossedmoduletwogroup$ be a Lie 2-group, where both
$\ggroup$ and $\thetagroup$ are connected and simply connected.
Given any $(\omega,\delta)\in\cala_k$, there exists a unique multiplicative $k$-vector field
$\gthetaV$ on $\crossedmoduletwogroup$, whose infinitesimal corresponds to $(\domega,\skewdelta)$.
\end{theorem}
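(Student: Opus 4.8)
The plan is to reconstruct the multiplicative $k$-vector field $\gthetaV$ from its infinitesimal data $(\omega,\delta)$ in two stages, first building the two $k$-differentials (with respect to the groupoid and group structures) and then integrating them. Concretely, given $(\omega,\delta)\in\cala_k$, I would first define a candidate $k$-differential $\groupdifferential{}$ of the Lie algebra $\crossedmodulealgebra$ by the formulae of Proposition~\ref{prop:groupdifferentialexplicit}, namely $\groupdifferential{}(u)=\domega(u)$ for $u\in\thetaalgebra$ and $\groupdifferential{}(x)=\frac{\Id-e^{-\phipush}}{\phipush}(\skewdelta(x))$ for $x\in\galgebra$, extended as a derivation of both the wedge product and the Lie bracket on $\wedge^\bullet(\crossedmodulealgebra)$. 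One must check that this $\groupdifferential{}$ is genuinely a $k$-differential, i.e.\ that it is compatible with the bracket on $\crossedmodulealgebra$; this is where conditions \textbf{ID1}, \textbf{ID2}, \textbf{ID3} (together with Proposition~\ref{Prop:domega1cocycle}, whose proof already only used these conditions) get used, and it amounts to verifying the derivation identity $\groupdifferential{}[x,u]=[\groupdifferential{}(x),u]+[x,\groupdifferential{}(u)]$ and the analogous ones on $\wedge^2\thetaalgebra$, $\wedge^2\galgebra$, etc.

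Next I would invoke Theorem~\ref{Thm:David-multi-diff}: since $\crossedmoduletwogroup$ is a Lie group with Lie algebra $\crossedmodulealgebra$, and a Lie group is in particular an ($s$-connected, $s$-simply connected) Lie groupoid over a point precisely when the group is connected and simply connected, the $k$-differential $\groupdifferential{}$ integrates to a unique $k$-vector field $\gthetaV$ on $\crossedmoduletwogroup$ that is multiplicative with respect to the group structure. Here I need connectedness and simple-connectedness of $\crossedmoduletwogroup\cong\ggroup\ltimes\thetagroup$, which follows from the hypotheses on $\ggroup$ and $\thetagroup$. This produces a multiplicative (for the group) $k$-vector field; what remains is to show it is also multiplicative with respect to the groupoid structure $\crossedmoduletwogroup\toto\ggroup$, and that its infinitesimal data are exactly $(\omega,\delta)$.

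For the groupoid-multiplicativity, I would use Lemma~\ref{Lem:Sigmagroupoidmultiplicativeiff}: I must verify that $\gthetaV$ is affine with respect to $\LiealgebroidoverG=\ggroup\ltimes\thetaalgebra$, that $\ggroup$ is coisotropic in $\crossedmoduletwogroup$, and that $\inserts_{\target^*\xi}\gthetaV$ is left-invariant for all $\xi\in\Omega^1(\ggroup)$. The affineness and the behaviour along $\ggroup$ should be read off from the explicit power-series expression Eq.~\eqref{Eqn:Vatbasepowerseries} with $\wedgedelta$ the group $1$-cocycle integrating $\skewdelta$ (this cocycle exists and is $W_k$-valued by \textbf{ID2} and the connectedness of $\ggroup$), and the third condition is essentially the content of the computations in the proof of Proposition~\ref{Thm:SigmaalongMdecomposition} run in reverse. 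The key point tying things together is the relation $\gpleftmove{u}=\gpoidleftmove{u}$ of Eq.~\eqref{eq:gp-gpd}: it forces $\groupoiddifferential{}(u)=\groupdifferential{}(u)$ on $\thetaalgebra$, and combined with \textbf{ID1} (which determines $\groupoiddifferential{}$ on $C^\infty(\ggroup)$ via $\skewdelta\circ\phi=\phipush\circ\omega$) this pins down the full groupoid $k$-differential, hence the groupoid-multiplicativity by the $s$-simply-connected case of Theorem~\ref{Thm:David-multi-diff} applied to $\crossedmoduletwogroup\toto\ggroup$. Finally, uniqueness: any two multiplicative $k$-vector fields with the same infinitesimal data induce the same group $k$-differential, hence coincide by the isomorphism part of Theorem~\ref{Thm:David-multi-diff}; that the infinitesimal of the constructed $\gthetaV$ is $(\omega,\delta)$ follows by unwinding Eq.~\eqref{eq:delta}, Eq.~\eqref{eq:omega} and Proposition~\ref{prop:domegadefined}.

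I expect the main obstacle to be the consistency check in the first stage — showing that the derivation $\groupdifferential{}$ defined piecewise on $\thetaalgebra$ and $\galgebra$ via the nonlinear operator $\frac{\Id-e^{-\phipush}}{\phipush}$ actually respects the Lie bracket of $\crossedmodulealgebra$, since this is precisely where all three conditions \textbf{ID1}--\textbf{ID3} must conspire and where the power-series operator interacts nontrivially with the $\galgebra$-action on $\thetaalgebra$. A secondary subtlety is making sure the same data yield a well-defined groupoid $k$-differential, i.e.\ that the two a priori different integrations (one over the point, one over $\ggroup$) of the ``same'' infinitesimal object agree; this is handled by Eq.~\eqref{eq:gp-gpd} but needs care because the left-invariant extensions relative to the two structures differ away from $\thetagroup$.
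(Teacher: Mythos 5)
Your skeleton coincides with the paper's: define $\partial$ on $\crossedmodulealgebra$ by $\partial(u)=\domega(u)$ and $\partial(x)=\frac{\Id-e^{-\phipush}}{\phipush}\skewdelta(x)$ (Eq.~\eqref{Eqt:defnpartialoutoftwodata}), prove it is a $k$-differential of $\crossedmodulealgebra$ using \textbf{ID1}--\textbf{ID3} (this is Proposition~\ref{Prop:partial1cocycle}; the essential case is the mixed bracket $\ba{x}{u}$, handled via the contraction identities of Lemma~\ref{Lem:skewdeltaderive1} transported to $W_k$ together with Lemma~\ref{Lem:projectionbracketzero}), integrate it over the connected, simply connected group $\crossedmoduletwogroup$ to a group-multiplicative $k$-vector field $\gthetaV$, then prove groupoid-multiplicativity, recover $(\domega,\skewdelta)$ as the infinitesimal, and deduce uniqueness from the fact that the group $k$-differential determines the field. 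Up to that point you have correctly identified both stages and where \textbf{ID1}--\textbf{ID3} enter.

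The genuine gap is in how you argue groupoid-multiplicativity. The shortcut you invoke --- that Eq.~\eqref{eq:gp-gpd} and \textbf{ID1} ``pin down the full groupoid $k$-differential, hence the groupoid-multiplicativity by the $s$-simply-connected case of Theorem~\ref{Thm:David-multi-diff} applied to $\crossedmoduletwogroup\toto\ggroup$'' --- does not work as stated: Theorem~\ref{Thm:David-multi-diff} attaches a $k$-differential only to fields already known to be multiplicative for the groupoid structure (for the merely group-multiplicative $\gthetaV$, the bracket $\ba{\gthetaV}{\gpoidleftmove{X}}$ is not yet known to be left-invariant, so there is no groupoid differential to speak of), and if instead you integrate the candidate groupoid differential to a second, groupoid-multiplicative field $\Sigma'$, you still owe the identification $\Sigma'=\gthetaV$, which is exactly the hard point. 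Relatedly, condition~(3) of Lemma~\ref{Lem:Sigmagroupoidmultiplicativeiff} is not Proposition~\ref{Thm:SigmaalongMdecomposition} ``run in reverse'': that proposition only controls $\gthetaV$ along the base $\ggroup$, whereas left-invariance of $\inserts_{\target^*(\xi)}\gthetaV$ must be verified at every point $(g,\alpha)$. The paper's proof does this by a genuine computation: write $\gthetaV|_{(g,\alpha)}=L_{g*}(\gthetaV|_{\alpha})+R_{\alpha*}(\gthetaV|_{g})$ using group-multiplicativity, feed in $\gthetaV|_{\alpha}=L_{\alpha*}(\wedgeomega|_{\alpha})$ (Lemma~\ref{Prop:SigmaatTheta}) and the formula of Lemma~\ref{Prop:SigmaatG} along $\ggroup$, and then use the integrated \textbf{ID1} (Eq.~\eqref{Eqn:wedgedeltaomegacomptible1}) together with the group $1$-cocycle identity for $\wedgedelta$; the affineness condition then follows from this plus Eq.~\eqref{eq:gp-gpd}, and coisotropy of $\ggroup$ from the absence of $\wedge^k\thetaalgebra$-components in Eq.~\eqref{Eqn:Sigmaatbasepowerseries2}. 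You name the right lemma, but this verification at a general point $(g,\alpha)$ is the new computation the proof actually requires, and it is missing from (indeed, argued around in) your sketch.
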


An immediate consequence is the following main result of the paper:

\begin{theorem}[universal lifting theorem]
Let $\crossedmoduletwogroup$ be a Lie 2-group, where both
$\ggroup$ and $\thetagroup$ are connected and simply connected
Lie groups with Lie algebras $\galgebra$ and $\theta$, respectively.
There is a canonical isomorphism of graded Lie algebras
\[ \bigoplus_{k\geq 0} \strictmultiplicative{k}{\crossedmoduletwogroup}\cong
\bigoplus_{k\geq 0} \cala_k .\]
\end{theorem}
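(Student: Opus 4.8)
The plan is to deduce the universal lifting theorem (the final boxed statement) directly from Theorem~\ref{Thm:pairtoSigma} together with Theorem~\ref{Thm:Sigmatopair} and Proposition~\ref{Prop:bracketofmultiplicative}. First I would define the map
\[ \Psi:\bigoplus_{k\geq 0}\strictmultiplicative{k}{\crossedmoduletwogroup}\to\bigoplus_{k\geq 0}\cala_k \]
degree by degree. For $k\geq 1$, Theorem~\ref{Thm:Sigmatopair} shows that a multiplicative $k$-vector field $\gthetaV$ determines a pair $(\domega,\skewdelta)$ satisfying \textbf{ID1}, \textbf{ID2}, \textbf{ID3}, i.e.\ an element of $\cala_k$; set $\Psi(\gthetaV)=(\domega,\skewdelta)$. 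For $k=0$, recall from the Remark after the definition of $\strictmultiplicative{0}{\crossedmoduletwogroup}$ that a multiplicative function is of the form $f(g,\alpha)=\nu(\alpha)$ with $\nu$ a group $1$-cocycle on $\thetagroup$ invariant under the $\ggroup$-action; differentiating $\nu$ at the unit yields a linear functional $\omega:\theta\to\reals$ with $\omega(x\moduleaction u)=0$, and we set $\Psi(f)=(\omega,0)\in\cala_0$. I would first check $\Psi$ is well defined and linear in each degree, which is essentially immediate.

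Next I would prove that $\Psi$ is a homomorphism of graded Lie algebras. For $k_1,k_2\geq 1$ this is exactly the content of Proposition~\ref{Prop:bracketofmultiplicative}: the infinitesimal of $\ba{\gthetaV_1}{\gthetaV_2}$ is $(\domega_3,\skewdelta_3)$ with $\domega_3,\skewdelta_3$ given by Eqs.~\eqref{align:temp1} and~\eqref{align:temp2}, which is precisely the bracket defining the graded Lie algebra structure on $\bigoplus_{k\geq 0}\cala_k$ (see the Corollary preceding this subsection). The cases involving degree $0$ must be handled separately but are straightforward: a multiplicative $0$-vector field $f$ acts as a derivation, and $\ba{f}{\gthetaV}$ and $\ba{f_1}{f_2}$ are computed directly; here I would just verify the bracket formulas~\eqref{align:temp1}--\eqref{align:temp2} still hold when one or both of $k_1,k_2$ is $0$, using $\skewdelta=0$ in degree $0$ and the defining relation $\omega(x\moduleaction u)=0$.

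Finally I would show $\Psi$ is bijective. In each degree $k\geq 1$, injectivity and surjectivity are furnished by Theorem~\ref{Thm:pairtoSigma}: given $(\omega,\delta)\in\cala_k$ there is a \emph{unique} multiplicative $k$-vector field $\gthetaV$ with $\Psi(\gthetaV)=(\omega,\delta)$, so $\Psi$ restricted to degree $k$ is a bijection. For $k=0$, injectivity follows because a connected $\thetagroup$ is determined by its Lie algebra so $\nu$ is determined by $\omega$ (two multiplicative functions with the same $\omega$ agree), and surjectivity follows because, $\thetagroup$ being connected and simply connected, any Lie algebra $1$-cocycle $\omega:\theta\to\reals$ integrates to a group $1$-cocycle $\nu:\thetagroup\to\reals$, which is $\ggroup$-invariant when $\omega$ kills $\galgebra\moduleaction\theta$ (again by connectedness of $\ggroup$), giving a multiplicative function $f(g,\alpha)=\nu(\alpha)$. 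Assembling the degreewise bijections gives that $\Psi$ is an isomorphism of graded Lie algebras, and since everything is built canonically from the crossed-module data, so is $\Psi$. The only mild subtlety --- and the place I would be most careful --- is the bookkeeping around degree $0$, making sure the bracket formulas and the integration argument for $1$-cocycles are consistent with the conventions in $\cala_0$; the substantive work has already been done in Theorems~\ref{Thm:Sigmatopair}, \ref{Thm:pairtoSigma} and Proposition~\ref{Prop:bracketofmultiplicative}.
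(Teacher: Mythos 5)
Your proposal is correct and follows essentially the same route as the paper, which presents the universal lifting theorem as an immediate consequence of Theorem~\ref{Thm:Sigmatopair}, Theorem~\ref{Thm:pairtoSigma}, and Proposition~\ref{Prop:bracketofmultiplicative}. The only additional content in your write-up is the explicit degree-$0$ bookkeeping (integrating $\omega:\theta\to\reals$ to a $\ggroup$-invariant cocycle $\nu$ on $\thetagroup$), which the paper dismisses as the obvious case and which you handle correctly.
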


\subsubsection{From infinitesimal data to $k$-differentials of
the Lie algebra  $\crossedmodulealgebra$}

The $k=0$ case is obvious, so we will assume $k\geq 1$ below.
We will divide the proof of Theorem~\ref{Thm:pairtoSigma} into several steps.

First, since $\skewdelta$ is a Lie algebra $1$-cocycle, it integrates into a Lie group $1$-cocycle
\[ \wedgedelta:\ggroup\to\galgebra\wedge(\wedge^{k-1}\thetaalgebra) \] such that
\begin{equation}\label{eq:delta2}
\skewdelta(x)=-\left.\frac{d}{dt}\right|_{t=0}\wedgedelta|_{\exp tx}, \quad\forall x\in\galgebra
.\end{equation}
Clearly the map $\wedgedelta$ takes values in $W_k$.

Let $\wedgeomega:\thetagroup\to\wedge^k\thetaalgebra$ be the group $1$-cocycle  integrating $\domega$.
As a direct consequence of property \textbf{ID1} from Theorem~\ref{Thm:Sigmatopair},
we have
\begin{equation}\label{Eqn:wedgedeltaomegacomptible1}
\wedgedelta|_{\Phi(\alpha)}=\phipush(\wedgeomega|_{\alpha}), \quad\forall g\in\ggroup, \alpha\in\thetagroup
.\end{equation}

Define a linear map $\partial:\crossedmodulealgebra\to\wedge^{k}(\crossedmodulealgebra)$ by
\begin{equation}\label{Eqt:defnpartialoutoftwodata}
\left\{ \begin{aligned}
\partial (u) &=\domega (u), &&\forall u\in\thetaalgebra; \\
\partial (x)  &=\frac{\Id-e^{-\phipush}}{\phipush}~\skewdelta (x), &&\forall x\in\galgebra
.\end{aligned}
\right.
\end{equation}

\begin{proposition}
\label{Prop:partial1cocycle} The operator $\partial$ defines
 a Lie algebra $k$-differential for the Lie algebra $\crossedmodulealgebra$.
\end{proposition}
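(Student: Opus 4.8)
The plan is to verify the two defining properties of a $k$-differential. Since $\partial$ is specified on the generators $\crossedmodulealgebra=\galgebra\oplus\thetaalgebra$ by Eq.~\eqref{Eqt:defnpartialoutoftwodata} and then extended as a degree-$(k-1)$ derivation of $(\wedge^\bullet(\crossedmodulealgebra),\wedge)$, the graded Leibniz rule for the wedge product holds by construction. For the compatibility with the Lie bracket, I would use that both $\partial$ and the Gerstenhaber (Schouten) extension of $[\,\cdot\,,\,\cdot\,]$ obey Leibniz, so it is enough to check $\partial[a,b]=[\partial a,b]+[a,\partial b]$ for $a,b$ ranging over the generators (on degree-$1$ elements the sign $(-1)^{(|a|-1)(k-1)}$ is trivial), and then to treat the three cases $a,b\in\thetaalgebra$; $a,b\in\galgebra$; and $a=x\in\galgebra$, $b=u\in\thetaalgebra$ separately.

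When $a,b\in\thetaalgebra$ the identity is exactly the statement that $\domega$ is a Lie algebra $1$-cocycle valued in $\wedge^k\thetaalgebra$, i.e.\ Proposition~\ref{Prop:domega1cocycle}. When $a,b\in\galgebra$ I would first record that $\phipush$ commutes with the extended adjoint action $\ad_z$ of any $z\in\galgebra$ on $\wedge^\bullet(\crossedmodulealgebra)$: both are degree-$0$ derivations and they agree on generators thanks to the crossed-module relation $\phi(z\moduleaction u)=[z,\phi(u)]$. Hence $\tfrac{\Id-e^{-\phipush}}{\phipush}$ commutes with $\ad_z$ and with $[\,\cdot\,,z]$ as well, and applying this operator to the $W_k$-valued cocycle identity \textbf{ID2} for $\skewdelta$ produces the desired equality.

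The remaining case $a=x\in\galgebra$, $b=u\in\thetaalgebra$ is where the work lies. Since $x\moduleaction u\in\thetaalgebra$ and $[x,\domega(u)]=x\moduleaction\domega(u)$, property \textbf{ID3} reduces the required identity $\partial(x\moduleaction u)=[\partial x,u]+[x,\partial u]$ to
\[ \Bigl[\,\tfrac{\Id-e^{-\phipush}}{\phipush}\,\skewdelta(x),\ u\,\Bigr]\;=\;\pr_{\wedge^k\thetaalgebra}\bigl[\,\skewdelta(x),\ u\,\bigr]. \]
I would establish this for an arbitrary $w\in W_k$ in place of $\skewdelta(x)$. Writing $R_u:=[\,\cdot\,,\,u\,]$, one checks on generators that (i) $\iota_{\zeta}$ commutes with $R_u$ for every $\zeta\in\galgebrastar$; (ii) by the $W_k$-analogue of Lemma~\ref{Lem:skewdeltaderive1} — valid by the very same proof, with $(TM,A,\rho,\rhopush)$ replaced by $(\galgebra,\thetaalgebra,\phi,\phipush)$ — one has $\iota_{\zeta}\bigl(\tfrac{\Id-e^{-\phipush}}{\phipush}\,w\bigr)=e^{-\phipush}(\iota_{\zeta}w)$, with $\iota_{\zeta}w\in\wedge^{k-1}\thetaalgebra$; and (iii) $R_u\circ e^{-\phipush}$ vanishes on $\wedge^\bullet\thetaalgebra$, because $e^{-\phipush}$ is an algebra automorphism with $e^{-\phipush}(v)=v-\phi(v)$ for $v\in\thetaalgebra$ and $R_u(v-\phi(v))=[v,u]_{\thetaalgebra}-\phi(v)\moduleaction u=0$ by the crossed-module relation $\phi(v)\moduleaction u=[v,u]_{\thetaalgebra}$. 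Combining (i)--(iii), $\iota_{\zeta}\bigl(R_u\,\tfrac{\Id-e^{-\phipush}}{\phipush}\,w\bigr)=0$ for all $\zeta\in\galgebrastar$, so $R_u\,\tfrac{\Id-e^{-\phipush}}{\phipush}\,w\in\wedge^k\thetaalgebra$; and since $\tfrac{\Id-e^{-\phipush}}{\phipush}\,w-w$ lies in the part of $\wedge^k(\crossedmodulealgebra)$ with at least two $\galgebra$-factors, while $R_u$ drops the number of $\galgebra$-factors by at most one, projecting onto $\wedge^k\thetaalgebra$ yields $R_u\,\tfrac{\Id-e^{-\phipush}}{\phipush}\,w=\pr_{\wedge^k\thetaalgebra}R_u(w)$, as wanted.

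The main obstacle is this last case: everything hinges on the displayed identity, whose proof must simultaneously exploit the symmetry built into $W_k$ (through the iterated-contraction Lemma~\ref{Lem:skewdeltaderive1}), the automorphism property of $e^{-\phipush}$, and both crossed-module relations, all while tracking the grading of $\wedge^\bullet(\crossedmodulealgebra)$ by the number of $\galgebra$-factors. The other two cases, and the reduction to checking everything on generators, are routine.
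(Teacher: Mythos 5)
Your argument is correct, and its skeleton is the same as the paper's: you reduce to the $1$-cocycle identity on generators, handle the case $u,v\in\thetaalgebra$ by Proposition~\ref{Prop:domega1cocycle} and the case $x,y\in\galgebra$ by commuting $\tfrac{\Id-e^{-\phipush}}{\phipush}$ past the adjoint action and applying it to \textbf{ID2}, and use \textbf{ID3} to reduce the mixed case to the identity $\bigl[\tfrac{\Id-e^{-\phipush}}{\phipush}\skewdelta(x),u\bigr]=\pr_{\wedge^k\thetaalgebra}\ba{\skewdelta(x)}{u}$. Where you genuinely diverge is in the proof of this last identity. The paper expands $\tfrac{\Id-e^{-\phipush}}{\phipush}\skewdelta(x)$ term by term, isolates each projection $\pr_{\wedge^j\galgebra\wedge(\wedge^{k-j}\thetaalgebra)}$, and kills the mixed components by contracting with $\zeta\in\galgebrastar$, using Eq.~\eqref{Lem:skewdeltaderive} together with the auxiliary Lemma~\ref{Lem:projectionbracketzero} (whose verification is left to the reader). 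You instead repackage Eq.~\eqref{Lem:skewdeltaderive} into the closed-form relation $\inserts_{\zeta}\circ\tfrac{\Id-e^{-\phipush}}{\phipush}=e^{-\phipush}\circ\inserts_{\zeta}$ on $W_k$, note that $[\,\cdot\,,u]\circ e^{-\phipush}$ annihilates $\wedge^\bullet\thetaalgebra$ because $e^{-\phipush}$ is an algebra automorphism with $v\mapsto v-\phi(v)$ and $\phi(v)\moduleaction u=\ba{v}{u}$, and then finish by counting $\galgebra$-factors; all of these steps check out (the commutation of $\inserts_\zeta$ with $[\,\cdot\,,u]$, the nilpotency giving $e^{-\phipush}$ as a finite sum, and the degree count identifying the result with $\pr_{\wedge^k\thetaalgebra}\ba{\skewdelta(x)}{u}$). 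This route bypasses Lemma~\ref{Lem:projectionbracketzero} and the projection-by-projection bookkeeping, and makes the use of both crossed-module relations explicit, while the common crucial input in both proofs remains the $W_k$-contraction identity~\eqref{Lem:skewdeltaderive}. One phrasing slip: $\phipush$ and $\ad_z$ do not ``agree on generators''; what your argument needs (and what is true) is that their commutator, being a derivation, vanishes on generators thanks to $\phi(z\moduleaction u)=\ba{z}{\phi(u)}$.
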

\begin{proof}

It suffices to prove that $\partial$ is a Lie algebra $1$-cocycle for the Lie algebra $\crossedmodulealgebra$.
In fact, Proposition~\ref{Prop:domega1cocycle} implies that
\[ \partial\ba{u}{v}=\ba{\partial u}{v}+\ba{u}{\partial v}, \quad\forall u,v\in\thetaalgebra .\]

On the other hand, it follows from a direct verification that
\[ \phipush\ba{x}{w}=\ba{x}{\phipush(w)}, \quad\forall x\in\galgebra, w\in\wedge^\bullet(\crossedmodulealgebra) .\]
As a consequence, applying the operator $\frac{\Id-e^{-\phipush}}{\phipush}$ to both sides of the equation:
\[ \skewdelta\ba{x}{y}=\ba{\skewdelta x}{y}+\ba{x}{\skewdelta y} ,\]
we obtain
\[ \partial\ba{x}{y}=\ba{\partial x}{y}+\ba{x}{\partial y}, \quad\forall x,y\in\galgebra .\]

It remains to prove the identity
\[ \partial\ba{x}{u}=\ba{\partial x}{u}+\ba{x}{\partial u}, \quad\forall x\in\galgebra,u\in \thetaalgebra .\]

Since $\partial\ba{x}{u}-\ba{x}{\partial u}=\pr_{\wedge^k\thetaalgebra}(\ba{\skewdelta(x)}{u})$
according to property \textbf{ID3} from Theorem~\ref{Thm:Sigmatopair}, it suffices to prove that
\begin{equation*}
\ba{\partial x}{u}=\pr_{\wedge^k\thetaalgebra}(\ba{\skewdelta(x)}{u})
.\end{equation*}
Now
\begin{align*}
\ba{\partial x}{u}
&=\left[\sum_{i=0}^{k-1}\frac{\minuspower{i}}{(i+1)!}\phipush^i(\skewdelta(x)) , u\right] \\
&=\sum_{j=0}^{k-1} \pr_{\wedge^j\galgebra\wedge(\wedge^{k-j}\thetaalgebra)}
\left[ \sum_{i=0}^{k-1}\frac{\minuspower{i}}{(i+1)!}\phipush^i(\skewdelta(x)) , u\right]
.\end{align*}
Using the definitions of $\skewdelta$ and $D_{\phi}$, we obtain the following
identity:
\[ \pr_{\wedge^{k }\thetaalgebra}
\left[ \sum_{i=0}^{k-1}\frac{\minuspower{i}}{(i+1)!}\phipush^i(\skewdelta(x)) , u \right]
=\pr_{\wedge^k\thetaalgebra}\ba{\skewdelta(x)}{u}
.\]
For $1\leq j\leq k-1$, the sum
$\pr_{\wedge^j\galgebra\wedge(\wedge^{k-j}\thetaalgebra)}
\left[ \sum_{i=0}^{k-1}\frac{\minuspower{i}}{(i+1)!}\phipush^i(\skewdelta(x)) , u \right]$
contains only the two terms
\[ \pr_{\wedge^j\galgebra\wedge(\wedge^{k-j}\thetaalgebra)}
\left( \left[ \frac{\minuspower{j-1}}{j!}\phipush^{j-1}(\skewdelta(x)) , u \right]
+ \left[ \frac{\minuspower{j}}{(j+1)!}\phipush^j(\skewdelta(x)) , u \right] \right) \]
and thus reduces to
\begin{equation}\label{Eqt:templine}
\frac{\minuspower{j-1}}{j!}\pr_{\wedge^j\galgebra\wedge(\wedge^{k-j}\thetaalgebra)}
\Big( \left[ \phipush^{j-1}(\skewdelta(x)) , u\right] -\frac{1}{j+1}
\left[ \phipush^j(\skewdelta(x)) , u \right] \Big) .\end{equation}
To prove that it vanishes, we need a couple of lemmas.

\begin{lemma}\label{Lem:projectionbracketzero}
For any $v\in\wedge^{k-1}\thetaalgebra$ and $1\leq l\leq k-1$, we have
\[ \pr_{\wedge^{l-1}\galgebra\wedge(\wedge^{k-l}\thetaalgebra)}
\Big( \left[ l \phipush^{l-1}(v)- \phipush^{l}(v) , u \right] \Bigl)=0,
\quad\forall u\in\thetaalgebra .\]
\end{lemma}
\begin{proof} This follows from a straightforward computation,
which is left to the reader.
\end{proof}

From Lemma~\ref{Lem:skewdeltaderive1}, it follows that,
for any $w\in W_k$ and $j\geq 1$, we have
\begin{equation}\label{Lem:skewdeltaderive}
\inserts_{\phi^*\zeta}(\phipush^{j-1}w)=\phipush^j(\inserts_{\zeta}w)
=\frac{1}{j+1}\inserts_{\zeta}(\phipush^j w), \quad\forall\zeta\in\galgebrastar
.\end{equation}

Now we return to the proof of Proposition~\ref{Prop:partial1cocycle}.
It remains to prove that~\eqref{Eqt:templine} vanishes.
Indeed, for any $\zeta\in\galgebrastar$, we have
\begin{multline*}
\inserts_{\zeta}\pr_{\wedge^j\galgebra\wedge(\wedge^{k-j}\thetaalgebra)}
\Bigr( \left[ \phipush^{j-1}(\skewdelta(x)) , u \right]
-\frac{1}{j+1} \left[ \phipush^j(\skewdelta(x)) , u \right] \Bigl) \\
=\pr_{\wedge^{j-1}\galgebra\wedge(\wedge^{k-j}\thetaalgebra)}
\Bigr( \left[ \inserts_{\zeta}\phipush^{j-1}(\skewdelta(x)) , u \right]
-\frac{1}{j+1} \left[ \inserts_{\zeta}\phipush^j(\skewdelta(x)) , u \right] \Bigl) \\
=\pr_{\wedge^{j-1}\galgebra\wedge(\wedge^{k-j}\thetaalgebra)}
\Bigr( \left[ j \phipush^{j-1}(\inserts_{\zeta}\skewdelta(x))-
\phipush^{j}(\inserts_{\zeta}\skewdelta(x)) , u \right] \Bigl)=0.
\end{multline*}
Here in the last two steps, we have used  Eq.~\eqref{Lem:skewdeltaderive}
and Lemma~\ref{Lem:projectionbracketzero}.
This concludes the proof of the proposition.
\end{proof}

\subsubsection{Multiplicative with respect to the  groupoid structure}

As a consequence of Proposition \ref{Prop:partial1cocycle}, we obtain
a $k$-vector field
$\Sigma$ on $\crossedmoduletwogroup$, which is multiplicative
with respect to the group structure
and whose induced $k$-differential with respect to the group structure
on $\crossedmoduletwogroup$ is
$\partial$. Now we need to prove that $\Sigma$ is also
multiplicative with respect to the groupoid structure on
 $\crossedmoduletwogroup\toto G$. For this purpose, we need
an explicit expression of $\Sigma$.
Since $\Sigma$ is multiplicative with respect to the group structure
on $\crossedmoduletwogroup$,
it suffices to find an explicit expression of $\Sigma$
along the subgroups $\{\unit_G\}\times \thetagroup$ and
$G \times \{\unit_{\thetagroup}\}$, respectively.
The next two lemmas are devoted to this investigation.

The following lemma is immediate.
\begin{lemma}\label{Prop:SigmaatTheta}
Identify $\thetagroup$ with the subgroup $\{\unit_G\}\times\thetagroup$ of $\crossedmoduletwogroup$.
Then $\Sigma$ is tangent to $\thetagroup$  and therefore induces a multiplicative
$k$-vector field $\Sigma|_{\thetagroup}$ on $\thetagroup$.
Moreover, $\Sigma|_{\alpha}=L_{\alpha*}(\wedgeomega|_{\alpha})$, for all $\alpha\in\thetagroup$.
\end{lemma}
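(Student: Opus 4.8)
The plan is to read everything off from the intrinsic group $1$-cocycle of $\Sigma$, and then restrict it to the subgroup $\thetagroup$. Recall that $\Sigma$ was produced (as a consequence of Proposition~\ref{Prop:partial1cocycle}) so as to be multiplicative with respect to the group structure on $\crossedmoduletwogroup$, with group $k$-differential equal to the Lie algebra $k$-differential $\partial$ of $\crossedmodulealgebra$ defined in~\eqref{Eqt:defnpartialoutoftwodata}. Since $\ggroup$ and $\thetagroup$ are connected and simply connected, so is $\crossedmoduletwogroup$ (its underlying manifold is $\ggroup\times\thetagroup$); hence Lemma~\ref{Lem:partialSigmacalculate} applies to the Lie group $\crossedmoduletwogroup$ and yields a group $1$-cocycle
\[ \intsigma:\crossedmoduletwogroup\to\wedge^k(\crossedmodulealgebra),\qquad \intsigma(p)=L_{\inverse{p}*}\bigl(\Sigma|_p\bigr), \]
for the adjoint action, with infinitesimal $-\partial$; and, $\crossedmoduletwogroup$ being connected, $\intsigma$ is the \emph{only} group $1$-cocycle with that infinitesimal. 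In particular $\Sigma|_p=L_{p*}\bigl(\intsigma(p)\bigr)$ at every point.

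First I would restrict $\intsigma$ to $\thetagroup=\{\unit_\ggroup\}\times\thetagroup$. From the multiplication rule $(g,\alpha)\gpmulti(h,\beta)=(gh,(\inverse{h}\groupaction\alpha)\beta)$ one sees at once that, for $\alpha\in\thetagroup$, left translation $L_\alpha$ on $\crossedmoduletwogroup$ maps $\thetagroup$ into itself and restricts there to left translation of the group $\thetagroup$; hence $L_{\alpha*}$ carries $\wedge^k\thetaalgebra$ onto $\wedge^k(T_\alpha\thetagroup)$, and $\intsigma|_{\thetagroup}$ is a genuine group $1$-cocycle on $\thetagroup$. Its infinitesimal is the restriction of $-\partial$ to $\thetaalgebra$, which by the definition~\eqref{Eqt:defnpartialoutoftwodata} of $\partial$ equals $-\domega$ and takes values in $\wedge^k\thetaalgebra$. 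Since $\thetaalgebra$ is an ideal of $\crossedmodulealgebra=\galgebra\ltimes\thetaalgebra$, the subspace $\wedge^k\thetaalgebra$ is invariant under the adjoint action of the connected group $\thetagroup$, so the usual ODE uniqueness for $1$-cocycles forces $\intsigma|_{\thetagroup}$ to take all its values in $\wedge^k\thetaalgebra$.

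The three assertions then follow. Tangency: $\Sigma|_\alpha=L_{\alpha*}\bigl(\intsigma(\alpha)\bigr)$ lies in $L_{\alpha*}(\wedge^k\thetaalgebra)=\wedge^k(T_\alpha\thetagroup)$, so $\Sigma$ is tangent to $\thetagroup$ and restricts to a $k$-vector field $\Sigma|_{\thetagroup}$ there; this restriction is multiplicative on $\thetagroup$ because its intrinsic map $\alpha\mapsto L_{\inverse{\alpha}*}(\Sigma|_\alpha)$ is the group $1$-cocycle $\intsigma|_{\thetagroup}$. Finally, $\intsigma|_{\thetagroup}$ and $\wedgeomega$ are both group $1$-cocycles on the connected group $\thetagroup$, valued in $\wedge^k\thetaalgebra$, with the same infinitesimal --- namely, up to the sign conventions of Lemma~\ref{Lem:partialSigmacalculate} and~\eqref{eq:omega}, the Lie algebra $1$-cocycle $\domega$ that $\wedgeomega$ integrates by definition --- so they coincide, which gives $\Sigma|_\alpha=L_{\alpha*}\bigl(\wedgeomega|_\alpha\bigr)$ for all $\alpha\in\thetagroup$.

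I do not anticipate a genuine obstacle here, which is exactly why the statement is advertised as ``immediate.'' The only real care needed is the bookkeeping of signs (the minus sign of Lemma~\ref{Lem:partialSigmacalculate} against the normalisations~\eqref{eq:omega} and~\eqref{Eqt:defnpartialoutoftwodata}). It is worth stressing the contrast with Proposition~\ref{Thm:SigmaalongMdecomposition}: no exponential factor $\tfrac{\Id-e^{-\phipush}}{\phipush}$ appears along $\thetagroup$, because $\thetagroup$ is \emph{not} the base of the groupoid $\crossedmoduletwogroup\toto\ggroup$ --- along $\thetagroup$ the relevant infinitesimal datum is simply $\partial|_{\thetaalgebra}=\domega$.
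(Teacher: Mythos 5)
Your proof is correct, and since the paper gives no argument for this lemma (it is simply declared immediate), your route --- restricting the group $1$-cocycle $p\mapsto L_{\inverse{p}*}(\Sigma|_p)$ of Lemma~\ref{Lem:partialSigmacalculate} to $\thetagroup$, noting that its infinitesimal $-\partial|_{\thetaalgebra}=-\domega$ lies in the $\thetagroup$-invariant subspace $\wedge^k\thetaalgebra$, and using uniqueness of cocycle integration on the connected group $\thetagroup$ to identify the restriction with $\wedgeomega$ --- is precisely the computation the authors leave implicit, with signs matching their conventions in Eqs.~\eqref{Eqt:defnpartialoutoftwodata} and~\eqref{eq:delta2}. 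You are also right to avoid Lemma~\ref{Thm:tangenttoTheata}: it presupposes multiplicativity with respect to the groupoid structure, which at this stage has not yet been established for $\Sigma$, so arguing purely through the group structure, as you do, is the correct (and presumably intended) approach.
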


Next, we have
\begin{lemma}\label{Prop:SigmaatG}
Along the Lie subgroup $\ggroup\cong\ggroup\times\{\unit_\Theta\}\subset G\times\Theta$,
$\Sigma$ can be explicitly expressed by the following formula:
\begin{equation}\label{Eqn:Sigmaatbasepowerseries2}
\Sigma|_{g}=L_{g*}\left(\frac{\Id-e^{-\phipush}}{\phipush}\big(\wedgedelta|_g\big)\right),
\quad\forall g\in\ggroup
.\end{equation}
Moreover, for any $\zeta\in\galgebrastar$,  we have
\begin{equation}\label{Eqt:insertszetaplusphiUpseta}
\inserts_{(L^*_{\inverse{g}}\zeta+\phi^*\zeta)}(\Sigma|_g)
=\inserts_{\zeta}(\wedgedelta|_g)\,.
\end{equation}
Here $L^*_{\inverse{g}}\zeta\in T_g^*\ggroup$ and $\phi^*\zeta\in\thetaalgebrastar=T^*_{\unit_{\thetagroup}}\thetagroup$.
\end{lemma}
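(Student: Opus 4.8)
We prove the two displayed formulas in turn, using the fact --- established just above from Proposition~\ref{Prop:partial1cocycle} --- that $\Sigma$ is multiplicative with respect to the \emph{group} structure on $\crossedmoduletwogroup$ and that the $k$-differential it induces for that structure is the operator $\partial$ of Eq.~\eqref{Eqt:defnpartialoutoftwodata}.

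For Eq.~\eqref{Eqn:Sigmaatbasepowerseries2}, I would apply Lemma~\ref{Lem:partialSigmacalculate} to the Lie group $\crossedmoduletwogroup$: it says that $\intsigma\colon p\mapsto L_{\inverse{p}*}(\Sigma|_p)$ is a group $1$-cocycle $\crossedmoduletwogroup\to\wedge^k(\crossedmodulealgebra)$, for the adjoint action of $\crossedmoduletwogroup$, whose infinitesimal is $-\partial$. Restricting to the subgroup $\ggroup\cong\ggroup\times\{\unit_\thetagroup\}$, and noting that conjugation by $(g,\unit_\thetagroup)$ acts on $\crossedmodulealgebra=\galgebra\oplus\thetaalgebra$ by $\Adjoint{g}$ on $\galgebra$ and by the crossed-module action on $\thetaalgebra$ --- precisely the $\ggroup$-module structure on $\wedge^k(\crossedmodulealgebra)$ that appears in Lemma~\ref{prop:wedgedelta1cocycle} --- one obtains a group $1$-cocycle $\intsigma|_{\ggroup}$ with infinitesimal $-\partial|_{\galgebra}=-\frac{\Id-e^{-\phipush}}{\phipush}\circ\skewdelta$. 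On the other hand, the identity $\phipush\ba{x}{w}=\ba{x}{\phipush(w)}$ (noted in the proof of Proposition~\ref{Prop:partial1cocycle}) exponentiates to $\ggroup$-equivariance of $\frac{\Id-e^{-\phipush}}{\phipush}$ on $\wedge^k(\crossedmodulealgebra)$, so $\frac{\Id-e^{-\phipush}}{\phipush}\circ\wedgedelta$ is again a group $1$-cocycle, and by Eq.~\eqref{eq:delta2} its infinitesimal is $-\frac{\Id-e^{-\phipush}}{\phipush}\circ\skewdelta$ as well. Since $\ggroup$ is connected, a group $1$-cocycle is determined by its infinitesimal, hence $\intsigma|_{\ggroup}=\frac{\Id-e^{-\phipush}}{\phipush}\circ\wedgedelta$, which is exactly Eq.~\eqref{Eqn:Sigmaatbasepowerseries2}.

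For Eq.~\eqref{Eqt:insertszetaplusphiUpseta}, observe first that left translation by $(g,\unit_\thetagroup)$ on $\crossedmoduletwogroup=\ggroup\ltimes\thetagroup$ acts as left translation on the $\ggroup$-factor and trivially on the $\thetagroup$-factor, so under the left trivialization the covector $L^*_{\inverse{g}}\zeta+\phi^*\zeta$ corresponds to $\zeta+\phi^*\zeta\in\galgebrastar\oplus\thetaalgebrastar=\crossedmodulealgebra^*$; thus, by Eq.~\eqref{Eqn:Sigmaatbasepowerseries2}, it suffices to evaluate $\inserts_{\zeta+\phi^*\zeta}$ on $\frac{\Id-e^{-\phipush}}{\phipush}(w)$ where $w=\wedgedelta|_g\in W_k$. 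Expanding $\frac{\Id-e^{-\phipush}}{\phipush}(w)=\sum_{i=0}^{k-1}\frac{\minuspower{i}}{(i+1)!}\phipush^i(w)$ as in Eq.~\eqref{Eqn:Vatbasepowerseries} (the sum terminates since $\phipush$ raises $\galgebra$-degree, so $\phipush^k w=0$), splitting $\inserts_{\zeta+\phi^*\zeta}=\inserts_\zeta+\inserts_{\phi^*\zeta}$, and using Eq.~\eqref{Lem:skewdeltaderive} in the form $\inserts_{\phi^*\zeta}(\phipush^i w)=\frac{1}{i+2}\inserts_\zeta(\phipush^{i+1}w)$ (valid for $i\geq 0$ because $w\in W_k$), a one-line reindexing shows the $\inserts_{\phi^*\zeta}$-terms cancel all the $i\geq1$ terms coming from $\inserts_\zeta$, leaving $\inserts_\zeta(w)=\inserts_\zeta(\wedgedelta|_g)$.

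The main obstacle is Eq.~\eqref{Eqn:Sigmaatbasepowerseries2}: not the arithmetic, but the module-theoretic bookkeeping --- identifying the $\ggroup$-module structure on $\wedge^k(\crossedmodulealgebra)$ coming from the adjoint action of $\crossedmoduletwogroup$, checking that $\frac{\Id-e^{-\phipush}}{\phipush}$ is equivariant for it, and invoking uniqueness (via connectedness of $\ggroup$) of the group $1$-cocycle integrating a prescribed Lie algebra $1$-cocycle. Once this is in place, Eq.~\eqref{Eqt:insertszetaplusphiUpseta} is a short computation whose only nontrivial input is the membership $\wedgedelta|_g\in W_k$, which was recorded when $\wedgedelta$ was introduced.
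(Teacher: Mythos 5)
Your proposal is correct and follows essentially the same route as the paper: the paper disposes of Eq.~\eqref{Eqn:Sigmaatbasepowerseries2} with the single remark that it ``follows from integrating $\partial(x)$ in Eq.~\eqref{Eqt:defnpartialoutoftwodata},'' and your cocycle-restriction, equivariance-of-$\frac{\Id-e^{-\phipush}}{\phipush}$, and uniqueness-on-a-connected-group argument is exactly the content of that integration, spelled out. Your derivation of Eq.~\eqref{Eqt:insertszetaplusphiUpseta} is the same telescoping cancellation via Eq.~\eqref{Lem:skewdeltaderive} that the paper performs term by term.
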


\begin{proof}
Eq.~\eqref{Eqn:Sigmaatbasepowerseries2} follows from integrating $\partial(x)$ in Eq.~\eqref{Eqt:defnpartialoutoftwodata}.
To prove Eq.~\eqref{Eqt:insertszetaplusphiUpseta}, according to Eq.~\eqref{Lem:skewdeltaderive}, we have
\[ \inserts_{\phi^*\zeta}\left(\tfrac{\minuspower{j-1}}{j!}\phipush^{j-1}\big(\wedgedelta|_g\big)\right)
=\tfrac{\minuspower{j-1}}{j!}\tfrac{1}{j+1}\inserts_{\zeta}\phipush^j\big(\wedgedelta|_g\big)
=- \inserts_{\zeta}\left(\tfrac{\minuspower{j}}{(j+1)!}\phipush^{j}\big(\wedgedelta|_g\big)\right) .\]
The conclusion thus follows immediately by using Eq.~\eqref{Eqn:Sigmaatbasepowerseries2}.
\end{proof}

\begin{proposition}
The $k$-vector field $\Sigma$ is also multiplicative with respect to
the groupoid structure on $\crossedmoduletwogroup\toto\ggroup$.
\end{proposition}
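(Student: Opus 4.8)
The plan is to verify the three conditions of Lemma~\ref{Lem:Sigmagroupoidmultiplicativeiff} for the groupoid $\crossedmoduletwogroup\toto\ggroup$: namely, Condition~\ref{cun} that $\Sigma$ is affine, Condition~\ref{cdeux} that $\ggroup$ is coisotropic in $\crossedmoduletwogroup$ with respect to $\Sigma$, and Condition~\ref{ctrois} that $\inserts_{\target^*\xi}\Sigma$ is invariant under the groupoid left translations for every $\xi\in\Omega^1(\ggroup)$. Since $\Sigma$ is already multiplicative with respect to the \emph{group} structure on $\crossedmoduletwogroup$, the strategy is to feed the explicit descriptions of $\Sigma$ along the subgroups $\thetagroup$ and $\ggroup$ provided by Lemmas~\ref{Prop:SigmaatTheta} and~\ref{Prop:SigmaatG} into these three conditions.

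Condition~\ref{cdeux} is the most transparent. By Eq.~\eqref{Eqn:Sigmaatbasepowerseries2}, $\Sigma|_g=L_{g*}\bigl(\frac{\Id-e^{-\phipush}}{\phipush}(\wedgedelta|_g)\bigr)$ with $\wedgedelta|_g\in\galgebra\wedge(\wedge^{k-1}\thetaalgebra)$. Since $\phipush$ annihilates $\galgebra$ and sends $\thetaalgebra$ into $\galgebra$, it strictly raises the $\galgebra$-degree, so $\phipush^j(\wedgedelta|_g)\in\wedge^{j+1}\galgebra\wedge(\wedge^{k-1-j}\thetaalgebra)$ and every term of the power series contains at least one factor lying in $\galgebra$. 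In the canonical decomposition $T\crossedmoduletwogroup|_{\ggroup}=T\ggroup\oplus\LiealgebroidoverG$, in which the $\galgebra$-directions are tangent to $\ggroup$ and the $\thetaalgebra$-directions span $\LiealgebroidoverG$, this means that $\Sigma|_{\ggroup}$ has no component in $\wedge^k\LiealgebroidoverG$; equivalently, $\ggroup$ is coisotropic.

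For Condition~\ref{ctrois}, observe that $\xi\mapsto\inserts_{\target^*\xi}\Sigma$ is $\cinf{\ggroup}$-linear and that $(\target^*f)\gpoidleftmove{P}=\gpoidleftmove{fP}$ for $f\in\cinf{\ggroup}$, $P\in\sections{\wedge^{k-1}\LiealgebroidoverG}$; since $\ggroup$ is parallelizable, it therefore suffices to treat left-invariant $1$-forms $\xi$, say with $\xi_{\unit_\ggroup}=\zeta\in\galgebrastar$. For such $\xi$, Eq.~\eqref{Eqt:insertszetaplusphiUpseta} identifies $\inserts_{\target^*\xi}\Sigma$ along $\ggroup$ with the left translate of $\inserts_{\zeta}(\wedgedelta|_g)\in\wedge^{k-1}\thetaalgebra$; this $(k-1)$-vector is tangent to the $\source$-fibres along $\ggroup$ and thus defines a section $P\in\sections{\wedge^{k-1}\LiealgebroidoverG}\cong\cinf{\ggroup,\wedge^{k-1}\thetaalgebra}$. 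Writing an arbitrary point as $(g,\alpha)=(g,\unit_\thetagroup)\gpmulti(\unit_\ggroup,\alpha)$ and using the group-multiplicativity relation $\Sigma|_{pq}=L_{p*}\Sigma|_q+R_{q*}\Sigma|_p$, one expresses $\Sigma$, hence $\inserts_{\target^*\xi}\Sigma$, at $(g,\alpha)$ in terms of $\Sigma|_{\ggroup}$ and $\Sigma|_{\thetagroup}$ (the latter being $L_{\alpha*}(\wedgeomega|_\alpha)$ by Lemma~\ref{Prop:SigmaatTheta}), and a direct computation then yields $\inserts_{\target^*\xi}\Sigma=\gpoidleftmove{P}$. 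Finally, for Condition~\ref{cun}: on a constant section $u\in\thetaalgebra\subset\sections{\LiealgebroidoverG}$ we have $\gpoidleftmove{u}=\gpleftmove{u}$ by Eq.~\eqref{eq:gp-gpd}, whence $\ba{\Sigma}{\gpoidleftmove{u}}=\ba{\Sigma}{\gpleftmove{u}}=\gpleftmove{\groupdifferential{}(u)}=\gpleftmove{\domega(u)}$, which, $\domega(u)\in\wedge^k\thetaalgebra$ being again a constant section, coincides with $\gpoidleftmove{\domega(u)}$ and hence is groupoid-left-invariant; for a general section $(\target^*f)u$ one expands $\ba{\Sigma}{(\target^*f)\gpoidleftmove{u}}$ by the Leibniz rule for the Schouten bracket and invokes Condition~\ref{ctrois} for the term involving $\inserts_{\target^*(\mathrm{d}f)}\Sigma$.

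The hard part will be Condition~\ref{ctrois}: one must reconcile the two notions of left invariance on $\crossedmoduletwogroup$ and propagate the along-$\ggroup$ formula of Lemma~\ref{Prop:SigmaatG} to the whole of $\crossedmoduletwogroup$ by means of group-multiplicativity, all the while keeping careful track of the identification between the splitting $T\crossedmoduletwogroup|_{\ggroup}=T\ggroup\oplus\LiealgebroidoverG$ and $\crossedmodulealgebra=\galgebra\oplus\thetaalgebra$.
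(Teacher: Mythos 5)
Your overall strategy is exactly the paper's: verify the three conditions of Lemma~\ref{Lem:Sigmagroupoidmultiplicativeiff} using the descriptions of $\Sigma$ along $\thetagroup$ and $\ggroup$ from Lemmas~\ref{Prop:SigmaatTheta} and~\ref{Prop:SigmaatG}. Your treatment of the coisotropy of $\ggroup$ (no $\wedge^k\thetaalgebra$-component in Eq.~\eqref{Eqn:Sigmaatbasepowerseries2}) and of the affine condition (constant sections via Eq.~\eqref{eq:gp-gpd} together with $\groupdifferential{}(u)=\domega(u)$, then general sections $fu$ by the Leibniz rule plus Condition~\ref{ctrois}) coincides with the paper's Claims (1) and (3), and your reduction of Condition~\ref{ctrois} to left-invariant $1$-forms is legitimate.

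The gap is precisely the step you yourself defer as ``the hard part'': the groupoid-left-invariance of $\inserts_{\target^*\xi}\Sigma$ at a general point $(g,\alpha)$ is only asserted (``a direct computation then yields $\inserts_{\target^*\xi}\Sigma=\gpoidleftmove{P}$''), and none of the ingredients that make that computation close is identified. After writing $\Sigma|_{(g,\alpha)}=L_{g*}L_{\alpha*}(\wedgeomega|_{\alpha})+R_{\alpha*}(\Sigma|_{g})$ by group-multiplicativity, the paper needs: (i) the pairing identities $\inserts_{\target^*\xi}L_{g*}L_{\alpha*}u=\inserts_{\phi^*\zeta}u$ and $\inserts_{\target^*\xi}R_{\alpha*}V=\inserts_{L^*_{\inverse{g}}\coAdjoint{\Phi(\alpha)}\zeta}V+\inserts_{\phi^*\coAdjoint{\Phi(\alpha)}\zeta}V$, obtained by differentiating $\target\circ L_g\circ L_{\alpha}$ and $\target\circ R_{\alpha}\circ L_g$; (ii) the integrated form of \textbf{ID1}, namely $\wedgedelta|_{\Phi(\alpha)}=\phipush(\wedgeomega|_{\alpha})$ (Eq.~\eqref{Eqn:wedgedeltaomegacomptible1}), to convert the $\thetagroup$-contribution $\inserts_{\phi^*\zeta}\wedgeomega|_{\alpha}$ into $\inserts_{\zeta}\wedgedelta|_{\Phi(\alpha)}$; and (iii) the Lie group $1$-cocycle identity for $\wedgedelta$, which is what recombines the two contributions into $L_{g*}L_{\alpha*}\inserts_{\zeta}\wedgedelta|_{g\Phi(\alpha)}$ and exhibits the left-invariance. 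Items (ii) and (iii) are where the compatibility of the infinitesimal data $(\domega,\skewdelta)$ actually enters the argument; since your outline never invokes them, the claimed ``direct computation'' has no stated reason to succeed, so what you have is a correct plan, matching the paper's route, rather than a proof of Condition~\ref{ctrois}.
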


\begin{proof}
We divide the proof into three steps.
\newline
\textbf{(1)} \textit{The base manifold $G$ is coisotropic with respect to $\Sigma$.}
\newline
For every $g\in\ggroup$, we have
$T_{(g,\unit_\Theta)}(G\ltimes\Theta)\cong T_g\ggroup\oplus\thetaalgebra$.
The conormal space of $T_g\ggroup$ can thus be canonically identified with $\thetaalgebrastar$.
It follows that $G$ is coisotropic with respect to $\Sigma$ since $\Sigma|_g$
does not contain any $(\wedge^k\thetaalgebra)$-components according to
Lemma~\ref{Prop:SigmaatG}.
\newline
\textbf{(2)} \textit{For every $\xi\in\Omega^1(\ggroup)$, $\inserts_{\target^*(\xi)}\Sigma$
is left-invariant with respect to the groupoid structure.}
\newline
For every $(g,\alpha)\in\crossedmoduletwogroup$, we identify $T_{(g,\alpha)} (\crossedmoduletwogroup )$
with $T_g\ggroup\oplus T_{\alpha}\thetagroup$. Since $\wedgeomega|_{\alpha}$ takes values in
$\wedge^k\thetaalgebra$, we have
\begin{equation*}
\Sigma|_{(g,\alpha)}=\Sigma|_{ g\gpmulti \alpha
}=L_{g*}(\Sigma|_{\alpha})+R_{\alpha*}(\Sigma|_{g})
=  L_{g*}L_{\alpha*}(\wedgeomega|_{\alpha})+R_{\alpha*}(\Sigma|_{g})
.\end{equation*}
Let $m$ be the point $\target(g,\alpha)=g\Phi(\alpha)$ of $\ggroup$.
Choose a $\zeta\in\galgebrastar$ and set $\xi|_{m}=L^*_{\inverse{m}}\zeta\in T^*_{m}\ggroup$.
We have, for all $u\in\thetaalgebra$,
\begin{equation}\label{eqn:temptemp1}
\inserts_{\target^*(\xi)} L_{g*}L_{\alpha*}u=\inserts_{\phi^*(\zeta)}u,
\end{equation}
which follows from the identity
\begin{equation*}
(\target\circ L_g\circ L_{\alpha})(\unit_{\ggroup},\beta)=g\Phi(\alpha)\Phi(\beta)=
(L_{g\Phi(\alpha)}\circ\Phi )(\beta), \quad\forall\beta\in\thetagroup
.\end{equation*}
Also note that, for all $V\in T_{(g, \unit_\Theta)}(\crossedmoduletwogroup)$,
\begin{equation}\label{eqn:temptemp2}
\inserts_{\target^*(\xi)}R_{\alpha*}V=\inserts_{\big(L^*_{\inverse{g}}\coAdjoint{\Phi(\alpha)}\zeta\big)}V
+\inserts_{\big(\phi^*\coAdjoint{\Phi(\alpha)}\zeta\big)}V
.\end{equation}
To prove this identity, we observe that
\begin{equation*}
\big(\target\circ R_{\alpha}\circ L_{g}\big)(h,\beta)=gh\Phi(\beta)\Phi(\alpha),
\quad\forall h\in\ggroup,\beta\in\thetagroup
,\end{equation*}
which implies that
\[ \big(\target_*\circ R_{\alpha*}\circ L_{g*}\big)(x,u)
=\big(L_{g\Phi(\alpha)*}\circ\Adjoint{\Phi(\alpha^{-1})}\big)(x+\phi(u)),
\quad\forall x\in \galgebra,u\in\thetaalgebra .\]
Thus Eq.~\eqref{eqn:temptemp2} follows from a straightforward verification.

Applying Eq.~\eqref{eqn:temptemp1}, we obtain
\begin{align*}
\inserts_{\target^*(\xi)}L_{g*}(\Sigma|_{\alpha})
&= \inserts_{\target^*(\xi)}\big(L_{g*}L_{\alpha*}(\wedgeomega|_{\alpha})\big) && \\
&= L_{g*}L_{\alpha*}(\inserts_{\phi^*\zeta}\wedgeomega|_{\alpha}) && \\
&= L_{g*}L_{\alpha*}\big(\inserts_{ \zeta}\phipush\wedgeomega|_{\alpha}\big) &&
\text{(by Eq.~\eqref{Eqn:wedgedeltaomegacomptible1})} \\
&= L_{g*}L_{\alpha*}\big(\inserts_{\zeta}\wedgedelta|_{\Phi(\alpha)}\big) . &&
\end{align*}
Using Eq.~\eqref{eqn:temptemp2} and Lemma~\ref{Prop:SigmaatG}), we have
\begin{align*}
\inserts_{\target^*(\xi)} R_{\alpha*}(\Sigma|_{g})
&= R_{\alpha*}\Big(\inserts_{L^*_{\inverse{g}}(\coAdjoint{\Phi(\alpha)}\zeta)}
(\Sigma|_g)+\inserts_{\phi^*\coAdjoint{\Phi(\alpha)}\zeta}(\Sigma|_g)\Big) \\
&= L_{g*}R_{\alpha*}\bigl(\inserts_{\coAdjoint{\Phi(\alpha)}\zeta}~\wedgedelta|_g\bigr).
\end{align*}
Therefore we have
\begin{align*}
(\inserts_{\target^*(\xi)}\Sigma)|_{(g,\alpha)}
&= \inserts_{\target^*(\xi)}L_{g*}(\Sigma|_{\alpha})+\inserts_{\target^*(\xi)} R_{\alpha*}(\Sigma|_{g}) \\
&= L_{g*}L_{\alpha*}\big(\inserts_{\zeta}\wedgedelta|_{\Phi(\alpha)}\big)
+L_{g*}R_{\alpha*}\big(\inserts_{\coAdjoint{\Phi(\alpha)}\zeta}\wedgedelta|_g\big) \\
&= L_{g*}L_{\alpha*}\Big(\inserts_{\zeta}\wedgedelta|_{\Phi(\alpha)}
+\Adjoint{\inverse{\alpha}}(\inserts_{\coAdjoint{\Phi(\alpha)}\zeta}\wedgedelta|_g)\Big) \\
&= L_{g*}L_{\alpha*}\inserts_{\zeta}\Big(\wedgedelta|_{\Phi(\alpha)}
+(\Phi(\inverse{\alpha}))_*\wedgedelta|_g)\Big) \\
&= L_{g*}L_{\alpha*}\inserts_{\zeta}\wedgedelta|_{g\Phi(\alpha)}
,\end{align*}
where, in the last step, we used the fact that $\wedgedelta$ is a Lie group $1$-cocycle.
In particular, we have
\[ (\inserts_{\target^*(\xi)}\Sigma)|_{ g\Phi(\alpha)}
=(\inserts_{\target^*(\xi)}\Sigma)|_{(g\Phi(\alpha),\unit_{\thetagroup})}
=L_{g \Phi(\alpha)*}\inserts_{\zeta}\wedgedelta|_{g\Phi(\alpha)} \]
and therefore
\[ (\inserts_{\target^*(\xi)}\Sigma)|_{(g,\alpha)}
=L^{\mathrm{gpd}}_{(g,\alpha)*}(\inserts_{\target^*(\xi)}\Sigma)|_{g\Phi(\alpha)} .\]
This proves that $\inserts_{\target^*(\xi)}\Sigma$ is indeed left-invariant with respect to the groupoid structure.
\newline
\textbf{(3)} \textit{For every $X\in \sections{\LiealgebroidoverG}$, $\ba{\Sigma}{\gpoidleftmove{X}}$
is left-invariant with respect to the groupoid structure.}
\newline
It suffices to consider $X=fu$, where $f\in C^\infty({\ggroup})$ and $u\in\thetaalgebra$
being considered as a constant section of $\LiealgebroidoverG\cong G\times\theta$. Then,
\begin{align*}
\ba{\Sigma}{\gpoidleftmove{X}} &= \ba{\Sigma}{(\target^*f)\gpoidleftmove{u}} && \\
&= (\target^*f)\ba{\Sigma}{\gpoidleftmove{u}}+\ba{\Sigma}{\target^*f}\wedge\gpoidleftmove{u}
&& \text{(by Eq.~\eqref{eq:gp-gpd})} \\
&= (\target^*f)\ba{\Sigma}{\gpleftmove{u}}+\minuspower{k-1}
\inserts_{\target^*\mathrm{d}f}\Sigma\wedge \gpoidleftmove{u} && \\
&= (\target^*f)\gpleftmove{\groupdifferential{\Sigma}(u)}+\minuspower{k-1}
\inserts_{\target^*\mathrm{d}f}\Sigma\wedge \gpoidleftmove{u} && \text{(by Eq.~\eqref{eq:gp-gpd})} \\
&= (\target^*f)\gpoidleftmove{\groupdifferential{\Sigma}(u)}+\minuspower{k-1}
\inserts_{\target^*\mathrm{d}f}\Sigma\wedge \gpoidleftmove{u}, &&
\end{align*}
which is clearly left-invariant according to Claim~\textbf{(2)}.
\newline
Finally, Claims \textbf{(1)}, \textbf{(2)}, and \textbf{(3)} imply that $\Sigma$ is indeed multiplicative
by Lemma~\ref{Lem:Sigmagroupoidmultiplicativeiff}.
\end{proof}

\begin{proof}[Proof of Theorem~\ref{Thm:pairtoSigma}]
From the infinitesimal data $(\domega,\skewdelta)$, we have constructed
a multiplicative $k$-vector field $\Sigma$ on the 2-group $\crossedmoduletwogroup$.
Assume that the infinitesimal data corresponding to $\Sigma$ is $(\domega',\skewdelta')$.
Proposition~\ref{prop:groupdifferentialexplicit} implies that $\domega'$
and $\skewdelta'$ can be recovered from $\groupdifferential{}$,
the $k$-differential of $\Sigma$ with respect to the group structure on $\crossedmoduletwogroup$,
by the following relations:
\begin{equation*}
\left\{ \begin{aligned}
\groupdifferential{}(u) &= \domega'(u),\\
\groupdifferential{}(x) &= \frac{\Id-e^{-\phipush}}{\phipush} \big(\skewdelta'(x)\big)
.\end{aligned} \right.
\end{equation*}
Since $\partial$ is defined by Eq.~\eqref{Eqt:defnpartialoutoftwodata} and $\Sigma$ integrates $\partial$,
$\groupdifferential{}$ must coincide with $\partial$. Hence it follows that $\domega'=\domega$ and $\skewdelta'=\skewdelta$.

Since both $\ggroup$ and $\thetagroup$ are connected and simply connected, so must be $\crossedmoduletwogroup$.
Hence the multiplicative vector field $\Sigma$ that integrates $\partial$ must be unique.
\end{proof}

\section{Quasi-Poisson Lie 2-groups}

Throughout this section,
$\groupcrossedmoduletriple{\thetagroup}{\Phi}{ \ggroup}$ denotes a
Lie group crossed module,  and $\crossedmoduletwogroup$ its
associated  Lie 2-group. By
$\crossedmoduletriple{\thetaalgebra}{\phi}{\galgebra} $,  we denote  its
corresponding Lie algebra crossed module,
 and by $\crossedmodulealgebra$ the semidirect product Lie algebra.

\subsection{Quasi-Poisson  Lie 2-groups}

\begin{definition}\label{defn:qusi-Poisson}
A quasi-Poisson structure on a Lie 2-group $\crossedmoduletwogroup$
is a pair $(\gthetaPi,\inteta)$, where $\gthetaPi\in\strictmultiplicative{2}{\crossedmoduletwogroup}$
is a multiplicative bivector field, $\inteta:\ggroup\to\wedge^3\thetaalgebra$ is a Lie group $1$-cocycle
such that
\begin{gather}
\label{Eqn:halfPiPiCinteta}
\thalf\ba{\gthetaPi}{\gthetaPi}=\gpoidleftmove{\inteta}-\gpoidrightmove{\inteta} ,
\intertext{and}
\label{Eqn:Pietazero}
\ba{\gthetaPi}{\gpoidleftmove{\inteta}}=0 .
\end{gather}
Here $\inteta$ is considered as a section in $\sections{\wedge^3\LiealgebroidoverG}$.
When $\inteta$ is zero, $\gthetaPi$ defines a Poisson structure on $\crossedmoduletwogroup$.
In this case, we say that $(\crossedmoduletwogroup,\gthetaPi)$ is a Poisson 2-group.
\end{definition}

It is clear that $\crossedmoduletwogroup\toto G$ together with $(\gthetaPi, \inteta)$
is a quasi-Poisson groupoid~\cite{MR2911881}.

The main result of this  section is the following:
\begin{theorem}\label{Thm:quasi-lie2-bioneteonequasipoisson}
Any quasi-Poisson Lie 2-group $(\gthetaPi,\inteta)$ on $\crossedmoduletwogroup$
naturally induces a quasi-Lie 2-bialgebra.
\newline
Conversely, given a quasi-Lie 2-bialgebra $(\theta,\galgebra,t)$ as in
Definition~\ref{Defn:Lie2bialgebra}, if both $\ggroup$ and $\thetagroup$
are connected and simply-connected Lie groups with Lie algebras
$\theta$ and $\galgebra$, respectively, then $\crossedmoduletwogroup$
admits a quasi-Poisson Lie 2-group structure whose infinitesimal
is isomorphic to the given quasi-Lie 2-bialgebra.
\end{theorem}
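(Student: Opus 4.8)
The plan is to transport the whole statement to the infinitesimal level by means of the universal lifting theorem (Theorem~\ref{thirdmain} and Theorem~\ref{Thm:pairtoSigma}), so that the two Schouten-bracket identities \eqref{Eqn:halfPiPiCinteta}--\eqref{Eqn:Pietazero} defining a quasi-Poisson structure turn, upon expansion, into the single big-bracket equation $\Poissonbracket{t,t}=0$ defining a quasi-Lie $2$-bialgebra. Given a quasi-Poisson Lie $2$-group $(\gthetaPi,\inteta)$, I would first apply the universal lifting theorem to the multiplicative bivector field $\gthetaPi\in\strictmultiplicative{2}{\crossedmoduletwogroup}$: it corresponds to a pair $(\domega,\skewdelta)\in\cala_2$ satisfying \textbf{ID1}--\textbf{ID3} of Theorem~\ref{Thm:Sigmatopair}, with $\domega:\thetaalgebra\to\wedge^2\thetaalgebra$ and $\skewdelta:\galgebra\to\galgebra\wedge\thetaalgebra$ Lie algebra $1$-cocycles. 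Differentiating the group $1$-cocycle $\inteta:\ggroup\to\wedge^3\thetaalgebra$ at the unit yields a Lie algebra $1$-cocycle $\deta:\galgebra\to\wedge^3\thetaalgebra$. Assembling $\derivescobracket\leftrightarrow\domega$, $\derivescoaction\leftrightarrow\skewdelta$, $\derivescoh\leftrightarrow\deta$ with the fixed crossed module $\crossedmoduletriple{\thetaalgebra}{\phimap}{\galgebra}$, which supplies $\derivescovarphi,\derivesbracket,\derivesaction$ and forces $\derivesh=0$ (the Lie $2$-algebra of a genuine Lie $2$-group is strict), produces a candidate element $t=\derivesbracket+\derivesaction+\derivescovarphi+\derivescobracket+\derivescoaction+\derivescoh\in\degreesubspace{\symmetricalgebra}{-4}$, and the direct statement amounts to $\Poissonbracket{t,t}=0$.

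To prove $\Poissonbracket{t,t}=0$, observe that both sides of each of \eqref{Eqn:halfPiPiCinteta} and \eqref{Eqn:Pietazero} are multiplicative $3$-vector fields on the connected group $\crossedmoduletwogroup$, so by injectivity in the universal lifting theorem each identity is equivalent to the corresponding identity between infinitesimals. The infinitesimal of $\thalf\ba{\gthetaPi}{\gthetaPi}$ is computed from $(\domega,\skewdelta)$ through Proposition~\ref{Prop:bracketofmultiplicative} (formulas \eqref{align:temp1}--\eqref{align:temp2}); the infinitesimal of $\gpoidleftmove{\inteta}-\gpoidrightmove{\inteta}$ is the coboundary-type pair built from $\deta$ and the $\galgebra$-action on $\thetaalgebra$; and the infinitesimal of $\ba{\gthetaPi}{\gpoidleftmove{\inteta}}$ is again obtained from Proposition~\ref{Prop:bracketofmultiplicative}. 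Expanding $\Poissonbracket{t,t}$ with the graded Leibniz rule of Section~\ref{Schoutenbracket} and sorting its components by symmetric-tensor type inside $\degreesubspace{\symmetricalgebra}{-5}$ gives a finite list of algebraic identities: the terms built solely from $\derivesbracket,\derivesaction,\derivescovarphi$ vanish by the Lie $2$-algebra axioms of $\crossedmoduletriple{\thetaalgebra}{\phimap}{\galgebra}$; the terms pairing the Lie $2$-algebra part with the co-part reproduce precisely \textbf{ID1}--\textbf{ID3} for $(\domega,\skewdelta)$ and the $1$-cocycle identity for $\deta$; and the remaining purely co-terms match \eqref{Eqn:halfPiPiCinteta} (co-Jacobi up to the coboundary of $\deta$) and \eqref{Eqn:Pietazero} (the co-cocycle condition on $\deta$). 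Hence $(\thetaalgebra,\galgebra,t)$ is a quasi-Lie $2$-bialgebra, which is by construction the infinitesimal of $(\gthetaPi,\inteta)$.

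For the converse, start from a quasi-Lie $2$-bialgebra $(\thetaalgebra,\galgebra,t)$ with $\derivesh=0$. By Proposition~\ref{Prop:tt=0impliesll=0andcc=0} its Lie $2$-algebra part is the given strict Lie $2$-algebra, which integrates to $\crossedmoduletwogroup$; read $\domega,\skewdelta,\deta$ off from $\derivescobracket,\derivescoaction,\derivescoh$. The mixed part of $\Poissonbracket{t,t}=0$ shows $(\domega,\skewdelta)$ satisfies \textbf{ID1}--\textbf{ID3}, so $(\domega,\skewdelta)\in\cala_2$, and Theorem~\ref{Thm:pairtoSigma} (this is where we use that $\ggroup$ and $\thetagroup$ are connected and simply connected) produces a unique multiplicative bivector field $\gthetaPi$ on $\crossedmoduletwogroup$ with that infinitesimal. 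The Lie algebra $1$-cocycle $\deta$ integrates to a Lie group $1$-cocycle $\inteta:\ggroup\to\wedge^3\thetaalgebra$ since $\ggroup$ is simply connected. It remains to verify \eqref{Eqn:halfPiPiCinteta} and \eqref{Eqn:Pietazero}; both are equalities of multiplicative $3$-vector fields on the connected group $\crossedmoduletwogroup$, so by injectivity in the universal lifting theorem they reduce to the infinitesimal identities already extracted from $\Poissonbracket{t,t}=0$ in the direct part. Since the infinitesimal data of the resulting $(\gthetaPi,\inteta)$ are exactly $\domega,\skewdelta,\deta$, the induced quasi-Lie $2$-bialgebra is canonically isomorphic to the given one.

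I expect the main obstacle to be the degree-$(-5)$ bookkeeping of the second paragraph: matching, component by component, the expansion of $\Poissonbracket{t,t}$ against the infinitesimalizations of \eqref{Eqn:halfPiPiCinteta}--\eqref{Eqn:Pietazero} supplied by Proposition~\ref{Prop:bracketofmultiplicative}, and checking that the combinatorial coefficients coming from the operator $\tfrac{\Id-e^{-\phipush}}{\phipush}$ in Proposition~\ref{prop:groupdifferentialexplicit} cancel so that the mixed terms recover \textbf{ID1}--\textbf{ID3} on the nose. No idea beyond the universal lifting theorem and the big-bracket calculus is needed, but this identification is where the real work lies.
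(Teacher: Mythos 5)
Your overall architecture is the paper's own: lift everything through the universal lifting theorem, infinitesimalize the two defining identities of a quasi-Poisson structure using the bracket formula of Proposition~\ref{Prop:bracketofmultiplicative}, and identify, by a term-by-term expansion of $\Poissonbracket{t,t}$ sorted by symmetric-tensor type, a quasi-Lie 2-bialgebra with a triple $(\domega,\skewdelta,\deta)$ satisfying \textbf{ID1}--\textbf{ID3}, the cocycle conditions, and the three compatibility equations. This is exactly what the paper does via Lemma~\ref{Prop:lie2codatatriple}, Proposition~\ref{Thm:quasiPoissontotriple}, and Proposition~\ref{Thm:tripledatatoquasipoisson}.

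There is, however, one step that fails as you state it: the treatment of Eq.~\eqref{Eqn:Pietazero}. The field $\gpoidleftmove{\inteta}$ is only left-invariant for the groupoid structure, not multiplicative, so $\ba{\gthetaPi}{\gpoidleftmove{\inteta}}$ (a $4$-vector field, incidentally, not a $3$-vector field) is not a multiplicative polyvector field; consequently neither Proposition~\ref{Prop:bracketofmultiplicative} nor the injectivity statement of the universal lifting theorem applies to it, and your reduction of \eqref{Eqn:Pietazero} to an infinitesimal identity has no support at that point. The paper's fix is Proposition~\ref{Lem:multiplicativeoutofcocycle}: one writes $\ba{\gthetaPi}{\gpoidleftmove{\inteta}}=\gpoidleftmove{\groupoiddifferential{\gthetaPi}(\inteta)}$ and proves (part (3) of that proposition, via the explicit formula \eqref{Eqt:PARTIALVintlamda}) that $\groupoiddifferential{\gthetaPi}(\inteta):\ggroup\to\wedge^{4}\thetaalgebra$ is a Lie group $1$-cocycle whose associated Lie algebra $1$-cocycle is $\domega\circ\deta-\deta\circ\ddelta$; on connected $\ggroup$ its vanishing is therefore equivalent to $\domega\circ\deta=\deta\circ\ddelta$. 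Similarly, your ``coboundary-type pair built from $\deta$ and the $\galgebra$-action'' needs to be proved, not just asserted: parts (1) and (2) of the same proposition show that $\gpoidleftmove{\inteta}-\gpoidrightmove{\inteta}$ is multiplicative with infinitesimal $(\deta\circ\phi,\phipush\circ\deta)$, which is what makes your injectivity argument for \eqref{Eqn:halfPiPiCinteta} legitimate. With this cocycle lemma supplied, the rest of your bookkeeping (and the converse direction) goes through as in the paper.
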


The proof is deferred to Section~\ref{Section:proofofmain}.
In fact, from its proof, it is clear that exactly the same conclusion holds
between Poisson Lie 2-groups and Lie 2-bialgebras.
Thus, as an immediate consequence, we obtain the following analogue of
a classical theorem of Drinfeld in the context of 2-groups.

\begin{corollary}\label{Thm:quasiPoissonLie2group1to1Lie2bialgebral3=0}
\begin{enumerate}
\item There is a one-to-one correspondence between connected and simply connected
quasi-Poisson Lie 2-groups and quasi-Lie 2-bialgebras.
\item There is a one-to-one correspondence between connected and simply-connected
Poisson Lie 2-groups and Lie 2-bialgebras.
\end{enumerate}
\end{corollary}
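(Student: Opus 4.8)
The corollary follows formally from Theorem~\ref{Thm:quasi-lie2-bioneteonequasipoisson}: its first assertion attaches to every (quasi-)Poisson Lie 2-group a (quasi-)Lie 2-bialgebra, its second assertion reverses this on connected, simply-connected 2-groups, and the uniqueness clauses --- Theorem~\ref{Thm:pairtoSigma} for the multiplicative bivector $\gthetaPi$, together with the uniqueness of the Lie group $1$-cocycle integrating a prescribed Lie algebra $1$-cocycle on a connected, simply-connected group --- guarantee that the (quasi-)Poisson structure recovered from its infinitesimal is the one we started with, so the two assignments are mutually inverse. Statement~(2) is the specialization $\inteta=0$, $\derivescoh=0$ of statement~(1). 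Thus the plan is to prove Theorem~\ref{Thm:quasi-lie2-bioneteonequasipoisson}.

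The engine is the universal lifting theorem (Theorem~\ref{thirdmain}): the graded Lie algebra isomorphism $\bigoplus_{k\geq0}\strictmultiplicative{k}{\crossedmoduletwogroup}\cong\bigoplus_{k\geq0}\cala_k$. Under it the multiplicative bivector $\gthetaPi$ corresponds to a pair $(\domega,\skewdelta)\in\cala_2$, which I read as a cobracket $\derivescobracket$ on $\thetaalgebra$ and a coaction $\derivescoaction$, the properties \textbf{ID1}--\textbf{ID3} of Theorem~\ref{Thm:Sigmatopair} being precisely the compatibilities these must satisfy. The Lie group $1$-cocycle $\inteta\colon\ggroup\to\wedge^3\thetaalgebra$ differentiates to a Lie algebra $1$-cocycle $\eta\colon\galgebra\to\wedge^3\thetaalgebra$, read as the co-homotopy $\derivescoh$; conversely, $\ggroup$ being connected and simply connected, $\eta$ integrates back to $\inteta$ uniquely. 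The crossed module $\crossedmoduletriple{\thetaalgebra}{\phi}{\galgebra}$ underlying the 2-group already supplies $\derivescovarphi=\phi$, the bracket $\derivesbracket$ on $\galgebra$ and the action $\derivesaction$, with $\derivesh=0$; this strict Lie 2-algebra part satisfies $\{l,l\}=0$ by Theorem~\ref{Thm:Liebialgebracm1-1strictLie2bialgebra}. Assembling, I form $t=\derivescovarphi+\derivesbracket+\derivesaction+\derivescobracket+\derivescoaction+\derivescoh\in\degreesubspace{\symmetricalgebra}{-4}$ (with $\derivesh=0$), and the heart of the matter is to show that the two quasi-Poisson axioms \eqref{Eqn:halfPiPiCinteta} and \eqref{Eqn:Pietazero} hold if and only if $\Poissonbracket{t,t}=0$, i.e.\ if and only if $(\thetaalgebra,\galgebra,t)$ is a quasi-Lie 2-bialgebra.

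To obtain this equivalence I would first record that the Schouten bracket of multiplicative polyvector fields is again multiplicative and that $\gpoidleftmove{\inteta}-\gpoidrightmove{\inteta}$ is multiplicative for both structures on $\crossedmoduletwogroup$ --- a standard fact from \cite{MR2911881}, applicable because the cocycle $\inteta$ is a genuine section of $\wedge^3\LiealgebroidoverG\cong\ggroup\times\wedge^3\thetaalgebra$. Hence both sides of \eqref{Eqn:halfPiPiCinteta} are multiplicative $3$-vector fields and both sides of \eqref{Eqn:Pietazero} multiplicative $4$-vector fields, so, $\ggroup$ and $\thetagroup$ being connected and simply connected, each of the two axioms is equivalent to the coincidence of the two sides' infinitesimals in $\cala_3$, respectively $\cala_4$. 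I then compute these infinitesimals: that of $\thalf\ba{\gthetaPi}{\gthetaPi}$ from the self-bracket formulae \eqref{align:temp1}--\eqref{align:temp2}, that of $\gpoidleftmove{\inteta}-\gpoidrightmove{\inteta}$ and that of $\ba{\gthetaPi}{\gpoidleftmove{\inteta}}$ by direct calculation; the outcome is a short list of bilinear identities in $(\phi,\derivesbracket,\derivesaction,\domega,\skewdelta,\eta)$. Finally, expanding $\Poissonbracket{t,t}=0$ by the bidegree counting $\galgebra/\galgebrastar$ against $\thetaalgebra/\thetaalgebrastar$ factors reproduces exactly the list of compatibility conditions of a quasi-Lie 2-bialgebra spelled out in \cite{2011arXiv1109.2290C}; matching the two lists term by term --- the purely $l$-type components being the strict Lie 2-algebra relations carried automatically by the crossed module, the mixed components reproducing \textbf{ID1}--\textbf{ID3} and \eqref{Eqn:halfPiPiCinteta}, and the remaining one \eqref{Eqn:Pietazero} --- establishes the equivalence, hence the first assertion of the theorem. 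For the converse: given a quasi-Lie 2-bialgebra $(\thetaalgebra,\galgebra,t)$ with $\derivesh=0$, its components $(\derivescobracket,\derivescoaction)$ satisfy \textbf{ID1}--\textbf{ID3} (being among the components of $\Poissonbracket{t,t}=0$), hence define an element of $\cala_2$ which Theorem~\ref{Thm:pairtoSigma} lifts to a unique multiplicative bivector $\gthetaPi$; the component $\derivescoh$ integrates to a Lie group $1$-cocycle $\inteta\colon\ggroup\to\wedge^3\thetaalgebra$; and running the term-by-term matching backwards shows that $(\gthetaPi,\inteta)$ satisfies \eqref{Eqn:halfPiPiCinteta} and \eqref{Eqn:Pietazero} and has $t$ as its infinitesimal.

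The main obstacle I anticipate is the bookkeeping in the third paragraph: the explicit computation of the infinitesimal of $\gpoidleftmove{\inteta}-\gpoidrightmove{\inteta}$, and above all the term-by-term identification of the several scalar identities packaged in the two quasi-Poisson axioms with the homogeneous components of $\Poissonbracket{t,t}=0$, keeping all Koszul signs and the splitting into the subspaces $\wedge^j\galgebra\wedge(\wedge^{k-j}\thetaalgebra)$ straight. A secondary point needing care is the verification that $\gpoidleftmove{\inteta}-\gpoidrightmove{\inteta}$ is genuinely multiplicative for both the group and the groupoid structures on $\crossedmoduletwogroup$ --- without which it would not be legitimate to compare it with $\thalf\ba{\gthetaPi}{\gthetaPi}$ via infinitesimals --- and the remark that no information is lost on passing to infinitesimals, which is precisely where the connectedness and simple-connectedness hypotheses enter, through the uniqueness half of the universal lifting theorem.
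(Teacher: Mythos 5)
Your proposal follows essentially the same route as the paper: reduce the corollary to Theorem~\ref{Thm:quasi-lie2-bioneteonequasipoisson}, translate the two quasi-Poisson axioms into infinitesimal identities via Proposition~\ref{Prop:bracketofmultiplicative}, Proposition~\ref{Lem:multiplicativeoutofcocycle} and the uniqueness part of Theorem~\ref{Thm:pairtoSigma}, and match these identities with the homogeneous components of $\Poissonbracket{t,t}=0$, exactly as the paper does in Propositions~\ref{Thm:quasiPoissontotriple}--\ref{Thm:tripledatatoquasipoisson} and Lemma~\ref{Prop:lie2codatatriple}. The only slip is calling $\ba{\gthetaPi}{\gpoidleftmove{\inteta}}$ a multiplicative $4$-vector field --- it is merely the groupoid-left-invariant field $\gpoidleftmove{\groupoiddifferential{\gthetaPi}(\inteta)}$ --- but since $\groupoiddifferential{\gthetaPi}(\inteta)$ is a group $1$-cocycle by Proposition~\ref{Lem:multiplicativeoutofcocycle}~(\ref{m3}), its vanishing is still detected by its Lie algebra cocycle $\domega\circ\deta-\deta\circ\skewdelta$, so your equivalence goes through unchanged.
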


\subsection{Multiplicative $k$-vector fields generated by group 1-cocycles}

\begin{lemma}
For any $u\in \thetaalgebra$, we have
\begin{gather}
\label{Eqt:Ad1minusphi-appendix}
\big(\Adjoint{\gpinverse{(h,\beta)}}\circ(\id_{\thetaalgebra}-\phi)\big)(u)
=\big((\id_{\thetaalgebra}-\phi)\circ\inverse{h}_*\big)(u) ,\\
\label{Eqn:Adjoint2-appendix}
\Adjoint{\gpinverse{(h,\beta)}}(u)=
\big(\Adjoint{\inverse{\beta}}\circ\inverse{h}_*\big)(u)
=\inverse{\big(h\Phi(\beta)\big)}_*(u) .
\end{gather}
Here $\id_{\thetaalgebra}$ denotes the identity map on $\thetaalgebra$.
\end{lemma}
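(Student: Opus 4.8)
These two identities are computations of the adjoint action of the Lie $2$-group $\crossedmoduletwogroup$ on its Lie algebra $\crossedmodulealgebra$, and the plan is to obtain them by differentiating conjugations of curves at $t=0$ after reducing to the two ``coordinate'' subgroups $\ggroup\times\{\unit_{\thetagroup}\}$ and $\{\unit_{\ggroup}\}\times\thetagroup$ of $\crossedmoduletwogroup$. The key preliminary observation is the factorization
\[ \gpinverse{(h,\beta)}=(\unit_{\ggroup},\inverse{\beta})\gpmulti(\inverse{h},\unit_{\thetagroup}), \]
which is immediate from the explicit group structure on $\crossedmoduletwogroup$ recalled in the proof of Proposition~\ref{thm:2groupstructuredetails}. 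Hence $\Adjoint{\gpinverse{(h,\beta)}}=\Adjoint{(\unit_{\ggroup},\inverse{\beta})}\circ\Adjoint{(\inverse{h},\unit_{\thetagroup})}$, and it suffices to compute the adjoint action of each factor. Throughout, $g_{*}\colon\thetaalgebra\to\thetaalgebra$ denotes the differential at $\unit_{\thetagroup}$ of the automorphism $g\groupaction(\cdot)$ of $\thetagroup$ (for $g\in\ggroup$), and $\Adjoint{\gamma}$ (for $\gamma\in\thetagroup$) the adjoint action of $\thetagroup$ on $\thetaalgebra$; note that $g\mapsto g_{*}$ is a homomorphism $\ggroup\to\mathrm{GL}(\thetaalgebra)$.

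For \eqref{Eqn:Adjoint2-appendix} I would conjugate the curve $t\mapsto(\unit_{\ggroup},\exp(tu))$ (representing $u\in\thetaalgebra\subset\crossedmodulealgebra$) by each factor. A direct computation with $\gpmulti$ gives
\[ (g,\unit_{\thetagroup})\gpmulti(\unit_{\ggroup},\exp(tu))\gpmulti(\inverse{g},\unit_{\thetagroup})=(\unit_{\ggroup},g\groupaction\exp(tu)) \]
and
\[ (\unit_{\ggroup},\gamma)\gpmulti(\unit_{\ggroup},\exp(tu))\gpmulti(\unit_{\ggroup},\inverse{\gamma})=(\unit_{\ggroup},\gamma\,\exp(tu)\,\inverse{\gamma}), \]
so that $\Adjoint{(\inverse{h},\unit_{\thetagroup})}$ acts on $\thetaalgebra$ as $\inverse{h}_{*}$ and $\Adjoint{(\unit_{\ggroup},\inverse{\beta})}$ acts on $\thetaalgebra$ as $\Adjoint{\inverse{\beta}}$. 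Composing via the factorization yields the first equality of \eqref{Eqn:Adjoint2-appendix}. For the second, the crossed-module relation~\ref{bun}, $\Phi(\alpha)\groupaction\delta=\alpha\delta\inverse{\alpha}$, identifies conjugation by $\inverse{\beta}$ in $\thetagroup$ with the action of $\Phi(\inverse{\beta})=\inverse{\Phi(\beta)}\in\ggroup$; hence $\Adjoint{\inverse{\beta}}=\inverse{\Phi(\beta)}_{*}$ on $\thetaalgebra$, and $\Adjoint{\inverse{\beta}}\circ\inverse{h}_{*}=\bigl(\inverse{\Phi(\beta)}\,\inverse{h}\bigr)_{*}=\inverse{(h\Phi(\beta))}_{*}$.

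For \eqref{Eqt:Ad1minusphi-appendix} I would exploit that $\id_{\thetaalgebra}-\phi\colon\thetaalgebra\to\crossedmodulealgebra$ is, up to sign, the differential of the group homomorphism $j\colon\thetagroup\to\crossedmoduletwogroup$, $\gamma\mapsto(\Phi(\gamma),\inverse{\gamma})=\gpoidinverse{(\unit_{\ggroup},\gamma)}$; that $j$ is a homomorphism is a one-line check from the structure maps, and its image is $\Ker(\target_{*})$, which $\Adjoint{\gpinverse{(h,\beta)}}$ preserves since $\target$ is a group homomorphism. It then suffices to conjugate $j(\delta)$ by the two factors. Using relation~\ref{bdeux}, $\Phi(g\groupaction\delta)=g\Phi(\delta)\inverse{g}$, one finds $(g,\unit_{\thetagroup})\gpmulti j(\delta)\gpmulti(\inverse{g},\unit_{\thetagroup})=j(g\groupaction\delta)$, whence $\Adjoint{(\inverse{h},\unit_{\thetagroup})}\circ(\id_{\thetaalgebra}-\phi)=(\id_{\thetaalgebra}-\phi)\circ\inverse{h}_{*}$. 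Using relation~\ref{bun} again, one checks that $(\unit_{\ggroup},\gamma)$ commutes with every $j(\delta)$, so $\Adjoint{(\unit_{\ggroup},\inverse{\beta})}$ restricts to the identity on $j(\thetagroup)$ and hence on $(\id_{\thetaalgebra}-\phi)(\thetaalgebra)$. Composing the two factors gives \eqref{Eqt:Ad1minusphi-appendix}.

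There is no serious obstacle: every step is a routine differentiation, and the only care required is bookkeeping in the twisted product $\gpmulti$ (keeping the order of group operations straight) together with invoking the correct crossed-module axiom~\ref{bun} or~\ref{bdeux} at each stage. The one genuinely structural input is the factorization $\gpinverse{(h,\beta)}=(\unit_{\ggroup},\inverse{\beta})\gpmulti(\inverse{h},\unit_{\thetagroup})$ together with the homomorphism $j$: these make it transparent that the conjugations stay inside $\thetaalgebra$, respectively inside $\Ker(\target_{*})=(\id_{\thetaalgebra}-\phi)(\thetaalgebra)$, which is all the lemma asserts.
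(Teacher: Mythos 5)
Your proof is correct and in essence matches the paper's: both arguments compute the conjugation explicitly in the group $\crossedmoduletwogroup$ and then differentiate at the unit along the same two families of curves, $t\mapsto(\unit_{\ggroup},\exp tu)$ and $\gamma\mapsto(\Phi(\gamma),\inverse{\gamma})$. The only difference is bookkeeping: the paper evaluates $\Adjoint{\gpinverse{(h,\beta)}}(g,\alpha)$ in a single general formula and then specializes, whereas you factor $\gpinverse{(h,\beta)}=(\unit_{\ggroup},\inverse{\beta})\gpmulti(\inverse{h},\unit_{\thetagroup})$ and treat the two factors separately via the homomorphism $j$ and the crossed-module axioms, which is a correct, slightly more structured packaging of the same computation.
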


\begin{proof}
A straightforward computation yields that, for all $(g,\alpha)\in\crossedmoduletwogroup$,
\begin{align*}
\Adjoint{\gpinverse{(h,\beta)}}(g,\alpha)
&= \gpinverse{(h,\beta)}\gpmulti(g,\alpha)\gpmulti(h,\beta) \\
&= \Bigl({\inverse{h}}gh,\bigl(({\inverse{h}}\inverse{g}h)\groupaction\inverse{\beta}\bigr)
(\inverse{h}\groupaction\alpha)\beta\Bigr) \\
&= \Bigl(\Adjoint{\inverse{h}}g,\bigl((\Adjoint{\inverse{h}}\inverse{g})
\groupaction\inverse{\beta}\bigr)(\inverse{h}\groupaction\alpha)\beta\Bigr)
.\end{align*}
In particular, we have
\begin{equation*}
\Adjoint{\gpinverse{(h,\beta)}}(\Phi(\inverse{\alpha}),\alpha)
=\big(\Phi(\inverse{h}\groupaction \inverse{\alpha}),\inverse{h}\groupaction\alpha)\big)
.\end{equation*}
Eq.~\eqref{Eqt:Ad1minusphi-appendix} thus follows immediately
by taking the tangent map at $\alpha=1_{\Theta}$.
Similarly, we have
\begin{equation*}
\Adjoint{\gpinverse{(h,\beta)}}(\unit_{\ggroup},\alpha)
=\big(\unit_{\ggroup},\inverse{(h\Phi(\beta))}\groupaction\alpha)\big).
\end{equation*}
Eq.~\eqref{Eqn:Adjoint2-appendix} follows by taking the tangent map at $\alpha=1_{\Theta}$.
\end{proof}

\begin{proposition}\label{Lem:multiplicativeoutofcocycle}
Let $\intlambda:\ggroup\to\wedge^l\thetaalgebra$
be a Lie group $1$-cocycle, and $\dlambda:\galgebra\to\wedge^l\thetaalgebra$
the corresponding Lie algebra $1$-cocycle.
\begin{enumerate}
\item\label{m1}
The $l$-vector field
\begin{equation*}
\gthetaC_{\intlambda}=\gpoidleftmove{\intlambda}-\gpoidrightmove{\intlambda}
\end{equation*}
on the 2-group $\crossedmoduletwogroup$ is multiplicative.
Here $\intlambda$ is considered as a section in $\sections{\wedge^l\LiealgebroidoverG}$,
and $\gpoidleftmove{\intlambda}$ and $\gpoidrightmove{\intlambda}$, respectively,
denote the left- and right-invariant $l$-vector fields on the groupoid $\crossedmoduletwogroup\toto G$.
\item\label{m2}
The infinitesimal data of $\gthetaC_{\intlambda}$ is
\begin{align*}
\domega_{\dlambda} &= \dlambda\circ\phi:\theta\to\wedge^l \theta; \\
\skewdelta_{\dlambda} &= \phipush\circ\dlambda:\galgebra\to\galgebra\wedge(\wedge^{l-1}\theta)
.\end{align*}
\item\label{m3}
Let $\groupoiddifferential{}:\sections{\wedge^{\bullet}\LiealgebroidoverG }
\to\sections{\wedge^{\bullet +k-1 }\LiealgebroidoverG }$ be the $k$-differential on the Lie algebroid $A$
induced by a multiplicative $k$-vector field $\gthetaV$ on $\crossedmoduletwogroup$.
Then, the section $\intsigma=\groupoiddifferential{}(\intlambda)\in\sections{\wedge^{k+l-1}\LiealgebroidoverG }$,
considered as a map $\ggroup\to\wedge^{k+l-1}\thetaalgebra$, is a Lie group $1$-cocycle.
The corresponding Lie algebra $1$-cocycle $\dsigma:\galgebra\to\wedge^{k+l-1}\thetaalgebra$ is
\[ \dsigma=\domega\circ \dlambda-\minuspower{(k-1)(l-1)}\dlambda\circ\ddelta ,\]
where $(\domega,\ddelta)$ is the infinitesimal of $\gthetaV$.
\end{enumerate}
\end{proposition}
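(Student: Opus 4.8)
The plan is to treat the three assertions in turn: I reduce part~\ref{m1} to two independent multiplicativity checks, read part~\ref{m2} off the restrictions of $\gthetaC_{\intlambda}$ to the two canonical subgroups of $\crossedmoduletwogroup$, and deduce part~\ref{m3} from the fact that the Schouten bracket of multiplicative fields is multiplicative, together with Proposition~\ref{Prop:bracketofmultiplicative}. For part~\ref{m1}, multiplicativity with respect to the groupoid structure is the standard observation that on any Lie groupoid $\Gamma\toto M$ with Lie algebroid $A$, and any $X\in\sections{\wedge^l A}$, the $l$-vector field $\gpoidleftmove{X}-\gpoidrightmove{X}$ is multiplicative (see~\cite{MR2911881}): checked against Lemma~\ref{Lem:Sigmagroupoidmultiplicativeiff}, it is affine because $\ba{\gpoidleftmove{X}-\gpoidrightmove{X}}{\gpoidleftmove{Y}}=\gpoidleftmove{\ba{X}{Y}}$ (left- and right-invariant fields commute), $M$ is coisotropic because the pure $\wedge^l A$-component of $(\gpoidleftmove{X}-\gpoidrightmove{X})|_M$ vanishes, and $\inserts_{\target^*\xi}(\gpoidleftmove{X}-\gpoidrightmove{X})=\inserts_{\target^*\xi}\gpoidleftmove{X}$ is left-invariant; applying this to $\crossedmoduletwogroup\toto\ggroup$ handles this half. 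For multiplicativity with respect to the group structure, Lemma~\ref{Lem:partialSigmacalculate}(1) reduces matters to checking that $(g,\alpha)\mapsto L_{\gpinverse{(g,\alpha)}*}\bigl(\gthetaC_{\intlambda}|_{(g,\alpha)}\bigr)$ is a Lie group $1$-cocycle on $\crossedmoduletwogroup$ valued in $\wedge^l(\crossedmodulealgebra)$; using the transformation-groupoid description of $\crossedmoduletwogroup\toto\ggroup$ one writes $\gpoidleftmove{\intlambda}|_{(g,\alpha)}$ and $\gpoidrightmove{\intlambda}|_{(g,\alpha)}$ explicitly in terms of $\intlambda(g)$, $\intlambda(g\Phi(\alpha))$ and the group/groupoid translations, and the identities~\eqref{Eqt:Ad1minusphi-appendix}--\eqref{Eqn:Adjoint2-appendix} together with $\intlambda(g_1 g_2)=\intlambda(g_1)+\Adjoint{g_1}\intlambda(g_2)$ close the verification. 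The same computation, read backwards, shows that $\gthetaC_{\mu}$ can be group-multiplicative only when $\mu$ is a group $1$-cocycle, an equivalence I use in part~\ref{m3}.

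For part~\ref{m2}, by Theorem~\ref{Thm:Sigmatopair} the datum $\domega_{\dlambda}$ is the infinitesimal of $\gthetaC_{\intlambda}|_{\thetagroup}$, while $\skewdelta_{\dlambda}$ is minus the derivative at the unit of the cocycle $\wedgedelta$ appearing in $\gthetaC_{\intlambda}|_{g}=L_{g*}\bigl(\tfrac{\Id-e^{-\phipush}}{\phipush}(\wedgedelta|_{g})\bigr)$, cf.~\eqref{Eqn:Vatbasepowerseries}. Restricting to $\{\unit_{\ggroup}\}\times\thetagroup$, a direct computation gives $\gthetaC_{\intlambda}|_{\alpha}=L_{\alpha*}\bigl(\intlambda(\Phi(\alpha))\bigr)$, so $\gthetaC_{\intlambda}|_{\thetagroup}$ is the multiplicative $l$-vector field on $\thetagroup$ attached to the $\thetagroup$-cocycle $\alpha\mapsto\intlambda(\Phi(\alpha))$ (a cocycle since ${\Phi(\alpha)}\groupaction\beta=\alpha\beta\inverse{\alpha}$), whence $\domega_{\dlambda}=\dlambda\circ\phi$. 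Restricting to $\ggroup\times\{\unit_{\thetagroup}\}$, one has $\gpoidleftmove{\intlambda}|_{(g,\unit_{\thetagroup})}=\intlambda(g)$ while $\gpoidrightmove{\intlambda}|_{(g,\unit_{\thetagroup})}$ differs from $\intlambda(g)$ only by the anchor correction produced by the groupoid inversion; computing that correction shows that the $\galgebra\wedge(\wedge^{l-1}\thetaalgebra)$-component of $L_{(g,\unit_{\thetagroup})^{-1}*}\gthetaC_{\intlambda}|_{(g,\unit_{\thetagroup})}$ equals $\phipush(\intlambda(g))$, so $\wedgedelta|_{g}=\phipush(\intlambda(g))$ and $\skewdelta_{\dlambda}=\phipush\circ\dlambda$. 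Alternatively one computes the group $k$-differential of $\gthetaC_{\intlambda}$ directly from $\gpoidleftmove{\groupdifferential{}(u)}=\ba{\gthetaC_{\intlambda}}{\gpoidleftmove{u}}=\gpoidleftmove{\ba{\intlambda}{u}}$ for $u\in\thetaalgebra$ (using~\eqref{eq:gp-gpd} and $\ba{\gpoidrightmove{\intlambda}}{\gpoidleftmove{u}}=0$) and invokes Proposition~\ref{prop:groupdifferentialexplicit}.

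For part~\ref{m3}, Theorem~\ref{Thm:David-multi-diff} gives $\gpoidleftmove{\intsigma}=\ba{\gthetaV}{\gpoidleftmove{\intlambda}}$; the right-handed analogue, available because $\gthetaV$ is multiplicative and hence $i$-symmetric up to the sign $\minuspower{k+1}$, gives $\ba{\gthetaV}{\gpoidrightmove{\intlambda}}=\gpoidrightmove{\intsigma}$, so that $\gthetaC_{\intsigma}=\gpoidleftmove{\intsigma}-\gpoidrightmove{\intsigma}=\ba{\gthetaV}{\gthetaC_{\intlambda}}$. Since $\gthetaV$ and $\gthetaC_{\intlambda}$ are both multiplicative with respect to the group structure on $\crossedmoduletwogroup$ (part~\ref{m1} and closure of multiplicative polyvector fields under the Schouten bracket), so is $\gthetaC_{\intsigma}$, and the equivalence noted at the end of part~\ref{m1} forces $\intsigma\colon\ggroup\to\wedge^{k+l-1}\thetaalgebra$ to be a Lie group $1$-cocycle. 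To identify $\dsigma$, write $\intlambda$ as a sum of terms $f\,P$ with $f\in\cinf{\ggroup}$ and $P\in\wedge^l\thetaalgebra$ constant (the algebroid $\LiealgebroidoverG\cong\ggroup\times\thetaalgebra$ being trivial), apply the Leibniz rule for $\groupoiddifferential{}$, and use $\groupoiddifferential{}(P)=\domega(P)$ (Proposition~\ref{prop:domegadefined}, $\domega$ extended as a derivation) and $\groupoiddifferential{}(f)|_{g}=\minuspower{k-1}\inserts_{(L^*_{g}\mathrm{d}f)}\wedgedelta|_{g}$ with $\wedgedelta|_{\unit_{\ggroup}}=0$, cf.~\eqref{Eqt:groupoiddifferentiononf}; differentiating at $\unit_{\ggroup}$, the $\domega(P)$-terms assemble to $\domega\circ\dlambda$ and the $\groupoiddifferential{}(f)$-terms to $-\minuspower{(k-1)(l-1)}\dlambda\circ\ddelta$, which is the asserted formula (and is consistent with Proposition~\ref{Prop:bracketofmultiplicative} applied to $\ba{\gthetaV}{\gthetaC_{\intlambda}}$ via part~\ref{m2}).

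The main obstacle is part~\ref{m3}. Making the identity $\ba{\gthetaV}{\gpoidrightmove{\intlambda}}=\gpoidrightmove{\intsigma}$ precise requires a careful account of how the groupoid inversion exchanges left- and right-invariant multivector fields and how this exchange interacts with the induced $k$-differential and with the $\minuspower{k+1}$ parity of $\gthetaV$; and extracting $\dsigma$ from the Leibniz expansion means disentangling the $\tfrac{\Id-e^{-\phipush}}{\phipush}$ twist from the genuine cocycle content. By contrast, parts~\ref{m1} and~\ref{m2} are essentially explicit verifications once the transformation-groupoid picture of $\crossedmoduletwogroup\toto\ggroup$ and the formulas~\eqref{Eqt:Ad1minusphi-appendix}--\eqref{Eqn:Adjoint2-appendix} are in hand; the recurring technical nuisance throughout is sign bookkeeping --- Schouten-bracket gradings, the inversion parity, and the sign in the derivative of a group $1$-cocycle.
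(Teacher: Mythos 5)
Your parts~\ref{m1} and~\ref{m2} are essentially correct and close to the paper's own treatment: the groupoid half of~\ref{m1} is the standard coboundary argument, the group half is the same verification that $c|_{(g,\alpha)}=L_{\gpinverse{(g,\alpha)}*}\big(\gthetaC_{\intlambda}|_{(g,\alpha)}\big)$ is a group $1$-cocycle via Eqs.~\eqref{Eqt:Ad1minusphi-appendix}--\eqref{Eqn:Adjoint2-appendix}, and your identification $\wedgedelta|_g=\phipush(\intlambda|_g)$ by restricting to the two canonical subgroups is a harmless variant of the paper's computation of $c$ in Eq.~\eqref{Eq:candlambda}.

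The genuine gap is in part~\ref{m3}, at the step where you conclude that $\intsigma$ is a Lie group $1$-cocycle. You deduce this from the claimed converse ``$\gthetaC_{\mu}$ group-multiplicative only if $\mu$ is a group $1$-cocycle,'' but that converse is false, and it does not follow from reading the part~\ref{m1} computation backwards. Indeed, the identity $c|_{(g,\alpha)}=\mu|_{g\Phi(\alpha)}-(\id_{\thetaalgebra}-\phi)\mu|_{g}$ holds for an \emph{arbitrary} section $\mu$ (its derivation never uses the cocycle property), and at $\alpha=\unit_{\thetagroup}$ it reads $c|_{(g,\unit_{\thetagroup})}=(\Id-e^{-\phipush})\mu|_g$; since $\Id-e^{-\phipush}$ is $\phipush$ composed with an invertible operator, the cocycle condition on $c$ only controls the cocycle defect of $\mu$ modulo $\ker\phipush$, so nothing forces $\mu$ itself to be a cocycle when $\phi$ has a kernel. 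Concretely, if $\Phi$ is trivial (so $\thetagroup$ is abelian by the crossed-module axiom), then $\crossedmoduletwogroup\toto\ggroup$ is a bundle of abelian groups, $\gpoidleftmove{\mu}=\gpoidrightmove{\mu}$ and hence $\gthetaC_{\mu}=0$ for \emph{every} section $\mu$, cocycle or not; multiplicativity of $\gthetaC_{\intsigma}=\ba{\gthetaV}{\gthetaC_{\intlambda}}$ then carries no information about $\intsigma$. What is missing is exactly the paper's intermediate result: the Leibniz expansion of $\groupoiddifferential{}(\intlambda)$ is pushed to the global pointwise formula~\eqref{Eqt:PARTIALVintlamda}, expressing $\intsigma|_g$ through $\domega$, $\dlambda$, $\intlambda|_g$, $\wedgedelta|_g$ and a projected bracket; the cocycle property of $\intsigma$ is read off from that formula (using that $\intlambda$ and $\wedgedelta$ are group cocycles and the operations are equivariant), and differentiating it at $\unit_{\ggroup}$ yields $\dsigma$, the bracket term dropping out because both cocycles vanish at the unit. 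Your expansion ``differentiated at the unit'' recovers the formula for $\dsigma$ but not the cocycle property, and your (correct) identity $\ba{\gthetaV}{\gpoidrightmove{\intlambda}}=\gpoidrightmove{\intsigma}$ together with the consistency check against Proposition~\ref{Prop:bracketofmultiplicative} cannot substitute for it. A minor further slip: the cocycle convention forced by $c$ is $\intlambda|_{gh}=\intlambda|_{h}+\Adjoint{\inverse{h}}\intlambda|_{g}$ (the one the paper uses in proving~\eqref{Eqt:PARTIALVintlamda}), not $\intlambda(g_1g_2)=\intlambda(g_1)+\Adjoint{g_1}\intlambda(g_2)$ as you wrote; this is only bookkeeping, but it should be fixed before the part~\ref{m1} verification is written out in full.
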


\begin{proof}
\textbf{1)} It is clear that $\gthetaC_{\intlambda}$ is multiplicative
with respect to the groupoid structure. It suffices to show that $\gthetaC_{\intlambda}$
is also multiplicative with respect to the group structure on $G\ltimes\Theta$.
Define $c:G\ltimes\Theta\to\wedge^{l}(\crossedmodulealgebra)$ by
\[ c|_{(g,\alpha)}= L_{\gpinverse{(g,\alpha)}*}\big(\gthetaC_{\intlambda}|_{(g,\alpha)}\big) ,\]
where $L$ stands for the  group left translations.
It is well known that $\gthetaC_{\intlambda}$ is multiplicative
with respect to the group structure if and only if $c$ is a group $1$-cocycle, i.e.\
\begin{equation}\label{Eqt:c}
c|_{(g,\alpha)\gpmulti(h,\beta)}=c|_{(h,\beta)}+\Adjoint{\gpinverse{(h,\beta)}}(c|_{(g,\alpha)})
.\end{equation}
Now a direct calculation yields
\begin{equation}\label{Eq:candlambda}
c|_{(g,\alpha)}=\intlambda|_{ g\Phi(\alpha)}-(\id_{\thetaalgebra}-\phi)\intlambda|_{g}
.\end{equation}
Here $(\id_{\thetaalgebra}-\phi)$ extends naturally to a map
$\wedge^l\thetaalgebra\to\wedge^l\thetaalgebra$, i.e.\
\begin{multline*}
(\id_{\thetaalgebra}-\phi)(u_1\wedge u_2\wedge\cdots\wedge u_l)
=(\id_{\thetaalgebra}-\phi)u_1\wedge(\id_{\thetaalgebra}-\phi)u_2\wedge\cdots\wedge
(\id_{\thetaalgebra}-\phi)u_l
,\end{multline*}
for all $u_1,\cdots,u_l\in\thetaalgebra$.
Using Eq.~\eqref{Eq:candlambda} and the assumption that $\intlambda$ is a $1$-cocycle, we have
\begin{align*}
& \text{r.h.s\ of Eq.~\eqref{Eqt:c}} \\
=& \intlambda|_{h\Phi(\beta)}-(\id_{\thetaalgebra}-\phi)\intlambda|_{h}
+\inverse{(h\Phi(\beta))}_*(\intlambda|_{g\Phi(\alpha)})
-((\id_{\thetaalgebra}-\phi)\circ\inverse{h}_*)(\intlambda|_{g}) \\
=& \intlambda|_{g\Phi(\alpha)h\Phi(\beta)}
-(\id_{\thetaalgebra}-\phi)\intlambda|_{gh} \\
=& c|_{(gh,{(\inverse{h}}\groupaction\alpha)\beta)} \\
=& \text{l.h.s.\ of Eq.~\eqref{Eqt:c}}
.\end{align*}
Thus, $\gthetaC_{\intlambda}$ is indeed multiplicative with respect to the group structure.
\newline
\textbf{2)} Let $\partial:\crossedmodulealgebra\to\wedge^{l}(\crossedmodulealgebra)$
be the $l$-differential induced by the multiplicative $l$-vector field $\gthetaC_{\intlambda}$.
According to Lemma~\ref{Lem:partialSigmacalculate}, we have
\begin{equation*}
\partial(x+u)=-\left.\frac{d}{dt}\right|_{t=0} c|_{\exp t(x+u)}, \quad\forall x+u\in\crossedmodulealgebra
.\end{equation*}
Assume that $(\domega_\lambda,\ddelta_\lambda)$ is the infinitesimal data
corresponding to $\gthetaC_{\intlambda}$.
According to Proposition~\ref{prop:groupdifferentialexplicit}, we have
\begin{equation*}
\domega_\lambda (u)= \partial (u) = -\left.\frac{d}{dt}\right|_{t=0}\big(\intlambda|_{ \Phi(\exp tu)}
-(\id_{\thetaalgebra}-\phi)\intlambda|_{\unit_{\ggroup}}\big) = \big(\dlambda\circ\phi\big) (u).
\end{equation*}
Moreover,
\begin{align*}
\ddelta_\lambda (x) &= \pr_{\galgebra\wedge(\wedge^{l-1}\thetaalgebra)}\partial (x) \\
&= -\left.\frac{d}{dt}\right|_{t=0} \pr_{\galgebra\wedge(\wedge^{l-1}\thetaalgebra)}
\big(\intlambda|_{\exp tx}-(\id_{\thetaalgebra}-\phi)\intlambda|_{\exp tx}\big) \\
&= -\left.\frac{d}{dt}\right|_{t=0}\phipush(\intlambda|_{\exp tx}) \\
&= \big(\phipush\circ\dlambda\big)(x)
.\end{align*}
Hence it follows that $\ddelta_\lambda = \phipush\circ\dlambda$.
\newline
\textbf{3)} We first prove the following formula:
\begin{equation}
\label{Eqt:PARTIALVintlamda}
\intsigma|_g = \domega (\intlambda|_g)+\minuspower{(k-1)(l-1)+1}\dlambda(\wedgedelta_g)
-\pr_{{\wedge^{k+l-1}\thetaalgebra }}\ba{\wedgedelta|_g}{\intlambda|_g}
,\end{equation}
where $\wedgedelta: G\to\galgebra\wedge(\wedge^{k-1}\theta)$
is the Lie group $1$-cocycle corresponding to $\delta$ as in Eq.~\eqref{eq:delta2}.
To prove it, assume that
\[ \intlambda|_g=\sum_{i}f_i(g)u_i \quad\text{and}\quad \wedgedelta|_g=\sum_{j}h_j(g)x_j\wedge w_j,
\quad\forall g\in\ggroup ,\]
where $f_i, h_j\in\cinf{\ggroup}$, $u_i\in\wedge^l\thetaalgebra$, $x_j\in \galgebra$, and $w_j\in\wedge^{k-1}\thetaalgebra$.
According to Eq.~\eqref{Eqt:groupoiddifferentiononf} and Proposition~\ref{prop:domegadefined}, we have
\begin{align*}
\intsigma|_g &= \groupoiddifferential{}(\intlambda)|_g \\
&= \sum_{i}\Big(f_i(g)\groupoiddifferential{}(u_i)+(\groupoiddifferential{}f_i)|_g\wedge u_i\Big) \\
&= \sum_{i}\Big(f_i(g)\domega(u_i)+\minuspower{k-1}\inserts_{(L^*_{ {g}}\mathrm{d}f_i)}\wedgedelta|_g\wedge u_i\Big) \\
&= \domega(\intlambda|_g)+\minuspower{k-1}\sum_{i,j}\big(\ip{L^*_g df_i}{x_j}h_j (g)w_j\wedge u_i\big) \\
&= \domega(\intlambda|_g)+\minuspower{k-1}\sum_{j}
\left(h_j(g) w_j\wedge\left.\frac{d}{dt}\right|_{t=0}\intlambda|_{g\exp tx_j}\right) \\
&= \domega(\intlambda|_g)+\minuspower{k-1}\sum_{j}\Big(h_j(g)w_j\wedge\big(-\dlambda(x_j)
-\pr_{{\wedge^{k+l-1}\thetaalgebra }}\ba{x_j}{\intlambda|_{g}}\big)\Big) \\
&= \text{r.h.s.\ of Eq.~\eqref{Eqt:PARTIALVintlamda}}
.\end{align*}
Here in the second from the last equality, we used the identity
\[ \intlambda|_{g\exp tx_j}=\intlambda|_{\exp tx_j}+(\exp tx_j)^{-1}_*\intlambda|_{g} .\]
From Eq.~\eqref{Eqt:PARTIALVintlamda}, it follows that $\intsigma$ is indeed a Lie group $1$-cocycle.
Moreover, the induced Lie algebra $1$-cocycle is
\begin{align*}
\dsigma(x) &= -\left.\frac{d}{dt}\right|_{t=0} \intsigma|_{\exp tx} \\
&= \left.\frac{d}{dt}\right|_{t=0}
\Big(\minuspower{(k-1)(l-1)}\dlambda(\wedgedelta|_{\exp tx})
-\domega(\intlambda|_{\exp tx})
+\pr_{{\wedge^{k+l-1}\thetaalgebra}}\ba{\wedgedelta|_{\exp tx}}{\intlambda|_{\exp tx}}\Big) \\
&= \domega(\dlambda(x))-\minuspower{(k-1)(l-1)}\dlambda(\ddelta(x))
.\end{align*}
Here $-\left.\frac{d}{dt}\right|_{t=0}\pr_{{\wedge^{k+l-1}\thetaalgebra}}
\left[ \wedgedelta|_{\exp tx} , \intlambda|_{\exp tx} \right]=0$, since both
$\wedgedelta$ and $\intlambda$ are group 1-cocycles.
This completes the proof.
\end{proof}

\subsection{Proof of the main theorem}\label{Section:proofofmain}

The following result describes the infinitesimal data of a
quasi-Poisson structure on the 2-group $\crossedmoduletwogroup$.

\begin{proposition}\label{Thm:quasiPoissontotriple}
Let $(\crossedmoduletwogroup,\gthetaPi,\inteta)$ be a quasi-Poisson
2-group as in Definition~\ref{defn:qusi-Poisson}. Let $(\domega,\skewdelta)$
be the corresponding infinitesimal of $\gthetaPi$ and
$\deta:\galgebra\to\wedge^3\thetaalgebra$ the Lie algebra $1$-cocycle induced by $\inteta$.
Then the following identities hold:
\begin{align}\label{Eqn:omega2}
\domega^2&=\deta\circ\phi, \\
\label{Eqn:omegaanddelta}
(\domega+\skewdelta)\circ\skewdelta&=\phipush\circ\deta, \\
\domega\circ\deta&=\deta\circ\ddelta,
\end{align}
where $\deta$ is identified with its extension to a degree-$2$ derivation of the exterior
algebra $\wedge^\bullet(\crossedmodulealgebra)$.
\end{proposition}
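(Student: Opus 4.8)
The plan is to extract all three identities by applying the infinitesimalization machinery of the previous subsections to the two defining equations~\eqref{Eqn:halfPiPiCinteta} and~\eqref{Eqn:Pietazero} of a quasi-Poisson $2$-group. The point is that both sides of each equation are, or are built from, multiplicative polyvector fields whose infinitesimal data we have already computed in closed form, so that equating infinitesimal data turns the two structure equations into algebraic relations among $\domega$, $\skewdelta$ and $\deta$.

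For the first two identities: since the Schouten bracket of multiplicative polyvector fields is multiplicative, $\thalf\ba{\gthetaPi}{\gthetaPi}$ is a multiplicative $3$-vector field, and its infinitesimal data is computed by Proposition~\ref{Prop:bracketofmultiplicative} with $\gthetaV_1=\gthetaV_2=\gthetaPi$, $k_1=k_2=2$. Here $\minuspower{(k_1-1)(k_2-1)}=-1$, so Eqs.~\eqref{align:temp1} and~\eqref{align:temp2} give $(2\domega^2,\,2(\domega+\skewdelta)\circ\skewdelta)$ as the infinitesimal data of $\ba{\gthetaPi}{\gthetaPi}$, hence $(\domega^2,\,(\domega+\skewdelta)\circ\skewdelta)$ for $\thalf\ba{\gthetaPi}{\gthetaPi}$, using that the assignment of infinitesimal data to a multiplicative polyvector field is $\reals$-linear. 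On the other hand, the right-hand side $\gpoidleftmove{\inteta}-\gpoidrightmove{\inteta}$ is exactly the multiplicative $3$-vector field $\gthetaC_{\inteta}$ of part~\eqref{m1} of Proposition~\ref{Lem:multiplicativeoutofcocycle} for the $1$-cocycle $\inteta$ (so $l=3$), whose infinitesimal data, by part~\eqref{m2}, is $(\deta\circ\phi,\,\phipush\circ\deta)$. Equating the $\domega$-components and the $\skewdelta$-components of these two pieces of infinitesimal data yields precisely Eqs.~\eqref{Eqn:omega2} and~\eqref{Eqn:omegaanddelta}.

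For the remaining identity, I would read Eq.~\eqref{Eqn:Pietazero} through the groupoid $k$-differential $\groupoiddifferential{}$ that $\gthetaPi$ induces on the transformation Lie algebroid $\LiealgebroidoverG\cong\ggroup\ltimes\thetaalgebra$. Regarding $\inteta$ as the section of $\sections{\wedge^3\LiealgebroidoverG}$ appearing in Definition~\ref{defn:qusi-Poisson}, the defining relation $\gpoidleftmove{\groupoiddifferential{}(P)}=\ba{\gthetaPi}{\gpoidleftmove{P}}$ of Theorem~\ref{Thm:David-multi-diff} gives $\gpoidleftmove{\groupoiddifferential{}(\inteta)}=\ba{\gthetaPi}{\gpoidleftmove{\inteta}}=0$, hence $\groupoiddifferential{}(\inteta)=0$. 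By part~\eqref{m3} of Proposition~\ref{Lem:multiplicativeoutofcocycle}, applied with $\gthetaV=\gthetaPi$, $k=2$, $\intlambda=\inteta$, $l=3$ (so $\minuspower{(k-1)(l-1)}=1$), the section $\groupoiddifferential{}(\inteta)$, viewed as a map $\ggroup\to\wedge^4\thetaalgebra$, is a Lie group $1$-cocycle whose corresponding Lie algebra $1$-cocycle is $\domega\circ\deta-\deta\circ\ddelta$. Since this group cocycle is identically zero, its derivative at the unit vanishes, i.e.\ $\domega\circ\deta=\deta\circ\ddelta$, which is the third identity.

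I do not expect a genuine obstacle here: once Propositions~\ref{Prop:bracketofmultiplicative} and~\ref{Lem:multiplicativeoutofcocycle} are in hand, the proof is a short unravelling of definitions. The only points requiring care are bookkeeping ones --- tracking the signs $\minuspower{(k_1-1)(k_2-1)}$ and $\minuspower{(k-1)(l-1)}$ through the relevant degrees, and making sure that the composites $\domega^2$, $(\domega+\skewdelta)\circ\skewdelta$, $\phipush\circ\deta$ and $\deta\circ\ddelta$ are interpreted via the agreed derivation extensions of $\domega,\skewdelta,\deta$ (and $\phipush$) to $\wedge^\bullet(\crossedmodulealgebra)$, as declared just before the statement. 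No new geometric input is needed beyond the two cited propositions and the definition of a quasi-Poisson $2$-group.
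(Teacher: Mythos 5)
Your proposal is correct and follows essentially the same route as the paper: the first two identities are obtained by equating the infinitesimal data of $\thalf\ba{\gthetaPi}{\gthetaPi}$ (computed via Proposition~\ref{Prop:bracketofmultiplicative}) with that of $\gthetaC_{\inteta}=\gpoidleftmove{\inteta}-\gpoidrightmove{\inteta}$ (Proposition~\ref{Lem:multiplicativeoutofcocycle}, parts~\eqref{m1}--\eqref{m2}), and the third from $\groupoiddifferential{}(\inteta)=0$ together with part~\eqref{m3}. Your explicit sign bookkeeping ($\minuspower{(k_1-1)(k_2-1)}=-1$ and $\minuspower{(k-1)(l-1)}=+1$) matches and merely spells out what the paper leaves implicit.
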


\begin{proof}
Let $\gthetaC_{\inteta}=\gpoidleftmove{\inteta}-\gpoidrightmove{\inteta}$.
According to Proposition~\ref{Lem:multiplicativeoutofcocycle},
$\gthetaC_{\inteta}$ is multiplicative. Moreover, its corresponding
infinitesimal is $(\deta\circ\phi,\phipush\circ\deta)$.
By Proposition~\ref{Prop:bracketofmultiplicative}, the infinitesimal
of $\thalf\ba{\gthetaPi}{\gthetaPi}$ is given by
$(\domega^2,(\domega+\skewdelta)\circ\skewdelta)$.
Thus Eq.~\eqref{Eqn:halfPiPiCinteta} implies Eqs.~\eqref{Eqn:omega2}
and~\eqref{Eqn:omegaanddelta}. On the other hand,
Eq.~\eqref{Eqn:Pietazero} is equivalent to
$\groupoiddifferential{\gthetaPi}(\inteta)=0$.
By Lemma~\ref{Lem:multiplicativeoutofcocycle}~(\ref{m3}),
we have $\domega\circ\deta-\deta\circ\ddelta=0$.
This completes the proof.
\end{proof}

Conversely, we have
\begin{proposition}\label{Thm:tripledatatoquasipoisson}
Let $\crossedmoduletwogroup$ be a Lie 2-group.
If both Lie groups $\ggroup$ and $\thetagroup$
are connected and simply connected, every triple $(\domega,\skewdelta,\deta)$,
where $(\domega,\skewdelta)$ satisfies the conditions of
Theorem~\ref{Thm:Sigmatopair}, and $\deta:\galgebra\to\wedge^3\thetaalgebra$
is a Lie algebra $1$-cocycle satisfying the conditions of
Proposition~\ref{Thm:quasiPoissontotriple}, can be uniquely integrated to a
quasi-Poisson structure on $\crossedmoduletwogroup$.
\end{proposition}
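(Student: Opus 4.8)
The plan is to assemble ingredients already established, with essentially no new computation on the $2$-group itself. First I would invoke Theorem~\ref{Thm:pairtoSigma}: since $(\domega,\skewdelta)$ satisfies \textbf{ID1}--\textbf{ID3} it is an element of $\cala_2$, and (using that $\ggroup$ and $\thetagroup$ are connected and simply connected) this produces a unique multiplicative bivector field $\gthetaPi$ on $\crossedmoduletwogroup$ with infinitesimal $(\domega,\skewdelta)$. Next, because $\ggroup$ is connected and simply connected, the Lie algebra $1$-cocycle $\deta$ integrates to a unique Lie group $1$-cocycle $\inteta:\ggroup\to\wedge^3\thetaalgebra$ valued in the $\ggroup$-module $\wedge^3\thetaalgebra$ induced by the crossed-module action. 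The claim is then that $(\gthetaPi,\inteta)$ is a quasi-Poisson structure in the sense of Definition~\ref{defn:qusi-Poisson}, and that it is the unique one with infinitesimal data $(\domega,\skewdelta,\deta)$.

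To establish Eq.~\eqref{Eqn:halfPiPiCinteta} I would compare infinitesimal data of $3$-vector fields. The left-hand side $\thalf\ba{\gthetaPi}{\gthetaPi}$ is multiplicative (the Schouten bracket preserves multiplicativity), and by Proposition~\ref{Prop:bracketofmultiplicative} with $k_1=k_2=2$ its infinitesimal is $(\domega^2,(\domega+\skewdelta)\circ\skewdelta)$. The right-hand side $\gthetaC_{\inteta}=\gpoidleftmove{\inteta}-\gpoidrightmove{\inteta}$ is multiplicative by Proposition~\ref{Lem:multiplicativeoutofcocycle}~(\ref{m1}) and has infinitesimal $(\deta\circ\phi,\phipush\circ\deta)$ by Proposition~\ref{Lem:multiplicativeoutofcocycle}~(\ref{m2}). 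But the hypotheses on $\deta$ are precisely Eqs.~\eqref{Eqn:omega2} and~\eqref{Eqn:omegaanddelta}, which say $\domega^2=\deta\circ\phi$ and $(\domega+\skewdelta)\circ\skewdelta=\phipush\circ\deta$; hence the two multiplicative $3$-vector fields have the same infinitesimal data, and this datum lies in $\cala_3$ since it is the infinitesimal of a multiplicative $3$-vector field (Theorem~\ref{Thm:Sigmatopair}). The uniqueness clause of Theorem~\ref{Thm:pairtoSigma} applied in degree $3$ then forces $\thalf\ba{\gthetaPi}{\gthetaPi}=\gthetaC_{\inteta}$, which is Eq.~\eqref{Eqn:halfPiPiCinteta}.

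For Eq.~\eqref{Eqn:Pietazero} I would use $\ba{\gthetaPi}{\gpoidleftmove{\inteta}}=\gpoidleftmove{\groupoiddifferential{\gthetaPi}(\inteta)}$, with $\inteta$ regarded as a section of $\wedge^3\LiealgebroidoverG$ and $\groupoiddifferential{\gthetaPi}$ the $2$-differential of $\LiealgebroidoverG$ induced by $\gthetaPi$. By Proposition~\ref{Lem:multiplicativeoutofcocycle}~(\ref{m3}) with $k=2$ and $l=3$, the section $\groupoiddifferential{\gthetaPi}(\inteta)$ is a Lie group $1$-cocycle $\ggroup\to\wedge^4\thetaalgebra$ whose infinitesimal Lie algebra $1$-cocycle is $\domega\circ\deta-\deta\circ\skewdelta$; the third identity in the hypotheses (of Proposition~\ref{Thm:quasiPoissontotriple}) says this vanishes, and a Lie group $1$-cocycle on a connected group with vanishing derivative is identically zero, so $\groupoiddifferential{\gthetaPi}(\inteta)=0$, giving Eq.~\eqref{Eqn:Pietazero}. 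Uniqueness is then immediate: $\gthetaPi$ is pinned down by Theorem~\ref{Thm:pairtoSigma} and $\inteta$ by simple connectedness of $\ggroup$. The main obstacle --- more a point to be careful about than a genuine difficulty --- is the degree-$3$ application of the uniqueness part of Theorem~\ref{Thm:pairtoSigma}: one must be certain the common infinitesimal data is a bona fide element of $\cala_3$ so that the theorem applies, and this is exactly what is guaranteed by realizing it as the infinitesimal of the multiplicative $3$-vector field $\thalf\ba{\gthetaPi}{\gthetaPi}$.
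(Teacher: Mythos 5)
Your proposal is correct and follows essentially the same route as the paper: integrate $(\domega,\skewdelta)$ via Theorem~\ref{Thm:pairtoSigma}, integrate $\deta$ to $\inteta$, obtain Eq.~\eqref{Eqn:Pietazero} from Proposition~\ref{Lem:multiplicativeoutofcocycle}~(\ref{m3}) together with $\domega\circ\deta-\deta\circ\ddelta=0$, and obtain Eq.~\eqref{Eqn:halfPiPiCinteta} by matching infinitesimals of the two multiplicative $3$-vector fields via Propositions~\ref{Prop:bracketofmultiplicative} and~\ref{Lem:multiplicativeoutofcocycle}~(\ref{m1},~\ref{m2}). The only difference is that you spell out the uniqueness step in degree $3$ (checking the common infinitesimal lies in $\cala_3$) which the paper leaves implicit; this is a welcome clarification rather than a different argument.
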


\begin{proof}
By Theorem~\ref{Thm:pairtoSigma}, we obtain a multiplicative bivector field
$\gthetaPi$ on $\crossedmoduletwogroup$ whose infinitesimal is $(\domega,\skewdelta)$.
Let $\inteta:\ggroup\to\wedge^3\thetaalgebra$ be the Lie group $1$-cocycle integrating $\deta$.
By Proposition~\ref{Lem:multiplicativeoutofcocycle}~(\ref{m3}),
$\groupoiddifferential{\gthetaPi}(\inteta)$ vanishes since
$\domega\circ\deta-\deta\circ\ddelta=0$.
Thus $\ba{\gthetaPi}{\gpoidleftmove{\inteta}}=\gpoidleftmove{\groupoiddifferential{\gthetaPi}(\inteta)}=0$.
Moreover, Eqs.~\eqref{Eqn:omega2} and~\eqref{Eqn:omegaanddelta} imply
Eq.~\eqref{Eqn:halfPiPiCinteta} according to
Proposition~\ref{Lem:multiplicativeoutofcocycle}~(\ref{m1} and \ref{m2}).
\end{proof}

Finally, we need the following
\begin{lemma}\label{Prop:lie2codatatriple}
A quasi-Lie 2 bialgebra structure on a crossed module of Lie algebras
$\crossedmoduletriple{\thetaalgebra}{\phi}{\galgebra}$ is equivalent to
triples $(\skewdelta,\domega,\deta)$ of linear maps
$\skewdelta:\galgebra\to W_2\subset\galgebra\wedge\thetaalgebra$,
$\domega:\thetaalgebra\to\wedge^2\thetaalgebra$ and
$\deta:\galgebra\to\wedge^3\thetaalgebra$ that satisfy the following properties:
\begin{enumerate}
\item $\phipush\circ\omega=\skewdelta\circ\phi$;
\item $\domega^2=\deta\circ\phi$;
\item $(\domega+\skewdelta)\circ\skewdelta=\phipush\circ\deta$;
\item $\domega\circ\deta=\deta\circ\ddelta$;
\item $\deta$ is a Lie algebra $1$-cocycle;
\item $\skewdelta$ is a Lie algebra $1$-cocycle;
\item $x\moduleaction\omega(u)-\omega(x\moduleaction u)
=\pr_{\wedge^k\thetaalgebra}(\ba{u}{\skewdelta(x)})$, for all $x\in\galgebra$ and $u\in\thetaalgebra$.
\end{enumerate}
\end{lemma}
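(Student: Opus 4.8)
First I record what must be shown. By Definition~\ref{Defn:Lie2bialgebra} with $\derivesh=0$, a quasi-Lie 2-bialgebra structure on the crossed module $\crossedmoduletriple{\thetaalgebra}{\phi}{\galgebra}$ is a choice of $\derivescobracket,\derivescoaction,\derivescoh$ in the spaces of Eq.~\eqref{eqt:derivescodata} such that
\[ t=\derivescovarphi+\derivesbracket+\derivesaction+\derivescobracket+\derivescoaction+\derivescoh\in\degreesubspace{\symmetricalgebra}{-4} \]
satisfies $\Poissonbracket{t,t}=0$, where $\derivescovarphi+\derivesbracket+\derivesaction\in\degreesubspace{\symmetricalgebra}{-4}$ is the element encoding the given crossed module; by Proposition~\ref{Prop:Lie2algebraelements} (for strict Lie 2-algebras) we already have $\Poissonbracket{\derivescovarphi+\derivesbracket+\derivesaction,\,\derivescovarphi+\derivesbracket+\derivesaction}=0$. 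Contracting the starred tensor factor gives canonical isomorphisms $\thetaalgebrastardegminustwo\symmetricproduct(\symmetricproduct^2\thetaalgebradegone)\cong\Hom(\thetaalgebra,\wedge^2\thetaalgebra)$, $\galgebrastardegminusone\symmetricproduct\galgebradegzero\symmetricproduct\thetaalgebradegone\cong\Hom(\galgebra,\galgebra\wedge\thetaalgebra)$ and $\galgebrastardegminusone\symmetricproduct(\symmetricproduct^3\thetaalgebradegone)\cong\Hom(\galgebra,\wedge^3\thetaalgebra)$, under which $\derivescobracket,\derivescoaction,\derivescoh$ become precisely the maps $\domega,\skewdelta,\deta$ of the statement. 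Thus the lemma asserts that $\Poissonbracket{t,t}=0$ is equivalent, under these identifications, to the system (1)--(7).

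The plan is to expand $\Poissonbracket{t,t}=0$ and match homogeneous components. Put $m=\derivescobracket+\derivescoaction+\derivescoh$ and $c=\derivescovarphi+m$. Using $\Poissonbracket{\derivescovarphi+\derivesbracket+\derivesaction,\,\derivescovarphi+\derivesbracket+\derivesaction}=0$ together with $\Poissonbracket{\derivescovarphi,\derivescovarphi}=0$ (no two factors of $\derivescovarphi\in\thetaalgebrastardegminustwo\symmetricproduct\galgebradegzero$ can be contracted), one gets
\[ \Poissonbracket{t,t}=\Poissonbracket{c,c}+2\,\Poissonbracket{\derivesbracket+\derivesaction,\,m}. \]
Since $\Poissonbracket{t,t}=0$ forces $\Poissonbracket{c,c}=0$ by Proposition~\ref{Prop:tt=0impliesll=0andcc=0}, the equation $\Poissonbracket{t,t}=0$ is equivalent to the pair of equations $\Poissonbracket{c,c}=0$ and $\Poissonbracket{\derivesbracket+\derivesaction,\,m}=0$. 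I would then expand each of these using nothing beyond the four defining properties of the big bracket recalled in Section~\ref{Schoutenbracket}, collecting by multidegree in $\galgebra,\thetaalgebra,\galgebrastar,\thetaalgebrastar$; the one structural point to isolate is that $\Poissonbracket{\derivescovarphi,\cdot}$ is a derivation of $\symmetricalgebraa$ whose restriction to $\wedge^\bullet\thetaalgebra$ is, up to sign, the operator $\phipush$, and which contracted against a $\galgebrastardegminusone$-factor produces $\phi$ --- this accounts for the occurrences of $\phipush$ and $\phi$ in (1)--(4). The first equation, $\Poissonbracket{c,c}=0$, is the weak Lie 2-coalgebra condition on $\galgebrastar\oplus\thetaalgebrastar$; dualizing the explicit weak Lie 2-algebra axioms recalled above term by term, it unfolds into the ``coalgebra-internal'' identities (1)--(4). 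The second, $\Poissonbracket{\derivesbracket+\derivesaction,\,m}=0$, being linear in the bracket of $\galgebra$ and in the action of $\galgebra$ on $\thetaalgebra$, unfolds into exactly the three identities that also refer to these operations: the $\derivesbracket,\derivesaction$-parts acting on $\derivescoaction$ give that $\skewdelta$ is a $1$-cocycle valued in $W_2$, i.e.\ (6) (equivalently \textbf{ID2}); those acting on $\derivescoh$ give that $\deta$ is a $1$-cocycle, i.e.\ (5); and the $\derivesaction$-part acting on $\derivescobracket$, combined with the $W_2$-symmetry of $\skewdelta$, gives (7) (equivalently \textbf{ID3}). Conversely, each of (1)--(7) is visibly one homogeneous component of $\Poissonbracket{c,c}$ or of $\Poissonbracket{\derivesbracket+\derivesaction,\,m}$, so together they force $\Poissonbracket{t,t}=0$. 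For bookkeeping it is worth noting that (1), (6), (7) coincide with \textbf{ID1}, \textbf{ID2}, \textbf{ID3} of Theorem~\ref{Thm:Sigmatopair}, and (2), (3), (4) with the three displayed identities of Proposition~\ref{Thm:quasiPoissontotriple}.

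The substance of the proof is thus the graded bookkeeping: enumerate the finitely many multidegrees appearing in $\Poissonbracket{t,t}\in\degreesubspace{\symmetricalgebra}{-5}$, evaluate every component of $\Poissonbracket{c,c}$ and of $\Poissonbracket{\derivesbracket+\derivesaction,\,m}$ by repeated use of the Leibniz and pairing rules, and check --- Koszul signs included --- that the list of identities obtained is exactly (1)--(7). There is no analytic or global ingredient; the main obstacle is simply that this expansion is lengthy, and the delicate spot is the collection of the $\skewdelta$-quadratic and $\deta$-linear terms, which is where the combination $\domega+\skewdelta$ of (3) and the $W_2$-constraint in (6) emerge.
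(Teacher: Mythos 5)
Your proposal is correct and follows essentially the same route as the paper: identify $(\derivescobracket,\derivescoaction,\derivescoh)$ with $(\domega,\skewdelta,\deta)$ and expand $\Poissonbracket{o+c,o+c}=0$ by multidegree, your preliminary splitting $\Poissonbracket{t,t}=\Poissonbracket{c,c}+2\Poissonbracket{\derivesbracket+\derivesaction,m}$ being only a harmless regrouping of the same eight homogeneous components that the paper enumerates. The one bookkeeping slip is that the $W_2$-valuedness of $\skewdelta$ is the $(\symmetricproduct^2\galgebra)\symmetricproduct\galgebrastar$-component of $\Poissonbracket{c,c}$ (it comes from $\Poissonbracket{\derivescovarphi,\derivescoaction}$), not a component of $\Poissonbracket{\derivesbracket+\derivesaction,m}$, whose three components give exactly the cocycle conditions (5), (6) and identity (7).
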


\begin{proof}
By Proposition~\ref{Prop:Lie2coalgebraelements},
a weak Lie 2-coalgebra structure underlying $\crossedmoduletriple{\thetaalgebra}{\phi}{\galgebra}$
is equivalent to an element
$c=\derivescovarphi+\derivescobracket+ \derivescoaction+\derivescoh\in
\degreesubspace{\symmetricalgebra}{-4}$ such that $\Poissonbracket{c,c}=0$.
Here $\phi$ and $\derivescovarphi$ are related by the equation:
$\phimap(u)=\{{\derivesvarphi},u\}$, for all $u\in\thetaalgebra$.
And $\crossedmoduletriple{\thetaalgebra}{\phi}{\galgebra}$ is a
quasi-Lie 2-bialgebra if and only if
$\Poissonbracket{o+c,o+c}=0$, where $o=\derivesbracket+\derivesaction$ is the data defining
the crossed module structure of $\crossedmoduletriple{\thetaalgebra}{\phi}{\galgebra} $,
as a special Lie 2-algebra with $\derivesh=0$.
Introduce the operators $\skewdelta$, $\domega$, and $\deta$ by the following relations:
\begin{align*}
\pairing{\skewdelta(x)}{\xi\wedge\kappa} &= -\Poissonbracket{\Poissonbracket{\Poissonbracket{\derivescoaction,x},\xi},\kappa}; \\ \pairing{\domega(u)}{\kappa_1\wedge \kappa_2} &=
\Poissonbracket{\Poissonbracket{\Poissonbracket{\derivescobracket,u}\kappa_1},\kappa_2}; \\
\pairing{\deta(x)}{\kappa_1\wedge\kappa_2\wedge\kappa_3} &=
\Poissonbracket{\Poissonbracket{\Poissonbracket{\Poissonbracket{\derivescoh,x},\kappa_1},\kappa_2},\kappa_3}.
\end{align*}
for all $x\in\galgebra$, $u\in\thetaalgebra$, $\xi\in\galgebrastar$, and $\kappa,\kappa_i\in\thetaalgebrastar$.
Expand $\Poissonbracket{o+c,o+c}$ and consider the result term by term.
Immediately, we have the following:
\begin{enumerate}
\item\label{n1} the $(\symmetricproduct^2\galgebra)\symmetricproduct \galgebrastar$-part is zero if and only if $\skewdelta$ is  valued in $W_2$;
\item\label{n2} the $\thetaalgebra \symmetricproduct
\galgebra \symmetricproduct \thetaalgebrastar$-part is zero if and only if Condition~\ref{n1}) is satisfied;
\item\label{n3} the $(\symmetricproduct^3 \thetaalgebra)\symmetricproduct
\thetaalgebrastar$-part is zero if and only if Condition~\ref{n2}) is satisfied;
\item\label{n4} the $(\symmetricproduct^2 \thetaalgebra)\symmetricproduct
\galgebra \symmetricproduct \galgebrastar$-part is zero if and only if Condition~\ref{n3}) is satisfied;
\item\label{n5} the $(\symmetricproduct^4 \thetaalgebra)\symmetricproduct
\galgebrastar$-part is zero if and only if Condition~\ref{n4}) is satisfied;
\item\label{n6} the $(\symmetricproduct^2 \galgebrastar)\symmetricproduct
(\symmetricproduct^3\thetaalgebra)$-part is zero if and only if Condition~\ref{n5}) is satisfied;
\item\label{n7} the $(\symmetricproduct^2\galgebrastar)\symmetricproduct\galgebra\symmetricproduct\thetaalgebra$-part
is zero if and only if Condition~\ref{n6}) is satisfied;
\item\label{n8} the $(\symmetricproduct^2\thetaalgebra)\symmetricproduct
\galgebrastar\symmetricproduct\thetaalgebrastar$-part is zero if and only if Condition~\ref{n7}) is satisfied.
\end{enumerate}
This concludes the proof.
\end{proof}

\begin{proof}[Proof of Theorem \ref{Thm:quasi-lie2-bioneteonequasipoisson}]
Lemma~\ref{Prop:lie2codatatriple} implies that a quasi-Lie 2 bialgebra
underlying the crossed module $\crossedmoduletriple{\thetaalgebra}{\phi}{\galgebra}$
is determined by the triple $(\domega,\skewdelta,\deta)$ that satisfies
the conditions in Theorem~\ref{Thm:Sigmatopair} and Proposition~\ref{Thm:quasiPoissontotriple}.
Thus Theorem~\ref{Thm:quasi-lie2-bioneteonequasipoisson} follows from
Proposition~\ref{Thm:quasiPoissontotriple} and Proposition~\ref{Thm:tripledatatoquasipoisson}.
\end{proof}

\subsection{Coboundary quasi-Poisson structures}

The following proposition describes a class of interesting examples of
quasi-Poisson structures on a Lie 2-group.

\begin{proposition}\label{Prop:etatriple}
\begin{enumerate}
\item Associated to any Lie group $1$-cocycle $\intlambda:\ggroup\to\wedge^2\thetaalgebra$,
there exists a quasi-Poisson structure $\big(\gthetaPi,\inteta\big)$ on $\crossedmoduletwogroup$ given as follows:
\begin{gather}\label{gthetaPiintlambda}
\gthetaPi = \gpoidleftmove{\intlambda}-\gpoidrightmove{\intlambda }, \\
\inteta = \thalf\ba{\intlambda}{\intlambda} . \label{eq:47}
\end{gather}
In Eq.~\eqref{gthetaPiintlambda}, $\intlambda$ is considered as
a section in $\sections{\wedge^2\LiealgebroidoverG}$
and the bracket in Eq.~\eqref{eq:47} stands for the pointwise
Schouten bracket on $\wedge^\bullet\theta$.
\item The infinitesimal
$(\domega_\dlambda,\ddelta_\dlambda,\deta_\dlambda)$ of $\big(\gthetaPi,\inteta\big)$
as described by Proposition~\ref{Thm:quasiPoissontotriple} is as follows:
\begin{gather*}
\domega_{\dlambda}=\dlambda\circ\phi, \\
\skewdelta_{\dlambda}=\phipush\circ\dlambda,\\
\deta_{\dlambda}=\dlambda\circ\phipush\circ\dlambda,
\end{gather*}
where $\dlambda:\galgebra\to\wedge^2\thetaalgebra$ is the Lie algebra $1$-cocycle induced by $\intlambda$.
\end{enumerate}
\end{proposition}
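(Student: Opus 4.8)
The plan is to verify the two defining identities of Definition~\ref{defn:qusi-Poisson} for the pair $(\gthetaPi,\inteta)$ in part~(1), and then to extract the infinitesimal triple in part~(2) from Propositions~\ref{Lem:multiplicativeoutofcocycle} and~\ref{Prop:bracketofmultiplicative} together with the uniqueness statement of Proposition~\ref{Thm:tripledatatoquasipoisson}.

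For part~(1): Proposition~\ref{Lem:multiplicativeoutofcocycle}~(\ref{m1}) already yields that $\gthetaPi=\gthetaC_{\intlambda}=\gpoidleftmove{\intlambda}-\gpoidrightmove{\intlambda}$ is a multiplicative bivector field on $\crossedmoduletwogroup$, hence $\thalf\ba{\gthetaPi}{\gthetaPi}$ is a multiplicative $3$-vector field. The key computation is
\[ \thalf\ba{\gthetaPi}{\gthetaPi}=\thalf\ba{\gpoidleftmove{\intlambda}}{\gpoidleftmove{\intlambda}}-\ba{\gpoidleftmove{\intlambda}}{\gpoidrightmove{\intlambda}}+\thalf\ba{\gpoidrightmove{\intlambda}}{\gpoidrightmove{\intlambda}}, \]
which one evaluates using the standard facts that on the Lie groupoid $\crossedmoduletwogroup\toto\ggroup$ the left-invariant extension $\sections{\wedge^\bullet\LiealgebroidoverG}\to\polyvectorfields{\bullet}{\crossedmoduletwogroup}$ is a morphism of Gerstenhaber algebras, the right-invariant extension an anti-morphism, and $\ba{\gpoidleftmove{P}}{\gpoidrightmove{Q}}=0$ for all $P,Q\in\sections{\wedge^\bullet\LiealgebroidoverG}$; this reduces $\thalf\ba{\gthetaPi}{\gthetaPi}$ to $\gpoidleftmove{\inteta}-\gpoidrightmove{\inteta}$ with $\inteta=\thalf\ba{\intlambda}{\intlambda}$ (the bracket being that of sections of $\LiealgebroidoverG$, which unwinds to the pointwise Schouten bracket on $\wedge^\bullet\thetaalgebra$ together with the anchor-derivative terms of the transformation algebroid $\ggroup\rtimes\thetaalgebra$). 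Equation~\eqref{Eqn:Pietazero} then follows from the graded Jacobi identity: writing $\gpoidleftmove{\inteta}=\thalf\ba{\gpoidleftmove{\intlambda}}{\gpoidleftmove{\intlambda}}$, the term $\ba{\gpoidleftmove{\intlambda}}{\ba{\gpoidleftmove{\intlambda}}{\gpoidleftmove{\intlambda}}}$ vanishes (it is, up to sign, the Jacobiator of the bivector $\gpoidleftmove{\intlambda}$), and $\ba{\gpoidrightmove{\intlambda}}{\ba{\gpoidleftmove{\intlambda}}{\gpoidleftmove{\intlambda}}}=0$ by $\ba{\gpoidleftmove{P}}{\gpoidrightmove{Q}}=0$ together with Jacobi. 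Finally, $\inteta$ is a Lie group $1$-cocycle valued in $\wedge^3\thetaalgebra$, which one checks from the cocycle property of $\intlambda$ and the $G$-equivariance of the structures involved (equivalently, this can be read off from the multiplicativity of $\gthetaC_{\inteta}=\thalf\ba{\gthetaPi}{\gthetaPi}$ via the criterion in the proof of Proposition~\ref{Lem:multiplicativeoutofcocycle}~(\ref{m1})).

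For part~(2): Proposition~\ref{Lem:multiplicativeoutofcocycle}~(\ref{m2}) applied to $\intlambda$ gives immediately $\domega_{\dlambda}=\dlambda\circ\phi$ and $\skewdelta_{\dlambda}=\phipush\circ\dlambda$. For $\deta_{\dlambda}$, I would compute the Lie algebra $1$-cocycle induced by $\inteta=\thalf\ba{\intlambda}{\intlambda}$ from Eq.~\eqref{eq:delta2}, differentiating along $\exp tx$: the purely fibrewise $\thetaalgebra$-bracket contributes nothing (being quadratic in $\intlambda$, which vanishes at the unit), while the anchor-derivative part, whose anchor is $u\mapsto\overleftarrow{\phi(u)}$, produces exactly $\dlambda\circ\phipush\circ\dlambda$. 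Equivalently — and this is the cleanest route — one verifies by a direct algebraic computation that the triple $(\dlambda\circ\phi,\phipush\circ\dlambda,\dlambda\circ\phipush\circ\dlambda)$ satisfies all the identities of Proposition~\ref{Thm:quasiPoissontotriple} (using only that $\dlambda$ is a Lie algebra $1$-cocycle and that $\phipush$ is a degree-$0$ derivation of $\wedge^\bullet(\crossedmodulealgebra)$ commuting suitably with the brackets); by Proposition~\ref{Thm:tripledatatoquasipoisson} this triple integrates to a unique quasi-Poisson structure, which must therefore be $(\gthetaPi,\inteta)$, so its infinitesimal is the asserted triple.

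The step I expect to be the main obstacle is the bookkeeping around \eqref{Eqn:halfPiPiCinteta} and $\deta_{\dlambda}$: one must track carefully the signs in the graded Schouten bracket (left- versus right-invariant extensions, the sign in the graded Jacobi identity), and — more delicately — the precise form of the Schouten bracket of $\intlambda$ with itself as a section of the transformation algebroid $\ggroup\rtimes\thetaalgebra$, namely isolating the anchor-derivative contribution (the source of the factor $\phipush$) and confirming that it accounts simultaneously for the passage from the fibrewise bracket on $\wedge^\bullet\thetaalgebra$ to the expression $\gpoidleftmove{\,\cdot\,}-\gpoidrightmove{\,\cdot\,}$ and for the term $\dlambda\circ\phipush\circ\dlambda$ in $\deta_{\dlambda}$. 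Once this is pinned down, the cocycle property of $\inteta$ and the identities of Proposition~\ref{Thm:quasiPoissontotriple} are routine verifications.
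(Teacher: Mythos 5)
The paper itself gives no argument here (``the proof is standard, and is left to the reader''), so I can only measure your outline against the argument the paper evidently intends, which is the one you sketch; in structure it is correct, and several of its steps are literally Proposition~\ref{Lem:multiplicativeoutofcocycle}. Two caveats. First, the bracket that actually appears when you expand $\thalf\ba{\gthetaPi}{\gthetaPi}$ is, as you note parenthetically, the Schouten bracket of $\intlambda$ as a section of $\wedge^2\LiealgebroidoverG$ (fibrewise bracket \emph{plus} anchor-derivative terms of the transformation algebroid), and this is not interchangeable with the literally pointwise bracket announced after Eq.~\eqref{eq:47}: the pointwise $\thalf\ba{\intlambda|_g}{\intlambda|_g}$ is in general not even a group $1$-cocycle, and, being quadratic in a map vanishing at the unit, it would have zero infinitesimal, contradicting $\deta_\dlambda=\dlambda\circ\phipush\circ\dlambda$ (and the paper's own coboundary formula~\eqref{Eqt:intlambda}). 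Your reading is the correct one and makes the proposition true, but a complete write-up must say this explicitly rather than let the two brackets blur, since it is exactly the point where the factor $\phipush$ enters.

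Second, your ``cleanest route'' for part (2) via Proposition~\ref{Thm:tripledatatoquasipoisson} does not close on its own: that proposition assumes $\ggroup$ and $\thetagroup$ connected and simply connected (not hypotheses of the present statement), and identifying its output with $(\gthetaPi,\inteta)$ already requires knowing that the group $1$-cocycle integrating $\dlambda\circ\phipush\circ\dlambda$ is $\inteta=\thalf\ba{\intlambda}{\intlambda}$ --- precisely the derivative computation the detour was meant to avoid. The efficient way to finish, and to settle the step you flag as the main obstacle, is to invoke Proposition~\ref{Lem:multiplicativeoutofcocycle}~(\ref{m3}) with $\gthetaV=\gthetaPi$ and the cocycle $\intlambda$ (so $k=l=2$): since $\ba{\gthetaPi}{\gpoidleftmove{\intlambda}}=\ba{\gpoidleftmove{\intlambda}}{\gpoidleftmove{\intlambda}}$, one has $\groupoiddifferential{}(\intlambda)=\ba{\intlambda}{\intlambda}$ as sections of $\wedge^3\LiealgebroidoverG$, and part~(\ref{m3}) then gives in one stroke that $\inteta=\thalf\,\groupoiddifferential{}(\intlambda)$ is a group $1$-cocycle and that its Lie algebra cocycle is $\thalf\bigl(\domega_\dlambda\circ\dlambda+\dlambda\circ\skewdelta_\dlambda\bigr)=\dlambda\circ\phipush\circ\dlambda$, with $\domega_\dlambda=\dlambda\circ\phi$ and $\skewdelta_\dlambda=\phipush\circ\dlambda$ supplied by part~(\ref{m2}). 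With that replacement, your verification of Eq.~\eqref{Eqn:halfPiPiCinteta} (left-/right-invariant extensions are a morphism/anti-morphism and mixed brackets vanish) and of Eq.~\eqref{Eqn:Pietazero} by graded Jacobi is sound.
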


\begin{proof}
The proof is standard, and is left to the reader.
\end{proof}

In particular, any $\Rmatrix\in\wedge^2\thetaalgebra$ induces
a Lie algebra $1$-cocycle $\lambda_\Rmatrix:\galgebra\to\wedge^2\theta$:
\begin{equation}\label{Eqt:dlambda}
\dlambda_{\Rmatrix}(x)=-x\moduleaction{{\Rmatrix}},
\quad\forall x\in\galgebra
.\end{equation}
Therefore by Proposition~\ref{Prop:etatriple}, there exists a quasi-Poisson structure
$(\gthetaPi_\Rmatrix,\inteta_{\Rmatrix})$ on the 2-group $\crossedmoduletwogroup$.
By a straightforward computation, we can describe this quasi-Poisson structure more explicitly:
\begin{multline}\label{Eqt:PioutofRmatrix}
(\gthetaPi_\Rmatrix)|_{(g,\alpha)}
=\fullaction{R_{g*}\Phistar}\Rmatrix-\fullaction{L_{g*}\Phistar}\Rmatrix
+\fullaction{L_{\alpha*} \inverse{g}_*}\Rmatrix-\fullaction{R_{\alpha*} \inverse{g}_*}\Rmatrix \\
+[(L_{g*}\circ\Phistar)\otimes(L_{\alpha})]\Rmatrix-[(R_{g*}\circ\Phistar)\otimes (L_{\alpha}\circ\inverse{g}_*)]\Rmatrix
,\end{multline}
and
\begin{equation}\label{Eqt:intlambda}
(\inteta_\Rmatrix)|_{g}=\thalf\big(\ba{\Rmatrix}{\Rmatrix}-\fullaction{\inverse{g}_*}\ba{\Rmatrix}{\Rmatrix}\big)
.\end{equation}
The infinitesimal of $(\gthetaPi_\Rmatrix,\inteta_{\Rmatrix})$ is given as follows:
\begin{align}
\label{Eqn:omegaRmatrix}
\domega_{\Rmatrix}(u) &= \ba{\Rmatrix}{u}, &&\forall u\in\thetaalgebra; \\
\label{Eqn:deltaRmatrix}
\skewdelta_{\Rmatrix}(x) &= -\phipush(x\moduleaction\Rmatrix)=-x\moduleaction(\phipush\Rmatrix),
&&\forall x\in\galgebra; \\
\deta_{\Rmatrix}(x)  &= - \thalf x\moduleaction{\ba{\Rmatrix}{\Rmatrix}}, &&\forall x\in\galgebra .
\end{align}

According to Lemma~\ref{Prop:lie2codatatriple}, the triple $(\domega_{\Rmatrix},\skewdelta_{\Rmatrix},\deta_{\Rmatrix})$
also defines a quasi-Lie 2-bialgebra structure underlying $\crossedmoduletriple{\thetaalgebra}{ }{\galgebra}$.
In particular, if $\deta_{\Rmatrix}=0$, i.e.\
\begin{equation}\label{Eqt:rrginvariant}
x\moduleaction\ba{\Rmatrix}{\Rmatrix}=0, \quad\forall x\in\galgebra
,\end{equation}
we obtain a Lie 2-bialgebra.

\begin{definition}\label{Defn:Rmatrix}
An element $\Rmatrix$ of $\wedge^2\thetaalgebra$ is called an r-matrix of a Lie algebra crossed module
$\crossedmoduletriple{\thetaalgebra}{\phi}{\galgebra}$ if
$\ba{\Rmatrix}{\Rmatrix}\in\wedge^3\thetaalgebra$ is $\galgebra$-invariant,
i.e.\ if Eq.~\eqref{Eqt:rrginvariant} holds.
\end{definition}

Similar to the Poisson group case, we have the following
\begin{theorem}
Corresponding to any $r$-matrix $\Rmatrix$ as above, there is
\begin{enumerate}
\item a Poisson Lie 2-group structure $\gthetaPi_\Rmatrix$
on $\crossedmoduletwogroup$ such that
\begin{equation*}
\gthetaPi_\Rmatrix=\gpoidleftmove{\intlambda_\Rmatrix}-\gpoidrightmove{\intlambda_\Rmatrix},
\end{equation*}
where $\intlambda_\Rmatrix:\ggroup\to\wedge^2\thetaalgebra$
is given by Eq.~\eqref{Eqt:dlambda} and
\item a Lie bialgebra crossed module underlying
$\crossedmoduletriple{\thetaalgebra}{\phi}{\galgebra}$.
\end{enumerate}
\end{theorem}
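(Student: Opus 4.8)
The plan is to derive both assertions by assembling the coboundary construction of Proposition~\ref{Prop:etatriple}, the infinitesimal correspondence of Proposition~\ref{Thm:quasiPoissontotriple}, and the equivalence between strict Lie 2-bialgebras and Lie bialgebra crossed modules (Theorem~\ref{Thm:Liebialgebracm1-1strictLie2bialgebra}); throughout, we assume $\ggroup$ (hence $\crossedmoduletwogroup$) connected and simply connected, as is needed already to integrate $\dlambda_{\Rmatrix}$ to $\intlambda_{\Rmatrix}$. For the first assertion, I would start from the fact that the $r$-matrix $\Rmatrix\in\wedge^2\thetaalgebra$ gives, via Eq.~\eqref{Eqt:dlambda}, the Lie algebra $1$-cocycle $\dlambda_{\Rmatrix}(x)=-x\moduleaction\Rmatrix$, which integrates to a Lie group $1$-cocycle $\intlambda_{\Rmatrix}:\ggroup\to\wedge^2\thetaalgebra$. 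Feeding $\intlambda_{\Rmatrix}$ into Proposition~\ref{Prop:etatriple}(1) produces the multiplicative bivector field $\gthetaPi_{\Rmatrix}=\gpoidleftmove{\intlambda_{\Rmatrix}}-\gpoidrightmove{\intlambda_{\Rmatrix}}$ together with the $1$-cocycle $\inteta_{\Rmatrix}=\thalf\ba{\intlambda_{\Rmatrix}}{\intlambda_{\Rmatrix}}$, forming a quasi-Poisson Lie 2-group. To see that this is actually Poisson, I would invoke the explicit formula~\eqref{Eqt:intlambda}, namely $(\inteta_{\Rmatrix})|_{g}=\thalf\bigl(\ba{\Rmatrix}{\Rmatrix}-\inverse{g}_*\ba{\Rmatrix}{\Rmatrix}\bigr)$: the defining $r$-matrix condition~\eqref{Eqt:rrginvariant}, i.e.\ $x\moduleaction\ba{\Rmatrix}{\Rmatrix}=0$ for all $x\in\galgebra$, says exactly that $\ba{\Rmatrix}{\Rmatrix}\in\wedge^3\thetaalgebra$ is annihilated by the infinitesimal $\galgebra$-action, hence, $\ggroup$ being connected, is fixed by the $\ggroup$-action. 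Therefore $\inteta_{\Rmatrix}\equiv 0$, and by Eq.~\eqref{Eqn:halfPiPiCinteta} we get $\ba{\gthetaPi_{\Rmatrix}}{\gthetaPi_{\Rmatrix}}=0$, so by Definition~\ref{defn:qusi-Poisson} the bivector $\gthetaPi_{\Rmatrix}$ is a genuine Poisson structure on $\crossedmoduletwogroup$ of the stated coboundary form.

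For the second assertion, I would pass to infinitesimal data. By Proposition~\ref{Prop:etatriple}(2) (equivalently, Eqs.~\eqref{Eqn:omegaRmatrix} and~\eqref{Eqn:deltaRmatrix}), the infinitesimal of $(\gthetaPi_{\Rmatrix},\inteta_{\Rmatrix})$ is the triple $(\domega_{\Rmatrix},\skewdelta_{\Rmatrix},\deta_{\Rmatrix})$ with $\domega_{\Rmatrix}(u)=\ba{\Rmatrix}{u}$, $\skewdelta_{\Rmatrix}(x)=-\phipush(x\moduleaction\Rmatrix)$, and $\deta_{\Rmatrix}(x)=-\thalf\,x\moduleaction\ba{\Rmatrix}{\Rmatrix}$, so that the $r$-matrix condition forces $\deta_{\Rmatrix}=0$. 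By Theorem~\ref{Thm:Sigmatopair} and Proposition~\ref{Thm:quasiPoissontotriple} this triple satisfies conditions~(1)--(7) of Lemma~\ref{Prop:lie2codatatriple}, so it defines a quasi-Lie 2-bialgebra structure on $\crossedmoduletriple{\thetaalgebra}{\phi}{\galgebra}$; since its co-homotopy term $\deta_{\Rmatrix}$ vanishes, this is in fact a \emph{strict} Lie 2-bialgebra. Applying Theorem~\ref{Thm:Liebialgebracm1-1strictLie2bialgebra} then turns it into a Lie bialgebra crossed module underlying $\crossedmoduletriple{\thetaalgebra}{\phi}{\galgebra}$, which is exactly part~(2).

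Since almost everything here is an assembly of results already established, I do not expect a serious obstacle. The one step that genuinely uses a hypothesis is the passage from $\galgebra$-invariance to $\ggroup$-invariance of $\ba{\Rmatrix}{\Rmatrix}$ in part~(1), where connectedness of $\ggroup$ enters; without it $\inteta_{\Rmatrix}$ might fail to vanish and $\gthetaPi_{\Rmatrix}$ would remain only quasi-Poisson. A minor point to check along the way is that the two descriptions of the infinitesimal data agree, but this is immediate from Proposition~\ref{Prop:bracketofmultiplicative} together with the formulas $\domega_{\dlambda}=\dlambda\circ\phi$ and $\skewdelta_{\dlambda}=\phipush\circ\dlambda$ recorded in Proposition~\ref{Lem:multiplicativeoutofcocycle}.
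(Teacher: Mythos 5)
Your proposal is correct and takes essentially the same route as the paper's (largely implicit) argument: build the coboundary quasi-Poisson structure via Proposition~\ref{Prop:etatriple}, observe that $\galgebra$-invariance of $\ba{\Rmatrix}{\Rmatrix}$ (with $\ggroup$ connected) forces $\inteta_\Rmatrix=0$ so that $\gthetaPi_\Rmatrix$ is genuinely Poisson, and pass through $\deta_\Rmatrix=0$, Lemma~\ref{Prop:lie2codatatriple}, and Theorem~\ref{Thm:Liebialgebracm1-1strictLie2bialgebra} to obtain the Lie bialgebra crossed module. No gaps.
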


In this case, the Lie bracket on $\thetaalgebrastar$ is induced by the $r$-matrix $\Rmatrix$:
\[ \pairing{\ba{\kappa_1}{\kappa_2}_{\Rmatrix}}{u}
=\pairing{\kappa_1\wedge\kappa_2}{[\Rmatrix,u]},
\quad\forall\kappa_1,\kappa_2\in\thetaalgebrastar,u\in\thetaalgebra ,\]
while the action of $\thetaalgebrastar$ on $\galgebrastar$ is given by
\[ \pairing{\kappa\moduleaction\xi}{x}=\pairing{{\kappa}\wedge{\phi^*\xi}}{x\moduleaction\Rmatrix} ,\]
for all $\kappa\in\thetaalgebrastar$, $\xi\in\galgebrastar$, and $x\in\galgebra$.

\begin{example}
Let $\thetaalgebra=\gltwo\cong\reals\id{}\oplus\sltwo$ and $\galgebra=\sltwo$.
Then the projection $\phi:\gltwo\to\sltwo$ is a Lie algebra crossed module.
It is easy to check that any $\Rmatrix\in\wedge^2\thetaalgebra$ is indeed an r-matrix.
\end{example}

\begin{example}
Let $\galgebra$ be a Lie algebra and $\thetaalgebra\subseteq\galgebra$ an ideal.
Consider the Lie algebra crossed module $\iota:\thetaalgebra\to\galgebra$, where $\iota$ is the inclusion.
Assume that $\Rmatrix\in\wedge^2\thetaalgebra$ such that
$\ba{\Rmatrix}{\Rmatrix}\in\wedge^3\thetaalgebra$ is $\galgebra$-invariant.
Then $\Rmatrix$ is clearly an r-matrix.
For example, we take $\galgebra=\gltwo$ and $\thetaalgebra=\sltwo$.
Then any bivector in $\wedge^2 \thetaalgebra$ is indeed an r-matrix.
\end{example}

\nocite{*}

\bibliographystyle{amsplain}
\bibliography{references}
\end{document}